\newcommand{\Sh}{\mathbb{S}}
\newcommand{\Z}{\mathbb{Z}}
\newcommand{\N}{\mathbb{N}}
\newcommand{\R}{\mathbb{R}}
\newcommand{\intRtwo}{\int \limits_{\R^2}}
\newtheorem{theorem}{Theorem}[section]
\newtheorem{definition}[theorem]{Definition}
\newtheorem{proposition}[theorem]{Proposition}
\newtheorem{lemma}[theorem]{Lemma}
\newtheorem{example}[theorem]{Example}
\newtheorem{corollary}[theorem]{Corollary}
\newtheorem{remark}[theorem]{Remark}
\DeclareMathOperator{\suppp}{supp \,}
\let\csname equation*\endcsname\relax
\let\csname endequation*\endcsname\relax
\begin{document}

\title{Classification of Edges using Compactly Supported Shearlets}
\author{Gitta Kutyniok\footnotemark[1] \and Philipp Petersen\footnotemark[1]}

\renewcommand{\thefootnote}{\fnsymbol{footnote}}

\footnotetext[1]{Department of Mathematics, Technische Universit\"at Berlin, 10623 Berlin, Germany;
\texttt{Email-Addresses: $\{$kutyniok,petersen$\}$@math.tu-berlin.de}.}
\maketitle

\begin{abstract}
We analyze the detection and classification of singularities of functions $f = \chi_S$, where $S \subset \R^d$ and $d = 2,3$. 
It will be shown how the set $\partial S$ can be extracted by a continuous shearlet transform associated with compactly supported 
shearlets. Furthermore, if $\partial S$ is a $d-1$ dimensional piecewise smooth manifold with $d=2$ or $3$, we will classify smooth 
and non-smooth components of $\partial S$. This improves previous results given for shearlet systems with a certain band-limited 
generator, since the estimates we derive are uniform. Moreover, we will show that our bounds are optimal. Along the way, we also
obtain novel results on the characterization of wavefront sets in 3 dimensions by compactly supported shearlets. Finally, geometric 
properties of $\partial S$ such as curvature are described in terms of the continuous shearlet transform of $f$.
\end{abstract}
\noindent {\bf Keywords.} Shearlets, Edge detection, Edge classification, Curvature, Higher dimensional shearlet transform, Wavefront set.
\section{Introduction}\label{sec:introduction}

One of the fundamental problems of imaging science is the extraction of edges from 2D and 3D images \cite{ChanShen2005}. Edges in images 
are the most significant feature, which describes the shape of objects and also allows to infer information on the 3D spatial order 
\cite{NitzbergMumfordShiota1993}. In fact most of the information is contained along the edges of images. In medical imaging applications 
it is, for instance, an important task to separate or classify multiple, potentially overlapping structures. In order to do so, a careful 
analysis of the singularities and their geometric properties is necessary, see \cite{StorathSeparation}.

In this paper, we will address two aspects of analyzing singularities in images. First of all we describe the {\em detection} of singularities of an underlying 2D and 3D image, where -- coarsely 
speaking -- a singularity of an image describes a point in which the image is not smooth. 

Second, if an edge is parametrized by a curve, it is, in particular from an 
information extraction point of view, important to describe the geometrical characteristics of this curve. Such characteristics include points $p$ in
which the curve does not possess a unique normal vector or the curvature in $p$. In particular, one asks for a {\em classification} of points on singularities.

Directional representation systems were in the past shown to be successful on certain aspects of this general task, in particular,
curvelet and shearlet systems, see  \cite{CurveletsIntro} and \cite{KLShearIntro2012}, respectively. In the sequel, we will focus
our attention to shearlet systems, which are to date more extensively used, in particular, due to the fact that they provide a 
unified treatment of the continuous and digital realm, thereby enabling faithful numerical realizations \cite{KLR15}.

\subsection{Related Work}\label{sec:relatedWork}

Historically first, it has been shown in \cite{KLWavefront2009} for a special shearlet system (and later extended in \cite{Grohs}), 
that shearlets are able to detect the wavefront set of a distribution in 2D. In terms of images, this implies that the shearlet 
transform can distinguish between points corresponding to smooth or discontinuous parts of the image in a sense that also incorporates 
the direction of the discontinuity. This allows to deal with edges in a geometrically more meaningful way. A particularly beautiful
application can be found in \cite{StorathSeparation}, in which such results are utilized to separate crossing singularities with 
different orientations.

Parallel to these results, a whole series of publications were devoted to the classification of different types of singularities, 
see \cite{GuoLab}, \cite{GuoLabLim}, and \cite{3DClassification}. In these works, a characteristic function of bounded domains $S$ 
with piecewise smooth boundary $\partial S$ is used as an image model. The boundary $\partial S$ then models a singularity of the 
image $\chi_S$. It is shown that we can infer $\partial S$, the orientation of the singularity, as well as points in which 
$\partial S$ is not given as a smooth curve, from the continuous shearlet transform. In particular, the decay of the shearlet 
transform with generator $\psi$ of the image $\chi_S$ at the position $p$ with orientation $s$ and for decreasing scale $a$ is 
given by $\mathcal{SH}_{\psi}(a,s,p)(\chi_S) = O(g(a))$ for $a \to 0$ for some function $g$. Different functions $g$ describe 
different types of singularities, allowing certain classification results.

In the three publications \cite{GuoLab,GuoLabLim,3DClassification}, which analyze this situation, a certain band-limited shearlet 
generator is used to obtain different orders of decay of the shearlet transform for different types of singularities in 2D and 3D. 
However, singularities are a very local concept in the spatial domain.

Hence it is intuitively evident, that the shearlet elements based on which the shearlet transform is defined should also be highly localized in spatial domain, in order to capture such a local phenomenon. The shearlet elements in \cite{GuoLab,GuoLabLim,3DClassification} are very well localized in spatial domain, but due to their band-limitedness they are globally supported.

In \cite{KGLConstrCmptShear2012}, compactly supported shearlets have been introduced, which in fact provide an even more localized system in the spatial domain. 


\subsection{Our Contribution}\label{sec:contribution}

The current theory as developed by Guo and Labate in \cite{GuoLab}, \cite{GuoLabLim}, and \cite{3DClassification} has some undesirable 
features which we will be able to overcome by using a different shearlet system. 

First of all, in 2D the decay rates of the shearlet transform which are used to describe singularities in \cite{GuoLab,GuoLabLim} 
are not uniform. In fact as we will recall in Theorem \ref{thm:2DClassificationClassical} (2), singularities can be detected as points $p$, where $\lim_{a \to 0^+} a^{-3/4} |\mathcal{SH}_{\psi}\chi_{S}(a,s,p)|>0$ for some orientation $s$. This limit can, however, be arbitrarily close to $0$.
This means that in practice a detection of singularities from values of the shearlet transform in a preasymptotic regime is not theoretically justified, even though it appears to work well in applications, \cite{MR3029974}.
The compact supportedness of our shearlets will allow for {\em uniform estimates on the decay rates of the 2D shearlet transform}, which means that there are constants $0< c_1 \leq c_2<\infty$ such that 
$ c_1 a^{3/4}\leq \mathcal{SH}_{\psi}\chi_{S}(a,s,p)| \leq c_2 a^{3/4}$ for all points $p$ and orientations $s$ corresponding to singularities.

Furthermore, the authors of \cite{GuoLab} distinguish different points within the singularity, which, however, potentially yield the same decay rates, 
so-called corner points of the first or second type. Our construction is able to distinguish between those points in the sense that we obtain different decay rates.


Certainly a theory for the detection and classification of higher dimensional singularities is desirable. We will show that also in 3D the compactly supported shearlet 
transform yields superior results to the classical band-limited transform in the context of edge classification. Indeed, in 3D the 
results of \cite{3DClassification} allow us to distinguish between smooth parts of the image, singularities that form 2D surfaces 
and curves within the singularity surfaces. However, they do not allow for point-like singularities, which can, for instance, occur 
should two curves cross. For instance, a pyramid could not be analyzed properly on the basis of the results of \cite{3DClassification}. 
With the high localization of our shearlet system, we are able to overcome this problem and {\em describe the decay at 1D singularities}. 
Of course we can further improve the former results by providing {\em uniform estimates on the decay rates of the 3D shearlet transform}. 
As a byproduct of the just described analysis of singularities in 3D, we also prove that the {\em 3D compactly supported shearlet 
transform can resolve the wavefront set}.

Finally, we address other geometric properties of the singularities, namely the curvature. For jump singularities in a 2D image, 
the shearlet transform $\mathcal{SH}_{\psi}(a,s,p)(\chi_S)$ decays as $O(a^{\frac{3}{4}})$ for $a \to 0$, if $p$ is a point in the 
singularity and the shearlet is properly oriented as we will see in Section $\ref{sec:detectAndClassify2D}$. We will observe, that 
the curvature is the only factor, that determines the value of $\lim_{a\to 0} a^{-\frac{3}{4}}\mathcal{SH}_{\psi}(a,s,p)(\chi_S)$. 
We will also examine the same question in 3D and present a similar observation; thus we derive an {\em expression of the 
curvature in terms of the 2D and 3D shearlet transform}.

Finally, we will show that -- as long as we search for uniform decay estimates -- the decay rate of our system for points on a 
singularity, which are no corner points is the only possible rate. This means that - at least in the regime of shearlets - 
compactly supported shearlets describe these singularities with an {\em optimal rate}.

\subsection{Outline}\label{sec:outline}
We start by recalling the construction of shearlet systems in 2D and 3D in Section \ref{sec:shearlets}. A construction of a detector shearlet that can be used 
for edge classification can be found in Subsection \ref{sec:ConstrDetect}. We will provide the results on edge classification with 2D shearlet systems in Section \ref{sec:detectAndClassify2D}. In Section \ref{sec:detectAndClassify3D} we show how to classify singularities in 3D. To that end, in Subsection \ref{sec:detectWavefrontSet3D},
we first provide novel results on the characterization of the wavefront set by the shearlet transform, and then provide our classification results in 
Subsection \ref{sec:classOfEdges3D}. In Subsection \ref{sec:curvature}, we provide a description of the curvature in terms of the 3D shearlet transform.
Finally, we prove in Section \ref{sec:optimality} that the decay rates are optimal. Some of the technical proofs are postponed to Section \ref{sec:proofs}.

\section{Shearlets}\label{sec:shearlets}

For both dimensions $d = 2,3$, a shearlet system is constructed by applying different operators to a generator function $\psi \in L^2(\R^d)$, 
obtaining elements of the form
$$\psi_{a,s,p}(x) := |\det M_{a,s}|^{\frac{d-1}{2}}\psi(M_{a,s} (x - p)), \quad \text{ for }x\in \R^d $$
where $a, s, p \in \Lambda$ with $\Lambda$ being some parameter set and $M_{a,s} \in GL(d,\R)$. In \cite{GKL2006}, $M_{a,s}$ was chosen as 
a composition of a \emph{parabolic scaling} and a \emph{shearing matrix} $A_a$ and $S_s$, respectively, such that, for $d = 2$, 
\begin{align}
M_{a,s} = S_s A_a,\ \text{ where } S_s = \begin{pmatrix}1 &s\\0&1\\ \end{pmatrix}, \quad A_a = \begin{pmatrix} a & 0 \\ 0 &a^{\frac{1}{2}}\end{pmatrix} \label{eq:scalingShearingMatrices}
\end{align}
and $a  = 2^k, k\in \N, s \in \Z$. Also, $p$ was chosen as $p\in M_{a,s}^{-1}\Z^2.$ The shearing matrix is then responsible for the name ``shearlet systems''.
Although these systems have had multiple applications and accomplishments in imaging science, yet another point of view is interesting. 

In \cite{KLWavefront2009}, an approach with a continuous parameter set has been introduced, i.e., for $d = 2$,  $a \in (0, \infty),\ s \in \R,\ p\in \R^2$ 
and $M_{a,s} = A_a^{-1}S_s^{-1}$ with $A_a$ and $S_s$ as in \eqref{eq:scalingShearingMatrices} is considered. A similar construction can be made for $d=3$. 
This will be the setting studied in this paper. Using the elements $\psi_{a,s,p}$, a transform can be introduced for functions $f\in L^2(\R^d)$ such that 
$$
f \mapsto \left\langle f, \psi_{a,s,p}\right \rangle, \quad \mbox{for }(a,s,p)\in \Lambda. 
$$
A natural question is now, whether $f$ can be reconstructed from these values. Indeed this is always possible if the shearlet system yields a \emph{frame}
 or, since we chose the continuous point of view, a \emph{continuous frame}.

Given a locally compact Hausdorff space equipped with a positive Radon measure $\mu$ with $\suppp \mu = X$, a family $\{\psi_x\}_{x\in X} \subseteq L^2(\R^d)$ is called 
\emph{continuous frame}, if there exists constants $C_1, C_2$ such that
\begin{align*}
 C_1 \|f\|^2 \leq \int_{X} | \left \langle f, \psi_x\right \rangle |^2 d\mu(x) \leq C_2 \|f\|^2, \quad \text{ for all } f\in L^2(\R^d).
\end{align*}
If $C:= C_1 = C_2$ is possible, the frame is called \emph{tight}.
One can also define the \emph{frame operator} $S: L^2(\R^d) \to L^2(\R^d)$ by $Sf: = \int_X\left \langle f, \psi_x \right \rangle \psi_x d\mu(x)$. It follows 
by the considerations in \cite{contFrames} that $S$ is bounded, positive, and invertible. In fact $\{S^{-1}\psi_z\}_{x\in X}$ is again a continuous frame, 
the so-called \emph{canonical dual frame}. Furthermore, if $\{\psi_x\}_{x\in X}$ is a tight frame with constant $C$, then $S = \frac{1}{C}I$. Finally, one 
can derive the \emph{reconstruction formula}
\begin{align*}
 f = \int_X \left \langle f, \psi_x \right \rangle \tilde{\psi}_x d\mu(x) \quad \text{ for all }f\in L^2(\R^d),
\end{align*}
where $\tilde{\psi}_x = S^{-1}\psi_x$ are the dual frame elements and equality is understood in the weak sense. 

In the following sections we now provide constructions of continuous shearlet frames for dimensions $d = 2$ and $d = 3$.


\subsection{2D Construction}\label{sec:shearlets2D}
The basis for our subsequent analysis of singularities is the so-called continuous shearlet transform. This transform results from the action of a locally 
compact group on $L^2(\R^2)$. This group is typically called the shearlet group and has been introduced in \cite{ShearletGroup}. We define the 
\emph{shearlet group} $\Sh$ as $\R^+ \times \R \times \R^2$ with the group operation
\begin{align*}
 \left( a, s, p\right)\left(a',s',p'\right) = \left(aa',s+a^\frac{1}{2}s', p + S_s A_a p' \right).
\end{align*}
The left invariant Haar measure $\mu_{\Sh}$ of $\Sh$ is given by
\begin{align*}
\mu_{\Sh} = \frac{1}{a^3} ds dp.
\end{align*}
A function $\psi$ such that
\begin{align*}
C_\psi:= \int \limits_{\R}\int \limits_{\R^+} \frac{|\hat{\psi}(\xi_1, \xi_2) |^2}{|\xi_1|^2} d\xi_1 d\xi_2 < \infty  
\end{align*}
is called \emph{admissible shearlet} or short \emph{shearlet}. For an admissible shearlet $\psi \in L^2(\R^2)$, we can define the \emph{continuous shearlet transform} 
$\mathcal{SH}_\psi: L^2(\R^2) \to L^2(\Sh)$ by
\begin{align*}
 \mathcal{SH_\psi}(f)(a,s,p): = \left \langle f, \psi_{a,s,p}\right \rangle \text{ for } (a,s,p)\in \Sh.
\end{align*}
The simplicity of the above transform comes at the cost of a non-uniform treatment of different directions. Indeed, letting $s\in \R$ vary 
over the whole real line results in highly elongated shearlet elements. To handle singularities with different orientations equally, we 
modify the above construction to obtain so-called cone-adapted shearlet systems. We follow \cite{Grohs} with the construction of a continuous 
cone-adapted shearlet system. 

We start by defining the middle square and two conic regions in Fourier domain by 
\begin{align*}
 D:=[-1,1]^2, \quad \mathcal{C}_{u,v}:=\left\{\xi \in \R^2: |\xi_1|\geq u, |\frac{\xi_2}{\xi_1}| \leq v \right\},\quad \mbox{and} 
 \quad \mathcal{C}_{u,v}^\nu:=\left\{\xi \in \R^2: |\xi_2|\geq u, |\frac{\xi_1}{\xi_2}| \leq v \right\}.
\end{align*}
We denote by $P_{D}$ the projection from $L^2(\R^2)$ onto $L^2(D)^\vee:=\{f \in L^2(\R^2): \suppp \hat{f} \subset D \}$, and analogously 
for $\mathcal{C}_{u,v}, \mathcal{C}_{u,v}^{\nu}$.

To obtain shearlet frames for these subsets of $L^2(\R^2)$, we require certain properties of the generator $\psi$ in terms of vanishing 
moments and decay in the frequency domain.

\begin{definition}[\cite{Grohs2011}]
 A function $\psi \in L^2(\R^2)$ possess $M$ (directional) \emph{vanishing moments in $x_1$ direction}, if
 \begin{align*}
  \int_{\R^2}\frac{|\hat{\psi}(\xi)|^2}{|\xi_1|^{2M}} d\xi <\infty.
 \end{align*}
A function $f \in L^2(\R^2)$ has \emph{Fourier decay of order $L_i$ in the $i-th$ variable} if $|\hat{f}(\xi)| \lesssim |\xi_i|^{-L_i}$.
\end{definition}

We next recall the following theorems due to \cite{Grohs} concerning the frame property 
of $\{\psi_{a,s,p}\}_{a\in (0,\Gamma], s\in [-\Xi, \Xi], p\in \R^2}$.

\begin{theorem}[\cite{Grohs}]\label{thm:coneNonTight}
 Let $\psi$ be an admissible shearlet with at least $1+\epsilon>1$ vanishing moments, Fourier decay of order $\gamma$ in the 
 second coordinate, and Fourier decay of order $\mu>0$ in the first variable. Then there exist $\Gamma$ and $\Xi$ such that the family 
 $\{P_{C_{u,v}} \psi_{a,s,p}\}_{a\in (0,\Gamma], s\in [-\Xi, \Xi], p\in \R^2}$ constitutes a continuous frame for $L^2(C_{u,v})^\vee$.
\end{theorem}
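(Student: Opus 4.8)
The plan is to pass to the Fourier domain and reduce the frame inequality to a pointwise two-sided bound on a single scalar weight. First I would observe that for $f \in L^2(C_{u,v})^\vee$ the projection is harmless, $\langle f, P_{C_{u,v}}\psi_{a,s,p}\rangle = \langle f, \psi_{a,s,p}\rangle$, because $\hat f$ is supported in $C_{u,v}$. Using Plancherel and the scaling law of the Fourier transform one gets $\widehat{\psi_{a,s,p}}(\xi) = |\det M_{a,s}|^{-1/2} e^{-2\pi i \langle p,\xi\rangle}\,\hat\psi(M_{a,s}^{-T}\xi)$, so that $\langle f,\psi_{a,s,p}\rangle$ is, up to the factor $|\det M_{a,s}|^{-1/2}$, the inverse Fourier transform in the variable $p$ of $\hat f(\xi)\,\overline{\hat\psi(M_{a,s}^{-T}\xi)}$.

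Next I would integrate out $p$ first. By Plancherel in $p$,
\[
\int_{\R^2}|\langle f,\psi_{a,s,p}\rangle|^2\, dp = |\det M_{a,s}|^{-1}\int_{\R^2}|\hat f(\xi)|^2\,|\hat\psi(M_{a,s}^{-T}\xi)|^2\, d\xi .
\]
Multiplying by the Haar density $a^{-3}$, integrating in $s$ and $a$ over the truncated ranges, and exchanging the order of integration (Tonelli, the integrand being nonnegative) turns the frame quantity into $\int_{\R^2}|\hat f(\xi)|^2\,\Delta(\xi)\, d\xi$, where
\[
\Delta(\xi) := \int_0^\Gamma\!\!\int_{-\Xi}^{\Xi} a^{-3/2}\,\bigl|\hat\psi\bigl(a\xi_1,\, a^{1/2}(s\xi_1+\xi_2)\bigr)\bigr|^2\, ds\, da .
\]
Since $\|\hat f\|=\|f\|$, the claimed frame property is then equivalent to the existence of $0<C_1\le C_2<\infty$ with $C_1\le \Delta(\xi)\le C_2$ for all $\xi\in C_{u,v}$.

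The core computation is the change of variables $\eta = M_{a,s}^{-T}\xi = (a\xi_1,\, a^{1/2}(s\xi_1+\xi_2))$, under which $a^{-3/2}\,ds\,da = \eta_1^{-2}\,d\eta$, all $\xi$-factors cancelling. Taken over the full ranges $\Gamma=\infty,\ \Xi=\infty$ this identifies the untruncated weight with the admissibility constant $C_\psi=\int \eta_1^{-2}\,|\hat\psi(\eta)|^2\,d\eta$, independently of $\xi$. Hence for the truncated weight the upper bound $\Delta(\xi)\le C_\psi$ is automatic, so $C_2=C_\psi$. For the lower bound I would write $\Delta(\xi)=C_\psi - R_\Gamma(\xi) - R_\Xi(\xi)$, where $R_\Gamma$ is the mass from scales $a>\Gamma$ and $R_\Xi$ that from shears $|s|>\Xi$. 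In $\eta$-coordinates $a>\Gamma$ forces $|\eta_1|=a|\xi_1|>\Gamma u$, so $R_\Gamma(\xi)\le \int_{|\eta_1|>\Gamma u}\eta_1^{-2}|\hat\psi(\eta)|^2\,d\eta$, a $\xi$-independent tail of the convergent integral $C_\psi$ (with the Fourier decay of order $\mu$ in the first variable guaranteeing convergence of the high-frequency part), which tends to $0$ as $\Gamma\to\infty$. Likewise, on the cone $|\xi_2/\xi_1|\le v$ the conditions $|s|>\Xi$ and $a\le\Gamma$ force $|\eta_2/\eta_1|>(\Xi-v)\Gamma^{-1/2}=:c_\Xi$, so $R_\Xi(\xi)\le \int_{|\eta_2|>c_\Xi|\eta_1|}\eta_1^{-2}|\hat\psi(\eta)|^2\,d\eta$, again $\xi$-independent. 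It then suffices to fix $\Gamma$ so that the first tail is below $C_\psi/4$ and then $\Xi$ so that $c_\Xi$ is large enough to push the second below $C_\psi/4$, yielding $\Delta(\xi)\ge C_\psi/2$ uniformly on the cone.

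The main obstacle is the uniform smallness of $R_\Xi$, that is, estimating $\int_{|\eta_2|>c_\Xi|\eta_1|}\eta_1^{-2}|\hat\psi(\eta)|^2\,d\eta$ as $c_\Xi\to\infty$. Away from the $\eta_1$-axis (say $|\eta_1|\ge 1$) this region also has $|\eta_2|>c_\Xi$ large, and the Fourier decay of order $\gamma$ in the second variable kills the contribution. The delicate part is the wedge near $\eta_1=0$, where the weight $\eta_1^{-2}$ is singular; here the assumption of $1+\epsilon$ vanishing moments is precisely what makes $\eta_1^{-2}|\hat\psi|^2$ integrable (it is dominated by $|\eta_1|^{-2(1+\epsilon)}|\hat\psi|^2$ near the axis), so that dominated convergence applies and the integral over the shrinking region $\{|\eta_2|>c_\Xi|\eta_1|\}$ tends to $0$. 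The structural point that makes the whole argument succeed is that, after the change of variables, both error regions are enclosed in sets independent of $\xi$, and this is exactly what delivers the uniformity of the frame bounds over $C_{u,v}$.
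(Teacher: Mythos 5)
Your argument is correct, but note that the paper itself contains no proof of this statement: it is imported verbatim from \cite{Grohs}, so the only internal material to compare with is the three-dimensional analogue, namely Lemma \ref{lem:FrameOp} and the tail estimates in the proof of Lemma \ref{lem:theWlemma}. Your reduction is exactly the one underlying those results and Grohs' original proof: Plancherel in $p$ converts the frame functional into $\int_{\R^2}|\hat f(\xi)|^2\Delta(\xi)\,d\xi$, so the frame property is equivalent to uniform two-sided bounds on the multiplier $\Delta$ over the cone. The genuine difference lies in how the truncation errors are handled. Grohs --- like the paper in the proof of Lemma \ref{lem:theWlemma} --- estimates the complementary parameter regions ($a>\Gamma$, respectively $|s|>\Xi$) quantitatively from the vanishing-moment and Fourier-decay hypotheses, which yields explicit rates; you instead use the change of variables $\eta=M_{a,s}^{-T}\xi$ (under which $a^{-3/2}\,ds\,da=\eta_1^{-2}\,d\eta$) together with continuity from above of the finite measure $\eta_1^{-2}|\hat\psi(\eta)|^2\,d\eta$, a soft argument that produces \emph{some} admissible pair $\Gamma,\Xi$ without rates --- which is all the theorem asserts, and is arguably cleaner. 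Two points in your write-up should be made precise, though neither is a gap in the strategy. First, the untruncated weight is not literally independent of $\xi$: for $\xi_1>0$ it equals $\int_{\{\eta_1>0\}}\eta_1^{-2}|\hat\psi(\eta)|^2\,d\eta$, while for $\xi_1<0$ it is the integral over $\{\eta_1<0\}$; since the cone $C_{u,v}$ is two-sided, your uniform lower bound requires both half-plane integrals to be finite and strictly positive. This is automatic for real-valued $\psi$ (Hermitian symmetry of $\hat\psi$) and is implicit in \cite{Grohs}, but it deserves a sentence. Second, the Plancherel-in-$p$ step needs a brief justification, since $\hat f\cdot\overline{\hat\psi(M_{a,s}^{-T}\cdot)}$ is a priori only in $L^1$; the standard identity $\|f\ast g\|_2=\|\hat f\hat g\|_2$ for $f,g\in L^2$ (with both sides possibly infinite) closes this.
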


\begin{theorem}[\cite{Grohs}]\label{thm:globalNonTight}
 Let $\psi$ be an admissible shearlet such that $\{P_{C_{1,1}}\psi_{a,s,p}\}_{a\in (0,\Gamma], s\in [-\Xi,\Xi], p\in \R^2}$ is a continuous frame 
 for $L^2(C_{1,1})^{\vee}$ with frame constants $A,B$, and let $W$ be any function with
\begin{align*}
 A\leq|\widehat{W}(\xi)| \leq B, \ \text{ for all } \xi\in [-1,1]^2.
\end{align*}
Then, we have the \emph{reproducing formula}
\begin{align*}
 \hspace*{-0.2cm} A\|f\|^2_2 \leq \ & \int \limits_{\R^2}|\left \langle P_Df, W(\cdot-p)\right \rangle|^2 dp + \int \limits_{p\in \R^2}\int \limits_{s\in [-\Xi, \Xi]} \int \limits_{a \in (0, \Gamma]}|\mathcal{SH}_{\psi}P_{C}f(a,s,p)|^2 a^{-3} da ds dp\\
&\hspace*{4.5cm}+\int \limits_{p\in \R^2}\int \limits_{s\in [-\Xi, \Xi]} \int \limits_{a \in (0, \Gamma]}|\mathcal{SH}_{\psi^{\nu}}P_{C^{\nu}}f(a,s,p)|^2 a^{-3} da ds dp \leq B \|f\|_2^2,
\end{align*}
for all $f\in L^2(\R^2)$. In every point of continuity $x$ of $f$, we have
\begin{align}
 f(x) = \ & \int \limits_{\R^2}\left \langle P_Df, W(\cdot-p)\right \rangle T_p P_D \tilde{W}d + \int \limits_{p\in \R^2}\int \limits_{s\in [-\Xi, \Xi]} \int \limits_{a \in (0, \Gamma]}\mathcal{SH}_{\psi}P_{C}f(a,s,p) P_{C}\widetilde{\psi}_{a,s,p} a^{-3} da ds dp \nonumber \\
&+\int \limits_{p\in \R^2}\int \limits_{s\in [-\Xi, \Xi]} \int \limits_{a \in (0, \Gamma]}\mathcal{SH}_{\psi^{\nu}}P_{C^{\nu}}f(a,s,p) \widetilde{\psi}^{\nu}_{a,s,p} a^{-3} da ds dp,\label{eq:reproFormula1}
\end{align}
where $\tilde{W}$ is any function with $(\tilde{W}(\xi))^{\vee}= \hat{W}(\xi)^{-1}$ for all $\xi \in [-1,1]^2$, $\psi^\nu(x_1,x_2) = \psi(x_2, x_1)$, $\widetilde{\psi}, \widetilde{\psi}^{\nu}$ represent the dual frame elements, and $T_p$ is the translation operator $g\mapsto T_pg := g(\cdot-p)$.
\end{theorem}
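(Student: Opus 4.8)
The plan is to treat the reproducing formula as the superposition of three independent channels — a low-frequency channel carried by the window $W$ on the square $D=[-1,1]^2$, and two high-frequency channels carried by the shearlets $\psi$ and $\psi^\nu$ on the cones $C=C_{1,1}$ and $C^\nu=C_{1,1}^\nu$ — and to glue them together using a frequency-domain partition. The first step is to observe that the three regions $D$, $C$, $C^\nu$ cover $\R^2$ with pairwise intersections of Lebesgue measure zero: on $D$ both coordinates are bounded by $1$, while $C\cap C^\nu$ forces $|\xi_1|=|\xi_2|\geq 1$ and each cone meets $D$ only where one coordinate equals $1$. Consequently $\chi_D+\chi_C+\chi_{C^\nu}=1$ almost everywhere, so that by Plancherel $\|f\|_2^2=\|P_Df\|_2^2+\|P_Cf\|_2^2+\|P_{C^\nu}f\|_2^2$ and, in $L^2$, $f=P_Df+P_Cf+P_{C^\nu}f$. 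This decomposition is what will let me add the three channel estimates without cross terms.

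For the two cone channels I would invoke Theorem \ref{thm:coneNonTight} directly. Since $P_Cf\in L^2(C)^\vee$, we have $\mathcal{SH}_\psi P_Cf(a,s,p)=\langle P_Cf,\psi_{a,s,p}\rangle=\langle P_Cf,P_C\psi_{a,s,p}\rangle$, so the cone-frame bounds give $A\|P_Cf\|_2^2\leq \int\int\int|\mathcal{SH}_\psi P_Cf(a,s,p)|^2 a^{-3}\,da\,ds\,dp\leq B\|P_Cf\|_2^2$. The vertical cone is handled by the reflection $(x_1,x_2)\mapsto(x_2,x_1)$, which swaps $C$ and $C^\nu$ and sends $\psi$ to $\psi^\nu$; hence Theorem \ref{thm:coneNonTight} transfers verbatim and yields the same bounds $A,B$ for the $\psi^\nu$-channel on $C^\nu$. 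For the low-frequency channel I would compute, again by Plancherel, that $\int_{\R^2}|\langle P_Df,W(\cdot-p)\rangle|^2\,dp=\int_D|\hat f(\xi)|^2|\widehat W(\xi)|^2\,d\xi$, since $p\mapsto\langle P_Df,W(\cdot-p)\rangle$ is the correlation $P_Df\star W$ whose Fourier transform is $\widehat{P_Df}\,\overline{\widehat W}$. The hypothesis on $\widehat W$ over $D$ is what controls this weight and aligns the low-frequency channel with the constants governing the cone channels.

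Summing the three channel estimates and using the measure-zero-overlap identity $\|f\|_2^2=\|P_Df\|_2^2+\|P_Cf\|_2^2+\|P_{C^\nu}f\|_2^2$ then produces the two-sided bound $A\|f\|_2^2\leq(\cdots)\leq B\|f\|_2^2$, establishing the reproducing inequality. For the reconstruction formula I would invert each channel separately and add. The two cone channels are reconstructed by the general continuous-frame reconstruction formula recalled in Section \ref{sec:shearlets}, applied on $L^2(C)^\vee$ and $L^2(C^\nu)^\vee$ with the canonical dual elements $P_C\widetilde\psi_{a,s,p}$ and $\widetilde\psi^\nu_{a,s,p}$. For the low-frequency channel, a short Fourier computation shows that the map $f\mapsto\int_{\R^2}\langle P_Df,W(\cdot-p)\rangle\,T_pP_D\widetilde W\,dp$ has Fourier multiplier $\overline{\widehat W}\,\widehat{\widetilde W}$ on $D$, which equals $1$ there precisely by the defining relation for $\widetilde W$; hence this map returns $P_Df$. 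Adding the three reconstructions and using $f=P_Df+P_Cf+P_{C^\nu}f$ gives the claimed identity, valid weakly in general and pointwise at every point of continuity of $f$.

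The routine parts are the Plancherel identities and the correlation computation; the step that needs the most care is the bookkeeping that makes the global constants come out to be exactly the cone constants $A,B$. This hinges on two facts working in tandem: that the three frequency regions genuinely tile $\R^2$ up to a null set, so that no frequency is double-counted and none is missed, and that the admissibility, vanishing-moment, and decay hypotheses underlying Theorem \ref{thm:coneNonTight} are symmetric under coordinate exchange, so that the $C^\nu$-channel inherits identical bounds. The one genuinely delicate point to verify is the compatibility of the window normalization on $D$ with the cone-frame constants, together with the nonvanishing $\widehat W\neq 0$ on $D$ (guaranteed by the lower bound on $|\widehat W|$) that makes $\widetilde W$ well defined.
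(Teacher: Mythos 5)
The paper itself does not prove this theorem --- it is imported verbatim from \cite{Grohs} --- so your proposal can only be measured against the standard argument, and in architecture it reproduces that argument correctly: the tiling $\R^2 = D \cup C \cup C^{\nu}$ up to Lebesgue-null overlaps, Plancherel to split $\|f\|_2^2 = \|P_Df\|_2^2 + \|P_Cf\|_2^2 + \|P_{C^\nu}f\|_2^2$ without cross terms, the frame hypothesis applied to $P_Cf \in L^2(C)^\vee$, the coordinate swap $(x_1,x_2)\mapsto(x_2,x_1)$ to transfer identical bounds to the $\psi^\nu$-channel, channel-wise inversion with the canonical duals, and the correlation/Fourier-multiplier computation for the window channel are all the right ingredients. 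One small misattribution: the cone bounds with the specific constants $A,B$ come from the theorem's \emph{hypothesis}, not from Theorem \ref{thm:coneNonTight}, which is only an existence statement (some $\Gamma,\Xi$, unspecified constants).

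The genuine gap is exactly the point you defer as ``bookkeeping'' and never carry out: with the window hypothesis as literally stated, it cannot be carried out. Your own (correct) computation gives $\int_{\R^2}|\langle P_Df, W(\cdot-p)\rangle|^2\,dp = \int_D |\hat f(\xi)|^2 |\widehat W(\xi)|^2\,d\xi$, so the hypothesis $A \le |\widehat W| \le B$ on $D$ pins the window channel between $A^2\|P_Df\|_2^2$ and $B^2\|P_Df\|_2^2$, and summing the three channels yields constants $\min(A,A^2)$ and $\max(B,B^2)$, not $A$ and $B$; indeed, if $A<1$ and $\hat f$ is supported in a region of $D$ where $|\widehat W|\equiv A$, the total equals $A^2\|f\|_2^2 < A\|f\|_2^2$, so the claimed lower bound fails. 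The inequality with constants exactly $A,B$ requires reading the window condition as $A \le |\widehat W(\xi)|^2 \le B$ (which is the form in \cite{Grohs}); your proof should either adopt that reading explicitly or weaken the conclusion. Two further loose ends of the same kind: the multiplier you obtain for the low-pass reconstruction is $\overline{\widehat W}\,\widehat{\widetilde W}$, which equals $1$ on $D$ only under a conjugate-consistent normalization of $\widetilde W$ (not the literal relation $\widehat{\widetilde W} = \widehat W^{-1}$ unless $\widehat W$ is real); and the pointwise identity at continuity points of $f$ is asserted but not argued --- frame theory gives the reconstruction only weakly, and upgrading weak equality to pointwise evaluation at continuity points needs a separate (approximate-identity type) argument.
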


\begin{example}\label{ex:classicalShearlet}
One example for a shearlet that gives rise to the reproducing formula \eqref{eq:reproFormula1} with $\Gamma=1, \Xi=2$, $\widetilde{\psi} = \psi$ and a suitable low pass filter $W$ is the so-called \emph{classical shearlet}, see \cite{KLWavefront2009}. We construct the generator $\psi$ of the classical shearlet by 
\begin{eqnarray}
\widehat{\psi}(\xi) = \widehat{\psi}(\xi_1,\xi_2) = \widehat{\psi_1}(\xi_1)\widehat{\psi_2}\left(\frac{\xi_2}{\xi_1}\right), \text{ for } \xi\in \R^2, \xi_2 \neq 0,
\end{eqnarray}
where $\psi_1, \psi_2\in L^2(\R)$ obey the following conditions:
\begin{compactenum}[(i)]
\item $\psi_1$ obeys the Calderon condition
$$\int_0^\infty |\widehat{\psi_1}(a \xi_1)|^2\frac{da}{a} = 1 \text{ for a.e. } \xi_1 \in \R$$
and $\widehat{\psi}_1\in C^\infty_0(\R)$ with $\suppp \widehat{\psi}_1 \subset [-2,\frac{1}{2}]\cup [\frac{1}{2}, 2]$;
\item $\|\psi_2\|_{L^2} = 1$ and $\widehat{\psi}_2\in C^\infty_0(\R)$ with $\suppp \widehat{\psi}_1 \subset [-1,1]$ and $\psi_2>0$ on $(-1,1)$.
\end{compactenum}
\end{example}

In Theorem \ref{thm:globalNonTight}, three different elements are used to construct the shearlet frame, namely $W, \psi$ and $\psi^\nu$ each of 
which is responsible for one of the sections of the frequency plane which are $D, C_{u,v}$ and $C_{u,v}^{\nu}$. This leads to the following
definition. 
\begin{definition}\cite{KGLConstrCmptShear2012}
The system $\{T_p W\}_{p\in \R^2}, \{\psi_{a,s,p}\}_{a \in (0,\Gamma], s\in [-\Xi, \Xi], p\in \R^2}, \{\psi_{a,s,p}^{\nu}\}_{a \in (0,\Gamma], 
s\in [-\Xi, \Xi], p\in \R^2}$, is called \emph{ continuous cone-adapted shearlet system}.
\end{definition}

In addition to Theorem \ref{thm:globalNonTight}, even tight continuous shearlet frames for $L^2(C_{u,v})^\vee$ and $L^2(C_{u,v}^{\nu})^\vee$ can be produced. 
The following result provides sufficient conditions for such a construction.

\begin{theorem}[\cite{Grohs}]\label{thm:tightFrameCompact}
 Let $\Xi>v, u \geq 0$, and let $\psi = \frac{\partial^M}{\partial x_1^M}\theta$ have $M$ vanishing moments in $x_1$ direction, Fourier decay of order $L_1$ in the 
 first variable, and $\theta$ has Fourier decay of order $L_2$ in the second variable such that
\begin{align*}
 2M-\frac{1}{2}>L_2>M>\frac{1}{2}.
\end{align*}
Then there exists some $W \in L^2(\R^2)$ such that $|\widehat{W}(\xi)|^2 = O(|\xi|^{-2\min(L_1,L_2-M)}),$ and for all $f\in L^2(C_{u,v})^\vee$, we
have the representation
$$
 f = \frac{1}{C_\psi}\int \limits_{\R^2} \left \langle f, T_p W \right \rangle T_p P_{C_{u,v}} W dp
\quad + \frac{1}{C_\psi}\int \limits_{p\in \R^2} \int \limits_{s \in [-\Xi, \Xi]} \int \limits_{a \in (0,\Gamma]} \mathcal{SH}_\psi f(a,s,p)P_{C_{u,v}}\psi_{a,s,p}a^{-3} da ds dp.
$$
\end{theorem}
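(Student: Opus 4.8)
The plan is to verify the reproducing formula as an identity of Fourier multipliers. First I would apply Plancherel's theorem to pass to the Fourier side, where for fixed $(a,s)$ the operator $f\mapsto\int_{\R^2}\mathcal{SH}_\psi f(a,s,p)\,P_{C_{u,v}}\psi_{a,s,p}\,dp$ acts as multiplication by $\mathbb{1}_{C_{u,v}}(\xi)\,|\widehat{\psi_{a,s,0}}(\xi)|^2$, a standard correlation-then-convolution computation. Using $\widehat{\psi_{a,s,0}}(\xi)=a^{3/4}\widehat{\psi}(a\xi_1,a^{1/2}(s\xi_1+\xi_2))$, so that $|\widehat{\psi_{a,s,0}}(\xi)|^2=a^{3/2}|\widehat{\psi}(a\xi_1,a^{1/2}(s\xi_1+\xi_2))|^2$, and integrating against the weight $a^{-3}\,da\,ds$, the shearlet part becomes multiplication by
\[
\Delta(\xi):=\int_{-\Xi}^{\Xi}\int_0^{\Gamma}a^{-3/2}\bigl|\widehat{\psi}(a\xi_1,a^{1/2}(s\xi_1+\xi_2))\bigr|^2\,da\,ds
\]
for $\xi\in C_{u,v}$, while the $W$-term contributes the multiplier $\mathbb{1}_{C_{u,v}}(\xi)|\widehat{W}(\xi)|^2$. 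Since $\widehat{f}$ is supported in $C_{u,v}$, it therefore suffices to choose $W$ so that $|\widehat{W}(\xi)|^2+\Delta(\xi)=C_\psi$ for all $\xi\in C_{u,v}$; dividing by $C_\psi$ then yields the claim.

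Next I would establish the Calder\'on-type identity that the untruncated symbol is constant. Extending the ranges to $a\in(0,\infty)$ and $s\in\R$ and substituting $t=a^{1/2}(s\xi_1+\xi_2)$ followed by $\eta_1=a\xi_1$ collapses the double integral to $\int_0^\infty\int_\R|\widehat{\psi}(\eta_1,t)|^2\eta_1^{-2}\,dt\,d\eta_1=C_\psi$, independently of $\xi$. This is precisely the admissibility integral, and the hypothesis $M>\tfrac12$ guarantees its finiteness: writing $\widehat{\psi}(\xi)=(2\pi i\xi_1)^M\widehat{\theta}(\xi)$ makes $|\widehat{\psi}(\xi)|^2|\xi_1|^{-2}\sim|\xi_1|^{2M-2}$ near $\xi_1=0$, which is integrable exactly when $M>\tfrac12$.

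With this identity in hand, I would define $\widehat{W}$ on $C_{u,v}$ as a square root of the deficit $C_\psi-\Delta(\xi)\ge 0$, extended to a decaying function on $\R^2$. The heart of the matter is to control this deficit, which is the integral of the shearlet symbol over the discarded region $\{a>\Gamma\}\cup\{|s|>\Xi\}$, and I would split it accordingly. On the out-of-cone piece $|s|>\Xi$, the lower bound $|s\xi_1+\xi_2|\gtrsim|\xi_1|$ valid on $C_{u,v}$ together with $|\widehat{\psi}(\xi)|\lesssim|\xi_1|^M|\xi_2|^{-L_2}$ produces a factor $a^{2M-L_2-3/2}$ whose integral over $(0,\Gamma]$ converges exactly because $2M-\tfrac12>L_2$, while the $s$-integral yields decay $|\xi_1|^{-2(L_2-M)}$. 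On the large-scale piece $a>\Gamma$, the substitution above turns it into a tail $\int_{\Gamma\xi_1}^{\infty}\int_\R|\widehat{\psi}(\eta_1,t)|^2\eta_1^{-2}\,dt\,d\eta_1$, whose decay is governed by the Fourier decay of order $L_1$ in the first variable. Combining the two contributions and using $|\xi_1|\sim|\xi|$ on the cone gives the stated bound $|\widehat{W}(\xi)|^2=O(|\xi|^{-2\min(L_1,L_2-M)})$.

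The main obstacle is this decay estimate for the deficit, together with the nonnegativity $\Delta(\xi)\le C_\psi$ needed for $\widehat{W}$ to exist as a genuine square-integrable function. The conditions $2M-\tfrac12>L_2>M>\tfrac12$ are precisely what make the relevant integrals converge: $M>\tfrac12$ secures admissibility, $L_2>M$ forces $\widehat{\psi}$ to decay in the second variable (so that $L_2-M>0$ controls the out-of-cone term), and $2M-\tfrac12>L_2$ keeps the scale integral in that term finite. Consequently the bulk of the work is the careful bookkeeping of these two regional estimates rather than any single conceptual step.
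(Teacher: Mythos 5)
Your proposal is correct and follows essentially the same route as the paper's: the paper itself quotes this theorem from \cite{Grohs} without proof, but its proof of the 3D analogue (Lemma \ref{lem:FrameOp}, Lemma \ref{lem:theWlemma}, and Theorem \ref{thm:3DReproFormula}) is exactly your argument --- realize the frame/shearlet part as a Fourier multiplier $\Delta(\xi)$, choose $|\widehat{W}(\xi)|^2$ to fill the deficit $C_\psi - \Delta(\xi)$ on the cone, and obtain the decay of $\widehat{W}$ by splitting the discarded $(a,s)$-region into an out-of-cone part and a large-scale part, with the stated inequalities $2M-\tfrac{1}{2}>L_2>M>\tfrac{1}{2}$ providing convergence precisely as you indicate. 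Your only deviations are cosmetic: a slightly different partition of the discarded region (you place the large-$a$, large-$s$ corner in the large-scale piece) and the reuse of the Calder\'{o}n substitution to express that piece as a tail of the admissibility integral, neither of which changes the substance.
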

The results above show that for every shearlet generator with a sufficient amount of vanishing moments and frequency decay we can always find a suitable 
window function to obtain a tight continuous frame.

\subsection{Construction of Detector Shearlets}\label{sec:ConstrDetect}

In the upcoming results concerning detection and classification of singularities in Sections \ref{sec:detectAndClassify2D} and \ref{sec:detectAndClassify3D} 
we will make certain assumptions on the generators of the shearlet systems. In order to guarantee, that we are not making a trivial statement, in this subsection 
we will provide a construction of a class of shearlet generators $\psi$, which fulfill all conditions we will require. Since this subsections only purpose is 
to justify the upcoming assumptions, it can be omitted on a first reading.

We begin our construction by describing a class of wavelets $\psi^1\in L^2(\R)$, which satisfy a certain not-vanishing moment condition of the form
\begin{align}
\int_{(-\infty,0]} \psi^1(x_1) dx_1 \neq 0,\quad \int_{(-\infty,0]} \psi^1(x_1)x_1^2 dx_1 \neq 0,\quad \int_{(-\infty,0]} \psi^1(x_1)x_1^3 dx_1 \neq 0.\label{eq:conditionsForObs23}
\end{align}
Interestingly, it will turn out that every compactly supported wavelet can be shifted in such a way that the shifted version fulfills the 
conditions \eqref{eq:conditionsForObs23}.

\begin{theorem}\label{thm:obs23}
 Let $\psi^1 \in L^2(\R)$ be a continuous compactly supported wavelet. Then there exists $t \in \R$ such that
 \begin{align*}
 \psi^1_t = \psi^1(\cdot - t)
\end{align*}
obeys \eqref{eq:conditionsForObs23}.
\end{theorem}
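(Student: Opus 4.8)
The plan is to reformulate the three conditions as statements about a single real variable and then exploit convexity. Writing the shift as $t=-u$ and substituting $y=x_1-t$, the three quantities in \eqref{eq:conditionsForObs23} for $\psi^1_t$ become, as functions of $u$,
$$ g_0(u)=\int_{-\infty}^{u}\psi^1(y)\,dy,\qquad g_2(u)=\int_{-\infty}^{u}\psi^1(y)(y-u)^2\,dy,\qquad g_3(u)=\int_{-\infty}^{u}\psi^1(y)(y-u)^3\,dy. $$
Thus it suffices to produce a single $u$ at which $g_0,g_2,g_3$ are all nonzero and then set $t=-u$. The key observation is that, when differentiating under the integral sign (legitimate since $\psi^1$ is continuous with compact support, so all integrands and their $u$-derivatives are bounded on compacta), the boundary term $\psi^1(u)(u-u)^k$ always vanishes for the factors $(y-u)^k$ with $k\ge 1$. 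This yields the clean relations $g_0'=\psi^1$ together with $g_2''=2\,g_0$ and $g_3'''=-6\,g_0$, and in passing shows $g_2\in C^2$ and $g_3\in C^3$.

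First I would record that $g_0\not\equiv 0$: otherwise $\psi^1=g_0'\equiv 0$, contradicting that a wavelet is nonzero. Hence $V:=\{u: g_0(u)\neq 0\}$ is a nonempty open set, and I fix one of its connected components $(\alpha,\beta)$. On this interval $g_0$ is continuous and nowhere zero, hence of constant sign; assume $g_0>0$ there (the other case is symmetric).

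The heart of the argument is then a zero-counting estimate on $(\alpha,\beta)$. Since $g_2''=2g_0>0$, the function $g_2$ is strictly convex on $(\alpha,\beta)$ and therefore has at most two zeros there. Since $g_3'''=-6g_0<0$, the second derivative $g_3''$ is strictly decreasing and so has at most one zero; by Rolle's theorem this forces $g_3'$ to have at most two zeros and $g_3$ to have at most three zeros on $(\alpha,\beta)$. Consequently $g_2$ and $g_3$ together vanish at no more than five points of $(\alpha,\beta)$, while $g_0$ vanishes nowhere there. As $(\alpha,\beta)$ contains infinitely many points, I may choose $u^\ast\in(\alpha,\beta)$ avoiding these finitely many exceptional points; then $g_0(u^\ast),g_2(u^\ast),g_3(u^\ast)$ are all nonzero, and $t=-u^\ast$ makes $\psi^1_t$ satisfy \eqref{eq:conditionsForObs23}.

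The main obstacle is arriving at the differential identities $g_2''=2g_0$ and $g_3'''=-6g_0$: once these are in hand, the sign-definiteness of $g_0$ on a component of $V$ converts the problem into an elementary convexity/Rolle count. I would take particular care to justify the differentiation under the integral and the vanishing of the boundary contributions, and to note that the argument uses only that $\psi^1$ is a nonzero continuous compactly supported function — consistent with the assertion that an \emph{arbitrary} such wavelet can be shifted into position.
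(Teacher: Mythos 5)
Your proof is correct and takes essentially the same approach as the paper: both reformulate the three conditions as functions of the shift parameter, differentiate under the integral (with the boundary terms vanishing) to obtain identities tying the second and third moments back to $g_0$ (the paper's $S_0$), and then use that $g_0$ is continuous, compactly supported and not identically zero to work on an interval where it has constant sign. Your convexity/Rolle zero-counting simply makes rigorous the paper's terse final sentence, and your identities $g_2''=2g_0$, $g_3'''=-6g_0$ are stated correctly, whereas the paper's intermediate relation $\frac{\partial}{\partial t}S_1=(1+t)S_0$ contains a harmless slip (it should read $\frac{\partial}{\partial t}S_1=S_0$).
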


\begin{proof}
We will require the functions given by
\begin{align*}
S_0: t \mapsto \int_{(-\infty,0]} \psi^1_t(x_1) dx_1, \quad S_1: t \mapsto \int_{(-\infty,0]} \psi^1_t(x_1) x^1dx_1, \\
S_2: t \mapsto \int_{(-\infty,0]} \psi^1_t(x_1)x_1^2 dx_1, \quad S_3: t \mapsto \int_{(-\infty,0]} \psi^1_t(x_1)x_1^3 dx_1.
\end{align*}
We first observe that 
$$
\frac{\partial}{\partial t} S_2(t) = \frac{\partial}{\partial t} \int_{(-\infty,0]} \psi^1_t(x_1)x_1^2 dx_1
= \frac{\partial}{\partial t} \int_{(-\infty,-t]} \hspace*{-0.5cm} \psi^1(x_1)(x_1+t)^2 dx_1
= 2\int_{(-\infty,-t]} \hspace*{-0.5cm} \psi^1(x_1)(x_1+t) dx_1 = 2 S_1(t).
$$
By similar arguments, we can prove that also $\frac{\partial}{\partial t} S_{3} = 3 S_{2}$ and $\frac{\partial}{\partial t} S_{1} = (1+t)S_{0}$.
Since $\psi^1 \neq 0$, the value $S_{0}(t)$ cannot be $0$ for all $t$. Furthermore $S_0$ is compactly supported. Since $S_{0}$ is continuous, there exists an open subset of $\R$ on which $S_{0} \neq 0$. 
By the above argument of the $S_i$ being related by taking derivatives, the existence of some $t$ such that $S_2(t), S_3(t) \neq 0$ follows.
\end{proof}

\begin{remark}
In fact there exist an abundance of compactly supported wavelets with an arbitrary amount of vanishing moments. The most prominent construction of
compactly supported wavelets, which also have the minimal support size for their number of vanishing moments can be found in \cite[Chapter 6]{TenLectures}. 
This shows that the conditions in \eqref{eq:conditionsForObs23} can easily be achieved.
\end{remark}

A second property, which we desire from a generator $\psi \in L^2(\R)$ is that, given a bounded set $K\subset \R$, there exists $c>0$ such that for all $\kappa \in K$,
we have
\begin{align*}
 \int_{\tilde{S}_\kappa} \psi(x)dx > c,
\end{align*}
where
\begin{align*}
\tilde{S}_\kappa  = \left \{(x_1,x_2)\in \suppp \psi: x_1\leq \kappa x_2^2 \right \}.
\end{align*}
It is clear that, this condition can never be fulfilled should the set $K$ be unbounded. However, 
for bounded sets $K$, we can prove the following result.

\begin{theorem}
Let $\psi^1$ be a continuous wavelet with $\int_{\R^+} \psi^1(x) dx > C_1$ for some positive constant $C_1$. Further, let $K\subset [-\nu, \nu]$ for some $\nu\geq 0$. 
Then there exists $r > 0$ such that, for every continuous function $\phi^1$ with $\suppp \phi^1 \subset [-r,r]$ and $\int\phi^1 > C_2$, the shearlet 
$\psi(x_1,x_2) = \psi^1(x_1) \phi^1(x_2)$ obeys
$$
  \int_{\tilde{S_\kappa}} \psi(x)dx \geq \frac{C_1C_2}{2},\quad \text{ for all } \kappa \in K.
$$
\end{theorem}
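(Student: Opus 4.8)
The plan is to use the tensor-product structure $\psi(x_1,x_2)=\psi^1(x_1)\phi^1(x_2)$ together with Fubini's theorem to collapse the two-dimensional integral over the parabolic region $\tilde{S}_\kappa$ into a one-dimensional cumulative integral of $\psi^1$. For fixed $x_2$ the $x_1$-slice of $\tilde{S}_\kappa$ is the half-line $\{x_1\le\kappa x_2^2\}$ (its intersection with $\suppp\psi^1$ is automatic, since $\psi^1$ vanishes elsewhere), so I would write
\[
\int_{\tilde{S}_\kappa}\psi\,dx=\int_\R\phi^1(x_2)\,F(\kappa x_2^2)\,dx_2,\qquad\text{where }F(y):=\int_{-\infty}^{y}\psi^1(x_1)\,dx_1 .
\]
Since $\psi^1$ is a continuous, integrable wavelet, $F$ is continuous, and $F(0)=\int_{(-\infty,0]}\psi^1$ is the one-sided mass of $\psi^1$ at the origin that the hypothesis bounds below by $C_1$. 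Everything then reduces to showing that, for $r$ small, the factor $F(\kappa x_2^2)$ stays uniformly close to $F(0)$ and can be pulled out against $\phi^1$.

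Next I would exploit the boundedness of $K$. Because $\kappa\in K\subset[-\nu,\nu]$ and $|x_2|\le r$ on $\suppp\phi^1$, the argument of $F$ is confined by $|\kappa x_2^2|\le\nu r^2$, uniformly over all admissible $\kappa$ and all $x_2$. Continuity of $F$ at $0$ then lets me fix $r>0$, depending only on $\psi^1$, $\nu$, and $C_1$ (and in particular independent of $\phi^1$ and of $C_2$), so small that $|F(\kappa x_2^2)-F(0)|<C_1/2$, hence $F(\kappa x_2^2)>C_1/2$, for every such $\kappa,x_2$. This is exactly the step in which boundedness of $K$ is indispensable: it forces the parabola $x_1=\kappa x_2^2$ to hug the line $x_1=0$ uniformly in $\kappa$ near the origin, which is why, as noted just before the statement, the property must fail once $K$ is unbounded.

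Finally, I would integrate the pointwise bound $F(\kappa x_2^2)>C_1/2$ against $\phi^1$ to obtain
\[
\int_{\tilde{S}_\kappa}\psi\,dx=\int_\R\phi^1(x_2)\,F(\kappa x_2^2)\,dx_2\ \ge\ \frac{C_1}{2}\int_\R\phi^1\ \ge\ \frac{C_1C_2}{2},
\]
uniformly in $\kappa\in K$, which is the claim. I expect the genuinely delicate point to be this last passage rather than the limit itself: replacing $F(\kappa x_2^2)$ by the constant $F(0)$ incurs an error controlled by $\sup|F(\kappa x_2^2)-F(0)|\cdot\int|\phi^1|$, and since the hypotheses pin down only $\int\phi^1$ and not $\int|\phi^1|$, an oscillatory $\phi^1$ of tiny support but large total mass could otherwise overwhelm the main term. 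The clean way around this is to keep the inequality pointwise and invoke nonnegativity of the window profile $\phi^1$ (natural in this construction), so that the pointwise lower bound integrates directly against $\phi^1$; this nonnegativity is the assumption the argument really leans on, and the main obstacle to a fully unconditional statement.
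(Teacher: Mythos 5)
Your proof is correct and is essentially the paper's own argument: Fubini collapses the integral to $\int_{[-r,r]} \phi^1(x_2)\, F(\kappa x_2^2)\, dx_2$ with $F(y)=\int_{-\infty}^{y}\psi^1(x_1)\,dx_1$, boundedness of $K$ together with $\suppp \phi^1 \subset [-r,r]$ confines the argument of $F$ to $[-\nu r^2,\nu r^2]$, continuity of $F$ near $0$ gives the pointwise bound $F>\frac{C_1}{2}$ there, and this is integrated against $\phi^1$ (the paper chooses $r\leq\min(\sqrt{\epsilon_1/\nu},\sqrt{\epsilon_2/\nu})$ for exactly this purpose). Your two side remarks are also on target: the paper's final inequality $\int \phi^1(x_2)F(\kappa x_2^2)\,dx_2 \geq \frac{C_1}{2}\int\phi^1$ tacitly uses $\phi^1\geq 0$, just as you note, and your reading of the hypothesis as a lower bound on the one-sided mass $F(0)$ is the reading the paper's proof actually uses.
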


\begin{proof}
We observe that, since $\int_{\R^+} \psi^1(x) dx > C_1$, there exist $\epsilon_1, \epsilon_2 >0$ such that
\begin{align*}
 \int_{(-\infty, -\epsilon_1]} \psi^1(x) dx > \frac{C_1}{2} \quad \mbox{and} \quad \int_{(-\infty, \epsilon_2]} \psi^1(x) dx > \frac{C_1}{2}.
\end{align*}
Now let $r \leq \min(\sqrt{\frac{\epsilon_1}{\nu}},\sqrt{\frac{\epsilon_2}{\nu}})$, and let $\phi^1$ be any continuous function with 
$\suppp \phi^1 \subset [-r,r]$. Then
\begin{align}
  &\int_{\tilde{S}_\kappa} \psi(x)dx = \int_{\tilde{S}_\kappa} \psi^1(x_1)\phi^1(x_2)dx = \ \int_{[-r, r]} \phi^1(x_2) \int_{(-\infty, \kappa x_2^2 ]} \psi^1(x_1) dx_1 dx_2. \label{eq:injectivity}
\end{align}
Since $\kappa x_2^2 \subseteq [-\nu r^2,\nu r^2] \subseteq [-\epsilon_1,\epsilon_2]$, we obtain that
\begin{align*}
 \int_{[-r, r]} \phi^1(x_2) \int_{(-\infty, \kappa x_2^2 ]} \psi^1(x_1) dx_1 dx_2 \geq \int_{[-r, r]} \phi^1(x_2) \frac{C_1}{2} dx_2 \geq \frac{C_1 C_2 }{2},
\end{align*}
which proves the result.
\end{proof}

This leads to the following definition of a particular class of shearlets, which satisfy the sufficient conditions of Propositions \ref{prop:regularPoints}, \ref{prop:FistOrCornerPoints}, \ref{prop:firstOrCorAligned} and \ref{prop:secOrCornerPoints} as well as of Theorem \ref{thm:Summary2D} for detection or classification of points on curvilinear discontinuities. This property also coins their name.

\begin{definition}
Let $\phi^1 \in C^2(\R)$ such that $\phi^1(0) = 0, {\phi^1}'(0) \neq 0$, $\int\phi^1>C_1$ and $\suppp {\phi^1} \subset [-r,r]$ for some $r> 0$,
and let $\psi^1$ be a wavelet which obeys \eqref{eq:conditionsForObs23}. Then $\psi$ such that $\psi(x_1,x_2) := \psi^1(x_1) \otimes \phi^1(x_2)$ is called a \emph{detector shearlet}.
\end{definition}
From our analysis in this subsection, it is evident that there exist infinitely many detector shearlets.

\subsection{3D Construction}\label{sec:shearlets3D}

Certainly, the most natural extension of the continuous two-dimensional shearlet systems introduced in the preceding subsection is that 
of considering a higher dimension. As is customary, we restrict ourselves to the three-dimensional case and stipulate that higher dimensions 
can be handled similarly. Fortunately, the classification by means of classical shearlets has been completely described by K. Guo and D. Labate 
in \cite{3DClassification}. For a construction of a 3D classical shearlet one should also consider the work \cite{3DClassification}. The construction is however similar to the two-dimensional classical shearlet of Example \ref{ex:classicalShearlet}. Nonetheless, classical shearlet systems always consist of band-limited shearlets, and since we aim to use compactly supported shearlets, we need to 
introduce a suitable three-dimensional continuous shearlet transform associated with compactly supported shearlets.

We follow \cite{3DClassification} with the definition of a pyramid adapted shearlet transform, while employing the more general notation of 
\cite{Grohs}, which was also used in the 2D case. We introduce the pyramids for $u,v,w >0$ by
\begin{align*}
 \mathcal{P}_{u,v,w}^1 := \left\{(\xi_1,\xi_2, \xi_3) \in \R^3: |\xi_1| \geq u, |\frac{\xi_2}{\xi_1}| \leq v \text{ and } |\frac{\xi_3}{\xi_1}| \leq w \right \},\\
 \mathcal{P}_{u,v,w}^2 := \left\{(\xi_1,\xi_2, \xi_3) \in \R^3: |\xi_1| \geq u, |\frac{\xi_2}{\xi_1}| > v \text{ and } |\frac{\xi_3}{\xi_1}| \leq w \right \},\\
 \mathcal{P}_{u,v,w}^3 := \left\{(\xi_1,\xi_2, \xi_3) \in \R^3: |\xi_1| \geq u, |\frac{\xi_2}{\xi_1}| \leq v \text{ and } |\frac{\xi_3}{\xi_1}| > w \right \}.
\end{align*}
With $s = (s_1,s_2)$ we will use the matrices
\begin{align*}
 M^{(1)}_{a,s}:=\left( \begin{array}{c c c}
                 a &a^\frac{1}{2}s_1 & a^{\frac{1}{2}}s_2\\
		0 &a^\frac{1}{2} & 0\\
		0 & 0&a^\frac{1}{2}\\
               \end{array}\right),
M^{(2)}_{a,s}:=\left( \begin{array}{c c c}
                 a^\frac{1}{2} & 0 &0\\
		a &a^\frac{1}{2}s_1 & a^{\frac{1}{2}}s_2\\
 		0 & 0&a^\frac{1}{2}\\
               \end{array}\right),
\end{align*}
\begin{align*}
\mbox{and} \qquad M^{(3)}_{a,s}:=\left( \begin{array}{c c c}
                 a^\frac{1}{2} & 0 &0\\
		0 & a^\frac{1}{2}&0\\
		a &a^\frac{1}{2}s_1 & a^{\frac{1}{2}}s_2\\
               \end{array}\right).
\end{align*}
In the same way as for the 2D case, we aim to define continuous shearlet systems corresponding to certain pyramids. We start by extending the notion of admissibility to the 3D case.

\begin{definition}
A function $\psi \in L^2(\R^3)$  such that 
\begin{align}
C_\psi:= \int \limits_{\R}\int \limits_{\R}\int \limits_{\R^+}\frac{|\hat{\psi}(\omega)|^2}{|\omega_1|^3}d\omega_1 d\omega_2 d\omega_3 <\infty, \label{eq:admissibility3D}
\end{align}
is called \emph{admissible shearlet}, or short \emph{shearlet}.
\end{definition}

The admissibility condition can be rewritten as follows.

\begin{lemma}\label{lem:admissibility}
Let $\psi \in L^2(\R^3)$  be an admissible shearlet. Then
$$
C_\psi= \int \limits_{\R^2}\int \limits_{\R^+}|\hat{\psi}({(M^{(1)}_{a,s}})^T \xi)|^2 a^{-2} da ds.
$$
\end{lemma}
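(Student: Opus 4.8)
The plan is to evaluate the right-hand side by the single substitution $\omega = (M^{(1)}_{a,s})^T\xi$ and to show that the resulting integral is independent of $\xi$ and equals the admissibility constant of \eqref{eq:admissibility3D}. Throughout I would assume $\xi_1 \neq 0$, since otherwise the map $(a,s_1,s_2)\mapsto (M^{(1)}_{a,s})^T\xi$ degenerates, and in any case $\{\xi_1 = 0\}$ is a null set excluded from the region of the pyramid $\mathcal{P}_{u,v,w}^1$ where the transform is used. First I would write out the substitution explicitly: a direct computation gives
\begin{align*}
(M^{(1)}_{a,s})^T\xi = \bigl(a\xi_1,\ a^{1/2}(\xi_2 + s_1\xi_1),\ a^{1/2}(\xi_3 + s_2\xi_1)\bigr),
\end{align*}
so I would set $\Phi(a,s_1,s_2) = (\omega_1,\omega_2,\omega_3)$ with $\omega_1 = a\xi_1$, $\omega_2 = a^{1/2}(\xi_2 + s_1\xi_1)$, and $\omega_3 = a^{1/2}(\xi_3+s_2\xi_1)$.

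Second, I would compute the Jacobian of $\Phi$. Differentiating, the Jacobian matrix is lower triangular,
\begin{align*}
D\Phi = \begin{pmatrix} \xi_1 & 0 & 0 \\ * & a^{1/2}\xi_1 & 0 \\ * & 0 & a^{1/2}\xi_1 \end{pmatrix},
\end{align*}
whence $|\det D\Phi| = a|\xi_1|^3$ and $da\,ds_1\,ds_2 = (a|\xi_1|^3)^{-1}\,d\omega_1\,d\omega_2\,d\omega_3$. Since $\xi_1\neq 0$, the map $\Phi$ is a smooth bijection from $\R^+\times\R^2$ onto the half-space $\{\omega\in\R^3 : \operatorname{sgn}\omega_1 = \operatorname{sgn}\xi_1\}$: the coordinate $\omega_1 = a\xi_1$ sweeps out exactly this half-space as $a$ ranges over $\R^+$, while for fixed $a$ the coordinates $\omega_2,\omega_3$ range over all of $\R$ as $(s_1,s_2)$ does.

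Finally, I would carry out the substitution. Using $a = \omega_1/\xi_1$ and $a>0$ one gets $a^{-2}(a|\xi_1|^3)^{-1} = (a^3|\xi_1|^3)^{-1} = |\omega_1|^{-3}$, where the last step uses $a^3 = \omega_1^3/\xi_1^3$. Hence, for $\xi_1>0$,
\begin{align*}
\int_{\R^2}\int_{\R^+} |\hat{\psi}((M^{(1)}_{a,s})^T\xi)|^2 a^{-2}\,da\,ds = \int_\R\int_\R\int_{\R^+} \frac{|\hat{\psi}(\omega)|^2}{|\omega_1|^3}\,d\omega_1\,d\omega_2\,d\omega_3 = C_\psi,
\end{align*}
the middle integral being over the half-space $\omega_1>0$ that matches the domain in \eqref{eq:admissibility3D}; in particular the left-hand side is independent of $\xi$, which is the real content of the statement. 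The one point requiring care, and which I expect to be the main obstacle, is this bookkeeping of the integration domain under $\Phi$: for $\xi_1<0$ the substitution produces the mirror half-space $\{\omega_1<0\}$, so one must either restrict to $\xi_1>0$ (natural on $\mathcal{P}_{u,v,w}^1$) or use a symmetry of $|\hat{\psi}|$ to identify the two half-space integrals. Matching the sign convention of \eqref{eq:admissibility3D} is the step to get exactly right; the remainder is the routine change of variables above.
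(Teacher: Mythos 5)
Your proposal is correct and is essentially the paper's own proof: the paper performs the identical change of variables $\omega = (M^{(1)}_{a,s})^T\xi$ with Jacobian determinant $a|\xi_1|^3$, merely running it in the opposite direction (from the $\omega$-integral defining $C_\psi$ to the $(a,s)$-integral). Your additional bookkeeping of the half-space $\{\operatorname{sgn}\omega_1 = \operatorname{sgn}\xi_1\}$ is a point the paper silently glosses over (its computation tacitly assumes $\xi_1>0$), so that observation is a welcome refinement rather than a different route.
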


\begin{proof}
For any $\xi \in \R^{3}$ with $\xi_1 \neq0$, we set $w(a,s) = (M_{a,s}^{(1)})^{T}(\xi)$ to obtain
\begin{align}
 \int \limits_{\R} \int \limits_{\R} \int \limits_{\R^+}\frac{|\hat{\psi}(\omega)|^2}{|\omega_1|^3}d\omega_1 d\omega_2 d\omega_3 &= \int \limits_{\R^2}\int \limits_{\R^+}\frac{|\widehat{\psi}((M_{a,s}^{(1)})^{T}(\xi))|^2}{|((M_{a,s}^{(1)})^{T}(\xi))_1|^3} |\det(J_w)| da ds,\label{eq:firstStepOfUmformung}
\end{align}
where $J_w$ denotes the Jacobian of $w$.
A simple computation shows
\begin{align*}
 (M_{a,s}^{(1)})^{T}(\xi)) = \left(\begin{array}{c}
                   a\xi_1\\
		a^{\frac{1}{2}}s_1\xi_1 + a^{\frac{1}{2}}\xi_2\\
			a^{\frac{1}{2}}s_2\xi_1 + a^{\frac{1}{2}}\xi_3\\
                  \end{array}\right) \quad \mbox{and} \quad
J_w = \left(\begin{array}{c c c}
                   \xi_1 & 0 & 0\\
		\frac{1}{2}a^{-\frac{1}{2}}(s_1\xi_1 + \xi_2) & a^{\frac{1}{2}}\xi_1&0\\
		\frac{1}{2}a^{-\frac{1}{2}}(s_2\xi_1 + \xi_3) & 0&a^{\frac{1}{2}}\xi_1\\
                  \end{array}\right).
\end{align*}
Therefore, \eqref{eq:firstStepOfUmformung} equals
\begin{align*}
 \intRtwo \int \limits_{\R^+} \frac{|\widehat{\psi}((M_{a,s}^{(1)})^{T}(\xi))|^2}{|a\xi_1|^3} |a\xi^3| da ds =  \intRtwo \int \limits_{\R^+} |\widehat{\psi}((M_{a,s}^{(1)})^{T}(\xi))|^2 |a|^{-2} da ds,
\end{align*}
which proves the claim.
\end{proof}

For an admissible shearlet, we next define
\begin{align*}
 \psi_{a,s,p} := a^{-1}\psi^d({(M_{a,s}^{(d)}})^{-1}(x-p)),
\end{align*}
where
\begin{align*}
 \psi^d = \psi \circ R^{d-1} \quad \text{ with } R = \left(\begin{array}{c c c}
                                                        0& 1 &0 \\
							0 & 0 & 1\\
							1 &0 & 0\\
                                                       \end{array}
 \right).
\end{align*}

This leads to the desired definition of continuous shearlet systems with uniform directionality.

\begin{definition}
For $\Gamma, \Xi >0$ and an admissible shearlet $\psi\in L^2(\R^3)$ the \emph{3D pyramid-based continuous shearlet system for general shearlets} is defined as
\begin{align*}
\Psi^d = \{P_{\mathcal{P}_{u,v,w}^d} \psi^d_{a,s,p}\}_{a\in (0,\Gamma], s\in [-\Xi, \Xi]^2, p\in \R^3}.
\end{align*}
\end{definition}

We now aim to find conditions under which these systems form continuous frames for $L^2(P^d_{u,v,w})$ with respect to the left 
Haar measure of the 3D shearlet group, which is $\frac{1}{a^{4}}da ds dp$. In the sequel we will only focus on the system $\Psi: = \Psi^1$ and we also denote $\mathcal{P}_{u,v,w} := \mathcal{P}_{u,v,w}^1$ and $M_{a,s} := M_{a,s}^{(1)}$.

Our first result provides necessary and sufficient conditions for the considered shearlet systems to constitute a continuous frame. 
We wish to mention that the corresponding 2D result is \cite[Lemma 4.2.]{Grohs}.

\begin{lemma}\label{lem:FrameOp}
 The frame operator associated with the system $\Psi$ is a Fourier multiplier with the function
\begin{align*}
 \Delta_{u,v,w}(\psi)(\xi) := \chi_{\mathcal{P}_{u,v,w}}(\xi) \int \limits_{a\in (0,\Gamma)}\int \limits_{\|s\|_\infty \leq \Xi} \left| \hat{\psi}(a\xi_1, \sqrt{a}(\xi_2 + s_1\xi_1), \sqrt{a}(\xi_3 + s_2\xi_1)) \right|^2 a^{-2} da ds,
\end{align*}
$\chi_{\mathcal{P}_{u,v,w}}$ denoting the characteristic function of $\mathcal{P}_{u,v,w}$. In particular, $\Psi$ is a frame for $L^2(\mathcal{P}_{u,v,w})^\vee$ if and only if there exist constants $0<A \leq B <\infty$, such that
\begin{align*}
 A \leq \Delta_{u,v, w}(\psi)(\xi) \leq B, \ \text{ for all } \xi \in \mathcal{P}_{u,v,w}.
\end{align*}
\end{lemma}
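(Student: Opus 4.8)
The plan is to identify the frame operator $S_\Psi$ explicitly by computing its action in the Fourier domain and showing it is a multiplication operator. Recall that $S_\Psi f = \int_X \langle f, \psi_x\rangle \psi_x \, d\mu(x)$, where here $X$ runs over $(a,s,p) \in (0,\Gamma] \times [-\Xi,\Xi]^2 \times \R^3$ with measure $a^{-4}\,da\,ds\,dp$, and the frame elements are the projections $P_{\mathcal{P}_{u,v,w}}\psi_{a,s,p}$. First I would pass to the Fourier side. The key observation is that the inner product $\langle f, \psi_{a,s,p}\rangle$, as a function of the translation parameter $p$ for fixed $(a,s)$, is itself a convolution; hence integrating $|\langle f, \psi_{a,s,p}\rangle|^2$ against $dp$ and applying Plancherel converts the $p$-integral into a multiplication in frequency. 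Concretely, using $\widehat{\psi_{a,s,p}}(\xi) = a\,\overline{e^{-2\pi i \langle p,\xi\rangle}}\,\widehat{\psi^1}(M_{a,s}^T\xi)$ (up to the normalization $a^{-1}$ in the definition and the Jacobian of the dilation), the translation $p$ contributes a modulation $e^{2\pi i \langle p,\xi\rangle}$, and the $p$-integral of the product of two such transforms reproduces a Fourier multiplier.

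The central computation is therefore to show that for $f$ with $\widehat{f}$ supported in $\mathcal{P}_{u,v,w}$,
\begin{align*}
\widehat{S_\Psi f}(\xi) = \chi_{\mathcal{P}_{u,v,w}}(\xi)\left(\int_{(0,\Gamma)}\int_{\|s\|_\infty \le \Xi} |\widehat{\psi}(M_{a,s}^T\xi)|^2\, a^{-2}\,da\,ds\right)\widehat{f}(\xi),
\end{align*}
and then to recognize, via the explicit form of $M_{a,s}^T\xi$ computed in Lemma~\ref{lem:admissibility}, that $M_{a,s}^T\xi = (a\xi_1,\ \sqrt{a}(\xi_2+s_1\xi_1),\ \sqrt{a}(\xi_3+s_2\xi_1))$, so that the integral is exactly $\Delta_{u,v,w}(\psi)(\xi)$. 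I would carry this out in the following order: (i) write $\langle f, P_{\mathcal{P}}\psi_{a,s,p}\rangle = \langle P_{\mathcal{P}}f,\psi_{a,s,p}\rangle$ and expand via Plancherel on the Fourier side; (ii) test $S_\Psi f$ against an arbitrary $g \in L^2(\mathcal{P}_{u,v,w})^\vee$ to get a quadratic form $\langle S_\Psi f, g\rangle$; (iii) interchange the order of integration (Fubini, justified by admissibility and the compactness of the $a$ and $s$ domains) so that the $p$-integral is performed first and produces, through $\int_{\R^3} e^{2\pi i\langle p,\xi-\eta\rangle}\,dp = \delta(\xi-\eta)$, a collapse onto the diagonal; (iv) read off the multiplier. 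The factor $a^{-2}$ in $\Delta$ arises by combining the measure weight $a^{-4}$ with the $a^{2}$ coming from the two normalization factors and the change-of-variables Jacobian, precisely mirroring the reduction already performed in the proof of Lemma~\ref{lem:admissibility}.

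Once $S_\Psi$ is identified as the Fourier multiplier $\Delta_{u,v,w}(\psi)$ on $L^2(\mathcal{P}_{u,v,w})^\vee$, the frame characterization is immediate: a multiplication operator $m(\xi)\cdot$ on $L^2$ of a region satisfies $A\|f\|^2 \le \langle S_\Psi f, f\rangle \le B\|f\|^2$ for all $f$ supported there if and only if $A \le m(\xi) \le B$ almost everywhere on that region, which is the standard essential-range argument. Since $\langle S_\Psi f, f\rangle = \int_X |\langle f,\psi_x\rangle|^2\,d\mu(x)$ is exactly the frame quadratic form, the stated equivalence follows. The main obstacle I anticipate is the rigorous justification of the Fubini interchange and, relatedly, making the formal $\delta$-function manipulation precise — the clean way is to avoid distributions entirely by computing $\int_{\R^3}\langle f,\psi_{a,s,p}\rangle\overline{\langle g,\psi_{a,s,p}\rangle}\,dp$ directly as an $L^2(\R^3_p)$ inner product of two $p$-translates and invoking Plancherel in $p$ before integrating in $(a,s)$, so that every step stays within genuine $L^2$ pairings; the admissibility condition $C_\psi < \infty$ together with the boundedness of the $(a,s)$-domain guarantees the integrability needed to swap the remaining integrals.
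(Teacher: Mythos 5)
Your proof is correct and is essentially the argument the paper has in mind: the paper itself only says the lemma "can be proved using a similar method as in \cite[Lemma 4.2]{Grohs}", and that method is precisely your computation — Plancherel in the translation variable $p$ to turn the $p$-integral into a frequency-side multiplication, Fubini over the compact $(a,s)$-domain, the identification $M_{a,s}^T\xi = (a\xi_1, \sqrt{a}(\xi_2+s_1\xi_1), \sqrt{a}(\xi_3+s_2\xi_1))$ giving the integrand of $\Delta_{u,v,w}(\psi)$ with the weight $a^{-4}\cdot a^2 = a^{-2}$, and the standard essential-range characterization of bounded invertible multiplication operators for the frame inequalities. Your remark on avoiding the formal $\delta$-function by computing $\int_{\R^3}\langle f,\psi_{a,s,p}\rangle\overline{\langle g,\psi_{a,s,p}\rangle}\,dp$ as a genuine $L^2$ pairing is exactly the right way to make the step rigorous.
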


\begin{proof}
This can be proved using a similar method as in \cite[Lemma 4.2]{Grohs}.
\end{proof}

Next we aim for suitable window functions that allow for tight frames of general shearlet systems. The corresponding two dimensional result 
is \cite[Lemma 4.7]{Grohs}. The 2D result of \cite{Grohs} uses weaker assumptions, which could be carried over directly. In the sequel we will, 
however, only need this weaker result.

\begin{lemma}\label{lem:theWlemma}
Let $\psi\in L^2(\R^3)$ be an admissible shearlet, $\Xi, \Gamma>0$ and $0< u,v,w<\Xi$. Let $W \in L^2(\R^3)$ be such that 
\begin{align*}
 \Delta_{u,v,w}(\psi)(\xi) + |\hat{W}(\xi)|^2 = C_{\psi}\chi_{\mathcal{P}_{u,v,w}}(\xi) \ \text{ for all } \xi \in \R^3.
\end{align*}
Assume that for a constant $C>0$
\begin{align*}
|\widehat{\psi}(\omega)| \leq C \frac{|\omega_1|^M}{(1+|\omega_1|^2)^{\frac{L_1}{2}}(1+|\omega_2|^2)^{\frac{L_2}{2}}(1+|\omega_3|^2)^{\frac{L_3}{2}}}, \ \text{ for all }\omega = (\omega_1,\omega_2, \omega_3)\in \R^3,
\end{align*}
and\begin{align*}
    2M-\frac{3}{2}>L_1,L_2,L_3>M>1.
   \end{align*}
Then
\begin{align}
 |\widehat{W}(\xi)|^2= O(|\xi|^{-2\min(M, L_3-M + \frac{1}{2},L_2-M + \frac{1}{2})}) \text{ for } |\xi| \to \infty.\label{eq:theFrequencyDecayOfW}
\end{align}
\end{lemma}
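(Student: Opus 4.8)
The plan is to estimate $|\widehat{W}(\xi)|^2 = C_\psi \chi_{\mathcal{P}_{u,v,w}}(\xi) - \Delta_{u,v,w}(\psi)(\xi)$ by exploiting the admissibility of $\psi$: since the full integral of $|\hat\psi|^2 a^{-2}$ over all of $\R^2 \times \R^+$ (against the shear variables $s$ and scale $a$) reproduces $C_\psi$ by Lemma \ref{lem:admissibility}, the window weight $|\widehat{W}(\xi)|^2$ measures exactly the mass that the truncated system $\Psi$ (with $a\in(0,\Gamma]$, $\|s\|_\infty \le \Xi$) \emph{fails} to capture. Concretely, I would write
\begin{align*}
 |\widehat{W}(\xi)|^2 = \int_{\R^2}\int_{\R^+} \left|\hat{\psi}(a\xi_1, \sqrt{a}(\xi_2 + s_1\xi_1), \sqrt{a}(\xi_3 + s_2\xi_1))\right|^2 a^{-2}\, da\, ds - \Delta_{u,v,w}(\psi)(\xi)
\end{align*}
on $\mathcal{P}_{u,v,w}$, so that $|\widehat{W}(\xi)|^2$ equals the same integrand integrated over the complementary region $\{a > \Gamma\} \cup \{\|s\|_\infty > \Xi\}$. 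The task then reduces to bounding this tail integral as $|\xi|\to\infty$ using the decay hypothesis on $\hat\psi$.

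The key steps are as follows. First I would substitute the frequency-decay bound on $|\hat\psi|$ into the tail integral, giving a product of three rational factors evaluated at the sheared/scaled arguments, with the numerator $|a\xi_1|^{2M}$. Next I would change variables to simplify: for fixed $\xi$ with $\xi_1 \neq 0$, set $\eta_2 = \sqrt{a}(\xi_2 + s_1\xi_1)$ and $\eta_3 = \sqrt{a}(\xi_3 + s_2\xi_1)$, which for each $a$ turns the $ds$-integral into an integral over $\eta_2,\eta_3$ with Jacobian $a^{-1}|\xi_1|^{-2}$. The inner integrals over $\eta_2$ and $\eta_3$ against the factors $(1+\eta_i^2)^{-L_i}$ converge (since $L_2,L_3 > M > 1 > 1/2$) and are uniformly bounded, so the remaining $a$-integral carries the decay. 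The first factor contributes $a^{2M}|\xi_1|^{2M}(1+a^2\xi_1^2)^{-L_1}$, and combining all powers of $a$, I expect the $a$-integral to split into a piece over $\{a>\Gamma\}$ and a piece over the region forced by the shear truncation $\{\|s\|_\infty>\Xi\}$, each producing a distinct power of $|\xi|$.

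Tracking the exponents carefully is where the three competing decay orders $M$, $L_2 - M + \tfrac12$, $L_3 - M + \tfrac12$ should emerge, and the minimum of these governs the overall rate. Roughly, the $\{a > \Gamma\}$ contribution is controlled by the first (vanishing-moment/frequency) factor and yields the $M$-type decay, while the $\{\|s\|_\infty > \Xi\}$ contribution forces at least one of $|\eta_2|,|\eta_3|$ to be large relative to $\sqrt{a}|\xi_1|$, which trades against the factors $(1+\eta_i^2)^{-L_i}$ and produces the $L_i - M + \tfrac12$ rates. The hypothesis $2M - \tfrac32 > L_i$ should be exactly what keeps all these exponents positive and the integrals convergent. \emph{The main obstacle} I anticipate is the bookkeeping in the shear-truncation region: one must simultaneously respect the constraint $\|s\|_\infty > \Xi$ (so that $|\xi_2 + s_1\xi_1|$ or $|\xi_3 + s_2\xi_1|$ is bounded below in terms of $|\xi|$), handle the anisotropic scaling in $a$, and verify uniformity over the three coordinate directions so that the bound depends only on $|\xi|$ and not on the direction of $\xi$ within $\mathcal{P}_{u,v,w}$. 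Since the lemma only claims the weaker conclusion \eqref{eq:theFrequencyDecayOfW} without a matching lower bound, I would not need to argue sharpness, only the stated upper estimate, which keeps the analysis to a one-sided tail bound.
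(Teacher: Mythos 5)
Your proposal follows essentially the same route as the paper's proof: the paper also combines the defining property of $W$ with Lemma \ref{lem:admissibility} to write $|\widehat{W}(\xi)|^2$ as the tail integral over $\{\|s\|_\infty \geq \Xi\} \cup \{a \geq \Gamma, \|s\|_\infty \leq \Xi\}$ on $\mathcal{P}_{u,v,w}$, inserts the decay hypothesis on $\widehat{\psi}$, and rescales the shear integrals (it substitutes $s_2 \mapsto s_2/(\sqrt{a}|\xi_1|)$ and uses $\xi = (\xi_1, r_1\xi_1, r_2\xi_1)$ with $|r_i|$ bounded, rather than your full change of variables to $\eta_2,\eta_3$ -- an immaterial difference), obtaining the $M$-type rate from the large-scale tail and the $L_i - M + \tfrac12$ rates from the shear tail, with $2M - \tfrac32 > L_i$ ensuring convergence of the $a$-integral near $0$, exactly as you anticipate.
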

\begin{proof}
The proof is postponed to Subsection \ref{subsec:proofs_lem:theWlemma}
\end{proof}

With Lemma \ref{lem:theWlemma} established, we obtain the following reproducing formula for $L^2(\mathcal{P}_{u,v,w})^\vee$ functions.

\begin{theorem}\label{thm:3DReproFormula}
Let $\psi$ be an admissible shearlet that satisfies the assumptions of Lemma \ref{lem:theWlemma}, and let $\Gamma, \Xi >0 $. Then, for all $0< u,v,w < \Xi$, 
there exists a function $W \in L^2(\R^3)$ with frequency decay given by \eqref{eq:theFrequencyDecayOfW} such that the continuous shearlet system
\begin{align*}
\{ P_{\mathcal{P}_{u,v,w}}\psi_{a,s,p}\}_{a\in (0,\Gamma], s\in [-\Xi, \Xi]^2, p\in \R^3}\cup \{(P_{\mathcal{P}_{u,v,w}}W)(\cdot - p)\}_{p\in \R^3}
\end{align*}
constitutes a tight frame on $L^2(\mathcal{P}_{u,v,w})^\vee$ with frame constants $C_\Psi$. In particular, we have the representation
$$
f = \frac{1}{C_\psi}\int \limits_{\R^3}\left \langle f, T_p W \right \rangle (P_{\mathcal{P}_{u,v,w}}W)(\cdot - p) dp 
	+\frac{1}{C_\psi} \int \limits_{\R^3}\int \limits_{s\in [-\Xi, \Xi]^2}\int \limits_{a\in (0,\Gamma]} \mathcal{SH}_\psi f(a,s,p) P_{\mathcal{P}_{u,v,w}}\psi_{a,s,p}(x)a^{-4}da ds dp.
$$
\end{theorem}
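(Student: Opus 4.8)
The plan is to compute the frame operator $S$ of the combined system and to show that, when restricted to $L^2(\mathcal{P}_{u,v,w})^\vee$, it reduces to multiplication by the constant $C_\psi$; the tight-frame property and the reconstruction formula are then immediate consequences of the general frame theory recalled after the definition of a continuous frame. The heavy analytic lifting has already been done: Lemma~\ref{lem:FrameOp} identifies the frame-operator symbol of the shearlet part, and Lemma~\ref{lem:theWlemma} supplies a window with the required decay. So the remaining task is essentially to assemble these two pieces.

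First I would produce the window. By Lemma~\ref{lem:theWlemma}, the decay and vanishing-moment hypotheses on $\psi$ guarantee the existence of $W \in L^2(\R^3)$ with the frequency decay \eqref{eq:theFrequencyDecayOfW} and satisfying $\Delta_{u,v,w}(\psi)(\xi) + |\widehat{W}(\xi)|^2 = C_\psi\,\chi_{\mathcal{P}_{u,v,w}}(\xi)$ for all $\xi \in \R^3$. This $W$ is exactly the window asserted by the theorem; only the tightness and the reproducing formula remain. Next I would write $S$ as a sum of two Fourier multipliers. By Lemma~\ref{lem:FrameOp}, the shearlet part $\{P_{\mathcal{P}_{u,v,w}}\psi_{a,s,p}\}$ contributes the Fourier multiplier with symbol $\Delta_{u,v,w}(\psi)$. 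For the translation part $\{(P_{\mathcal{P}_{u,v,w}}W)(\cdot-p)\}_{p\in\R^3}$, I would observe that $P_{\mathcal{P}_{u,v,w}}$ is the Fourier multiplier $\chi_{\mathcal{P}_{u,v,w}}$ and hence commutes with translations, so that $P_{\mathcal{P}_{u,v,w}}T_pW = (P_{\mathcal{P}_{u,v,w}}W)(\cdot-p)$. A direct Fourier-domain computation of $f \mapsto \int_{\R^3}\langle f, T_pW\rangle (P_{\mathcal{P}_{u,v,w}}W)(\cdot-p)\,dp$ then shows that this operator is the Fourier multiplier with symbol $\chi_{\mathcal{P}_{u,v,w}}(\xi)\,|\widehat{W}(\xi)|^2$; here I use that for $f\in L^2(\mathcal{P}_{u,v,w})^\vee$ one has $\langle f, T_pW\rangle = \langle f, P_{\mathcal{P}_{u,v,w}}T_pW\rangle$, since the orthogonal projection $P_{\mathcal{P}_{u,v,w}}$ fixes $f$.

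Then I would add the two symbols. For $f\in L^2(\mathcal{P}_{u,v,w})^\vee$ we have $\operatorname{supp}\widehat{f}\subset \mathcal{P}_{u,v,w}$ and $\chi_{\mathcal{P}_{u,v,w}}\widehat{f}=\widehat{f}$, so on this subspace the combined frame operator acts by the symbol
$$\Delta_{u,v,w}(\psi)(\xi) + |\widehat{W}(\xi)|^2 = C_\psi \quad \text{on } \mathcal{P}_{u,v,w},$$
by the defining equation of $W$. Hence $Sf = C_\psi f$ for every $f\in L^2(\mathcal{P}_{u,v,w})^\vee$, which is precisely the assertion that the combined system is a tight continuous frame with constant $C_\psi$ (the constant denoted in the statement and appearing in the reproducing formula). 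The reconstruction formula then follows from $f = C_\psi^{-1}Sf$ by writing out the two pieces of $S$ and replacing $\langle f, \psi_{a,s,p}\rangle = \mathcal{SH}_\psi f(a,s,p)$ and $\langle f, T_pW\rangle$, with equality understood in the weak sense exactly as in Theorem~\ref{thm:globalNonTight}.

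The main obstacle is therefore not the final algebra but the careful justification of the operator identities in the appropriate weak sense: verifying that the translation part genuinely gives the multiplier $\chi_{\mathcal{P}_{u,v,w}}|\widehat{W}|^2$, that the $a,s,p$-integration defining the shearlet frame operator converges and produces $\Delta_{u,v,w}(\psi)$ as claimed in Lemma~\ref{lem:FrameOp}, and that the interchange of projection, translation, and integration is licit. This is a three-dimensional transcription of the two-dimensional argument of \cite{Grohs}, whose structure I would follow line by line.
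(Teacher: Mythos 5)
Your proposal is correct and takes essentially the same route as the paper: the paper's proof likewise invokes Lemma \ref{lem:FrameOp} to identify the frame operator of the combined system as the Fourier multiplier with symbol $\Delta_{u,v,w}(\psi) + |\widehat{W}|^2$, uses the defining equation of $W$ from Lemma \ref{lem:theWlemma} to conclude that this symbol equals $C_\psi\chi_{\mathcal{P}_{u,v,w}}$, and deduces tightness (hence the reproducing formula) from the fact that the frame operator is then a multiple of the identity on $L^2(\mathcal{P}_{u,v,w})^\vee$. The only difference is presentational: you make explicit the Fourier-domain computation of the translation-part multiplier and the weak-sense justifications, which the paper leaves implicit in its citation of the two lemmata.
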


\begin{proof}
By Lemma \ref{lem:FrameOp}, the frame operator of the above system is given as a Fourier multiplier with
\begin{align*}
\Delta_{u,v,w}(\psi) + |\hat{W}|^2,
\end{align*}
which, by Lemma \ref{lem:theWlemma}, equals $C_{\psi}P_{\mathcal{P}_{u,v,w}}$. Therefore, the frame operator is a multiple of the identity 
on the space $L^{2}(\mathcal{P}_{u,v,w})^{\vee}$ and thus the frame is tight.
\end{proof}

This finishes our considerations on a reproducing formula for compactly supported shearlets. The reproducing formula in the special case when $\psi$ is a classical shearlet has also been given in 
\cite[Proposition 2.1]{AnalysisShearlet3D}.

\section{Detection and Classification in 2D}\label{sec:detectAndClassify2D}

In this section we will describe and classify different points of a function by the shearlet transform introduced in Subsection \ref{sec:shearlets2D}. 

\subsection{Characterization of the Wavefront Set}\label{sec:detectWavefrontSet2D}

We start with recalling some results on the characterization of the wavefront set by shearlets, which will be required for
the proof of our main results.

In contrast to the situation of band-limited shearlets, compactly supported shearlets will not necessarily decay rapidly in frequency domain. 
Consequently, they do not need to be infinitely often differentiable. While the continuous shearlet transform with band-limited shearlets 
is able to detect points, where the function is not $C^{\infty}$ in a neighborhood, it is intuitively clear, that with compactly supported
shearlets we will only be able to detect non-differentiability up to a certain level, which should depend on the number of vanishing moments 
and the differentiability of the underlying shearlet. 

Because of these considerations, we now first introduce the notion of a $k$-regular point. 

 \begin{definition}[\cite{Grohs}]
 For a distribution $u$, a point $x\in \R^2$ is called \emph{$k$-regular point of $u$}, if there exists a neighborhood $U_x$ of $x$ and 
 some $\phi\in C^{\infty}_0(U_x)$, such that $\phi(x) \neq 0$ and $\phi u \in C^{k}_0(\R^2)$.
 We call the complement of the set of $k$-regular points \emph{$k$-singular support}.
 \end{definition}

 We next define a special version of this notion, which incorporates directionality.

 \begin{definition}[\cite{Grohs}]
  For a distribution $u$, a point $(x, \lambda) \in \R^2\times \R$ is a \emph{$k$-regular directed point} for $u$, if there exist 
  neighborhoods $U_x$, $V_\lambda$ and a function $\phi \in C^{\infty}_0(\R^2)$ with $\phi = 1$ on $U_x$ such that, for all $0<N\leq k$, 
  there exists $C_N$ satisfying
  \begin{align*}
   |\widehat{(u\phi)}(\eta)| \leq C_N(1+|\eta|)^{-N},
  \end{align*}
 for all $\eta = (\eta_1,\eta_2)\in \R^2$ with $\frac{\eta_1}{\eta_2}\in V_\lambda$. We call the complement of the set of $k$-regular 
 directed points the \emph{$k$-wavefront set}, which we denote by $k-WF(u)$.
 \end{definition}
 
 For $k = \infty$, these definitions match the classical definitions of regular points, singular support, etc., see for instance \cite{KLWavefront2009}. 
 
 In \cite{Grohs} the following direct theorem has been established.
 
\begin{theorem}[\cite{Grohs}]\label{thm:WavefrontSet2D}
 Assume that $f$ is an $L^2(\R^2)$ function and that $(p_0,s_0)$ is an $N$-regular directed point of $f$. Let $\psi \in H^L(\R^2)$ 
 such that $\hat{\psi} \in L^1(\R^2)$ is a shearlet with $M$ vanishing moments, which satisfies a decay estimate of the form
\begin{align*}
 \psi(x) = O((1+|x|)^{-P}) \text{ for } |x|\to \infty.
\end{align*}
Then there exists a neighborhood $U(p_0)$ of $p_0$ and $V(s_0)$ of $s_0$ such that, for any $\frac{1}{2}<\alpha<1, p\in U(p_0)$ and $s\in V(s_0)$, we have the decay estimate
\begin{align*}
 \mathcal{SH}_{\psi}f(a,s,p)= O(a^{-\frac{3}{4}+\frac{P}{2}} + a^{(1-\alpha)M} + a^{-\frac{3}{4}+\alpha N} + a^{(\alpha-\frac{1}{2})L}) \text{ for } a\to 0.
\end{align*}
\end{theorem}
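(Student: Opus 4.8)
The plan is to localize $f$ around $p_0$, pass to the Fourier side, and split the resulting frequency integral into four pieces, each governed by exactly one of the four hypotheses (spatial decay $P$, vanishing moments $M$, directional regularity $N$, Sobolev smoothness $L$), so that the four pieces produce the four summands in the claim. First I would fix a cutoff $\phi \in C_0^\infty(\R^2)$ with $\phi \equiv 1$ on a neighborhood $U$ of $p_0$, as provided by the definition of an $N$-regular directed point, and write $f = \phi f + (1-\phi)f$. For $p$ close to $p_0$ the support of $(1-\phi)f$ stays at distance $\geq \delta > 0$ from $p$, so after the anisotropic substitution $y = M_{a,s}(x-p)$ the decay estimate $\psi = O((1+|x|)^{-P})$, the bound $|M_{a,s}(x-p)| \gtrsim a^{-1/2}\delta$, and Cauchy--Schwarz against $\|(1-\phi)f\|_2 \leq \|f\|_2$ give $|\langle (1-\phi)f, \psi_{a,s,p}\rangle| = O(a^{-3/4 + P/2})$, uniformly for $p \in U(p_0)$. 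This is the first summand.

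For the local part I would apply Plancherel together with the identity $\widehat{\psi_{a,s,p}}(\xi) = a^{3/4} e^{-2\pi i \langle p, \xi\rangle} \hat{\psi}(a\xi_1, a^{1/2}(\xi_2 + s\xi_1))$, so that, writing $\eta = (a\xi_1, a^{1/2}(\xi_2 + s\xi_1))$,
$$
\mathcal{SH}_\psi(\phi f)(a,s,p) = a^{3/4} \int_{\R^2} \widehat{\phi f}(\xi)\, \overline{\hat{\psi}(\eta)}\, e^{2\pi i \langle p, \xi\rangle}\, d\xi.
$$
The phase factor has modulus one, which is precisely what makes all the bounds below uniform in $p$. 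I would then split $\R^2$ into the low-frequency set $\{|\xi| \leq a^{-\alpha}\}$ and the high-frequency set $\{|\xi| > a^{-\alpha}\}$, the latter intersected with a fixed cone $V$ about the direction singled out by $s_0$ and with its complement $V^c$. Here $V$ is chosen inside the directional-regularity region $\{\xi : \xi_1/\xi_2 \in V_\lambda\}$ and small enough that the choice works simultaneously for all $s$ in a neighborhood $V(s_0)$.

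On $\{|\xi| \leq a^{-\alpha}\}$ one has $|\eta_1| = a|\xi_1| \leq a^{1-\alpha}$, hence $|\hat{\psi}(\eta)|^2 \leq a^{2(1-\alpha)M}\, |\hat{\psi}(\eta)|^2/|\eta_1|^{2M}$; integrating, invoking the $L^2$-form of the $M$ vanishing moments $\int |\hat{\psi}|^2 |\eta_1|^{-2M}\,d\eta < \infty$, and using Cauchy--Schwarz against $\|\widehat{\phi f}\|_2 = \|\phi f\|_2$ yields the summand $a^{(1-\alpha)M}$ (the Jacobian $a^{-3/2}$ of the $\eta$-substitution cancels the square $a^{3/2}$ of the prefactor). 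On $V \cap \{|\xi| > a^{-\alpha}\}$ the directional estimate $|\widehat{\phi f}(\xi)| \leq C_N(1+|\xi|)^{-N} \leq C_N a^{\alpha N}$, combined with $\int |\hat{\psi}(\eta)|\,d\xi = a^{-3/2}\|\hat{\psi}\|_1$ (finite since $\hat{\psi} \in L^1$), produces $a^{-3/4 + \alpha N}$. Finally, on $V^c \cap \{|\xi| > a^{-\alpha}\}$ a short geometric computation shows that the transverse scaled frequency satisfies $|\eta_2| \gtrsim a^{1/2 - \alpha}$, so the $H^L$ bound $\int |\hat{\psi}|^2 |\eta_2|^{2L}\,d\eta < \infty$ gives $\int_{|\eta_2| \gtrsim a^{1/2-\alpha}} |\hat{\psi}|^2\,d\eta \lesssim a^{2(\alpha - 1/2)L}$, and Cauchy--Schwarz yields $a^{(\alpha - 1/2)L}$.

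I expect the main obstacle to be the geometry of the region decomposition rather than the individual per-region estimates, which are routine once set up. One must choose the cone $V$ and the neighborhoods $U(p_0), V(s_0)$ so that the essentially $a^{1/2}$-narrow Fourier concentration of $\psi_{a,s,p}$ lands inside the directional-regularity region uniformly in $s \in V(s_0)$, and one must verify that the bound $|\eta_2| \gtrsim a^{1/2 - \alpha}$ persists on all of $V^c \cap \{|\xi| > a^{-\alpha}\}$, including the degenerate part near $\eta_1 = 0$, where the cone description degenerates and it is instead the magnitude constraint $|\xi| > a^{-\alpha}$ that forces $\eta_2$ to be large. Once these geometric facts are established and the constants $C_N$ are chosen uniformly over $V(s_0)$, the four bounds add up to the stated estimate, valid uniformly for $p \in U(p_0)$ and $s \in V(s_0)$ and for every $\tfrac12 < \alpha < 1$.
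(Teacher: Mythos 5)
The paper itself contains no proof of this theorem --- it is quoted directly from \cite{Grohs} (``In \cite{Grohs} the following direct theorem has been established'') --- so the only comparison available is with the original argument, and your proposal is correct and follows essentially that same route: spatial localization $f=\phi f+(1-\phi)f$ with the far part controlled by the decay of order $P$ (your Cauchy--Schwarz estimate in fact yields the slightly stronger exponent $a^{P/2-1/2}$, granted $P>1$ so that the tail integral converges, which is implicit in the statement being nontrivial), followed by Plancherel and the three-region frequency split $\{|\xi|\le a^{-\alpha}\}$, $V\cap\{|\xi|>a^{-\alpha}\}$, $V^{c}\cap\{|\xi|>a^{-\alpha}\}$, whose regions produce the terms $a^{(1-\alpha)M}$, $a^{-3/4+\alpha N}$, $a^{(\alpha-1/2)L}$ exactly as in \cite{Grohs}. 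Your flagged geometric point is indeed the crux, and it is handled correctly: the cone $V$ must conically contain the frequency concentration lines $\xi_2=-s\xi_1$ of all $\psi_{a,s,p}$ with $s\in V(s_0)$ while sitting inside the directional-regularity region, which is possible precisely because of the (convention-dependent) pairing of the shearing parameter $s_0$ with the direction of the regular directed point.
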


\subsection{Classification of Edges}\label{sec:classOfEdges2D}

Given $S\subset \R^2$ with a smooth boundary except for finitely many 'corner points', it has been established in \cite{GuoLab} and \cite{GuoLabLim}, 
that the boundary and corner points can be classified by the asymptotic behavior of the shearlet transform. The authors of \cite{GuoLab, GuoLabLim} 
consider the continuous shearlet transform with respect to a classical shearlet as in Example \ref{ex:classicalShearlet}. In this section we will not only show that the results extend to 
compactly supported shearlets, but even more that in this situation we can give uniform estimates.

We will start by briefly recalling the notation from \cite{GuoLab}. Let $\alpha:(0,L) \to \partial S$ be a parametrization of $\partial S$ with 
respect to arc-length. We will furthermore assume that $\alpha^{(l)}$ is {\em semi-continuous} for every $l\geq0$, which means that for every 
$t_0 \in (0,L)$ there exist left and right limits of $\alpha^{(l)}(t)$ at $t_0$. We further denote by $n(t^-), n(t^+)$ the outer normal directions 
of $\partial S$ at $\alpha(t)$. Should $n(t^-)$ and $n(t^+)$ coincide, we will omit the $+$ and $-$ sign.

For the curvature at $\alpha(t_0)$ we write $\kappa(t_0^-), \kappa(t_0^+)$ for the left and right limits and $\kappa(t_0)$ should $\kappa(t_0^-)$ and $\kappa(t_0^+)$ coincide. We say that a shearing parameter $s$ \emph{corresponds to a 
direction} $n$, provided that $s= \tan \theta_0$ and $n= \pm(\cos \theta_0, \sin \theta_0)$ for some $\theta_0 \in [0,2\pi]$.

We also require the notion of different types of corner points, which the shearlet transform will be shown to be able to classify.

\begin{definition}[\cite{GuoLab}]
A point $p = \alpha(t_0)$ is called a \emph{corner point} of $\partial S$, if either $\alpha'(t_0^+) \neq \pm \alpha'(t_0^-)$ or 
$\alpha'(t_0^+) = \pm \alpha'(t_0^-)$ but $\kappa'(t_0^+) \neq \pm \kappa'(t_0^-)$. In the first case, we call $p$ a \emph{corner point 
of the first type} and in the other case a \emph{corner point of the second type}. If $\alpha$ is infinitely often differentiable at $p$, 
then we call $p$ a \emph{regular point} of $\partial S$.
\end{definition}

The following result holds for the continuous shearlet transform associated with a classical shearlet.

\begin{theorem}[\cite{GuoLab}]\label{thm:2DClassificationClassical}
Let $S\subset \R^2$ with smooth boundary $\partial S$ except for finitely many corner points.
\begin{enumerate}
\item[(i)] Let $p$ be a regular point of $\partial S$.
\begin{compactenum}[(1)]
 \item If $s = s_0$ does not correspond to the normal direction of $\partial S$ at $p$, then
 \begin{align*}
  \lim \limits_{a \to 0^+} a^{-N} \mathcal{SH}_{\psi}\chi_{S}(a,s_0,p) = 0, \ \text{ for all } N>0.
 \end{align*}
\item If $s = s_0$ corresponds to the normal direction of $\partial S$ at $p$, then
\begin{align*}
0 < \lim \limits_{a \to 0^+} a^{-\frac{3}{4}} |\mathcal{SH}_{\psi}\chi_{S}(a,s_0,p)|<\infty.
\end{align*}
\end{compactenum}
\item[(ii)] Let $p\in \partial S$ be a corner point.
\begin{compactenum}[(1)]
 \item If $p$ is a corner point of the first type and $s = s_0$ does not correspond to any of the normal directions of $\partial S$ at $p$, then
 \begin{align*}
  \lim \limits_{a \to 0^+} a^{-\frac{9}{4}} \mathcal{SH}_{\psi}\chi_{S}(a,s_0,p) < \infty.
 \end{align*}
 \item If $p$ is a corner point of the second type and $s = s_0$ does not correspond to any of the normal directions of $\partial S$ at $p$, then
 \begin{align*}
  0 < \lim \limits_{a \to 0^+} a^{-\frac{9}{4}} |\mathcal{SH}_{\psi}\chi_{S}(a,s_0,p)| < \infty.
 \end{align*}
\item If $s = s_0$ corresponds to the normal direction of $\partial S$ at $p$, then
\begin{align*}
0 < \lim \limits_{a \to 0^+} a^{-\frac{3}{4}} |\mathcal{SH}_{\psi}\chi_{S}(a,s_0,p)|<\infty.
\end{align*}
\end{compactenum}
\end{enumerate}
\end{theorem}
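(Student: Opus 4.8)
The plan is to convert the transform into a single rescaled integral over a dilated domain and to read off the decay from the limiting shape of that domain. Passing to the spatial domain and substituting $y=A_a^{-1}S_{s_0}^{-1}(x-p)$ turns the transform into
\begin{align*}
\mathcal{SH}_{\psi}\chi_S(a,s_0,p)=a^{3/4}\int_{\widetilde S_a}\overline{\psi(y)}\,dy,\qquad \widetilde S_a:=A_a^{-1}S_{s_0}^{-1}(S-p),
\end{align*}
so the entire question reduces to the asymptotics of $\int_{\widetilde S_a}\overline{\psi}$ as $a\to 0$. Since $A_a^{-1}=\mathrm{diag}(a^{-1},a^{-1/2})$ dilates the first coordinate faster than the second, and $\psi$ is Schwartz, only the part of $\widetilde S_a$ inside a fixed ball contributes, and its limiting shape is governed entirely by how $\partial S$ sits at $p$ relative to $s_0$.

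First I would treat a regular point $p=\alpha(t_0)$. After the shear $S_{s_0}^{-1}$ the boundary is a single graph through the origin, and whether its tangent is vertical is precisely the dichotomy of $s_0$ corresponding to the normal or not. If $s_0$ corresponds to the normal, the curve reads $z_1=\tfrac12\kappa(t_0)z_2^2+O(z_2^3)$, and applying $A_a^{-1}$ sends it to $y_1=\tfrac12\kappa(t_0)y_2^2+O(a^{1/2})$; hence $\widetilde S_a$ converges to the parabolic region $\{y_1\ge\tfrac12\kappa(t_0)y_2^2\}$ and
\begin{align*}
\lim_{a\to 0}a^{-3/4}\mathcal{SH}_{\psi}\chi_S(a,s_0,p)=\int_{\{y_1\ge\frac12\kappa(t_0)y_2^2\}}\overline{\psi(y)}\,dy,
\end{align*}
which is finite, nonzero for a suitable $\psi$, and depends on $p$ only through $\kappa(t_0)$; this gives (i)(2) and already foreshadows the curvature description. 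If $s_0$ does not correspond to the normal, the tangent is non-vertical and $A_a^{-1}$ flattens the curve onto the $y_1$-axis, so $\widetilde S_a$ tends to a half-plane. Here the leading coefficient vanishes because $\int_{\R}\psi(y_1,y_2)\,dy_1$ is the slice of $\widehat{\psi}$ at $\xi_1=0$, which is zero since $\widehat{\psi}_1(0)=0$; as every $\xi_1$-derivative of $\widehat{\psi}$ also vanishes at $\xi_1=0$, applying the same argument to successive Taylor coefficients upgrades this to rapid decay, proving (i)(1). Statement (ii)(3) is then identical, since the arc whose normal is $s_0$ contributes exactly the parabolic limit.

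For corner points the limiting domain is neither a parabola nor a half-plane but a \emph{degenerate sector}: the two one-sided tangents produce two graphs through the origin whose mutual opening is squeezed by $A_a^{-1}$ at rate $a^{1/2}$. I would split the neighbourhood of $p$ into its two smooth arcs and expand $\int_{\widetilde S_a}\overline{\psi}$ in powers of $a^{1/2}$, using once more the vanishing of $\widehat{\psi}$ near $\xi_1=0$ to cancel the low-order contributions. The decay is then polynomial rather than rapid, and its exact order is the order of the lowest surviving term. The first/second-type dichotomy enters precisely here: for a first-type corner the two arcs carry genuinely different tangents, while for a second-type corner the tangents coincide and only $\kappa'$ jumps, so that the surviving term should be proportional to $\kappa'(t_0^+)-\kappa'(t_0^-)\neq 0$. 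Matching this surviving order to the stated rate $a^{9/4}$ — with a one-sided bound in the first-type case and a sharp two-sided estimate in the second-type case — is the target of the computation.

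The \textbf{main obstacle} is exactly this corner bookkeeping. One must expand $\int_{\widetilde S_a}\overline{\psi}$ far enough and track the cancellations forced by the vanishing of $\widehat{\psi}$ at $\xi_1=0$ together with the matching or mismatch of the one-sided tangents, in order to certify both the precise exponent and the non-vanishing (resp. possible vanishing) of its coefficient. I expect the cleanest route to the sharp constant is the Fourier side: writing $\widehat{\chi_S}(\xi)=\tfrac{i}{2\pi|\xi|^2}\int_{\partial S}(\xi\cdot n)e^{-2\pi i x\cdot\xi}\,d\sigma(x)$ and performing repeated non-stationary-phase integration by parts over the frequency wedge $|\xi_2/\xi_1+s_0|\lesssim a^{1/2}$, the smooth arcs contribute rapid decay and the rate is dictated by the boundary terms at the corner, whose size reflects the tangent and curvature-derivative jumps (and whose residual oscillation $e^{-2\pi i p\cdot\xi}$ is cancelled by the translation $p$ at the corner). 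Making these boundary estimates uniform and sharp is the delicate step; the regular-point statements, by contrast, follow immediately from the limiting-domain computation above.
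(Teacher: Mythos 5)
First, a point of orientation: the paper never proves Theorem \ref{thm:2DClassificationClassical} itself -- it is quoted from \cite{GuoLab} as a background result for the \emph{classical band-limited} shearlet of Example \ref{ex:classicalShearlet} -- so your attempt must be judged against what such a proof actually requires. Your reduction $\mathcal{SH}_{\psi}\chi_S(a,s_0,p)=a^{3/4}\int_{\widetilde S_a}\overline{\psi}$ and the limiting-domain analysis is exactly the spatial-domain strategy the paper uses for its \emph{own} results on compactly supported generators (Propositions \ref{prop:regularPoints}--\ref{prop:secOrCornerPoints}), and it does yield the upper bounds at regular points, as well as (i)(1) via the all-vanishing-moments argument. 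But transplanting it to the classical shearlet leaves two genuine gaps. The first is the strict positivity in (i)(2), (ii)(2), (ii)(3). You reduce (i)(2) to $\int_{\{y_1\ge\frac12\kappa(t_0)y_2^2\}}\overline{\psi}\,dy\neq 0$ and then write ``nonzero for a suitable $\psi$''; the theorem does not let you choose $\psi$. For the classical shearlet $\widehat{\psi}$ vanishes identically near $\xi_1=0$, so \emph{all} $y_1$-moments of $\psi$ vanish -- the very cancellation you exploit for (i)(1) makes integrals over parabolic regions delicate. Concretely, for $\kappa=0$ one has, with $g(y_1):=\int\psi(y_1,y_2)\,dy_2$ and $\widehat g(\xi_1)=\widehat{\psi_1}(\xi_1)\widehat{\psi_2}(0)$,
\begin{align*}
\int_{\{y_1\ge 0\}}\psi(y)\,dy \;=\; c\,\widehat{\psi_2}(0)\;\mathrm{p.v.}\!\int_{\R}\frac{\widehat{\psi_1}(\xi_1)}{\xi_1}\,d\xi_1 ,\qquad c\neq 0,
\end{align*}
which vanishes whenever $\widehat{\psi_1}$ is real and even -- a choice permitted by the conditions of Example \ref{ex:classicalShearlet}. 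So the nonvanishing cannot follow from the limiting-domain computation alone; it is precisely the finer Fourier-side analysis (and hypotheses) of \cite{GuoLab} that carries this claim. The present paper makes the same point: after Proposition \ref{prop:regularPoints} it is the special detector construction that guarantees $\int_{\tilde S}\psi$ is bounded away from zero, and Section \ref{sec:contribution} stresses that for classical shearlets this limit can be arbitrarily small. (Also, in (ii)(3) the limit set is one-sided, as in Proposition \ref{prop:firstOrCorAligned}, not the full parabolic region.)

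The second gap is that parts (ii)(1)--(ii)(2) are not proved at all: you state yourself that matching the surviving order to $a^{9/4}$ ``is the target of the computation,'' and that computation is the bulk of the theorem. Moreover, your heuristic points in the wrong direction. A genuine tangent jump (first type) produces a \emph{larger} local contribution, not a smaller one -- for compactly supported generators the paper obtains the slower rate $a^{5/4}$ at first-type corners versus $a^{7/4}$ at second-type corners (Propositions \ref{prop:FistOrCornerPoints} and \ref{prop:secOrCornerPoints}). That the classical shearlet gives the common, much faster rate $a^{9/4}$ for both types, with only an upper bound for the first type but a two-sided bound for the second, is a cancellation phenomenon specific to a generator with all vanishing moments and rapid spatial decay; your sketch (whether in the spatial picture or via the divergence-theorem representation of $\widehat{\chi_S}$ and non-stationary phase) never establishes the exponent $\frac94$, the one-sidedness in (ii)(1), or the nonvanishing of the coefficient in (ii)(2). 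As it stands, the proposal proves the regular-point upper bounds and (i)(1), and outlines, but does not prove, everything else.
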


We will now analyze the detection of different types of regularity for points on the discontinuity curve. In fact, as mentioned before,
for compactly supported shearlets we even derive uniform estimates in the decay rates. We will deal with the different types in a
series of propositions, and present those in a uniform form -- therefore with the conditions on the shearlet generator being presented
slightly more restrictive -- in Theorem \ref{thm:Summary2D}.

We start with the detection of curve-like discontinuities, which corresponds to item (i)(2) of Theorem \ref{thm:2DClassificationClassical}. 
In the sequel, we consider the following mildly restricted set of compact sets in $\R^2$, which is necessary to have a chance for
uniform estimates at all. Notice though that the conditions are typically always fulfilled. 
\begin{definition}
For $\rho>0$ the set of all sets $S\subset \R^2$ with piecewise smooth boundary with corner points $\{p_i: i\in I\}$ and arc-length parametrization $\alpha$ such that 
\smallskip
\begin{compactenum}[(1)]
\item $\|\alpha^{(3)}(t)\| \leq \rho$ for all $t\in (0,L)$, $t \not \in \alpha^{-1}(\{ p_i :  i\in I\})$,
\smallskip
\item $\|\alpha^{(3)}(t^\pm)\| \leq \rho$ for all $t \in \alpha^{-1}(\{ p_i\}_i)$,
\end{compactenum}
will in the sequel be denoted by $\mathfrak{S}_\rho$.
\end{definition}
\smallskip
We now make the following observation which shows that the decay rates in the case of compactly supported shearlets only depend on 
the curvature and the third derivative of $\alpha$. Apart from this, they are independent from the set $S\in \mathfrak{S}_\rho$.

\begin{proposition}\label{prop:regularPoints}
Let $\delta > 0$ and $\rho > 0$, and let $\psi\in L^2(\R^2)\cap L^{\infty}(\R^2)$ be a compactly supported shearlet. Then there exists $C_{\delta, \rho, \psi}$ 
such that, for all $S\in \mathfrak{S}_\rho$, $f = \chi_S$, and $p = \alpha(t_0)$ such that $\|p-p_i\| \geq \delta$ for all $i \in I$, we have for all $a \in (0,1]$
\begin{align*}
a^{\frac{3}{4}}\int_{\tilde{S}} \psi(x)dx - C_{\delta, \rho}a^{\frac{5}{4}}\leq \left \langle f, \psi_{a,s,p} \right \rangle 
\leq a^{\frac{3}{4}}\int_{\tilde{S}} \psi(x)dx + C_{\delta, \rho}a^{\frac{5}{4}}, \text{ for } s\in B_{a}(\tilde{s}).
\end{align*}
where $\tilde{s}$ corresponds to the normal direction of $\partial S$ at $p$, and, with $\rho(s): =  \cos(\arctan(s))$,
\begin{align*}
\tilde{S}  := \left \{(x_1,x_2)\in \suppp \psi: x_1\leq \frac{1}{2 \rho(s)^2}(\alpha_1''(t_0)-s\alpha_2''(t_0))x_2^2\right \}.
\end{align*}
\end{proposition}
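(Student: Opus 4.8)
The plan is to convert the inner product into an integral of $\psi$ over a rescaled copy of $S$ and then to show that, as $a\to 0$, this rescaled copy meets $\suppp\psi$ in essentially the model region $\tilde S$. Writing $\psi_{a,s,p}(x)=a^{-3/4}\psi(M_{a,s}(x-p))$ and substituting $y=M_{a,s}(x-p)$, so that $dx=|\det M_{a,s}^{-1}|\,dy=a^{3/2}\,dy$, one obtains
\begin{align*}
\langle \chi_S,\psi_{a,s,p}\rangle = a^{3/4}\int_{\suppp\psi}\psi(y)\,\chi_{M_{a,s}(S-p)}(y)\,dy.
\end{align*}
Thus the proposition is equivalent to the estimate $\big|\int_{\suppp\psi}\psi\,(\chi_{M_{a,s}(S-p)}-\chi_{\tilde S})\big|\le C_{\delta,\rho,\psi}\,a^{1/2}$, and since $\psi\in L^\infty(\R^2)$ it suffices to bound the Lebesgue measure of the symmetric difference $\big(M_{a,s}(S-p)\,\triangle\,\tilde S\big)\cap\suppp\psi$ by a constant multiple of $a^{1/2}$.

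Next I would localize. With $\suppp\psi\subset[-T,T]^2$ and $\|M_{a,s}^{-1}\|\lesssim a^{1/2}$ for $a\le 1$ and $|s|$ in the (bounded) cone range, every $y\in\suppp\psi$ satisfies $\|M_{a,s}^{-1}y\|\le c_\psi a^{1/2}$, so only the arc of $\partial S$ within distance $c_\psi a^{1/2}$ of $p$ is relevant; by arc-length this is $\alpha$ restricted to $|t-t_0|\le c'a^{1/2}$. Setting $a_0:=(\delta/c_\psi)^2$, for $a\le a_0$ this neighborhood contains no corner point $p_i$ (as $\|p-p_i\|\ge\delta$), so $\partial S$ is there a single smooth arc and $S$ is locally its subgraph. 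For $a\in[a_0,1]$ both $a^{-3/4}\langle\chi_S,\psi_{a,s,p}\rangle$ and $\int_{\tilde S}\psi$ are bounded by $\|\psi\|_\infty|\suppp\psi|$ while $a^{1/2}\ge a_0^{1/2}$, so the claim holds after enlarging the constant; it remains to treat $a\le a_0$.

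For the core step I place $p=\alpha(t_0)$ at the origin and Taylor expand $\alpha(t)=\alpha'(t_0)(t-t_0)+\tfrac12\alpha''(t_0)(t-t_0)^2+R(t)$, where $\|R(t)\|\le\tfrac{\rho}{6}|t-t_0|^3$ by the third-derivative bound built into $\mathfrak{S}_\rho$, and $\alpha''\perp\alpha'$ since $\alpha$ is arc-length. Projecting $M_{a,s}^{-1}y$ onto the unit normal $n_s$ and the tangent direction determined by $s$, a direct computation gives normal coordinate $\rho(s)\,a\,y_1$ exactly and tangential coordinate $u=\rho(s)^{-1}a^{1/2}y_2+O(a)$. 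The curve through $p$ has tangent corresponding to $\tilde{s}$ with $|s-\tilde{s}|\le a$, so in these coordinates its graph has the form $\tfrac12\langle\alpha''(t_0),n_s\rangle\,u^2$ up to a linear term of size $O(a)$ and a cubic remainder $O(|u|^3)$ controlled by $\rho$ (the correction to the quadratic coefficient being itself $O(a)$). Substituting, using $\langle\alpha''(t_0),n_s\rangle=\rho(s)(\alpha_1''(t_0)-s\,\alpha_2''(t_0))$, and dividing the resulting subgraph inequality by $a$, the cubic remainder $O(|u|^3)=O(a^{3/2})$ and the linear term both scale down to $O(a^{1/2})$ and the condition collapses to
\begin{align*}
y_1\le\frac{1}{2\rho(s)^2}\big(\alpha_1''(t_0)-s\,\alpha_2''(t_0)\big)\,y_2^2+O(a^{1/2}),
\end{align*}
which is exactly the defining inequality of $\tilde S$ perturbed by $O(a^{1/2})$ in $y_1$.

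Finally I would conclude and identify the obstacle. The last step shows that inside $\suppp\psi$ the sets $M_{a,s}(S-p)$ and $\tilde S$ are subgraphs of the same parabola whose defining inequalities differ by an additive $O(a^{1/2})$; their symmetric difference is therefore contained in a band of width $O(a^{1/2})$ about that parabola, which over $|y_2|\le T$ has measure $O(a^{1/2})$. Multiplying by $\|\psi\|_\infty$ and the prefactor $a^{3/4}$ yields the asserted bound with $C_{\delta,\rho,\psi}$ depending only on $\|\psi\|_\infty$, $T$, $\rho$ and $\delta$. I expect the main obstacle to be carrying out this measure estimate \emph{uniformly}: one must check that the $O(a^{1/2})$ control on the perturbation of the boundary parabola holds with one constant valid for all $S\in\mathfrak{S}_\rho$, all admissible $p$, and all $s\in B_a(\tilde{s})$ simultaneously — which is precisely the role of the uniform bound $\|\alpha^{(3)}\|\le\rho$ together with the separation $\|p-p_i\|\ge\delta$.
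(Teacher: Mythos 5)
Your proposal is correct and follows essentially the same route as the paper: rescale via the transformation theorem to extract the factor $a^{3/4}$, approximate the rescaled set $M_{a,s}(S-p)\cap\suppp \psi$ by the parabolic region $\tilde{S}$, bound the measure of the symmetric difference by $O(a^{1/2})$ through a second-order Taylor expansion of the boundary (with uniformity supplied by $\|\alpha^{(3)}\|\leq\rho$ and the corner separation $\delta$), and absorb the regime $a$ bounded away from $0$ into the constant. The only cosmetic difference is that you expand the arc-length parametrization in normal/tangent coordinates for general $s$ directly, whereas the paper first reduces to $s=0$, $p=0$ via $f\mapsto (T_{-p}f)\circ S_s$ and then Taylor-expands the graph $x_2\mapsto\alpha_1(\alpha_2^{-1}(x_2))$ obtained from the inverse function theorem.
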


\begin{proof}
Letting $f = \chi_S$, we will start by restricting our attention to the order of convergence of
\begin{align}\label{eq:AsymptoticBehaviorCase1}
  \left \langle f, \psi_{a,0,0} \right \rangle, \ \text{ for }a \to 0.
\end{align}
In this case $\partial S$ contains $(0,0)$ and the normal vector $n$ on $\partial S$ obeys $\frac{n_2}{n_1}\leq a$. Notice that, since $\|n\|=1$, this 
immediately implies $1 \geq n_1^2\geq 1/(1+a^2)$, $n_2^2 \leq 1/(1+1/a^2)$. Furthermore, we can assume $\suppp \psi_{a,0,0} \subset [-\delta,\delta]^2$.
After we examined the asymptotic behavior of \eqref{eq:AsymptoticBehaviorCase1} we will obtain the general case by considering $\tilde{f}:= f\circ S_s$.
 
With an application of the transformation theorem,  \eqref{eq:AsymptoticBehaviorCase1} can be written as
\begin{align*}
 a^{\frac{3}{4}}\left \langle f(A_a \cdot), \psi \right \rangle = a^{\frac{3}{4}}\left \langle \chi_{A_a^{-1} S }, \psi \right \rangle.
\end{align*}
Since the boundary of $S$ is given by $\alpha$ with $\alpha(t_0) = (0,0)$ and $\frac{\partial \alpha}{\partial x_1}(t_0) = n_2, 
\frac{\partial \alpha}{\partial x_2}(t_0) = n_1$, and $n_1 >0$, the inverse function theorem yields that on a neighborhood of $0$ the inverse
$\alpha_2^{-1}$ of the second component function of $\alpha$ does exist. Therefore, $A_a^{-1} S \cap \suppp \psi$ is given by
\begin{align*}
A_a^{-1} S \cap \suppp \psi = \left\{ (x_1,x_2)\in \suppp \psi: a x_1 \leq \alpha_1(\alpha_2^{-1}(\sqrt{a}x_2)) \right\}.
\end{align*}
As we showed, the boundary curve as a function of $x_2$ is given by
\begin{align*}
  x_2\mapsto \alpha_1(\alpha_2^{-1}(x_2)).
\end{align*}
Now, we compute the second order Taylor approximation of the boundary curve and use the fact that $(\alpha_2^{-1})'(0) =  \frac{1}{n_1}$ to obtain
\begin{eqnarray}\nonumber
\alpha_1(\alpha_2^{-1}(x_2))
&= &\alpha_1(\alpha_2^{-1}(0)) + \frac{\partial\alpha_1(\alpha_2^{-1})}{\partial x_2}(0) x_2 + \frac{1}{2}\frac{\partial^2\alpha_1(\alpha_2^{-1})}{\partial x_2^2}(0) x_2^2 + O( x_2^3)\nonumber\\
&= &\frac{1}{n_1}\alpha_1'(t_0) x_2 + \frac{1}{2}\frac{1}{n_1^2} \alpha_1''(t_0) x_2^2  +\frac{1}{2}\frac{n_2}{n_1^3} \alpha_2''(t_0) x_2^2 + O( x_2^3) \nonumber\\
&= &\frac{n_2}{n_1}x_2 +  \frac{1}{2 n_1^2} \alpha_1''(t_0) x_2^2  +\frac{n_2}{n_1} \frac{1}{2 n_1^2 }\alpha_2''(t_0) x_2^2 + O(  x_2^3),\label{eq:thePreviouslyComputedTaylorApprox}
\end{eqnarray}
where the constant in $O(x^3_2)$ is bounded by $\rho$.
Now let us introduce the set
\begin{align*}
\tilde{S}:= \left \{(x_1,x_2)\in \suppp \psi: x_1 \leq \frac{1}{2}\alpha_1''(t_0)x_2^2 \right \}.
\end{align*}
The $L_1$ norm of $\chi_{\tilde{S}} - \chi_{A_a^{-1} S \cap \suppp \psi}$ can be estimated by the area under the function 
\begin{align}
x_2\mapsto\left|\frac{1}{a} \alpha_1(\alpha_2^{-1}(\sqrt{a}x_2)) - \frac{1}{2}\alpha_1''(t_0)(x_2)^2\right|.\label{eq:areaUnderTheFunction}
\end{align}
By the previously computed Taylor approximation \eqref{eq:thePreviouslyComputedTaylorApprox}, the right hand side of \eqref{eq:areaUnderTheFunction} is bounded by
\begin{eqnarray*}
\sqrt{a}|x_2|+ \left| \frac{1}{2 n_1^2} \alpha_1''(t_0) x_2^2  -  \frac{1}{2} \alpha_1''(t_0) x_2^2 \right|+ \left|a \frac{1}{2 n_1^2 }\alpha_2''(t_0) x_2^2  \right| + \rho(a^{\frac{1}{2}}x_2^3).
\end{eqnarray*}
Since $|x_2|$ is bounded by a constant depending on the support size of $\psi$ there exists $C_{\rho, \psi}$ such that the term above can be estimated by
\begin{eqnarray*}
C_{\rho, \psi}\left(|\frac{1}{2 n_1^2} - \frac{1}{2}| + a^{\frac{1}{2}} + \frac{a}{2 n_1^2}\right) \leq  C_{\rho, \psi}\left(\frac{1 - n_1^2}{2 n_1^2}  + a^{\frac{1}{2}} + a\right) \leq C_{\rho, \psi}(a^2 + a + a^{\frac{1}{2}}) \text{ for } a\to 0.
\end{eqnarray*}
For all $a$ satisfying that $\suppp \psi_{a,0,0} \subset [-\delta,\delta]^2$, this implies 
\begin{align}
 a^{\frac{3}{4}}\left[ \int_{\tilde{S}} \psi(x)dx  - C_{\rho, \psi}(a^2 + a + a^{\frac{1}{2}}) \right] \leq &\left \langle f, \psi_{a,0,0} \right \rangle \leq a^{\frac{3}{4}}\left[ \int_{\tilde{S}} \psi(x)dx  + C_{\rho, \psi}(a + a^{\frac{1}{2}}) \right].
\end{align}
For general $a$, with a possibly different constant $C_{\delta, \rho, \psi}>0$, we obtain 
\begin{align}
 a^{\frac{3}{4}}\int_{\tilde{S}} \psi(x)dx -C_{\delta, \rho, \psi}(a^{\frac{5}{4}}) \leq &\left \langle f, \psi_{a,0,0} \right \rangle \leq a^{\frac{3}{4}}\int_{\tilde{S}} \psi(x)dx + C_{\delta, \rho, \psi}(a^{\frac{5}{4}}).\label{eq:theOrderOfConvergence}
\end{align}
The shearing parameter $0$ corresponds to the normal direction $\pm(1,0)$. However, we showed that the order of convergence \eqref{eq:theOrderOfConvergence} 
holds also for a small perturbation of the normal direction. Now assuming $|\tilde{s}| \leq a$, the corresponding normal directions obeys $|\frac{n_2}{n_1}| \leq a$, 
since, by definition,
\begin{align*}
\left| \frac{n_2}{n_1}\right| = \left|\frac{\sin \theta_0}{\cos \theta_0 }\right| = \left|\tan{\theta_0}\right| = \left|\tilde{s}\right| \leq a.
\end{align*}
Therefore, \eqref{eq:theOrderOfConvergence} holds for $\left \langle f, \psi_{a,s,0} \right \rangle$ with $s\in B_{a}(0)$ for $a \to 0$. 

To examine the general situation, we consider
\begin{align*}
 \left \langle f, \psi_{a,s,p} \right \rangle,
\end{align*}
where $p\in \partial S$ and the normal direction at $p$ corresponds to $s$. After the transformation
\begin{align*}
 f \to (T_{-p}f)\circ S_s = \chi_{S_s^{-1} (S-p)}=: \tilde f,
\end{align*}
we have $\left \langle f, \psi_{a,s,p} \right \rangle = \left \langle \tilde f, \psi_{a,0,0} \right \rangle$ and it turns out that the boundary curve of $S_s^{-1} (S-p)$ is in fact given by the parametrization
\begin{align*}
 t\mapsto S_s^{-1} (\alpha(t)-p).
\end{align*}
To repeat the argumentation from before, observe that
\begin{align*}
 \left(\frac{d}{dt}(S_s^{-1} (\alpha-p))(0)\right)_2 = \alpha'_2(0) = \cos(\arctan(s)) = \rho(s).
\end{align*}
Applying the machinery from before then yields
\begin{align*}
a^{\frac{3}{4}}\int_{\tilde{S}} \psi(x)dx - C_{\delta, \rho, \psi}(a^{\frac{5}{4}})\leq  \left \langle f, \psi_{a,s,0} \right \rangle \leq a^{\frac{3}{4}}\int_{\tilde{S}} \psi(x)dx + C_{\delta, \rho, \psi}(a^{\frac{5}{4}}),
\end{align*}
with
\begin{align*}
\tilde{S}:= \left \{(x_1,x_2)\in \suppp \psi: x_1 \leq \frac{1}{2 \rho(s)^2}(\alpha_1''(t_0)-s\alpha_2''(t_0))x_2^2 \right \}.
\end{align*}
The proof is complete.
\end{proof}

The detector shearlet from Subsection \ref{sec:ConstrDetect} has the property that $\int_{\tilde{S}} \psi(x)dx$ never vanishes and thus a uniform 
lower bound can be achieved. Furthermore, information about the curvature of the boundary curve is contained in the set $\tilde{S}$. We will pick 
up on the topic of extracting the curvature from the decay in Section \ref{sec:curvature}.

The next structure under investigation are the corner points. We start with corner points and shearing directions not aligned with any normal direction.

\begin{proposition}\label{prop:FistOrCornerPoints}
Let $S\subset \R^2$ with a smooth boundary except for finitely many corner points, let $f = \chi_S$, and let $\psi\in L^2(\R^2)\cap L^{\infty}(\R^2)$ be a compactly supported shearlet. Then, 
for a corner point $\alpha(t_0) = p\in \partial S$ of the first type and $s$ that does not correspond to a normal direction of $\partial S$ at $p$ or to a tangent direction, we have
\begin{align*}
  \left \langle f, \psi_{a,s,p}\right \rangle = O(a^{\frac{5}{4}}) \text{ for } a\to 0.
\end{align*}
If furthermore $\psi = \psi^1 \otimes{\phi^1}$ with a wavelet $\psi^1\in L^2(\R)$ and a function ${\phi^1}\in C^2(\R)\cap L^2(\R)$ such that either 
$$
{\phi^1}(0) = 0 \quad \mbox{or} \quad \int_{(-\infty,0)} \psi^1(x_1)x_1 dx_1 = 0
$$
as well as
\begin{align*}
 \int_{(-\infty,0)}{\psi^1(x_1)x_1^2 dx_1} \neq 0, \text{ and } (\phi^1)'(0) \neq 0,
\end{align*}
then
\begin{align*}
 \lim \limits_{a\to 0^+} a^{-\frac{5}{4}}|\left \langle f, \psi_{a,s,p}\right \rangle| > 0.
\end{align*}
\end{proposition}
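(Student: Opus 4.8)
The plan is to localize the computation around the corner point $p=\alpha(t_0)$ and reduce to a model in which the two smooth boundary pieces meeting at $p$ are treated separately. After the usual normalization $\tilde f := (T_{-p}f)\circ S_s$, so that $\langle f,\psi_{a,s,p}\rangle = \langle \tilde f,\psi_{a,0,0}\rangle$, I would choose $a$ small enough that $\suppp\psi_{a,0,0}$ meets $\partial(S_s^{-1}(S-p))$ only in a neighborhood of the corner. Near the corner, $\tilde S$ looks (to leading order) like the intersection or union of two half-planes whose boundaries pass through the origin with distinct slopes $m^-, m^+$ determined by the one-sided tangents $\alpha'(t_0^\pm)$; the hypothesis that $s$ is not a normal or tangent direction guarantees both slopes are nonzero and finite and the corner is genuinely two-sided in the shearlet window.

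For the \emph{upper bound} $O(a^{5/4})$ I would apply the transformation theorem exactly as in Proposition~\ref{prop:regularPoints}, writing
\begin{align*}
\langle \tilde f,\psi_{a,0,0}\rangle = a^{\frac34}\langle \chi_{A_a^{-1}\tilde S},\psi\rangle,
\end{align*}
and then estimating $\int_{A_a^{-1}\tilde S}\psi(x)\,dx$. The key point is that, because the two tangent lines have slopes bounded away from $0$, under the anisotropic scaling $A_a^{-1}=\operatorname{diag}(a^{-1},a^{-1/2})$ each boundary piece is sheared into a nearly vertical line at horizontal position $O(a^{1/2})$, so the set $A_a^{-1}\tilde S\cap\suppp\psi$ is, up to an error region of measure $O(a^{1/2})$, a fixed wedge near $\{x_1\le 0\}$. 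The leading-order contribution $\int_{\{x_1\le 0\}}\psi\,dx$ must vanish; I expect it does precisely because the first-moment/vanishing conditions in the hypothesis kill it, leaving the $O(a^{1/2})$ correction, which after the prefactor $a^{3/4}$ yields $a^{5/4}$.

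For the \emph{sharp lower bound} I would carry the Taylor expansion one order further and identify the coefficient of the surviving $O(a^{1/2})$ term explicitly. Writing $\psi=\psi^1\otimes\phi^1$ and integrating first in $x_1$, the corner splits the $x_2$-integral into a left part (slope $m^-$) and a right part (slope $m^+$); after expanding $\int_{(-\infty,\,m^\pm\sqrt a\,x_2]}\psi^1(x_1)\,dx_1$ the leading nonvanishing contribution is governed by $\int_{(-\infty,0)}\psi^1(x_1)x_1\,dx_1$ paired against $\phi^1(0)$ and by $\int_{(-\infty,0)}\psi^1(x_1)x_1^2\,dx_1$ paired against $(\phi^1)'(0)$. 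The two stated alternatives ($\phi^1(0)=0$ or $\int\psi^1 x_1=0$) are designed to annihilate the term that would otherwise be common to both sides of the corner and hence cancel, so that the asymmetry $m^-\neq m^+$ makes the second-moment term survive; the conditions $\int_{(-\infty,0)}\psi^1 x_1^2\,dx_1\neq 0$ and $(\phi^1)'(0)\neq 0$ then force a nonzero coefficient and give $\lim_{a\to0^+}a^{-5/4}|\langle f,\psi_{a,s,p}\rangle|>0$.

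\textbf{The main obstacle} I anticipate is controlling the two-sided contribution cleanly: I must verify that the competing leading terms from the two smooth arcs indeed cancel under the moment hypotheses (rather than merely being of the same order) and that the surviving term does not itself vanish for generic corners. This requires bookkeeping the one-sided Taylor coefficients of $\alpha_1(\alpha_2^{-1}(\cdot))$ at $t_0^\pm$ together with the error bounded by $\rho$, and confirming that the $s$-not-tangent hypothesis keeps the relevant slope factors nonzero so the second-moment coefficient is genuinely nontrivial.
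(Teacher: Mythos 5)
Your outline reproduces the paper's high-level strategy (normalize to $s=0$, $p=0$; replace the two arcs by their one-sided tangents; exploit the tensor structure and a Taylor expansion for the sharp constant), but the geometric core of your argument is wrong, and the error is not cosmetic. Under $A_a^{-1}=\operatorname{diag}(a^{-1},a^{-1/2})$ a boundary line $x_2=m\,x_1$ with $0<|m|<\infty$ becomes $x_2=m\sqrt{a}\,x_1$: inside the fixed support of $\psi$ it flattens to a nearly \emph{horizontal} segment, not a nearly vertical one. Consequently the rescaled corner region is either a sliver of measure $O(\sqrt{a})$ hugging the $x_1$-axis (when both arcs leave the corner into the same vertical half-plane), or, up to a set of measure $O(\sqrt{a})$, a half-plane with \emph{horizontal} boundary; it is never close to $\{x_1\le 0\}$. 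This matters because $\int_{\{x_1\le 0\}}\psi\,dx$ does \emph{not} vanish for the shearlets this proposition is aimed at: detector shearlets are built so that $\int_{(-\infty,0]}\psi^1\neq 0$ and $\int\phi^1>0$ (see \eqref{eq:conditionsForObs23} and Subsection \ref{sec:ConstrDetect}), and nonvanishing of precisely such integrals is what produces the $a^{3/4}$ rate at aligned regular points in Proposition \ref{prop:regularPoints}. If your picture were right, a corner would generically decay like $a^{3/4}$ and could not be distinguished from a regular point. Note also that the first assertion of the proposition assumes no tensor structure and no moment conditions at all, so an upper bound resting on ``the first-moment/vanishing conditions in the hypothesis'' cannot be the right mechanism. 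The paper's upper bound needs no cancellation: after reducing to thin wedges bounded by two graphs over $x_1\le 0$ (via unions and complements, using $\int\psi_{a,0,0}=0$), linearizing the arcs costs $O(a^{3/2})$ in $L^1$ after rescaling, the rescaled linearized wedge is a triangle of vertical extent $O(\sqrt{a})$, and boundedness of $\psi$ gives $a^{3/4}\cdot O(\sqrt{a})=O(a^{5/4})$.

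For the lower bound, the pairings you quote ($\int\psi^1x_1$ against $\phi^1(0)$, and $\int\psi^1 x_1^2$ against $(\phi^1)'(0)$) are indeed the paper's, but they arise from integrating in $x_2$ first, i.e.\ from Taylor-expanding $\int_{m^-\sqrt{a}\,x_1}^{m^+\sqrt{a}\,x_1}\phi^1(x_2)\,dx_2$ at $0$ --- an expansion that only makes sense in the nearly-horizontal geometry. Your proposed route, expanding $\int_{(-\infty,\,m^\pm\sqrt{a}\,x_2]}\psi^1(x_1)\,dx_1$, would instead pair $\int_{(-\infty,0]}\psi^1$ with $\int\phi^1$ and $\psi^1(0)$ with $\int\phi^1(x_2)x_2\,dx_2$, so the algebra you state is inconsistent with the geometry you set up. Finally, the bookkeeping you defer as ``the main obstacle'' is the actual crux: in the correct expansion the $\phi^1(0)$-term enters at order $a^{3/4}\cdot a^{1/2}=a^{5/4}$ while the $(\phi^1)'(0)$-term enters at order $a^{3/4}\cdot a=a^{7/4}$, so one must determine exactly which hypotheses leave a nonzero coefficient at order $a^{5/4}$; asserting that the second-moment term ``survives'' at that order, without carrying out this computation, leaves the positivity claim unproved.
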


\begin{proof}
First, we examine the behavior of
\begin{align*}
\left \langle f, \psi_{a,0,0} \right \rangle, \ \text{ for } a\to 0.
\end{align*}
For the general case, the exact same argument can be made for $\tilde{f} = f \circ S_s$, and we leave those details to the interested reader.

Now assume $\alpha(0) = 0$. Since $\alpha'_1(0^+),\alpha'_1(0^-)  \neq 0$, by the inverse function theorem the following functions
exist: $g^{+} := {\alpha_2}_{|t\geq 0} \circ \alpha_1^{-1}$ and $g^- := {\alpha_2}_{|t \leq 0} \circ \alpha_1^{-1}$. If $(g^+)'(0)\leq 0 < (g^-)'(0)$, 
we define the following sets which describe $f$ locally:
\begin{align*}
T&:=\left \{ (x_1, x_2)\in \suppp \psi,:\ x_1 \leq 0 ,\ g^-(x_1) \leq x_2 \leq g^+(x_1)\right \},\\
\tilde{T}&:=\left \{ (x_1, x_2)\in \suppp \psi:\  x_1 \leq 0,\ (g^-)'(0)x_1 \leq x_2 \leq (g^+)'(0)x_1\right \}.
\end{align*}
Notice that different constellations of $(g^+)'(0)$ and $(g^-)'(0)$ can occur. But first of all it is clear, that $(g^-)'(0) < (g^+)'(0) \leq 0$ can be written as a difference of sets of the form of $T$. Second, 
$$\left \{ (x_1, x_2)\in \suppp \psi:\ x_1 \geq 0 ,\ g^-(x_1) \leq x_2 \leq g^+(x_1)\right \},$$
will be dealt with the same as $T$. Lastly, we can revert any constellations of $(g^+)'(0)$ and $(g^-)'(0)$ back to these sets by taking unions as well as complements of these sets, thereby using, $\left\langle \chi_T, \psi_{a,0,0}\right \rangle = \left\langle \chi_{\suppp \psi \setminus T}, \psi_{a,0,0}\right \rangle$ if $a \leq 1$.

Now we observe the following approximation:
$$
\left \| \chi_{A_a^{-1}T} - \chi_{A_a^{-1}\tilde{T}} \right\|_1 = O(a^{\frac{3}{2}}) \text{ for } a\to 0.
$$
This estimate yields that
\begin{align}
 \left \langle f, \psi_{a,0,0}\right \rangle = a^{\frac{3}{4}} \left \langle \chi_{A_a^{-1} \tilde{T}}, \psi \right \rangle + O(a^{\frac{9}{4}})\text{ for } a\to 0. \label{eq:TheConvergenceOfThisIsGIvenBy}
\end{align}
It should be noted that the constant in the $a^{\frac{9}{4}}$ term is not uniform, since it depends on the second derivative of $g$. Let $r\geq 0$ be such that, $\suppp \psi \subset [-r,r]^2$, then, employing the transformation theorem, the behavior of \eqref{eq:TheConvergenceOfThisIsGIvenBy} as $a \to 0$ is given by
\begin{align}
\left \langle f, \psi_{a,0,0}\right \rangle = a^{\frac{3}{4}} \int \limits_{\bigtriangleup((0,0), (-r,-(g^-)'(0)\sqrt{a}r), 
(-r,-(g^+)'(0)\sqrt{a}r))} \psi(x) dx + O(a^{\frac{9}{4}}) \text{ for } a\to 0,\label{eq:thisDependsOnPsi}
\end{align}
where $\bigtriangleup(a,b,c)$ denotes the triangle with edge points $a, b, c$.

Since the measure of $\bigtriangleup((0,0), (-r,-(g^-)'(0)\sqrt{a}r), (-r,-(g^+)'(0)\sqrt{a}r))$ is of order $O(\sqrt{a})$, we obtain
$$\left \langle f, \psi_{a,0,0}\right \rangle = O(a^{\frac{5}{4}}), \text{ for } a\to 0,$$
provided that $\psi$ is bounded.

A lower bound on \eqref{eq:thisDependsOnPsi} depends strongly on the choice of $\psi$. If $\psi(x_1,x_2)= \psi^1(x_1){\phi^1}(x_2)$ 
with a wavelet $\psi^1$ and a function ${\phi^1}$ with $(\phi^1)'(0) \neq 0$ and ${\phi^1}(0) =0$ or
$$\int_{(-r,0)} \psi^1(x_1)x_1 dx_1 = 0,$$
then, by a Taylor expansion of $\phi^1$ at $0$ and integration along $x_2$, \eqref{eq:thisDependsOnPsi} can be written as
\begin{align*}
&a^{\frac{3}{4}} \int \limits_{\bigtriangleup((0,0), (-r,-(g^-)'(0)\sqrt{a}r), (-r,-(g^+)'(0)\sqrt{a}r))} \psi(x_1, x_2) dx + O(a^{\frac{9}{4}})\\
= & a^{\frac{5}{4}} \int \limits_{(-r,0)} \psi^1(x_1)((g^+)'(0)^2-(g^-)'(0)^2){\phi^1}'(0)\frac{1}{2}x_1^2 dx_1 + O(a^{\frac{7}{4}}) \text{ for } a\to 0.
\end{align*}
The proposition is proved.
\end{proof}

Next we analyze the behavior of the shearlet transform, if the shearing variable corresponds to a normal direction at 
a corner point.

\begin{proposition}\label{prop:firstOrCorAligned}
Let $S\subset \R^2$ with a smooth boundary except for finitely many corner points, and let $f = \chi_S$. Further, let $\psi\in L^2(\R^2)\cap L^{\infty}(\R^2)$ 
be a compactly supported shearlet, let $\alpha(t_0) = p\in \partial S$ a corner point of the first type, and let $\tilde{s}$ correspond to a 
normal direction $n(t_0^{\pm})$ of $\partial S$ at $p$. Then we have
\begin{align*}
\lim_{a \to 0} a^{\frac{3}{4}} \left \langle f, \psi_{a,s(a),p} \right \rangle \in \left\{\int_{\tilde{S}^{up}}\psi(x)dx, \int_{\tilde{S}^{down}}\psi(x)dx\right\}, \text{ if } s(a)\in B_{a}(\tilde{s}),
\end{align*}
where, with $\tilde{S}$ as in Proposition \ref{prop:regularPoints},
\begin{align*}
\tilde{S}^{up}  := \tilde{S} \cap \{x: x_2 \geq 0\}\quad \mbox{and} \quad \tilde{S}^{down}  :=\tilde{S} \cap \{x: x_2 < 0\}.
\end{align*}
\end{proposition}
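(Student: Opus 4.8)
The plan is to mirror the reduction in the proof of Proposition \ref{prop:regularPoints} and then split the support of $\psi$ into the two half planes separated by the tangent line of the arc whose normal is $\tilde s$, analyzing this \emph{aligned} arc and the \emph{transversal} arc separately.

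First I would normalize to $p=0$ and $\tilde s=0$ by passing to $\tilde f=(T_{-p}f)\circ S_s$ as at the end of the proof of Proposition \ref{prop:regularPoints}; the perturbation $s(a)\in B_a(\tilde s)$ is absorbed exactly as there, a shearing of order $a$ only moving the rescaled boundary by order $a$. After this reduction the normal direction is $(1,0)$, the aligned arc has a vertical tangent at the origin, and — since the corner is of the first type and $\tilde s$ meets the normal of only one arc — the transversal arc has a non-vertical tangent. I would then work from the exact identity
\begin{align*}
\left\langle f,\psi_{a,0,0}\right\rangle=a^{\frac34}\int_{A_a^{-1}S\,\cap\,\suppp\psi}\psi(x)\,dx,
\end{align*}
and split $\suppp\psi$ along $\{x_2=0\}$, so that for small $a$ the aligned arc governs $\partial S$ on one half, say $\{x_2<0\}$, and the transversal arc on $\{x_2\ge0\}$.

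On the aligned half the computation is verbatim that of Proposition \ref{prop:regularPoints} restricted to $\{x_2<0\}$: the rescaled set converges in $L^1$ to the half-parabola region $\tilde S^{down}$ (built from the one-sided curvature of the aligned arc), giving
\begin{align*}
\int_{A_a^{-1}S\,\cap\,\suppp\psi\,\cap\,\{x_2<0\}}\psi(x)\,dx=\int_{\tilde S^{down}}\psi(x)\,dx+O(a^{\frac12}).
\end{align*}
The heart of the argument is the transversal half. Rescaling the transversal arc, written near the origin as a graph, by $A_a^{-1}$, its non-vertical tangent makes the rescaled boundary take the form $x_1=c\,a^{-1/2}x_2+O(x_2^2)$ with $c\neq0$, so it escapes every bounded region. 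Hence $A_a^{-1}S\cap\suppp\psi\cap\{x_2\ge0\}$ converges in $L^1$, with symmetric difference of measure $O(a^{1/2})$ (a strip of height $O(a^{1/2})$ near $x_2=0$), either to the empty set or to the whole half $\suppp\psi\cap\{x_2\ge0\}$. The crucial point is that even the second case contributes nothing at leading order: $\psi$ is compactly supported and bounded, hence $L^1$ with continuous $\hat\psi$, and admissibility forces $\hat\psi(0,\xi_2)=0$ for all $\xi_2$, i.e. $\int_\R\psi(x_1,x_2)\,dx_1=0$ for a.e. $x_2$; by Fubini
\begin{align*}
\int_{\suppp\psi\,\cap\,\{x_2\ge0\}}\psi(x)\,dx=\int_0^\infty\!\Big(\int_\R\psi(x_1,x_2)\,dx_1\Big)dx_2=0.
\end{align*}
Either way the transversal half contributes $O(a^{1/2})$.

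Adding the two halves yields $\left\langle f,\psi_{a,0,0}\right\rangle=a^{3/4}\big(\int_{\tilde S^{down}}\psi+O(a^{1/2})\big)$, so $a^{-3/4}\left\langle f,\psi_{a,0,0}\right\rangle\to\int_{\tilde S^{down}}\psi$; if the aligned arc instead lies in $\{x_2\ge0\}$ the same reasoning gives the limit $\int_{\tilde S^{up}}\psi$, which produces the two admissible values. Undoing the normalization returns the statement for general $p$ and $s(a)\in B_a(\tilde s)$. The main obstacle I anticipate is the transversal half: proving the $O(a^{1/2})$ $L^1$ estimate uniformly and, above all, pinning the admissibility cancellation $\int_\R\psi(x_1,\cdot)\,dx_1=0$ as the exact reason no spurious half-plane term survives; the degenerate sub-case of a horizontal transversal tangent must be checked separately, but is handled identically, the rescaled boundary then collapsing onto $\{x_2=0\}$ from the $x_2$-direction.
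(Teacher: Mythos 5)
Your core ideas are sound, but your route is genuinely different from the paper's, and one case in your analysis is missing. The paper's proof is a three-line reduction that decomposes the \emph{set}: it writes $\chi_S = \chi_{S_1} \pm \chi_{S_2}$, where $\partial S_1$ has a first-type corner at $p$ with two mutually perpendicular normals, one corresponding to $\tilde s$, and $\partial S_2$ has a first-type corner at $p$ with no normal corresponding to $\tilde s$; Proposition \ref{prop:FistOrCornerPoints} then gives $\left\langle \chi_{S_2},\psi_{a,s,p}\right\rangle = O(a^{5/4})$, and the method of Proposition \ref{prop:regularPoints} applied to $S_1$ (whose second arc has horizontal tangent and collapses onto $\{x_2=0\}$ under $A_a^{-1}$) yields the half-parabola limit. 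You instead decompose the \emph{integration domain} along $\{x_2=0\}$ and dispose of the transversal arc directly, via the escape estimate $x_1 = c\,a^{-1/2}x_2 + O(x_2^2)$ together with the cancellation $\int_\R \psi(x_1,x_2)\,dx_1 = 0$ for a.e.\ $x_2$. That cancellation is correctly justified (compact support makes $\hat\psi$ continuous, and admissibility then forces $\hat\psi(0,\cdot)\equiv 0$), and it is the very fact the paper itself invokes in the proof of Theorem \ref{thm:3Dmain}(iii). Your route has the advantage of not leaning on Proposition \ref{prop:FistOrCornerPoints}, whose hypotheses are actually strained in the paper's own application: the auxiliary set $S_2$ there necessarily has a tangent direction corresponding to $\tilde s$, which that proposition formally excludes (the exclusion is only needed for its lower bound, not the upper bound the paper uses). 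What the paper's route buys is brevity.

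The gap: you assert that the aligned arc and the transversal arc govern different halves of $\suppp\psi$, but both arcs can head into the \emph{same} half-plane. Take for $S$ the thin wedge $\{x_2\le x_1\le 0\}$: its boundary arcs have tangents $(0,1)$ and $(1,1)/\sqrt 2$ at the first-type corner $0$, the first normal corresponds to $\tilde s = 0$, and both arcs lie in $\{x_2\le 0\}$. Then the half $\{x_2<0\}$ is bounded by \emph{both} arcs, so the computation there is not verbatim Proposition \ref{prop:regularPoints}, while $\{x_2\ge 0\}$ contains no boundary at all, so your dichotomy does not apply as stated. The repair uses exactly your own strip estimate: inside $\suppp\psi$ the rescaled transversal arc is confined to $\{|x_2|\lesssim \sqrt a\}$, hence it alters the aligned-half set only on a region of measure $O(\sqrt a)$, and outside that strip the set is governed by the aligned arc alone (or by the vanishing-moment cancellation, in the complement-of-wedge configuration, up to the side convention already implicit in the definition of $\tilde S$). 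You should carry this case explicitly, alongside the horizontal-tangent degeneracy you already flagged; without it, the step ``on the aligned half the computation is verbatim Proposition \ref{prop:regularPoints}'' is false in general.
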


\begin{proof}
We first observe that we can write $\chi_S$ = $\chi_{S_1} \pm \chi_{S_2}$ such that $\partial S_1$ and $ \partial S_2$ both 
have a corner point of the first type at $p$, while both normals of $\partial S_1$ are perpendicular, and one corresponds to the 
$s$ and none of the normals at $\partial S_2$ corresponds to $s$. By Proposition \ref{prop:FistOrCornerPoints}, we obtain 
that $\left \langle \chi_{S_2}, \psi_{a,s,p}\right \rangle = O(a^{\frac{5}{4}})$ for $a \to 0$. With the same methods as 
in Proposition \ref{prop:regularPoints}, we derive the limit of $a^{\frac{3}{4}}\left \langle \chi_{S_1}, \psi_{a,s,p}\right \rangle$.
\end{proof}

Finally, we examine the decay at corner points of the second type.

\begin{proposition}\label{prop:secOrCornerPoints}
Let $S\subset \R^2$ with a smooth boundary except for finitely many corner points, and let $f = \chi_S$. Further, let 
$\psi(x_1,x_2)= \psi^1(x_1){\phi^1}(x_2)$ with a compactly supported bounded wavelet $\psi^1$ and a compactly supported function ${\phi^1}\in C^2(\R)\cap L^2(\R)$ satisfying 
${\phi^1}'(0) \neq 0$.
Then, for a corner point or the second type $p \in \partial S$ and $s$ that does not correspond to a normal direction or to a tangent direction of $\partial S$  at $p$,
we have
\begin{align*}
\left \langle f, \psi_{a,s,p}\right \rangle = O(a^{\frac{7}{4}}) \text{ for } a\to 0.
\end{align*}
If furthermore $\psi_1$ has three vanishing moments and 
\begin{align*}
 \int_{(-\infty,0)}{\psi^1(x_1)x_1^3 dx_1} \neq 0,
\end{align*}
then
\begin{align*}
 \lim \limits_{a\to 0^+} a^{-\frac{7}{4}}|\left \langle f, \psi_{a,s,p}\right \rangle| > 0.
\end{align*}
\end{proposition}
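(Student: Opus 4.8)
The plan is to follow the same local-geometry strategy used in Proposition \ref{prop:FistOrCornerPoints}, but to carry the Taylor expansion of the boundary curve to \emph{third} order, since the defining feature of a second-type corner point is that the tangent directions agree ($\alpha'(t_0^+) = \pm\alpha'(t_0^-)$) while the curvature derivatives differ ($\kappa'(t_0^+)\neq\pm\kappa'(t_0^-)$). After reducing to the case $p = \alpha(0) = (0,0)$ and $s = 0$ via the transformation $\tilde f = f\circ S_s$ exactly as before, I would use the inverse function theorem to write the two branches of $\partial S$ near $0$ as graphs $g^+, g^-$ over $x_1$. Since the tangents coincide, $(g^+)'(0) = (g^-)'(0)$, so the first-order terms cancel; the set separating the two branches is therefore controlled by the difference of the \emph{quadratic and cubic} Taylor coefficients of $g^\pm$, which are governed by $\kappa$ and $\kappa'$.

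Next I would reproduce the anisotropic rescaling $A_a$ and estimate the region between the true curved boundary and its polynomial model. Under the parabolic dilation $x_1 \mapsto a x_1$, $x_2 \mapsto \sqrt a\, x_2$, the quadratic term in the graph scales like $a^{1/2}$ relative to the leading behaviour and the cubic term like $a$, so the relevant symmetric-difference region $A_a^{-1}T \setminus A_a^{-1}\tilde T$ has measure $O(a)$ rather than the $O(a^{1/2})$ that appeared for first-type corners. Combined with the prefactor $a^{3/4}$ coming from the normalization of $\psi_{a,0,0}$, this immediately yields the claimed upper bound $\langle f, \psi_{a,s,p}\rangle = O(a^{7/4})$, provided $\psi$ is bounded, with the cancellation of the second-order pieces being the reason the rate improves from $a^{5/4}$ to $a^{7/4}$.

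For the lower bound I would plug the tensor form $\psi = \psi^1\otimes\phi^1$ into the leading $a^{7/4}$ coefficient and integrate out the $x_2$ variable first. A Taylor expansion of $\phi^1$ at $0$ isolates the factor $(\phi^1)'(0)$, which is nonzero by hypothesis; the $x_1$ integral then collapses to a multiple of $\int_{(-\infty,0)}\psi^1(x_1)\,x_1^3\,dx_1$, weighted by the difference of the cubic Taylor coefficients of $g^\pm$, i.e.\ by $\kappa'(t_0^+) \mp \kappa'(t_0^-)$, which is nonzero precisely because $p$ is a second-type corner. The three-vanishing-moments assumption on $\psi^1$ guarantees that the lower-order moments $\int \psi^1$, $\int \psi^1 x_1$, $\int \psi^1 x_1^2$ vanish so that no term of order lower than $a^{7/4}$ survives, while the assumption $\int_{(-\infty,0)}\psi^1(x_1)x_1^3\,dx_1\neq 0$ makes the surviving coefficient nonzero; together these give $\lim_{a\to 0^+} a^{-7/4}|\langle f,\psi_{a,s,p}\rangle| > 0$.

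The main obstacle I anticipate is the bookkeeping for the lower bound: one must verify that the cubic Taylor coefficients of $g^+$ and $g^-$ genuinely differ under the second-type hypothesis (translating the condition on $\kappa'$ into a condition on the third derivative of the graph), and that no spurious cancellation between the two branches wipes out the $x_1^3$-moment. Unlike the upper bound, which only needs the measure estimate and boundedness of $\psi$, the lower bound requires tracking the exact combination of geometric coefficients multiplying $\int_{(-\infty,0)}\psi^1 x_1^3\,dx_1$ and confirming it is bounded away from zero; I would handle the general (non-aligned, non-tangential) orientation by checking that the shearing conjugation $f\mapsto f\circ S_s$ preserves the corner type and keeps these coefficients nonvanishing, as was asserted but left to the reader in the first-type case.
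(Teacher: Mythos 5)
Your skeleton (reduction to $s=0$, $p=0$ via $f\circ S_s$, one-sided graph representation of $\partial S$, the tensor structure with a Taylor expansion of $\phi^1$ isolating $(\phi^1)'(0)$, vanishing moments killing low-order terms, and a surviving coefficient involving $\int_{(-\infty,0)}\psi^1 x_1^3\,dx_1$) is the same as in the paper's proof, but two of your central quantitative steps fail. The first concerns the lower bound. In this proposition $s$ does not correspond to the normal direction, so the boundary is a graph $x_2=g(x_1)$ over the \emph{first} variable, and after the parabolic substitution $x_1=ay_1$, $x_2=\sqrt a\,y_2$ the rescaled boundary reads
\begin{align*}
y_2=\tfrac{1}{\sqrt a}\,g(ay_1)=g'(0)\sqrt a\,y_1+\tfrac12 g''(0^\pm)\,a^{3/2}y_1^2+\tfrac16 g'''(0^\pm)\,a^{5/2}y_1^3+\ldots,
\end{align*}
so the quadratic term is suppressed relative to the linear one by a factor $a$ and the cubic term by $a^2$ --- not by $a^{1/2}$ and $a$ as you assert. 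Consequently a jump of the \emph{cubic} Taylor coefficients of $g^\pm$ (your translation of ``second type'' via $\kappa'$) can enter $\left\langle f,\psi_{a,0,0}\right\rangle=a^{3/4}\int_{A_a^{-1}S}\psi$ only at order $a^{3/4}\cdot a^{5/2}=a^{13/4}$ or beyond, and can never produce an $a^{7/4}$ asymptotic. The paper's proof uses a different mechanism and a different geometric hypothesis: it expands $g$ only to \emph{second} order, its operative assumption is $(g^-)''(0)\neq\pm(g^+)''(0)$ (a jump of the one-sided second derivatives, i.e.\ of the curvature), and the one-sided third moment appears from the cross term $g'(0)\sqrt a\,x_1\cdot\tfrac12(g^\pm)''(0)a^{3/2}x_1^2$ created when the rescaled boundary height is \emph{squared} inside the $(\phi^1)'(0)h^2$ contribution of the Taylor expansion of $\int\phi^1$; see \eqref{eq:Charlie2}, \eqref{eq:theFirstOfTheImportantTerms} and \eqref{eq:theSecondOfTheImportantTerms}. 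The weight there is $g'(0)(g^\pm)''(0)$, not a third derivative of the graph, so the coefficient you identify is not the one the proof needs.

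The second gap is the upper bound. Because the tangents agree and $g'(0)\neq 0$, the two branches $g^+$, $g^-$ are graphs over \emph{complementary} half-lines $x_1\gtrless 0$; there is no wedge between them, so the pair of regions $T$, $\tilde T$ from Proposition \ref{prop:FistOrCornerPoints} that you want to recycle does not exist in this configuration. Moreover, a measure estimate on a symmetric difference only controls the error of replacing the true region by a model region; it cannot bound the main term, since $A_a^{-1}S\cap\suppp\psi$ has measure of order one. The $O(a^{7/4})$ bound is a cancellation phenomenon, not a smallness-of-support phenomenon, and this is exactly what the paper's proof supplies: it splits the integral along $\{x_2=0\}$ and uses $\int\psi^1=0$ (``$\psi$ integrates to zero along $x_1$'') to pass to \eqref{eq:twoImportantTerms}, Taylor-expands $\phi^1$, and cancels the order-$a^{5/4}$ term \eqref{eq:Charlie1} against its counterpart from the other branch using the first vanishing moment of $\psi^1$, leaving the $(\phi^1)'(0)g'(0)^2a$-term as the leading contribution. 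Without reproducing this cancellation structure, your argument establishes neither the upper nor the lower bound.
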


\begin{proof}
We first restrict ourselves to the case $s = 0$ and $p= 0$ and, as always, we observe, that the general situation follows by using $\tilde{f}:= f\circ S_s$. Since $\alpha$ is differentiable at $0$ and its normal does not equal $\pm(1,0)$, it is locally given as the graph of a function $g:[-\epsilon, \epsilon] \to [-\epsilon, \epsilon],$ such that for $a$ small enough
\begin{eqnarray}
\left\langle f, \psi_{a,0,0}\right \rangle = \int_{x_2\geq g(x_1)} \psi_{a,0,0} dx.
\end{eqnarray}  
Assuming w.l.o.g. that $g'(0)> 0$, we can split the integral and obtain, by invoking a Taylor series of $g$, that
\begin{align*}
\int_{x_2\geq g(x_1)} \psi_{a,0,0}(x) dx = \int \limits_{\substack{x_2\leq 0 \\ x_2\geq g'(0)x_1 + \frac{1}{2}(g^-)''(0)x_1^2}} \psi_{a,0,0}(x) dx + \int\limits_{\substack{x_2\geq 0 \\ x_2\geq g'(0)x_1 + \frac{1}{2}(g^+)''(0)x_1^2}} \psi_{a,0,0}(x) dx + O(a^3),
\end{align*} 
where $(g^-)$ denotes the part of $g$ for $x_1 \leq 0$ and $(g^+)$ corresponds to the positive $x_1$. 
Furthermore, by the assumption on $\alpha$, we have $(g^-)''(0) \neq \pm(g^+)''(0)$.
Next we use, that $\psi$ integrates to $0$ along $x_1$ to obtain
\begin{align}
\int_{x_2\geq g(x_1)} \psi_{a,0,0}(x) dx = \int\limits_{0 \geq x_2\geq g'(0)x_1 + \frac{1}{2}(g^-)''(0)x_1^2} \psi_{a,0,0}(x) dx + \int  \limits_{0\leq x_2\leq g'(0)x_1 + \frac{1}{2}(g^+)''(0)x_1^2} \psi_{a,0,0}(x) dx + O(a^3). \label{eq:twoImportantTerms}
\end{align} 
We continue with the first term of \eqref{eq:twoImportantTerms}.
\begin{align*}
\int\limits_{0 \geq x_2\geq g'(0)x_1 + \frac{1}{2}(g^-)''(0)x_1^2} \psi_{a,0,0}(x) dx = a^\frac{3}{4}\int\limits_{0 \geq x_2\geq g'(0)\sqrt{a}x_1 + \frac{1}{2}(g^-)''(0)a^\frac{3}{2}x_1^2} \psi^1(x_1)\phi^1(x_2) dx.
\end{align*}
Since for $a$ small enough $x_2$ approaches zero we can employ a Taylor expansion of $\phi^1$ at $0$ to obtain that the term above equals
$$a^\frac{3}{4}\int\limits_{0 \geq x_2\geq g'(0)\sqrt{a}x_1 + \frac{1}{2}(g^-)''(0)a^\frac{3}{2}x_1^2} \psi^1(x_1)( (\phi^1(0) + (\phi^1)'(0)x_2 + \frac{1}{2}(\phi^1)''(\zeta)x_2^2 ) dx,$$
for some small $\zeta$. Integrating along $x_2$ yields
\begin{align}
&a^\frac{3}{4}\int\limits_{x_1 \leq 0} \psi^1(x_1)(\phi^1)(0)g'(0)\sqrt{a}x_1  dx_1 \label{eq:Charlie1}\\
&\quad + a^\frac{3}{4}\int\limits_{x_1 \leq 0} \psi^1(x_1)\frac{1}{2}(\phi^1)'(0)\left(g'(0)\sqrt{a}x_1 + \frac{1}{2}(g^-)''(0)a^\frac{3}{2}x_1^2\right)^2 dx_1 + O(a^\frac{9}{4}).\label{eq:Charlie2}
\end{align}
If we apply the same method to the second term of \eqref{eq:twoImportantTerms} we see, that we obtain the expression above with $x_1\leq 0$ replaced by $x_1\geq 0$. Using that $\psi$ has vanishing moments we see that the term \eqref{eq:Charlie1} will cancel with its counterpart from the computation of the second term of \eqref{eq:twoImportantTerms}.
We continue with the second term of \eqref{eq:Charlie1}, which equals
\begin{align}
a^\frac{3}{4}\int\limits_{x_1 \leq 0} \psi^1(x_1)(\phi^1)'(0) g'(0)(g^-)''(0) a^2 x_1^3 dx +a^\frac{3}{4} \int\limits_{x_1 \leq 0} \psi^1(x_1)(\phi^1)'(0) g'(0)^2 a x_1^2 dx  + O(a^\frac{9}{4}).\label{eq:theFirstOfTheImportantTerms}
\end{align}
With the same procedure we obtain that the second term of \eqref{eq:twoImportantTerms} equals:
\begin{align}
a^\frac{3}{4}\int\limits_{x_1 \geq 0} \psi^1(x_1)(\phi^1)'(0) g'(0)(g^+)''(0) a^2 x_1^3 dx + a^\frac{3}{4}\int\limits_{x_1 \leq 0} \psi^1(x_1)(\phi^1)'(0) g'(0)^2 a x_1^2 dx  + O(a^\frac{9}{4}).\label{eq:theSecondOfTheImportantTerms}
\end{align}
If we employ, after adding the terms \eqref{eq:theFirstOfTheImportantTerms} and \eqref{eq:theSecondOfTheImportantTerms}, that $\psi^1$ has 3 vanishing moments and $(g^-)''(0) \neq \pm(g^+)''(0)$ as well as the fact that $g'(0) \neq 0$ since $s$ does not correspond to a tangent direction, the result follows.

\end{proof}

Finally, we want to address item (i)(1) of Theorem \ref{thm:2DClassificationClassical} for compactly supported shearlets. 
In fact, the faster decay along regular points whose normal direction does not correspond to the shearing direction is associated 
with the frequency decay and the number of vanishing moments of the shearlet.  Indeed, Theorem \ref{thm:WavefrontSet2D} shows 
under which conditions we have the desired convergence for points not being contained in the wavefront set. By \cite[Thm. 5.1]{KLWavefront2009}, 
the wavefront set is exactly the set of $(p,s)$ such that the classical shearlet transform corresponding to $(p,s)$ decays rapidly. 
By Theorem \ref{thm:2DClassificationClassical}, these are exactly the points we are interested in.

Proposition \ref{prop:regularPoints} yields an upper bound and also a uniform lower bound for the asymptotic behavior, if the integral
\begin{align*}
 |\int_{\tilde{S}} \psi(x)dx|,
\end{align*}
is bounded from below uniformly for all regular points. While being necessary and sufficient, this might not be the most convenient 
condition, but, for instance, all detector shearlets fulfill this condition. 

With Propositions \ref{prop:regularPoints}, \ref{prop:FistOrCornerPoints}, \ref{prop:firstOrCorAligned}, \ref{prop:secOrCornerPoints}, 
and Theorem \ref{thm:WavefrontSet2D}, we can now state a theorem similar to Theorem \ref{thm:2DClassificationClassical} with the 
improvements of a uniform lower bound in the convergence of the shearlet coefficients of regular points associated with the normal 
direction and the extraction of the curvature. Furthermore, concerning uniform bounds, this is the best one can hope for, since 
almost no useful shearlet can have a different uniform lower and upper asymptotic bound on the decay of the shearlet coefficients 
as we will see in Theorem \ref{thm:optimalityOfasymptoticBehaviour}. In this sense the presented decay estimates are optimal.

The following theorem combines all the propositions from this subsection and states them in a unified way, wherefore in some
instances the conditions on the shearlet generator are slightly more restrictive than in the separate statements in the propositions. In the following theorem $H^L(\R^2)$ denotes the space of $L$ times weakly differentiable $L^2(\R^2)$ with all derivatives in $L^2(\R^2)$. The assertions of the following Theorem are illustrated in Figure \ref{fig:Summary2D}

\begin{theorem}\label{thm:Summary2D}
 Let $S\in \mathfrak{S}_\rho$ for $\rho>0$. Let $\psi \in H^L(\R^2) \cap L^{\infty}(\R^2)$ be a compactly supported shearlet with $M$ vanishing moments such that there exists an $\alpha \in (1/2,1)$ with $(1-\alpha)M \geq \frac{7}{4}$ and $(\alpha-\frac{1}{2} )L \geq \frac{7}{4}$.
 \begin{enumerate}
\item[(i)] Let $\alpha(t_0) = p\in \partial S$ be a regular point.
\begin{compactenum}[(1)]
 \item If $s$ does not correspond to the normal direction of $\partial S$ at $p$, then $\mathcal{SH}_{\psi}\chi_{S}(a,s,p)$ decays as
\begin{align}
  \mathcal{SH}_{\psi}\chi_{S}(a,s,p) = O(a^{(1-\alpha)M} + a^{(\alpha-\frac{1}{2} )L}), \text{ for } a\to 0, \ \text{ for all } \alpha \in \left(1/2,1\right). \label{eq:theDecay}
 \end{align}
 \item If $\delta>0$, $\|p-p_i\| > \delta$ for all corner points $p_i$, and $\tilde{s}$ corresponds to the normal direction of $\partial S$ at $p$, then there exists a constant $C_{\delta}$ such that for all $a\in (0,1]$
\begin{align*}
a^{\frac{3}{4}}\int_{\tilde{S}} \psi(x)dx - C_{\delta}a^{\frac{5}{4}} \leq \left \langle f, \psi_{a,s,p} \right \rangle \leq a^{\frac{3}{4}}\int_{\tilde{S}} \psi(x)dx + C_{\delta}a^{\frac{5}{4}}, \text{ for } s\in B_{a}(\tilde{s}),
\end{align*}
where
\begin{align*}
\tilde{S}  = \left \{(x_1,x_2)\in \suppp \psi: x_1\leq \frac{1}{2 \rho(s)^2}(\alpha_1''(t_0)-s\alpha_2''(t_0))x_2^2\right \}.
\end{align*}
\end{compactenum}
\item[(ii)] Let $p\in \partial S$ be a corner point.
\begin{compactenum}[(1)]
\item If $p \in \partial S$ is a corner point of the first type and $\tilde{s}$ corresponds to a normal direction of $\partial S$ at $p$, then if $s(a) \in B_a(\tilde{s})$
\begin{align*}
\lim_{a \to 0} a^{\frac{3}{4}} \left \langle f, \psi_{a,s(a),p} \right \rangle \in \left\{\int_{\tilde{S}^{up}}\psi(x)dx, \int_{\tilde{S}^{down}}\psi(x)dx\right\},
\end{align*}
where
\begin{align*}
\tilde{S}^{up}  = \tilde{S} \cap \{x: x_2 \geq 0\}, \ \tilde{S}^{down}  =\tilde{S} \cap \{x: x_2 < 0\}.
\end{align*}
\item If $p \in \partial S$ is a corner point of the first type and $s$ does not correspond to a normal direction of $\partial S$ at $p$, then
\begin{align*}
  \left \langle f, \psi_{a,s,p}\right \rangle = O(a^{\frac{5}{4}}) \text{ for } a\to 0.
\end{align*}
If furthermore $\psi = \psi^1 \otimes {\phi^1}$ for a wavelet $\psi^1\in L^2(\R)$ and $\psi \in C^2(\R^2) \cap L^2(\R^2)$ and $\phi^1(0) = 0$, ${\phi^1}'(0) \neq 0$, and
\begin{align*}
\int_{(-\infty,0)}{\psi^1(x_1)x_1^2 dx_1} \neq 0,
\end{align*}
then
\begin{align*}
 \lim \limits_{a\to 0^+} a^{-\frac{5}{4}}|\left \langle f, \psi_{a,s,p}\right \rangle| > 0.
\end{align*}
 \item If $\psi(x_1,x_2)$ fulfills the assumptions of Proposition \ref{prop:secOrCornerPoints}.
 Then for a corner point of the second type $p\in \partial S$ and $s$ that does not correspond to a normal direction of $\partial S$  at $p$, we have
\begin{align*}
\left \langle f, \psi_{a,s,p}\right \rangle = O(a^{\frac{7}{4}}) \text{ for } a\to 0.
\end{align*}
If furthermore
\begin{align*}
 \int_{(-\infty,0)}{\psi^1(x_1)x_1^3 dx_1} \neq 0, \text{ and } {\phi^1}''(0) \neq 0,
\end{align*}
then
\begin{align*}
 \lim \limits_{a\to 0^+} a^{-\frac{7}{4}}|\left \langle f, \psi_{a,s,p}\right \rangle| > 0.
\end{align*}
\end{compactenum}
\end{enumerate}
\end{theorem}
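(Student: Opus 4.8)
The plan is to assemble this theorem from the four propositions of this subsection together with the wavefront-set estimate of Theorem \ref{thm:WavefrontSet2D}, checking in each case that the (slightly tightened) hypotheses on $\psi$ imply the hypotheses of the relevant earlier statement. Concretely, item (i)(2) is exactly Proposition \ref{prop:regularPoints}, item (ii)(1) is Proposition \ref{prop:firstOrCorAligned}, item (ii)(2) is Proposition \ref{prop:FistOrCornerPoints}, and item (ii)(3) is Proposition \ref{prop:secOrCornerPoints}. The only case requiring a genuine argument beyond citation is item (i)(1).

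For (i)(1) I would argue as follows. Since $p$ is a regular point, $\partial S$ is smooth near $p$, and by the characterization of the wavefront set of $\chi_S$ (see \cite{KLWavefront2009} and the discussion preceding this theorem) the only directions appearing in the wavefront set over $p$ are the normal directions. Hence, if $s$ does not correspond to the normal direction at $p$, then $(p,s)$ is an $N$-regular directed point of $\chi_S$ for every $N>0$, and I may invoke Theorem \ref{thm:WavefrontSet2D} with an arbitrarily large $N$. Because $\psi$ is compactly supported and bounded, the decay hypothesis $\psi(x)=O((1+|x|)^{-P})$ holds for every $P$, so the term $a^{-3/4+P/2}$ is negligible; likewise, choosing $N$ large makes $a^{-3/4+\alpha N}$ negligible. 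The assumption $\psi\in H^L(\R^2)$ with $L>7/2$ (which follows from $(\alpha-\tfrac12)L\ge\tfrac74$ and $\alpha<1$) guarantees $\hat\psi\in L^1(\R^2)$ by Cauchy--Schwarz, so the theorem indeed applies. What survives is exactly $O(a^{(1-\alpha)M}+a^{(\alpha-\frac12)L})$, and this holds for each fixed $\alpha\in(1/2,1)$, which is the assertion. The existence of an $\alpha$ with $(1-\alpha)M\ge\tfrac74$ and $(\alpha-\tfrac12)L\ge\tfrac74$ is then used only to guarantee that this rate is at least $a^{7/4}$, hence strictly faster than the corner rate $a^{5/4}$, so that the classification is internally consistent.

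For the remaining items the work is to match hypotheses. In (ii)(2) and (ii)(3) the tensor assumptions $\psi=\psi^1\otimes\phi^1$ with the stated non-vanishing moment and derivative conditions are special cases of the weaker ``either/or'' hypotheses in Propositions \ref{prop:FistOrCornerPoints} and \ref{prop:secOrCornerPoints} (here one simply fixes the branch $\phi^1(0)=0$, respectively imposes the three-vanishing-moment condition), so the upper bounds and the strictly positive lower limits carry over verbatim. In (i)(2) the assumption $S\in\mathfrak{S}_\rho$ supplies the uniform bound on $\alpha^{(3)}$ needed for the uniform constant $C_\delta$ in Proposition \ref{prop:regularPoints}, and in (ii)(1) the splitting $\chi_S=\chi_{S_1}\pm\chi_{S_2}$ from Proposition \ref{prop:firstOrCorAligned} yields the two possible limit values $\int_{\tilde S^{up}}\psi$ and $\int_{\tilde S^{down}}\psi$.

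The main, and essentially only, obstacle is the bookkeeping in item (i)(1): one must confirm that the four competing exponents of Theorem \ref{thm:WavefrontSet2D} can all be reduced to the two surviving terms simultaneously for a single admissible choice of $(P,N,\alpha)$, and that the tightened global hypotheses on $M$, $L$ and $\alpha$ are mutually consistent (i.e.\ the admissible range is nonempty). Once this is verified, the theorem is the concatenation of the four propositions and the wavefront estimate, and no further computation is needed.
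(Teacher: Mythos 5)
Your proposal is correct and follows essentially the same route as the paper: items (i)(2) and (ii) are read off from Propositions \ref{prop:regularPoints}, \ref{prop:firstOrCorAligned}, \ref{prop:FistOrCornerPoints}, and \ref{prop:secOrCornerPoints}, while item (i)(1) is obtained by placing $(p,s)$ outside the wavefront set of $\chi_S$ (via Theorem \ref{thm:2DClassificationClassical} together with \cite[Thm.~5.1]{KLWavefront2009}) and then invoking Theorem \ref{thm:WavefrontSet2D}, with compact support and boundedness of $\psi$ rendering the $a^{-3/4+P/2}$ and $a^{-3/4+\alpha N}$ terms negligible. Your hypothesis bookkeeping (e.g.\ deducing $\hat\psi\in L^1(\R^2)$ from $\psi\in H^L(\R^2)$ with $L>7/2$ by Cauchy--Schwarz) is in fact more explicit than the paper's own two-line argument.
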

\begin{proof}
The assertions (i)(2) and (ii) are simply Propositions \ref{prop:regularPoints}, \ref{prop:FistOrCornerPoints}, \ref{prop:firstOrCorAligned}, and \ref{prop:secOrCornerPoints}.
Now we want to address the assertion (i)(1). By Theorem \ref{thm:2DClassificationClassical} the points described in item (i)(1) admit rapid decay, provided that the shearlet 
transform associated with a classical shearlet is considered. Going through the proof of Theorem \ref{thm:2DClassificationClassical} one can observe, that there exists neighborhoods of $s$ and $p$ such that the decay estimates hold even on this neighborhood with uniform constants. Hence, by Theorem 5.1 of \cite{KLWavefront2009}, they are contained in the complement of the wavefront set, 
which by Theorem \ref{thm:WavefrontSet2D} yields the decay of \eqref{eq:theDecay}.
\end{proof}

\begin{figure}\label{fig:Summary2D}
\fbox{
\begin{minipage}{0.24\textwidth}
\tiny{Regular Point:}

\centering
\includegraphics[width=0.9\linewidth]{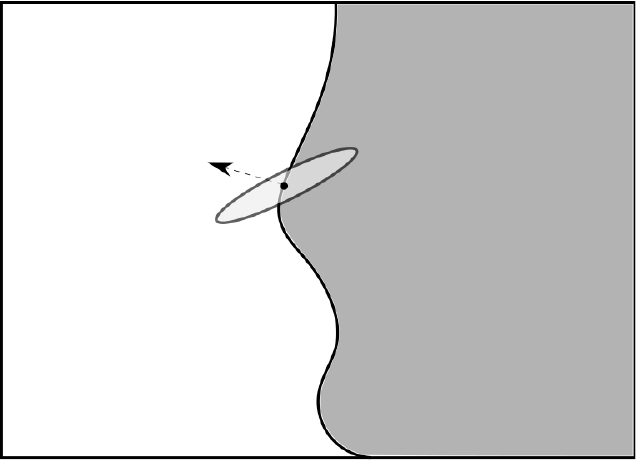}
\put(-45,32){$p$}

\tiny{$\mathcal{SH}_{\psi}f(a,s,p)$}

\tiny{$= O(a^{(1-\alpha)M} + a^{(\alpha - 1/2)L})$ \\ \ \\ \ }
\end{minipage}
\begin{minipage}{0.24\textwidth}
\vspace{-10pt}
\tiny{Aligned:}

\centering
\includegraphics[width=0.9\linewidth]{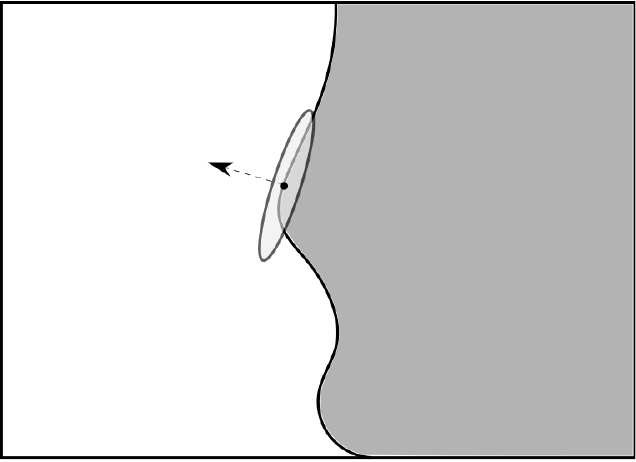}
\put(-52,38){$p$}

\tiny{$c_1 a^{\frac{3}{4}} \leq \mathcal{SH}_{\psi}f(a,s,p) \leq c_2 a^{\frac{3}{4}}$}

\tiny{for  $a \to 0$}
\end{minipage}
\begin{minipage}{0.24\textwidth}
\vspace{-4pt}
\tiny{First Order Corner Point:}

\centering
\includegraphics[width=0.9\linewidth]{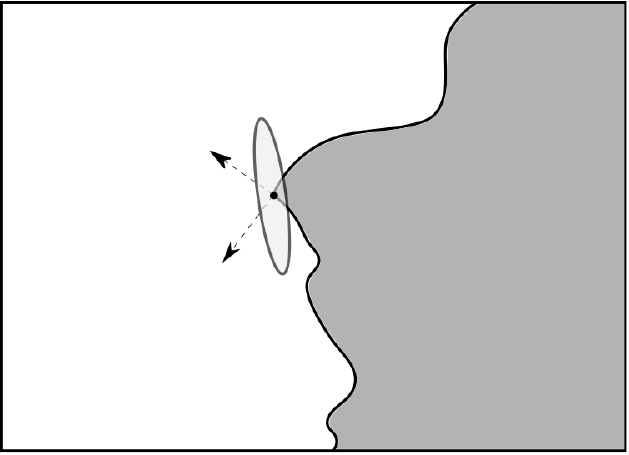}
\put(-46,35){$p$}

\tiny{$0<\lim \limits_{a \downarrow 0} a^{-\frac{5}{4}} \mathcal{SH}_{\psi}f(a,s,p)< \infty$  \\ \ }

\

\end{minipage}
\begin{minipage}{0.24\textwidth}
\vspace{-3pt}
\tiny{Second Order Corner Point:}

\centering
\includegraphics[width=0.9\linewidth]{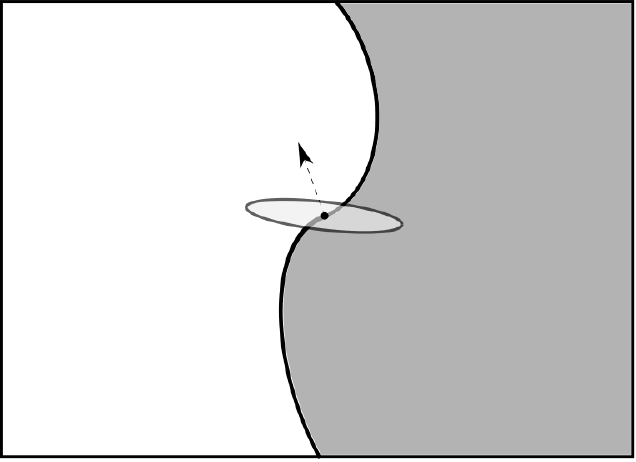}
\put(-43,27){$p$}

\tiny{$0< \lim \limits_{a \downarrow 0} a^{-\frac{7}{4}} \mathcal{SH}_{\psi}f(a,s,p)< \infty$  \\ \ }

\

\end{minipage}
}
\caption{Different decay rates of Theorem \ref{thm:Summary2D}. The constants $c_1,c_2$ are chosen in accordance with Theorem \ref{thm:Summary2D} (2).}
\end{figure}

\begin{remark}
Naturally the question arises, whether the results from this section and in particular Theorem \ref{thm:Summary2D} hold in a more general setup. Admittedly the framework of piecewise constant functions is restrictive. In the very recent article \cite{GuoL2014} the authors give first results concerning a function model that consists of piecewise smooth functions. For classification in this regime an additional shearlet generator is used to construct a more powerful shearlet system. We expect similar extensions to be possible also for compactly supported shearlets. The generalizations are, however, not straightforward and are therefore beyond the scope of this paper. This will be a topic of future work.
\end{remark}

\section{Detection and Classification in 3D}\label{sec:detectAndClassify3D}

We now turn to the 3D situation. In this section, we will give a characterization of singularities of a function $f$ by the 3D shearlet transform
introduced in Subsection \ref{sec:shearlets3D}. Similar to the 2D discussion, we will start with a subsection on the characterization of the wavefront
set. This will however require more work, since a 3D version of \cite[Thm. 5.5]{Grohs} is not available yet.

\subsection{Characterization of the Wavefront Set}\label{sec:detectWavefrontSet3D}

We first have to prove an extension of \cite[Thm. 5.5]{Grohs}, deriving a characterization of the complement of a three-dimensional wavefront set
as those points and directions which correspond to rapid decay of the shearlet transform. We wish to mention that this result is also interesting in
its own right. In fact, in the sequel, we will not use the statement of Theorem \ref{thm:InverseOfWavefrontSetThm} in its full generality, but only
in a more specialized situation as discussed in Remark \ref{rem:only}.

We start with three localization results, all of which correspond to results in \cite{Grohs} for 2D. The proofs of those lemmata are postponed to
Subsection \ref{subsec:proofs_sec4}.

\begin{lemma}\label{lem:SmoothCutOffPlusWavefront}
Let $\lambda\in \R^2$ and let $f\in L^2(\R^3)$ be a function such that, for an open neighborhood $V_\lambda \subset \R^2$ of $\lambda$,
\begin{align*}
\hat{f}(\eta) = O((1+|\eta|)^{-N}), \text{ for } |\eta| \to \infty,
\end{align*}
for all $\eta$ such that $\left( \frac{\eta_2}{\eta_1}, \frac{\eta_3}{\eta_1}\right) \in V_\lambda$.
Let $\Phi$ be a smooth function. Then there exists an open neighborhood $V_\lambda' \subset \R^2$ of $\lambda$ such that
\begin{align*}
(\Phi f)^\wedge (\eta) = O((1+|\eta|)^{-N}), \text{ for }|\eta| \to \infty,
\end{align*}
for all $\eta$ such that $\left( \frac{\eta_2}{\eta_1}, \frac{\eta_3}{\eta_1}\right) \in V_\lambda'$ and $|\eta| \to \infty$.
\end{lemma}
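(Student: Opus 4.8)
The plan is to pass to the Fourier side, where multiplication by $\Phi$ becomes convolution, and then to exploit that $\widehat{\Phi}$ is rapidly decreasing (as holds for the smooth cut-offs $\Phi$ relevant here) in order to transport the directional decay of $\widehat{f}$ from the cone over $V_\lambda$ into a slightly smaller cone over a neighborhood $V_\lambda'$. Concretely, I introduce the good cone
$$
\Gamma := \left\{\zeta\in\R^3 : \zeta_1\neq 0,\ \left(\tfrac{\zeta_2}{\zeta_1},\tfrac{\zeta_3}{\zeta_1}\right)\in V_\lambda\right\},
$$
fix a bounded open set $V_\lambda'$ with $\lambda\in V_\lambda'$ and $\overline{V_\lambda'}\subset V_\lambda$, and write, using $(\Phi f)^\wedge = c\,\widehat{\Phi}\ast\widehat{f}$, the splitting
$$
(\Phi f)^\wedge(\eta)=c\int_{\Gamma}\widehat{\Phi}(\eta-\zeta)\widehat{f}(\zeta)\,d\zeta+c\int_{\R^3\setminus\Gamma}\widehat{\Phi}(\eta-\zeta)\widehat{f}(\zeta)\,d\zeta
$$
for $\eta$ in the cone $\Gamma'$ over $V_\lambda'$. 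The two pieces are then estimated separately.

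On $\Gamma$ the hypothesis gives $|\widehat{f}(\zeta)|\le C(1+|\zeta|)^{-N}$. Using the elementary inequality $1+|\eta|\le (1+|\zeta|)(1+|\eta-\zeta|)$, which yields $(1+|\zeta|)^{-N}\le (1+|\eta|)^{-N}(1+|\eta-\zeta|)^{N}$, together with $\int_{\R^3}|\widehat{\Phi}(w)|(1+|w|)^{N}\,dw<\infty$ (valid since $\widehat{\Phi}$ is Schwartz), the first integral is bounded by $C'(1+|\eta|)^{-N}$, which already has the claimed order.

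The remaining integral is where the geometry enters and is the main obstacle. Since $V_\lambda'$ is bounded, there is $R>0$ with $|\eta_1|\ge (1+R^2)^{-1/2}|\eta|$ for $\eta\in\Gamma'$; combining this lower bound on $\eta_1$ with the positive angular gap between $\overline{V_\lambda'}$ and $\partial V_\lambda$, I claim there is $c>0$ with $|\eta-\zeta|\ge c|\eta|$ whenever $\eta\in\Gamma'$, $|\eta|$ is large, and $\zeta\notin\Gamma$. I would prove this by contradiction: if $\zeta$ lay in the ball of radius $c|\eta|$ about $\eta$ with $c$ small, then $\zeta_1$ remains comparable to $|\eta|$ and a direct estimate of $|\tfrac{\zeta_2}{\zeta_1}-\tfrac{\eta_2}{\eta_1}|$ and $|\tfrac{\zeta_3}{\zeta_1}-\tfrac{\eta_3}{\eta_1}|$ shows the slopes of $\zeta$ differ from those of $\eta\in V_\lambda'$ by $O(c)$, forcing $\zeta\in\Gamma$. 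Granting the separation, Cauchy--Schwarz gives
$$
\left|\int_{\R^3\setminus\Gamma}\widehat{\Phi}(\eta-\zeta)\widehat{f}(\zeta)\,d\zeta\right|\le \|\widehat{f}\|_{2}\left(\int_{|w|\ge c|\eta|}|\widehat{\Phi}(w)|^{2}\,dw\right)^{1/2},
$$
and the last factor decays faster than any power of $|\eta|$ by the Schwartz decay of $\widehat{\Phi}$; in particular it is $O((1+|\eta|)^{-N})$.

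Adding the two estimates yields $(\Phi f)^\wedge(\eta)=O((1+|\eta|)^{-N})$ for all $\eta$ with $(\tfrac{\eta_2}{\eta_1},\tfrac{\eta_3}{\eta_1})\in V_\lambda'$ as $|\eta|\to\infty$, which is the assertion for this choice of $V_\lambda'$. The step I expect to require the most care is the separation estimate $|\eta-\zeta|\ge c|\eta|$: one must verify that spatial closeness of $\eta$ and $\zeta$ genuinely forces closeness in both slope coordinates uniformly as $|\eta|\to\infty$, and it is precisely the bound $|\eta_1|\gtrsim|\eta|$ on the smaller cone that makes the slope map well-behaved there.
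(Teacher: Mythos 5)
Your proof is correct and follows essentially the same route as the paper's: both write $(\Phi f)^\wedge = \hat{\Phi} * \hat{f}$, shrink $V_\lambda$ to a neighborhood $V_\lambda'$ with a positive angular gap, and split the convolution into a near piece where the directional decay of $\hat{f}$ applies and a far piece killed by the rapid decay of $\hat{\Phi}$. The only difference is organizational — the paper splits according to the size of the shift variable ($|\xi|<\delta t$ versus $|\xi|\geq \delta t$) while you split according to cone membership of the source variable, and you make explicit the separation estimate and the Peetre/Cauchy--Schwarz steps that the paper leaves implicit.
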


This lemma gives rise to the following characterization of $N-$regular directed points.

\begin{lemma}\label{lem:DirectedPointsAndCones}
A point $(p_0,s_0)$ is an $N-$regular directed point of $g\in L^2(\R^3)$ if and only if $(p_0,s_0)$ is an $N -$ regular directed point of $P_{\mathcal{P}_{u,v,w}}g$ provided that $|s_0^{(1)}| < v, |s_0^{(2)}| < w$ for $s_0 = (s_0^{(1)}, s_0^{(2)})$.
\end{lemma}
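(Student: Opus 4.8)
The plan is to reduce the equivalence to a single statement about the error term $h := g - P_{\mathcal{P}_{u,v,w}}g$, whose Fourier transform is $\widehat{h} = (1 - \chi_{\mathcal{P}_{u,v,w}})\widehat{g}$. I would first show that, after multiplication by any spatial cutoff $\phi \in C^\infty_0$, the localized transform $\widehat{\phi h}$ decays rapidly in a full cone of directions around $s_0$. Granting this, both implications follow from the identities $P_{\mathcal{P}_{u,v,w}}g = g - h$ and $g = P_{\mathcal{P}_{u,v,w}}g + h$ together with linearity of the Fourier transform: the quantities $\widehat{\phi g}$ and $\widehat{\phi\, P_{\mathcal{P}_{u,v,w}}g}$ differ by the rapidly decaying $\widehat{\phi h}$, so one satisfies the decay defining an $N$-regular directed point exactly when the other does, and with the same cutoff $\phi$ (hence the same $U_{p_0}$).

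The geometric heart of the argument exploits the hypothesis $|s_0^{(1)}| < v$, $|s_0^{(2)}| < w$. I would choose an open neighborhood $V_{s_0}$ of $s_0$ whose closure is contained in the open box $(-v,v)\times(-w,w)$; this is possible precisely because $s_0$ lies strictly inside the angular range of the pyramid $\mathcal{P}_{u,v,w} = \mathcal{P}^1_{u,v,w}$. For $\eta$ with $(\eta_2/\eta_1,\eta_3/\eta_1) \in V_{s_0}$ the bounds $|\eta_2| \leq v'|\eta_1|$ and $|\eta_3| \leq w'|\eta_1|$ (with $v'<v$, $w'<w$ coming from $\overline{V_{s_0}}$) force $|\eta_1| \geq |\eta|/\sqrt{1+(v')^2+(w')^2}$, so there is $R>0$ with $|\eta_1| \geq u$ whenever $|\eta| > R$. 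Hence every such $\eta$ lies in $\mathcal{P}_{u,v,w}$, giving $\widehat{h}(\eta) = 0$ there; in particular $\widehat{h}(\eta) = O((1+|\eta|)^{-N})$ for all $N$ on this directional cone.

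The step I expect to be the main obstacle is passing from this un-localized directional vanishing of $\widehat{h}$ to the localized quantity $\widehat{\phi h}$ that actually appears in the definition, since multiplication by $\phi$ convolves in frequency and smears the Fourier support out of the cone. This is exactly what Lemma \ref{lem:SmoothCutOffPlusWavefront} controls. Applying it with $f = h$ and $\Phi = \phi$ produces a possibly smaller neighborhood $V'_{s_0}$ of $s_0$ on which $\widehat{\phi h}(\eta) = O((1+|\eta|)^{-N})$. Since decay at the top order $N$ implies decay at every lower order, and $\widehat{h}$ in fact decays at all orders on the cone, a single application at the required order suffices and $V'_{s_0}$ may be taken independent of $N$.

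Finally I would assemble the two implications by a cone-intersection bookkeeping. For the forward direction, let $\phi$ and $V^g_{s_0}$ witness the $N$-regularity of $g$ at $(p_0,s_0)$; on $V^g_{s_0}\cap V'_{s_0}$, again a neighborhood of $s_0$, both $\widehat{\phi g}$ and $\widehat{\phi h}$ decay rapidly, so $\widehat{\phi\, P_{\mathcal{P}_{u,v,w}}g} = \widehat{\phi g} - \widehat{\phi h}$ does too, with $\phi$ still $\equiv 1$ on $U_{p_0}$. The converse is the mirror image: from a cutoff witnessing regularity of $P_{\mathcal{P}_{u,v,w}}g$ and the identity $\widehat{\phi g} = \widehat{\phi\, P_{\mathcal{P}_{u,v,w}}g} + \widehat{\phi h}$, the same $h$-estimate gives rapid decay of $\widehat{\phi g}$. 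The hypothesis $|s_0^{(1)}|<v$, $|s_0^{(2)}|<w$ enters only in choosing the box-contained cone $V_{s_0}$ of the second step, which is precisely what makes $\widehat{h}$ vanish in the directions around $s_0$.
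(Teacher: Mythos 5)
Your proof is correct and follows essentially the same route as the paper: both decompose $g$ into $P_{\mathcal{P}_{u,v,w}}g$ plus the complementary piece $h = P_{(\mathcal{P}_{u,v,w})^c}g$, observe that $\widehat{h}$ vanishes on a cone of directions around $s_0$ for large frequencies (precisely because $|s_0^{(1)}|<v$, $|s_0^{(2)}|<w$), and invoke Lemma \ref{lem:SmoothCutOffPlusWavefront} to transfer this to the localized quantity $\widehat{\phi h}$, concluding both implications by linearity. Your treatment of the geometric step is in fact slightly more careful than the paper's, which asserts the vanishing of $(P_{(\mathcal{P}_{u,v,w})^c}g)^\wedge$ on the directional cone without noting that this only holds once $|\eta|$ is large enough to force $|\eta_1|\geq u$.
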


The final localization result will later be employed to show that only the shearlet coefficients in a neighborhood
of $p_0$ are relevant.

\begin{lemma}\label{lem:localizationForWavefront}
Let $f\in L^2(\R^3)$, and let $\Phi$ be a smooth bump function supported in a neighborhood $V(p_0)$ of a point $p_0 \in \R^3$. Furthermore,
let $\delta >0$, let $U(p_0)$ be another neighborhood of $p_0$ such that $V(p_0) + B_\delta(0) \subset U(p_0)$, and set for an admissible shearlet $\psi\in L^2(\R^3)$
\begin{align}
g := \int_{p \in U(p_0)^c, s\in [-\Xi, \Xi], a\in (0,\Gamma]} \left \langle f, \psi_{a,s,p}\right \rangle\Phi\psi_{a,s,p} a^{-4} da ds dp. \label{eq:definitionOfG}
\end{align}
Then, for all $u,v,w \in (0,\infty)$, we have
\begin{align*}
|\hat{g}(\xi)| = O(|\xi|^{-N}), \ \text{ for } |\xi | \to \infty, \xi \in \mathcal{P}_{u,v,w},
\end{align*}
provided that, for $j=0,\dots, N$ with $\frac{P_j}{2} > j+4$,
\begin{align}
\left(\frac{\partial }{\partial x_1}\right)^j \psi (x) = O(| x |^{-P_j}), \ \text{ for }  |x|\to \infty. \label{eq:derivativeDecay}
\end{align}
\end{lemma}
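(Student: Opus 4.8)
The plan is to reduce the frequency decay of $\hat g$ on the pyramid to an $L^1$ bound on the $x_1$-derivatives of $g$. Writing $g = \Phi\cdot h$, where $g$ is given by \eqref{eq:definitionOfG} and
$$
h(x) := \int_{p\in U(p_0)^c,\ \|s\|_\infty\leq\Xi,\ a\in(0,\Gamma]} \langle f,\psi_{a,s,p}\rangle\, \psi_{a,s,p}(x)\, a^{-4}\,da\,ds\,dp,
$$
I observe that $\Phi$ is smooth and compactly supported, so it suffices to show that $h$ is $N$ times differentiable in $x_1$ with $\partial_{x_1}^j h$ bounded on $\suppp\Phi$ for $j=0,\dots,N$. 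Indeed, the Leibniz rule then makes each $\partial_{x_1}^j g$ bounded with compact support, hence an element of $L^1(\R^3)$. Integrating by parts $N$ times in $x_1$ gives $(2\pi i\xi_1)^N\hat g(\xi)=\widehat{\partial_{x_1}^N g}(\xi)$, so that $|\xi_1|^N|\hat g(\xi)|\leq (2\pi)^{-N}\|\partial_{x_1}^N g\|_{L^1}$. On $\mathcal P_{u,v,w}$ one has $|\xi_1|\geq(1+v^2+w^2)^{-1/2}|\xi|$, which upgrades this to the claimed $|\hat g(\xi)|=O(|\xi|^{-N})$.

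The heart of the matter is the bound on $\partial_{x_1}^j h$. First I would record the algebraic fact that differentiating a shearlet in $x_1$ produces only first-variable derivatives of $\psi$: since $M_{a,s}$ is upper triangular, so is $M_{a,s}^{-1}$, whence $(M_{a,s}^{-1})_{k1}=0$ for $k\geq 2$ and $(M_{a,s}^{-1})_{11}=a^{-1}$, giving
$$
\partial_{x_1}^j\psi_{a,s,p}(x)=a^{-(1+j)}(\partial_1^j\psi)\big(M_{a,s}^{-1}(x-p)\big).
$$
This is exactly why the hypothesis \eqref{eq:derivativeDecay} is imposed on $x_1$-derivatives of $\psi$. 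Combining the decay $(\partial_1^j\psi)(y)=O((1+|y|)^{-P_j})$ with the expansion estimate $|M_{a,s}^{-1}y|\geq c\,a^{-1/2}|y|$ (valid for $\|s\|_\infty\leq\Xi$ since $\|M_{a,s}\|\leq C a^{1/2}$), and bounding $|\langle f,\psi_{a,s,p}\rangle|\leq\|f\|_2\|\psi\|_2$ by Cauchy--Schwarz and the $L^2$-normalization of $\psi_{a,s,p}$, I obtain the pointwise estimate $|\partial_{x_1}^j\psi_{a,s,p}(x)|\leq C\,a^{-(1+j)}(1+c\,a^{-1/2}|x-p|)^{-P_j}$.

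The main obstacle, and the point at which the exponent condition is pinned down, is the power counting in the triple integral. For $x\in\suppp\Phi\subset V(p_0)$ and $p\in U(p_0)^c$, the separation $V(p_0)+B_\delta(0)\subset U(p_0)$ forces $|x-p|\geq\delta$; substituting $q=x-p$ and rescaling $r=a^{-1/2}q$ yields
$$
\int_{U(p_0)^c}\big(1+c\,a^{-1/2}|x-p|\big)^{-P_j}\,dp\leq a^{3/2}\int_{|r|\geq a^{-1/2}\delta}(1+c|r|)^{-P_j}\,dr = O\!\big(a^{P_j/2}\big),
$$
the last estimate holding for small $a$ because $P_j>3$. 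Since the $s$-integral contributes only the bounded factor $(2\Xi)^2$, this gives
$$
|\partial_{x_1}^j h(x)|\leq C\int_0^\Gamma a^{-(1+j)+P_j/2}\,a^{-4}\,da = C\int_0^\Gamma a^{\,P_j/2-j-5}\,da,
$$
which is finite exactly when $P_j/2-j-5>-1$, i.e. $P_j/2>j+4$, precisely the hypothesis of the lemma. The estimate is uniform in $x$, so it simultaneously supplies the integrable majorant that justifies differentiation under the integral sign and establishes the boundedness of $\partial_{x_1}^j h$ on $\suppp\Phi$. The required care is entirely in matching the anisotropic $a^{1/2}$/$a$ scaling of $M_{a,s}$ against the measure $a^{-4}\,da$ so that the exponent lands exactly on $P_j/2>j+4$; once the $L^1$ bounds on $\partial_{x_1}^j g$ are in hand, the integration-by-parts argument of the first paragraph concludes the proof.
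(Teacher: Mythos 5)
Your proof is correct, and its analytic core is the same as the paper's: both arguments hinge on the observation that differentiating $\psi_{a,s,p}$ in $x_1$ costs a factor $a^{-1}$ per derivative and produces only $\partial_1^j\psi$ (since the first column of $M_{a,s}^{-1}$ has a single nonzero entry $a^{-1}$), on the separation $|x-p|\geq \delta$ forced by $V(p_0)+B_\delta(0)\subset U(p_0)$, and on exactly the power counting $\int_0^\Gamma a^{P_j/2-j-5}\,da<\infty$, i.e. $P_j/2>j+4$. Where you differ is the reduction step: the paper routes through the plane Radon transform and the projection slice theorem, identifying $\hat g(\omega,s_1\omega,s_2\omega)$ with the one-dimensional Fourier transform of the Radon projection $I(u)=\mathcal{R}g(u,s)$, and then shows $\partial_u^N I$ is bounded and compactly supported (hence in $L^1(\R)$); you instead establish $\partial_{x_1}^j g\in L^1(\R^3)$ directly, integrate by parts in $x_1$ to get $|\xi_1|^N|\hat g(\xi)|\lesssim \|\partial_{x_1}^N g\|_{L^1}$, and use $|\xi_1|\geq (1+v^2+w^2)^{-1/2}|\xi|$ on $\mathcal{P}_{u,v,w}$. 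Your route is more elementary — it needs no Radon transform machinery and handles all directions in the pyramid at once through a single uniform $L^1$ bound — while the paper's slicing makes the directional nature of the statement explicit, treating each ray $(\omega,s_1\omega,s_2\omega)$ separately; the estimates one must prove in either case are identical. One small point present in both proofs: the hypothesis \eqref{eq:derivativeDecay} only controls $\partial_1^j\psi$ at infinity, so for scales $a$ bounded away from $0$ (where $|M_{a,s}^{-1}(x-p)|$ is merely bounded below by $c\,\Gamma^{-1/2}\delta$) one implicitly also uses local boundedness of $\partial_1^j\psi$ to have the global bound $(1+|y|)^{-P_j}$; this is harmless but worth stating.
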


With these localization lemmata, we can finally state and prove the result for classification of three-dimensional wavefront sets.

\begin{theorem}\label{thm:InverseOfWavefrontSetThm}
Let $0 < u,v,w < \infty$ and $f\in L^2(\mathcal{P}_{u,v,w})^{\vee}\cap L^1(\R^3)$. Assume that there exists a
neighborhood $U(p_0)$ of $p_0\in \R^3$ and some $\epsilon>0$ such that, for all $(s,p) \in B_\epsilon(s_0) \times U(p_0)$,
\begin{align*}
\mathcal{SH}_{\psi}f(a,s,p) = O(a^N), \ \text{ as } a\to 0,
\end{align*}
with $N\geq 3$ and the implied constants being uniform over $(s,p) \in B_\epsilon(s_0) \times U(p_0)$. If $\psi \in L^2(\R^3)$ is such that there exists a constant $C>0$
with
\begin{align*}
|\widehat{\psi}(\omega)| \leq C \frac{\min(1,|\omega_1|)^M}{(1+|\omega_1|^2)^{\frac{L}{2}}(1+|\omega_2|^2)^{\frac{L}{2}}(1+|\omega_3|^2)^{\frac{L}{2}}}, \ \text{ for all } \omega \in \R^3,
\end{align*}
with $M = \frac{N}{2}+4, L = N+6$, then $(p_0,s_0)$ is an $\frac{N-2}{2}$- regular directed point of $f$ for all $N$.
\end{theorem}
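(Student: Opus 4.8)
The plan is to reconstruct $f$ from its shearlet coefficients, to localize simultaneously in space (around $p_0$) and in frequency direction (around $s_0$), and then to convert the assumed scale-decay $O(a^N)$ of the coefficients into directional frequency decay of a spatially cut-off version of $f$. Since $f = P_{\mathcal{P}_{u,v,w}}f$ and $|s_0^{(1)}|, |s_0^{(2)}|$ are dominated by $v,w$, Lemma~\ref{lem:DirectedPointsAndCones} allows me to phrase the whole argument in terms of the pyramid-localized reconstruction. Concretely, I would fix a smooth bump $\Phi$ with $\Phi \equiv 1$ on a neighborhood of $p_0$ and $\suppp \Phi$ contained in the hypothesis neighborhood $U(p_0)$, and aim to show that $\widehat{\Phi f}(\eta) = O((1+|\eta|)^{-(N-2)/2})$ for every $\eta$ whose direction $(\eta_2/\eta_1,\eta_3/\eta_1)$ lies in a sufficiently narrow cone $V_{s_0}$ around $s_0$; this is exactly the defining estimate of an $\frac{N-2}{2}$-regular directed point (compare Lemma~\ref{lem:SmoothCutOffPlusWavefront}).

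The starting point is the tight-frame reproducing formula of Theorem~\ref{thm:3DReproFormula}, which splits $f$ into a window term and a shearlet integral. The window term has Fourier transform proportional to $|\widehat{W}|^2\,\widehat{f}$; by Lemma~\ref{lem:theWlemma} with the prescribed parameters $M=\tfrac{N}{2}+4$, $L=N+6$ the factor $|\widehat{W}(\xi)|^2$ decays polynomially of high order, and since $f\in L^1(\R^3)$ gives $\widehat{f}\in L^\infty$, this term already decays faster than required, its directional decay being preserved under multiplication by $\Phi$ via Lemma~\ref{lem:SmoothCutOffPlusWavefront}. For the shearlet integral I would split the $p$-integration into $p\in U(p_0)$ and $p\in U(p_0)^c$. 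The far part, after multiplication by $\Phi$, is precisely the object $g$ of Lemma~\ref{lem:localizationForWavefront}, so its Fourier transform is $O(|\xi|^{-N})$ throughout $\mathcal{P}_{u,v,w}$ (the required derivative-decay hypotheses on $\psi$ being available for the compactly supported generators under consideration). Hence only the spatially near part $f^{\mathrm{near}}_{\mathrm{sh}}$ remains.

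For $f^{\mathrm{near}}_{\mathrm{sh}}$ I would compute $\widehat{f^{\mathrm{near}}_{\mathrm{sh}}}(\eta)$ using $\widehat{\psi_{a,s,p}}(\eta) = a\,e^{-2\pi i\langle\eta,p\rangle}\,\widehat{\psi}(M_{a,s}^T\eta)$, where $M_{a,s}^T\eta=(a\eta_1,\sqrt{a}(\eta_2+s_1\eta_1),\sqrt{a}(\eta_3+s_2\eta_1))$, and split the shearing integral at $B_\epsilon(s_0)$. On the resonant region $s\in B_\epsilon(s_0)$ the hypothesis supplies the uniform bound $|\mathcal{SH}_\psi f(a,s,p)|\le C a^N$; inserting this and using that the decay of $\widehat{\psi}$ concentrates the integrand at $a\sim|\eta_1|^{-1}$ and at the resonant shear (which lies inside $B_\epsilon(s_0)$ once $V_{s_0}$ is narrow enough) yields decay of order $|\eta|^{1-N}$, comfortably better than needed, with $N\ge 3$ guaranteeing convergence of the $a$-integrals. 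On the non-resonant region $s\notin B_\epsilon(s_0)$ no coefficient decay is available, so I would instead apply Cauchy--Schwarz in $(a,s,p)$ against the Haar measure $a^{-4}\,da\,ds\,dp$, bounding the coefficient factor by the global upper frame bound $\int|\mathcal{SH}_\psi f|^2 a^{-4}\lesssim\|f\|_2^2$ from Lemma~\ref{lem:FrameOp}, and estimating the remaining factor $\int_{s\notin B_\epsilon}\int_0^\Gamma a^{-2}|\widehat{\psi}(M_{a,s}^T\eta)|^2\,da\,ds$ through the off-ridge decay of $\widehat{\psi}$. It is in this last estimate that the parameters $M,L$ are consumed and the final rate is produced; a concluding application of Lemma~\ref{lem:SmoothCutOffPlusWavefront} then transfers the directional decay of $f^{\mathrm{near}}_{\mathrm{sh}}$ to $\Phi f^{\mathrm{near}}_{\mathrm{sh}}$.

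I expect the main obstacle to be exactly this non-resonant Cauchy--Schwarz estimate: although the coefficients are only controlled in $L^2$ there, one must show that the rapid decay of $\widehat{\psi}$ away from its ridge forces shears misaligned with $\eta$ to contribute a genuine power of $|\eta|$, and one must track the interplay of the shearing scaling $\sqrt{a}$, the anisotropic weights $(1+a(\eta_i+s_{i-1}\eta_1)^2)^{-L/2}$, and the factor $\min(1,|a\eta_1|)^{M}(1+a^2\eta_1^2)^{-L/2}$ precisely enough to obtain the exponent $\frac{N-2}{2}$ uniformly over the cone. Verifying that the cone $V_{s_0}$ and the bump $\Phi$ can be chosen compatibly with the neighborhoods $U(p_0)$ and $B_\epsilon(s_0)$ furnished by the hypothesis, so that the window, far, and near contributions are all simultaneously controlled, is the remaining bookkeeping.
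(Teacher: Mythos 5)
Your proposal is correct and its architecture coincides with the paper's proof: the paper likewise starts from the reproducing formula of Theorem \ref{thm:3DReproFormula}, reduces to the pyramid via Lemma \ref{lem:DirectedPointsAndCones}, disposes of the window term using Lemma \ref{lem:theWlemma} together with $\hat f\in L^\infty$ (from $f\in L^1$), removes the spatially far coefficients with Lemma \ref{lem:localizationForWavefront}, and splits the remaining near part at the shear ball $B_\epsilon(s_0)$, using the hypothesis on resonant shears and the off-ridge decay of $\hat\psi$ elsewhere. The differences are in how the near-part integrals are closed. First, the paper splits additionally in scale at $a^2=1/|\xi|$, giving three terms $T_1,T_2,T_3$; on the resonant term it bounds $|\hat\phi*\hat\psi_{a,s,0}|$ by a constant and integrates $a^{N-3}$ over $a^2\le 1/|\xi|$, and it is these estimates that produce the exponent $\frac{N-2}{2}$, whereas your use of the concentration of $\hat\psi$ at $a\sim|\eta_1|^{-1}$ gives the stronger order $|\eta|^{-(N-2)}$ (not quite the $|\eta|^{1-N}$ you state, but comfortably sufficient). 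Second, on the non-resonant shears the paper does not use Cauchy--Schwarz: it bounds the coefficients uniformly by $\|f\|_2\|\psi\|_2$ and integrates the mismatch bound $a^{M-L/2}|\xi_1|^{M-L}$ directly; your frame-bound/Cauchy--Schwarz variant also closes, since with the stated parameters $2M-L-2=0$, so after inserting the mismatch $|s_i\eta_1+\eta_{i+1}|\gtrsim\epsilon|\eta_1|$ the second factor is of order $|\eta_1|^{-(N+5)/2}$. Third, the paper carries the spatial cutoff through all three terms (splitting the convolution $\hat\phi*\hat\psi_{a,s,0}$ by hand) instead of transferring it at the end via Lemma \ref{lem:SmoothCutOffPlusWavefront}; your order of operations is equally legitimate. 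Two caveats you should make explicit: the derivative-decay hypotheses \eqref{eq:derivativeDecay} required by Lemma \ref{lem:localizationForWavefront} do \emph{not} follow from the theorem's Fourier-side assumption on $\psi$ (spatial decay of $\partial_{x_1}^j\psi$ is a genuinely additional requirement, which the paper also passes over and which holds in its intended application, cf.\ Remark \ref{rem:only}); and since the hypothesis $O(a^N)$ is only asymptotic as $a\to 0$, on the resonant region you must absorb $a\in[a_0,\Gamma]$ into the constant via the trivial bound $|\mathcal{SH}_\psi f(a,s,p)|\le\|f\|_2\|\psi\|_2$ before integrating over all scales.
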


\begin{proof}
Let $W$ be chosen according to Lemma \ref{lem:theWlemma}, and choose $\Gamma, \Xi>0$ such that the system
\begin{align*}
\left(P_{\mathcal{P}_{u,v+\kappa,w+\kappa}}\psi_{a,s,p} \right)_{a\in(0,\Gamma], s\in [-\Xi,\Xi], p\in \R^3}\cup (T_pP_{\mathcal{P}_{u,v+\kappa,w+\kappa}} W )_{p\in \R^3},
\end{align*}
constitutes a tight frame for $L^2(\mathcal{P}_{u,v+\kappa,w\kappa})$, where $v +\kappa > |(s_0)_1|+\epsilon$ and $w +\kappa >|(s_0)_2|+\epsilon$. 
We then define
\begin{align*}
 g := \int_{p\in \R^3, s\in [\Xi,\Xi]^2, a\in (0,\Gamma]} \left \langle f, \psi_{a,s,p} \right \rangle \psi_{a,s,p} a^{-4}da ds dp.
\end{align*}
It will turn out that in order to prove that $(p_0,s_0)$ is a regular point of $f$, it in fact suffices to prove that $(p_0,s_0)$ is a 
regular point of $g$. This can be seen by Theorem \ref{thm:3DReproFormula}, which yields
\begin{align*}
 f = \ & \frac{1}{C_\psi}P_{\mathcal{P}_{u,v+\kappa,w+\kappa}}\left(\int \limits_{\R^3}\left \langle f, T_pW \right \rangle T_p W dp + g \right ).
\end{align*}
Now Lemma \ref{lem:DirectedPointsAndCones} implies that $(p_0,s_0)$ in an $N$-regular point of $f$, provided that it is an $N$-regular point of 
$\int \limits_{\R^3}\left \langle f, T_pW \right \rangle T_p W dp + g$.

We start by showing, that $(p_0,s_0)$ is an $N$-regular directed point of $\int \limits_{\R^3}\left \langle f, T_pW \right \rangle T_p W dp$. 
First,
\begin{align*}
\mathcal{F}\left(\int \limits_{\R^3}\left \langle f, T_pW \right \rangle T_p W dp\right)& =  \mathcal{F}\left(\left \langle f, T_{(\cdot)}W \right \rangle * W \right) = \mathcal{F}\left( (f * W ) * W \right) = \hat{f}(\xi) \widehat{W}^2.
\end{align*}
Since, by Lemma \ref{lem:theWlemma}, $|\widehat{W}(\xi)|$ decays like
\begin{align*}
|\widehat{W}(\xi)|^2= O(|\xi|^{-2\min(M, L-M + \frac{1}{2},L-M + \frac{1}{2})}) \text{ for } |\xi| \to \infty,
\end{align*}
and since $f \in L^1(\R^3)$ and hence $\hat{f}(\xi) $ is bounded, the term $|\hat{f}(\xi) \widehat{W}(\xi)^2|$ decays of order
$$O(|\xi|^{-2\min(M, L-M + \frac{1}{2})}) \text{ for } |\xi| \to \infty.$$

Second, we need to analyze the decay of $|\hat{g}(\xi)|$ for $\xi= (\xi_1,\xi_2,\xi_3)$ and $(\xi_2/\xi_1, \xi_3/\xi_1)\in B_\frac{\epsilon}{2}(s_0)$ and $|\xi| \to \infty$. With Lemma \ref{lem:localizationForWavefront} and a smooth cutoff function $\phi$ 
supported in a set $V$ such that $V+B_\delta \subset U(p_0)$ for some $\delta > 0$, we can restrict our attention to
\begin{align*}
 \tilde{g}: =  \left(\int_{p\in U(p_0), s\in [\Xi,\Xi]^2, a\in (0,1]} \left \langle f, \psi_{a,s,p} \right \rangle \phi \psi_{a,s,p} a^{-4}da ds dp\right).
\end{align*}
We now split $\hat{\tilde{g}}$ into 3 terms as follows:
\begin{align*}
\hat{\tilde{g}}
&= \left(\int_{p\in U(p_0), s\in [\Xi,\Xi]^2, a^2\in (\frac{1}{|\xi|},1)} \left \langle f, \psi_{a,s,p} \right \rangle \hat{\phi} *  \hat{\psi}_{a,s,0} e^{2\pi i p} a^{-4}da ds dp\right)\\
&\quad + \left(\int_{p\in U(p_0), s\in B_\epsilon(s_0), a^2\in (0, \frac{1}{|\xi|}]} \left \langle f, \psi_{a,s,p} \right \rangle \hat{\phi} *  \hat{\psi}_{a,s,0} e^{2\pi i p} a^{-4}da ds dp\right)\\
	&\quad \quad + \left(\int_{p\in U(p_0), s \in [-\Xi, \Xi]^2 \setminus B_\epsilon(s_0), a^2\in (0, \frac{1}{|\xi|}]} \left \langle f, \psi_{a,s,p} \right \rangle \hat{\phi} *  \hat{\psi}_{a,s,0} e^{2\pi i p} a^{-4}da ds dp\right)\\
&= T_1 + T_2 + T_3.
\end{align*}
We begin by estimating $T_1(\xi)$. By the Fourier decay of $\psi$, we obtain for each $\xi\in \R^3$
\begin{equation}\label{eq:noNeedForAName}
 |\hat{\psi}_{a,s,p}(\xi)| \leq \frac{C}{(1+a^2\xi_1^2)^{\frac{L}{2}}(1+a (s_1  \xi_1 + \xi_2)^2)^{\frac{L}{2}}(1+a (s_2  \xi_1 + \xi_3)^2)^{\frac{L}{2}}}
 \leq C a^{-L} |\xi_1 |^{-L}. 
\end{equation}
Due to our choice of $a$ this can be estimated by $|\xi_1 |^{-\frac{L}{2}}$ and if $(\xi_2/\xi_1, \xi_3/\xi_1)\in B_\frac{\epsilon}{2}(s_0)$ this is bounded by $C' |\xi |^{-\frac{L}{2}}$.
Furthermore
\begin{align}
\hat{\phi} * \hat{\psi}_{a,s,0}(\xi) = \int_{|y|\geq |\xi|/2} \hat{\phi}(y) \hat{\psi}_{a,s,0}(y-\xi) dy + \int_{|y|< |\xi|/2} \hat{\phi}(y) \hat{\psi}_{a,s,0}(y-\xi) dy. \label{eq:theSplittedConvolution}
\end{align}
By the smoothness of $\phi$, the first term of \eqref{eq:theSplittedConvolution} decays rapidly and the second decays as $O(|\xi |^{-\frac{L}{2}})$ for $|\xi| \to \infty$.
Hence we can estimate $T_1(\xi)$ by
\begin{align*}
|T_1(\xi)| \le C |\xi |^{-\frac{N-2}{2}}.
\end{align*}
We continue with term $T_2$.
By the assumptions in the decay of $\mathcal{SH}_{\psi}f(a,s,p)$ on $B_\epsilon(s_0) \times U(p_0)$, we obtain the following estimate
\begin{align*}
|T_2(\xi)| \lesssim  &\int \limits_{p\in U(p_0), s\in B_\epsilon(s_0), a^2\in (0, \frac{1}{|\xi|}]} a^{N} |\hat{\phi} *  \hat{\psi}_{a,s,0}| a^{-4}da ds dp \lesssim  &\int \limits_{a^2\in (0, \frac{1}{|\xi|}]} a^{N-3} da \lesssim \left( \frac{1}{ |\xi |} \right)^{\frac{N-2}{2}} = \left( |\xi |\right)^{-\frac{N-2}{2}}.
\end{align*}
Finally, the term $T_3$ is estimated by
\begin{align*}
|T_3| \lesssim &\int_{p\in U(p_0), s \in [-\Xi, \Xi]^2 \setminus B_\epsilon(s_0), a^2\in (0, \frac{1}{|\xi|}]}|\hat{\phi} *  \hat{\psi}_{a,s,0} | a^{-4}da ds dp.
\end{align*}
Now we need to estimate $\left(|\hat{\phi}| * | \hat{\psi}_{a,s,0}|\right)(\xi)$ for all $\xi$ satisfying $\left(\frac{\xi_2}{\xi_1}, \frac{\xi_3}{\xi_1}\right) \in B_{\frac{\epsilon}{2}}(s_0)$. One main ingredient will be the decay properties of $\hat{\psi}_{a,s,0}$ and $\hat{\phi}$. We start by observing that
\begin{align}
 \left(|\hat{\phi}| * | \hat{\psi}_{a,s,0}|\right)(\xi) = \int \limits_{y \in \R^3} |\hat{\phi}(y)| | \hat{\psi}_{a,s,0}(\xi-y)|dy. \label{eq:theIntegralToSplit}
\end{align}
We split the integral \eqref{eq:theIntegralToSplit} into the part $T_{3,1}$, where $|y| \leq \frac{\epsilon}{4}\frac{|\xi_1|}{1+\Xi}$, and the part 
$T_{3,2}$, where $|y| > \frac{\epsilon}{4}\frac{|\xi_1|}{1+\Xi}$.
In the first case, since $(\xi_2/\xi_1, \xi_3/\xi_1)\in B_\frac{\epsilon}{2}(s_0)$ and $s \not \in B_{\epsilon}(s_0)$, we know that there exists $s_i$, $i=1,2$ such that $|(s_i  \xi_1 + \xi_{i+1})| \geq |\frac{\epsilon}{2} \xi_1|$. Furthermore
\begin{align*}
|y_1 + s_iy_{i+1}| \leq (1+\Xi) |y|
\end{align*}
and hence
\begin{align*}
|y_1 + s_iy_i - \xi_1 + s_i \xi_i|\leq \frac{\epsilon}{4} |\xi_1|.
\end{align*}
Therefore we have 
\begin{align*}
 &|T_{3,1}(\xi)| \lesssim \frac{|a \xi_1|^M}{(1+a (\frac{\epsilon}{4}\xi_1)^2)^{\frac{L}{2}}} \leq a^{M-\frac{L}{2}} |\xi_1|^{M-L}.
\end{align*}
Since $M\geq4$, the term $T_{3,2}$ can be estimated by
\begin{align}
 &|T_{3,2}(\xi)| \lesssim \int \limits_{|y| > \frac{\epsilon}{4}\frac{|\xi_1|}{1+\Xi}}  |\hat{\phi}(y)| |a \xi_1|^{4}  dy \lesssim a^4\int \limits_{|y| > \frac{\epsilon}{4}\frac{|\xi_1|}{1+\Xi}}  |\hat{\phi}(y)| |\xi_1|^{4}  dy. \label{eq:weCanNeglectThis}
\end{align}
Furthermore, for $\xi$ with $(\xi_2/\xi_1, \xi_3/\xi_1) \in B_{\frac{\epsilon}{2}}(s_0)$,
$$|\xi| \sim |\xi_1|,$$
which implies that -- due to the decay of $|\hat{\phi}(\xi)|$ -- for $|\xi|\to \infty$ the term \eqref{eq:weCanNeglectThis} can be neglected.
Hence \eqref{eq:theIntegralToSplit} is of the order of
$$O(a^{M-\frac{L}{2}} |\xi|^{M-L}) \text{ for } a \to 0.$$
We finish this proof by inserting this estimate in $T_3$, arriving at
\begin{align*}
|T_3(\xi)| \lesssim &\int \limits_{a^2\in (0,\frac{1}{|\xi|}]}(a^{M-\frac{L}{2}-3} |\xi|^{M-L}) da \lesssim (|\xi|^{-\frac{M-2}{2}+\frac{L}{4}} |\xi|^{M-L})\lesssim |\xi|^{\frac{M-2}{2}-\frac{3}{4}L} \leq |\xi|^{-\frac{N}{2}-\frac{9}{2}}.
\end{align*}
The proof is complete.
\end{proof}

\begin{remark}  \label{rem:only}
We wish to mention that in the sequel, we will only use Theorem \ref{thm:InverseOfWavefrontSetThm} in the situation of $\psi$ being a classical shearlet, see \cite{AnalysisShearlet3D} for the construction of a 3D classical shearlet.
Due to the band-limitedness of classical shearlets, the assumptions of Theorem \ref{thm:InverseOfWavefrontSetThm} are then fulfilled for every $M,L\in \N$.
\end{remark}

The preceding remark implies that we can state the following corollary of Theorem \ref{thm:InverseOfWavefrontSetThm}, which is what we will
in fact require.

\begin{corollary}\label{cor:WavefrontSetForBandLimited}
Let $0 < u,v,w < \infty$, let $f\in L^2(\mathcal{P}_{u,v,w})\cap L^1(\R^3)$, and let $\psi$ be a classical shearlet. Assume that, for all $N\in \N$, there exists
a neighborhood $U(p_0)$ of $p_0\in \R^3$ and some $\epsilon>0$ such that, for all $(s,p) \in B_\epsilon(s_0) \times U(p_0)$,
\begin{align*}
\mathcal{SH}_{\psi}f(a,s,p) = O(a^N), \ \text{ as } a\to 0,
\end{align*}
with the implied constants uniform over $s$ and $p$. Then $(p_0,s_0)$ is a regular directed point of $f$.
\end{corollary}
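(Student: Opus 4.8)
The plan is to obtain the corollary directly from Theorem \ref{thm:InverseOfWavefrontSetThm} by letting the regularity order tend to infinity. The first move is a reduction: to show that $(p_0,s_0)$ is a regular directed point of $f$ — equivalently, an $\infty$-regular directed point — it suffices to show that $(p_0,s_0)$ is a $k$-regular directed point for every finite $k\in\N$. This rests on two structural facts about the notion introduced earlier. First, the defining condition of a $k$-regular directed point quantifies over all $0<N\le k$, so $k$-regularity implies $k'$-regularity whenever $k'\le k$; the property is monotone in the order. Second, for $k=\infty$ these definitions coincide with the classical notion of a regular directed point. Hence ``$k$-regular for every finite $k$'' is exactly ``regular''.

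Next I would fix an arbitrary $k\in\N$ and choose $N$ large enough that both $N\ge 3$ and $\frac{N-2}{2}\ge k$ hold, for instance $N=\max(3,2k+2)$. By the hypothesis of the corollary applied to this particular $N$, there exist a neighborhood $U(p_0)$ and an $\epsilon>0$ such that $\mathcal{SH}_{\psi}f(a,s,p)=O(a^N)$ as $a\to 0$, uniformly over $(s,p)\in B_\epsilon(s_0)\times U(p_0)$. This is precisely the decay hypothesis demanded by Theorem \ref{thm:InverseOfWavefrontSetThm}. It remains to verify the Fourier-side hypothesis on $\psi$ for the values $M=\frac{N}{2}+4$ and $L=N+6$: since $\psi$ is a classical shearlet it is band-limited, its generator vanishes near the $\omega_1=0$ hyperplane (so the numerator factor $\min(1,|\omega_1|)^M$ is harmless) and $\widehat{\psi}$ has compact support (so every polynomial decay in the denominator holds trivially). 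This is exactly the content of Remark \ref{rem:only}, which I would simply invoke to conclude that a classical shearlet satisfies the bound of Theorem \ref{thm:InverseOfWavefrontSetThm} for every $M,L\in\N$, in particular for the present choice.

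With both hypotheses in place, Theorem \ref{thm:InverseOfWavefrontSetThm} yields that $(p_0,s_0)$ is an $\frac{N-2}{2}$-regular directed point of $f$; by $\frac{N-2}{2}\ge k$ and the monotonicity noted above, it is then a $k$-regular directed point. Since $k\in\N$ was arbitrary, the reduction from the first paragraph finishes the argument. There is no genuine obstacle here, as the corollary is essentially a repackaging of the theorem; the only two points requiring care are the monotonicity-and-limit identification in the first step (ensuring that ``$\frac{N-2}{2}$-regular for all $N$'' really upgrades to full regularity rather than merely finite regularity) and the confirmation, via Remark \ref{rem:only}, that the band-limited classical shearlet meets the $\psi$-hypothesis for the required $M$ and $L$.
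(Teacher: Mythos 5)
Your proposal is correct and takes essentially the same route as the paper, which derives the corollary in one line by combining Theorem \ref{thm:InverseOfWavefrontSetThm} (applied for each $N$) with Remark \ref{rem:only} on the band-limitedness of classical shearlets. Your explicit choice $N=\max(3,2k+2)$ and your passage from $k$-regularity for every finite $k$ to full regularity merely spell out what the paper leaves implicit (indeed the paper does not comment on that final upgrade at all, even though it tacitly requires the $k$-dependent neighborhoods and cutoffs to be compatible as $k\to\infty$, a subtlety your write-up at least flags).
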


\subsection{Classification of Edges}\label{sec:classOfEdges3D}

Before we describe the situation for edge classification with the shearlet system introduced in Subsection \ref{sec:shearlets3D}, we would 
like to recall the classification result for classical shearlets from \cite{3DClassification}, both for comparison purposes and for 
using it in one argument of the proof of our main classification result Theorem \ref{thm:3Dmain}(vi).

To this end, we start by recalling the definition of a piecewise smooth manifold from \cite{3DClassification}:
Let $\Omega$ be a subset of $\R^3$ such that $\partial \Omega$ is a 2-dimensional manifold, then we say that $\partial \Omega$ is \emph{piecewise smooth} if:
\begin{compactenum}[(1)]
\item $\partial \Omega$ is a $C^\infty$ manifold except for possibly finitely many $C^3$ curves on $\partial \Omega$, which we call \emph{separating curves};
\item at each point of $\partial \Omega$, except for finitely many corner points $\{p_i\}_i$, the curve has exactly two outer normal vectors which are not collinear.
\item at each corner point $(p_i)$ of $\partial \Omega$, the curve has more than two outer normal vectors which are mutually not collinear and $\Omega$ is locally convex.
\end{compactenum}
Moreover, we say that a normal vector $n$ {\it corresponds to a shearing variable} $s$, if $n = (\cos\theta \sin \eta, \sin\theta \sin \eta, \cos \eta)$ and 
$s = (\tan \theta, \cot \eta \sec \theta)$ for $\eta, \theta \in [0, \pi]$.

Now we can state the main theorem for the classical shearlet.

\begin{theorem}[\cite{3DClassification}]\label{thm:3DClassificationClassical}
Let $\Omega$ be a bounded region in $\R^3$ and denote its boundary by $\partial\Omega$. Assume that $\partial\Omega$ is a piecewise smooth 2-dimensional manifold, which
does not contain any  corner points. Let $\gamma_j, j=1,2,\dots, m$ be the separating curves of $\partial \Omega$. Then the
following statements hold.
\begin{compactenum}[(i)]
 \item If $p\not \in \partial \Omega$, then
\begin{align*}
 \lim \limits_{a\to 0^+} a^{-N} \mathcal{SH}_\psi \chi_\Omega(a,s_1,s_2, p) = 0, \ \text{ for all } N>0.
\end{align*}
\item If $p\in \partial \Omega \setminus \bigcup_{j=1}^m \gamma_j$ and $(s_1,s_2)$ does not correspond to the normal direction of $\partial \Omega$ at $p$, then
\begin{align*}
 \lim \limits_{a\to 0^+} a^{-N}\mathcal{SH}_\psi \chi_\Omega(a,s_1,s_2,p) = 0, \ \text{ for all } N>0.
\end{align*}
\item If $p\in \partial \Omega \setminus \bigcup_{j=1}^m \gamma_j$ and $(s_1,s_2)$ corresponds to the normal direction of $\partial \Omega$ at $p$ or $p\in \bigcup_{j=1}^m \gamma_j$ and $(s_1, s_2)$ corresponds to one of the two normal directions of $\partial \Omega$ at $p$, then
\begin{align*}
 \lim \limits_{a\to 0^+} a^{-1}\mathcal{SH}_\psi\chi_\Omega(a,s_1,s_2,p) \neq 0.
\end{align*}
\item If $p \in \bigcup_{j=1}^m \gamma_j$ and $(s_1,s_2)$ does not correspond to the normal directions of $\partial \Omega$ at $p$, then there exists $C_{\delta, \rho}$ such that
\begin{align*}
 |\mathcal{SH}_\psi \chi_\Omega(a,s_1,s_2,p)| \leq C a^{\frac{3}{2}}.
\end{align*}
\end{compactenum}
\end{theorem}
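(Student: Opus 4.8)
The plan is to pass to the Fourier domain and exploit that a classical (band-limited) shearlet localizes the frequency integral. Writing $\psi_{a,s,p}(x)=a^{-1}\psi(M_{a,s}^{-1}(x-p))$ and using $\det M_{a,s}=a^2$, one gets
\[
\mathcal{SH}_\psi\chi_\Omega(a,s,p)=a\int_{\R^3}\hat{\chi_\Omega}(\xi)\,e^{2\pi i p\cdot\xi}\,\overline{\hat\psi(M_{a,s}^T\xi)}\,d\xi ,
\]
where $M_{a,s}^T\xi=(a\xi_1,\sqrt a(\xi_2+s_1\xi_1),\sqrt a(\xi_3+s_2\xi_1))$. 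Since $\hat\psi$ is supported in a set with $|\omega_1|\sim 1$ and $|\omega_2|,|\omega_3|$ bounded, the integrand is supported on a frequency \emph{plate} with $\xi_1\sim a^{-1}$ and transverse extent $\sim a^{-1/2}$, i.e. of volume $\sim a^{-2}$, centered along the conormal ray selected by $(s_1,s_2)$. Thus the decay of $\mathcal{SH}_\psi\chi_\Omega(a,s,p)$ is governed by the decay of $\hat{\chi_\Omega}$ along the conormal direction picked out by $s$. After translating $p$ to the origin and using that the singularities are detected locally, each case reduces to computing $\hat{\chi_\Omega}$ near $p$ from the local geometry of $\partial\Omega$.

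\textbf{Rapid decay, cases (i) and (ii).} For these I would use that the wavefront set of $\chi_\Omega$ is the conormal bundle of $\partial\Omega$: off $\partial\Omega$ the function $\chi_\Omega$ is locally constant, and at a smooth boundary point its only singular direction is the surface normal. Hence, if $p\notin\partial\Omega$ (case (i)) or if $p$ is a smooth boundary point and $s$ does not correspond to the normal (case (ii)), then $(p,s)$ is a regular directed point. The forward implication for band-limited shearlets, which is classical since $\hat\psi$ is Schwartz, then gives rapid decay; this is the converse direction to Corollary~\ref{cor:WavefrontSetForBandLimited}. Concretely, on the plate the only sub-rapidly-decaying contribution to $\hat{\chi_\Omega}$ comes from the normal ray, which is avoided by hypothesis, so the integral is $O(a^N)$ for every $N$.

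\textbf{Surface rate, case (iii).} Here I would compute $\hat{\chi_\Omega}$ near a smooth boundary point via the surface measure: integration by parts shows $\xi\cdot\hat{\chi_\Omega}(\xi)$ is, up to lower order, the Fourier transform of the (normal-weighted) surface measure on $\partial\Omega$. A two-dimensional stationary-phase argument, nondegenerate because the surface is curved, yields $\hat{\chi_\Omega}(\xi)\sim c\,|\xi|^{-2}$ along the conormal. Inserting this into the displayed integral, the integrand is $\sim a^2$ on a plate of volume $\sim a^{-2}$, so the integral tends to a constant, and the prefactor $a$ gives the rate $a^1$, matching $\lim_{a\to0^+}a^{-1}\mathcal{SH}_\psi\chi_\Omega\neq0$. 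The point requiring care is the \emph{nonvanishing} of the limiting constant, which amounts to checking that the leading integral of the stationary-phase profile against $\hat\psi$ does not cancel; this uses the positivity/shape properties of the classical generator. For a separating-curve point with $s$ equal to one of the two face normals, I would isolate the contribution of the corresponding smooth face (handled exactly as above) and show the second face and the edge are negligible relative to $a$.

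\textbf{Edge rate, case (iv) (the main obstacle).} At a point of a separating curve with $s$ corresponding to neither face normal, the plan is to decompose $\chi_\Omega$ near $p$ into the two smooth faces meeting along the $C^3$ curve plus an edge remainder. Off both face normals, each face contributes a rapidly decaying term by the case-(ii) mechanism, leaving the edge term. The edge is a one-dimensional set, so its frequency content concentrates near the two-dimensional \emph{conormal plane} of the edge tangent, decaying like $|\xi|^{-2}$ within that plane away from the two face-normal rays. The worst case is when the conormal direction selected by $s$ lies in this plane but avoids the face-normal rays: then the plate meets the edge content in a region where one transverse dimension is cut to $O(1)$ by the out-of-plane decay, giving effective volume $\sim a^{-3/2}$; combined with the $\sim a^2$ size of $\hat{\chi_\Omega}$ there and the prefactor $a$, this yields the bound $\le C a^{3/2}$. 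I expect \emph{the clean separation of the edge contribution from the two smooth faces, together with a bound uniform over all off-normal $s$ (in particular the degenerate in-plane directions)}, to be the hard part; this is where the local convexity at the separating curve and its $C^3$-regularity are essential, and where one may alternatively invoke the classical-shearlet analysis of \cite{3DClassification} to control the remainder.
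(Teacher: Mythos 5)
First, a structural point: the paper does not prove this theorem at all. It is recalled verbatim from \cite{3DClassification} (Guo--Labate) and used as a black box later, e.g.\ in the proof of Theorem \ref{thm:3Dmain}(vi) and in Section \ref{sec:optimality}. So the only meaningful baseline is the cited work, whose strategy your sketch does resemble in spirit: one passes to the Fourier side, reduces $\widehat{\chi_\Omega}$ to oscillatory integrals over the boundary surface, and exploits the frequency localization of the classical shearlet on a parabolic plate. Your plate-counting heuristics reproduce the correct exponents ($a^N$, $a$, $a^{3/2}$), which is a good sign, but two load-bearing steps are missing.

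The gaps are these. In case (iii), your stationary-phase step, $\widehat{\chi_\Omega}(\xi)\sim c\,|\xi|^{-2}$ along the conormal ray, presupposes nonvanishing Gaussian curvature of $\partial\Omega$ at $p$. The theorem makes no such assumption: flat faces are allowed (the motivating example in this very paper is a pyramid), and for a flat face the boundary contribution behaves like $|\xi|^{-1}$ concentrated in an $O(1)$-neighborhood of the normal ray rather than $|\xi|^{-2}$ spread over the plate; the final rate $a$ happens to come out the same, but by different bookkeeping, so your argument as written breaks precisely in a case the theorem covers, and a uniform treatment of the degenerate-curvature situation is needed. More importantly, the substance of (iii) is not the upper bound but the \emph{nonvanishing} of $\lim_{a\to0^+}a^{-1}\mathcal{SH}_\psi\chi_\Omega$; you flag this but do not prove it, and it is not a routine positivity check --- it is exactly where the explicit leading-order computation with the classical generator enters. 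In case (iv), which you correctly identify as the main obstacle, your proposed fallback --- ``invoke the classical-shearlet analysis of \cite{3DClassification} to control the remainder'' --- is circular, since that analysis \emph{is} the statement being proved. Without it you must actually carry out the decomposition of $\chi_\Omega$ near the separating curve and establish the $O(a^{3/2})$ bound uniformly over the in-plane, off-normal shear directions; no such argument is given. As it stands, the proposal is a plausible roadmap with the right numerology, but not a proof.
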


We now turn to our statement for compactly supported shearlets, and ask the reader to compare it to the just described result for
classical (band-limited) shearlets.
We though first require some notation. In the sequel, we will write $x_{(i,j)}$ for the 2D vector containing the $i$-th and $j$-th entry 
of the 3D vector $x$. Furthermore, note that, for $p\in \partial \Omega \setminus \bigcup_{j=1}^m \gamma_j$, there exists a local arc-length 
parametrization of $\partial \Omega$ at $p$, i.e., 
$$
\alpha: \R^2 \supset B_\epsilon(0) \to  U \subset \partial \Omega,\quad \alpha(0) = p,
$$
such that the Jacobian at $0$ is given by
\begin{align}
 J_\alpha(0) = \left(\begin{array}{c c}
                -\sin \theta& -\cos \eta \\
                \cos \theta  & 0 \\
		 0	& \sin \eta\\
 \end{array}\right), \quad \text{ with } s = (\tan \theta, \cot \eta\sec \theta). \label{eq:theJacobian}
\end{align}
We now fix this parametrization. Since the Jacobian only depends on $s$, we can define $\mathrm{P}(s) := J_{\alpha_{(x_2,x_3)}}^{-1}(0)$.

For compactly supported shearlets, we then achieve the following decay rates for different types of singularities,
which allow precise classification.

\begin{figure}[htb]
 \begin{minipage}{0.49\textwidth}
 \centering
  \includegraphics[width = 0.6\textwidth]{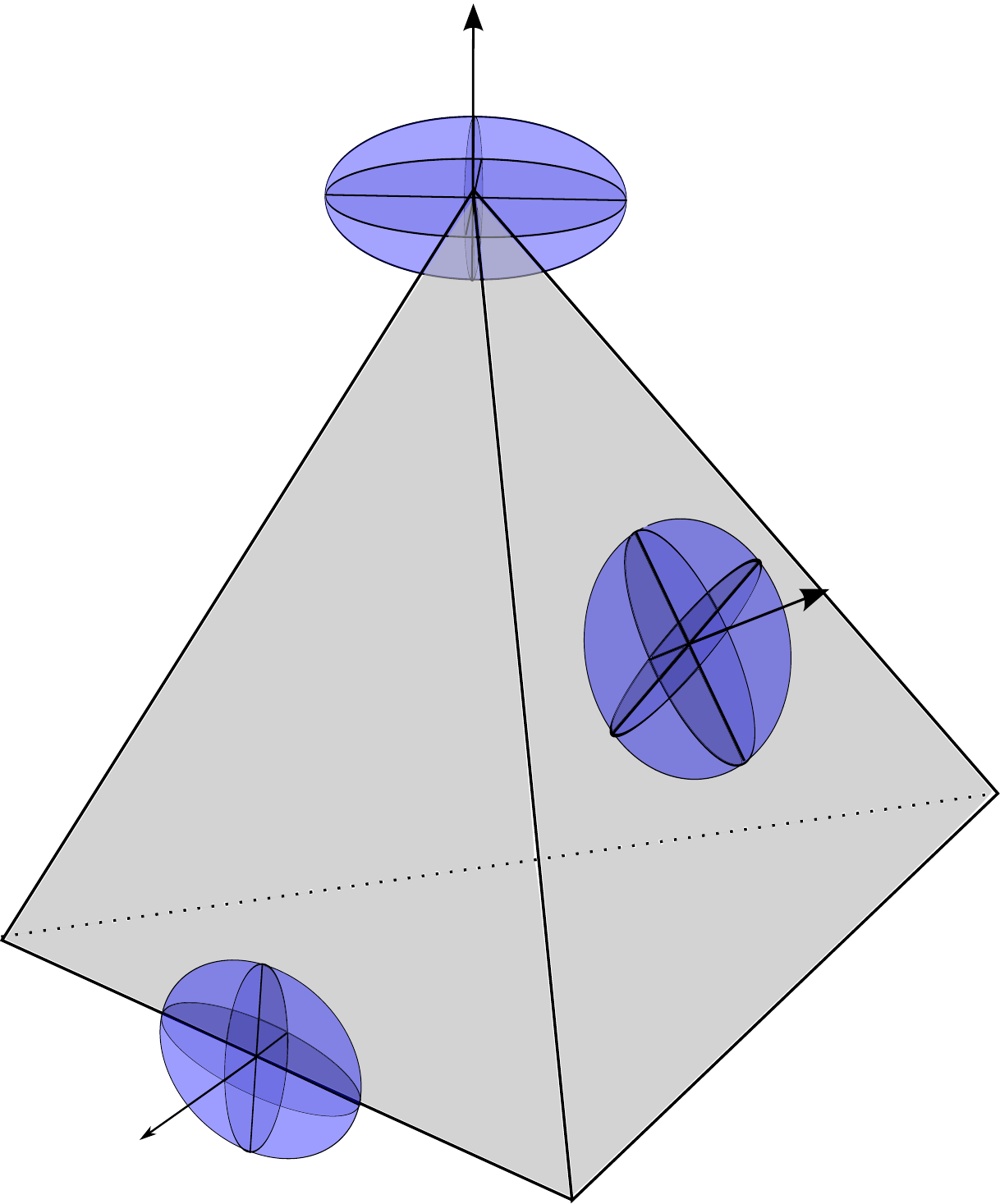}
  \put(-30,60){(A)}
  \put(-80,25){(B)}
  \put(-90,152){(C)}
 \end{minipage}
\begin{minipage}{0.49\textwidth}
  \centering
  \includegraphics[width = 0.6\textwidth]{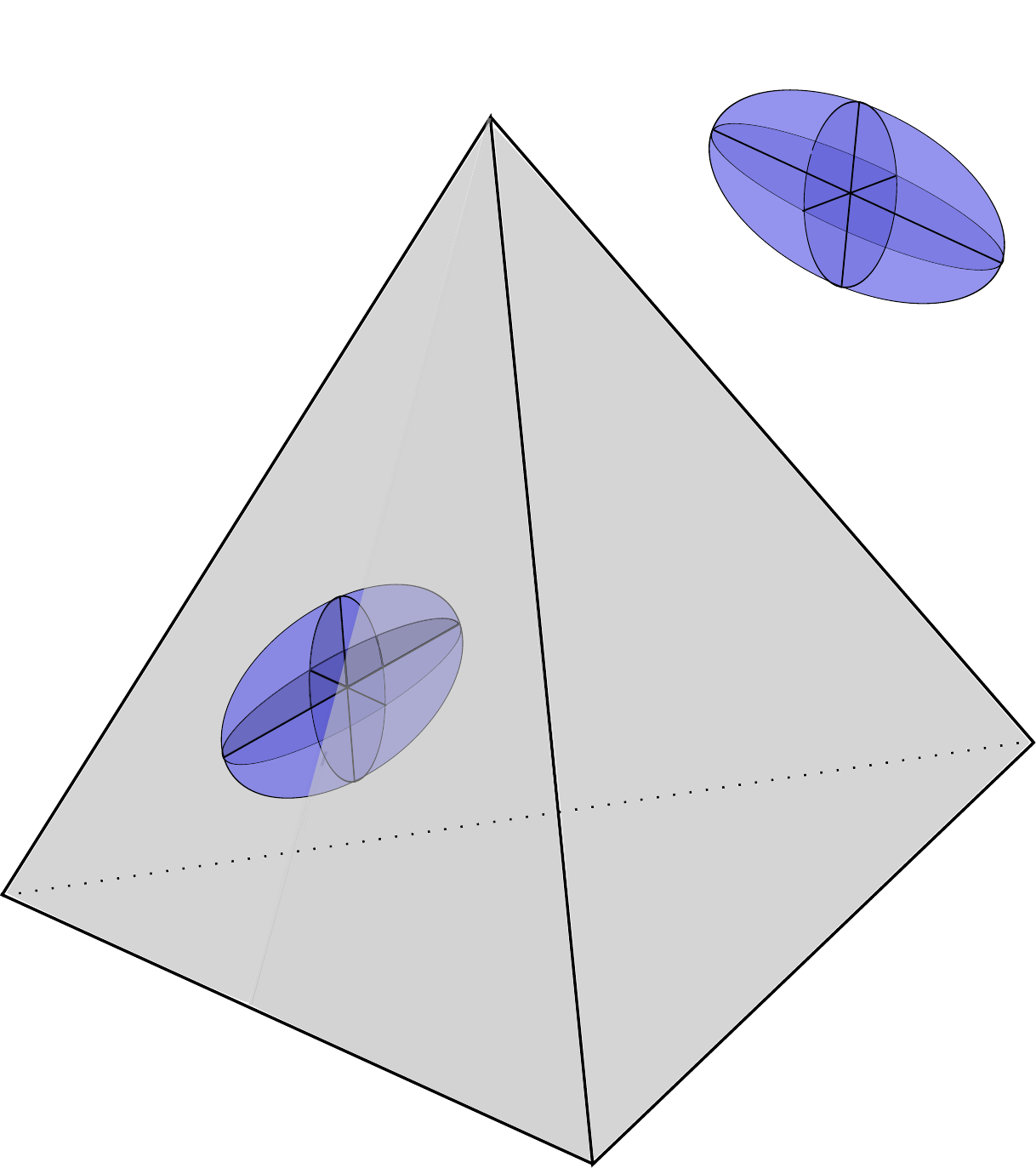}
  \put(-50,152){(D)}
  \put(-125,85){(E)}
 \end{minipage}
 \definecolor{slets}{rgb}{0,0,0.66}
 \caption{Different decay rates of Theorem \ref{thm:3Dmain}. The shearlets are depicted as \textcolor{slets}{blue ellipsoids}, $f$ is chosen to be the characteristic function of a pyramid: \quad
 \textbf{(A):} Shearlet aligned with the 2D discontinuity:  $\left \langle f, \psi_{a,s,p} \right \rangle \sim a.$ \quad
\textbf{(B):} Shearlet on separating curve, not corresponding to surface normals:  $\left \langle f, \psi_{a,s,p} \right \rangle \lesssim a^{\frac{3}{2}}.$ \quad
\textbf{(C):} Shearlet located at a 1D singularity:  $\left \langle f, \psi_{a,s,p} \right \rangle \lesssim a^{2}.$ \quad
\textbf{(D):} Shearlet not located within object:  $\left \langle f, \psi_{a,s,p} \right \rangle \lesssim a^{N}$ for all $N\in \N$. \quad
\textbf{(E):} Shearlet not corresponding to surface normal: \textit{fast decay depending on vanishing moments and smoothness of $\psi$}. 
}
\end{figure}

\begin{theorem} \label{thm:3Dmain}
Let $\Omega$ be a bounded region in $\R^3$ and denote its boundary by $\partial\Omega$. Assume that $\partial\Omega$ is a piecewise smooth 2-dimensional manifold. 
Let $\gamma_j, j=1,2,\dots, m$ be the separating curves of $\partial \Omega$ and $\{p_i\}_i$ be its corner points. Moreover, let $f = \chi_\Omega$, and let $\psi\in L^2(\R^3)\cap L^{\infty}(\R^3)$ be a 
compactly supported shearlet. Then the following statements hold.
\begin{compactenum}[(i)]
\item If $p\in \partial \Omega$ with $dist(p, \gamma_j)>\delta>0$ for all $j = 1,\dots, m$, if $(s_1,s_2)$ corresponds to an outer normal direction of 
$\partial \Omega$ at $p$, and if $\alpha$ is a local arc-length parametrization with $\alpha(t_0) = p$ obeying \eqref{eq:theJacobian} such that 
$\sum_{|\beta| = 3} |\partial^\beta \alpha(p)|\leq \rho$, then there exists $C_{\rho, \delta}$ such that for all $a\in (0,1]$
\begin{align*}
a \int_{\tilde{S}} \psi(x)dx - C_{\rho, \delta}a^{\frac{3}{2}} \leq \left \langle f, \psi_{a,s,p} \right \rangle \leq a \int_{\tilde{S}} \psi(x)dx + C_{\rho, \delta}a^{\frac{3}{2}},
\end{align*}
where 
\begin{align*}
\tilde{S}  = \left \{(x_1,x_2, x_3)\in \suppp \psi: x_1\leq \frac{1}{2} (\mathrm{P}(s)x_{(2,3)})^T  H_{(S_s^{-1}\alpha)_1}(t_0) (\mathrm{P}(s)x_{(2,3)}) \right \},
\end{align*}
with $H_{\alpha_1}$ being the Hessian Matrix of $\alpha_{(x_1)}$.
\item If $p \in \bigcup_{j=1}^m \gamma_j\setminus\{p_i\}_i$, if $\gamma_j(t_0) = p$, and if $(s_1,s_2)$ corresponds to one of the normal directions 
of $\partial \Omega$ at $p$, then
\begin{align*}
 \lim_{a\to 0} a^{-1}\left \langle f, \psi_{a,s,p}\right \rangle \in \left\{\int_{\tilde{S}^{up}} \psi(x)dx, \int_{\tilde{S}^{down}} \psi(x)dx \right\},
\end{align*}
where
$$
\tilde{S}^{up} = \tilde{S} \cap \{x: (\gamma_j'(t_0))_{3} x_2 \leq (\gamma_j'(t_0))_2 x_3\}
\quad \mbox{and} \quad \tilde{S}^{down} = \tilde{S} \cap \{x:(\gamma_j'(t_0))_3 x_2 \geq (\gamma_j'(t_0))_2 x_3\}. 
$$
\item If $p \in \bigcup_{j=1}^m \gamma_j\setminus\{p_i\}_i$ and if $(s_1,s_2)$ does not correspond to any of the normal directions of $\partial \Omega$ 
at $p$, then
\begin{align*}
  \left \langle f, \psi_{a,s,p}\right \rangle = O(a^{\frac{3}{2}}) \text{ for } a\to 0.
\end{align*}
\item If $p \in \{p_i\}_i$ and if $(s_1,s_2)$ corresponds to a direction $n$ such that there exist $\epsilon, \lambda >0$ satisfying that, for every 
$s<\epsilon$, we have $p - s n\in \Omega$, and for all $0 \neq \omega \in \Omega\cap (p_i + [-\epsilon, \epsilon]^3)$, we have 
$\langle\omega/\|\omega\|,n \rangle \leq \lambda < 0$, then
\begin{align*}
  \left \langle f, \psi_{a,s,p}\right \rangle = O(a^{2}) \text{ for } a\to 0.
\end{align*}
\item If $p \not \in \partial \Omega$, then $\left \langle f, \psi_{a,s,p} \right \rangle$ decays rapidly as $a \to 0$.
\item If $p\in \partial \Omega \setminus \bigcup_{j=1}^m \gamma_j$, and if $\psi\in H^{R}(\R^3)$ with $M$ moments in $x_1$ direction, then, for $\frac{1}{2} < \beta < 1$,
we have
\begin{align*}
 \left \langle f, \psi_{a,s,p}\right \rangle = O(a^{(1-\beta) M} + a^{(\beta-\frac{1}{2})R}) \text{ for } a \to 0.
\end{align*}
\end{compactenum}
\end{theorem}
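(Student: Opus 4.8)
The plan is to handle the six assertions by the same two-step scheme used for the 2D Propositions \ref{prop:regularPoints}, \ref{prop:firstOrCorAligned}, \ref{prop:FistOrCornerPoints}, \ref{prop:secOrCornerPoints}: reduce each to a geometric measure estimate after an anisotropic rescaling. First I would normalize. Translating by $p$ and applying the shear $S_s$ (so that $M_{a,s} = S_s A_a$ with $A_a = \mathrm{diag}(a, a^{1/2}, a^{1/2})$) reduces the problem to $\langle \chi_{S_s^{-1}(\Omega-p)}, \psi_{a,0,0}\rangle$, in which the outer normal corresponding to $s$ is aligned with the $x_1$-axis. The change of variables $y = M_{a,s}^{-1}(x-p)$, together with $\psi_{a,s,p} = a^{-1}\psi(M_{a,s}^{-1}(\cdot-p))$ and $|\det M_{a,s}| = a^2$, then yields the workhorse identity
\begin{align*}
\left\langle f, \psi_{a,s,p}\right\rangle = a\int_{M_{a,s}^{-1}(\Omega-p)\,\cap\,\suppp\psi} \psi(y)\,dy,
\end{align*}
valid for (i)--(iv). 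Everything hinges on the rescaled set $M_{a,s}^{-1}(\Omega-p)\cap\suppp\psi$ as $a\to 0$, recalling that the $x_1$-direction is scaled by $a$ and the two transverse directions by $a^{1/2}$.

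For (i) I would write $\partial\Omega$ locally as a graph $x_1 = h(x_{(2,3)})$; since $s$ is aligned with the normal the linear part of $h$ vanishes, and a second-order Taylor expansion identifies the Hessian, transported to the graph variables by $\mathrm{P}(s) = J_{\alpha_{(x_2,x_3)}}^{-1}(0)$ of \eqref{eq:theJacobian}, as the quadratic set $\tilde S$. The anisotropic scaling makes the rescaled region converge to $\tilde S$ with $L^1$-error $O(a^{1/2})$, controlled uniformly by the third-derivative bound $\rho$ and the distance $\delta$ from the separating curves; multiplying by the prefactor $a$ gives the two-sided bound $a\int_{\tilde S}\psi \pm C_{\rho,\delta}a^{3/2}$. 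Assertions (iii) and (ii) are the separating-curve analogues of Propositions \ref{prop:FistOrCornerPoints} and \ref{prop:firstOrCorAligned}: near $\gamma_j$ the set $\Omega$ is bounded by two smooth sheets. In (iii), where $s$ matches neither normal, the linearized wedge occupies, after rescaling, a region of measure $O(a^{1/2})$ (a transverse wedge of area $O(a^{1/2})$ times a rescaled curve-length of $O(1)$), so the identity gives $O(a^{3/2})$. In (ii) I would split $\chi_\Omega = \chi_{S_1}\pm\chi_{S_2}$ with only $S_1$ having a normal aligned to $s$; the $S_2$-term is $O(a^{3/2})$ by (iii), hence negligible against the $O(a)$ main term, while the $S_1$-term is treated as in (i) but restricted to the half selected by the tangent $\gamma_j'(t_0)$, producing the limit $\int_{\tilde S^{up}}\psi$ or $\int_{\tilde S^{down}}\psi$.

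The genuinely new part, and the one I expect to be the main obstacle, is (iv). Here one must exploit the local cone hypothesis: in normal-aligned coordinates the condition $\langle\omega/\|\omega\|,n\rangle\le\lambda<0$ for $\omega\in\Omega\cap(p+[-\epsilon,\epsilon]^3)$ forces $\Omega$ into a genuine cone $\{\|\omega_{(2,3)}\|\lesssim|\omega_1|,\ \omega_1<0\}$. Consequently, on the part of $\suppp\psi_{a,s,p}$ with $|\omega_1| = O(a)$, the transverse extent of $\Omega$ is also $O(a)$, so after rescaling $M_{a,s}^{-1}(\Omega-p)\cap\suppp\psi$ lies in a tube of transverse radius $O(a^{1/2})$ and has measure $O(a)$. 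The workhorse identity then yields $\langle f,\psi_{a,s,p}\rangle = O(a^2)$. The delicate point is that it is precisely the strict inequality $\lambda<0$ (the pointedness of the corner) that upgrades the transverse scale from the $O(1)$ one would see along a separating curve to $O(a^{1/2})$; checking that the geometric hypotheses genuinely give this uniform conical containment is the crux.

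Finally, (v) and (vi) bypass the rescaling identity. For (v), since $p\notin\partial\Omega$ and $\partial\Omega$ is closed, $f$ is constant on a neighbourhood of $p$; as $\suppp\psi_{a,s,p} = M_{a,s}\suppp\psi + p$ shrinks to $\{p\}$, for small $a$ the coefficient equals a constant times $\int\psi = \hat\psi(0) = 0$ (forced by admissibility), so the transform is eventually zero, a fortiori rapidly decaying. For (vi) I would first establish that $(p,s)$ is a regular directed point of every order: Theorem \ref{thm:3DClassificationClassical}(ii) gives rapid decay of the classical band-limited shearlet transform, uniformly on a neighbourhood of $(p,s)$, so Corollary \ref{cor:WavefrontSetForBandLimited} places $(p,s)$ outside the wavefront set. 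I would then invoke a 3D direct theorem, the higher-dimensional transcription of Theorem \ref{thm:WavefrontSet2D}, bounding the compactly supported shearlet transform at a regular directed point by a sum of powers of $a$, two of which carry the spatial-decay order (here infinite, as $\psi$ is compactly supported) and the directed regularity order (here infinite); these vanish, leaving precisely $O(a^{(1-\beta)M} + a^{(\beta-\frac{1}{2})R})$. The one ingredient beyond the excerpt is this 3D direct theorem, whose proof is a routine, if lengthy, dimensional lift of the argument behind Theorem \ref{thm:WavefrontSet2D}.
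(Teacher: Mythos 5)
Your reductions for (i), (ii), (iv) and (v) follow the paper's strategy, and your tube argument for (iv) is in fact cleaner than the paper's pyramid construction: the conical containment $\|x_{(2,3)}\|\lesssim \sqrt{a}\,|x_1|$ after rescaling does hold uniformly on $\suppp\psi$ and gives measure $O(a)$, hence $O(a^2)$. But your argument for (iii) --- the very step that (ii) leans on --- contains a genuine error. Near a separating curve with $s$ matching neither normal, the rescaled set $M_{a,s}^{-1}(\Omega-p)\cap\suppp\psi$ does \emph{not} have measure $O(a^{1/2})$. Each bounding sheet has normal $n$ with $(n_2,n_3)\neq(0,0)$, and under the anisotropic scaling a plane with such a normal converges to the plane with normal $(0,n_2,n_3)$, i.e.\ to a plane \emph{containing} the $x_1$-direction. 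Concretely, for the model wedge $\{x_2\geq |x_1|\}\times\R$ (edge perpendicular to the $x_1$-axis) the rescaled set is $\{x_2\geq \sqrt{a}\,|x_1|\}$, which converges to the half-space $\{x_2\geq 0\}$ and has measure $O(1)$ inside $\suppp\psi$; the crude bound $\|\psi\|_\infty\cdot(\text{measure})$ therefore only gives $O(a)$, not $O(a^{3/2})$. The missing idea is cancellation: the limit set is invariant in the $x_1$-direction, and admissibility forces $\int_\R\psi(x_1,x_2,x_3)\,dx_1=0$, so the integral of $\psi$ over the limit set vanishes; the coefficient is then controlled by the $O(\sqrt{a})$ symmetric difference between the rescaled set and its $x_1$-invariant limit, which yields $a\cdot O(\sqrt{a})=O(a^{3/2})$. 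This is exactly how the paper argues (it integrates $\psi$ in $x_1$ against the linearized graph $Dg^{+}(\sqrt{a}x_1,x_2)$ and uses $\int\psi\,dx_1=0$); without this step the claimed rate in (iii), and hence the splitting argument in (ii), does not follow.

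A secondary issue concerns (vi): you defer the conclusion to a ``3D direct theorem'' that you do not prove. No such theorem is available in the paper; instead the paper proves the estimate inline --- Plancherel, a frequency split at $|\xi_1|=a^{-\beta}$, vanishing moments for the low band, and, for the high band, the wavefront-set decay of $\hat{f}$ (obtained exactly as you say, from Theorem \ref{thm:3DClassificationClassical} together with Corollary \ref{cor:WavefrontSetForBandLimited}) combined with the $H^R$-smoothness of $\psi$. Your reduction is plausible in spirit, but as written it replaces the main computation by a citation to a result that does not yet exist; carrying out that dimensional lift is essentially the content of the paper's proof of (vi).
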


\begin{proof} 
 We start with (i) and $p \in \partial \Omega \setminus \bigcup_{j=1}^m \gamma_j$. It comes as no surprise that the general strategy will be similar 
 as in the 2D case and thus we start by restricting ourselves to analyzing
\begin{align}
\left \langle f, \psi_{a,0,0} \right \rangle. \label{eq:theApproximationOf}
\end{align}
Now we pick a local arc-length parametrization $\alpha$ of the manifold at $0$ such that $\alpha(0) = 0$ and $\alpha$ obeys \eqref{eq:theJacobian}. 
Due to the fact that the normal corresponding to $s=0$ is $n=\pm(1,0,0)$, we obtain with \eqref{eq:theJacobian}
\begin{align*}
J_{\alpha_{(x_2,x_3)}}(0) = \mathrm{Id}.
\end{align*}
Hence, $\alpha_{(x_2,x_3)}$ is locally invertible. By the preceding considerations we can write the set $\Omega \cap [-\epsilon, \epsilon]^3$ for $\delta>\epsilon>0$ as
\begin{align*}
 \{ x \in [-\epsilon, \epsilon]^3: x_1 \leq \alpha_{(x_1)}(\alpha_{(x_2,x_3)}^{-1}(x_2,x_3)) \}
\quad \mbox{or}\quad
 \{ x \in [-\epsilon, \epsilon]^3: x_1 \geq \alpha_{(x_1)}(\alpha_{(x_2,x_3)}^{-1}(x_2,x_3)) \},
\end{align*}
where $\alpha_{(x_1)}$ and $\alpha_{(x_2,x_3)}$ denote restrictions of $\alpha$ to the respective variables.
We now will restrict ourselves to the examination of the first possibility, since the second will follow analogously.

For sufficiently small $a$, $A_a^{-1} \Omega \cap \suppp \psi$ will have the form
\begin{align*}
\{ x \in \suppp \psi: a x_1 \leq \alpha_{(x_1)}(\alpha_{(x_2,x_3)}^{-1}(\sqrt{a}x_2,\sqrt{a}x_3)) \}.
\end{align*}
We now aim to find a suitable approximation for this set, which is independent of $a$. For this, as a suitable candidate we choose
\begin{align*}
 \tilde{S}: = \left \{(x_1,x_2, x_3)\in \suppp \psi:  x_1 \leq  \frac{1}{2} x_{(2,3)}^T  H_{\alpha_{1}}(0) x_{(2,3)},  \right \},
\end{align*}
where $H_{\alpha_1}$ denotes the Hessian matrix of $\alpha_{(x_1)}$. Notice that the difference
\begin{align}
\|\chi_{A_a^{-1} \Omega \cap \suppp \psi}-\chi_{\tilde{S}}\|_1  \label{eq:theEstimate1234}
\end{align}
is bounded by the integral
\begin{align}
 \int_{Q}|\frac{1}{a}\alpha_{(x_1)}(\alpha_{(x_2,x_3)}^{-1} (\sqrt{a}x_2,\sqrt{a}x_3)) - \frac{1}{2} x_{(2,3)}^T  H_{\alpha_1}(0) x_{(2,3)}| dx_2dx_3, \label{eq:theIntegral}
\end{align}
where $Q$ is the projection of $\suppp \psi$ onto the $x_2- x_3$ -plane. The next step is to
calculate the Taylor series of $\alpha_{(x_1)}(\alpha_{(x_2,x_3)}^{-1}(\sqrt{a}x_2,\sqrt{a}x_3))$ at $(0,0)$ to obtain
\begin{eqnarray*}
\alpha_{(x_1)}(\alpha_{(x_2,x_3)}^{-1}(\sqrt{a}x_2,\sqrt{a}x_3))
&= &\frac{1}{2} \frac{1}{a}(\sqrt{a}x_2,\sqrt{a}x_3)^T H_{\alpha_1}(0)(\sqrt{a}x_2,\sqrt{a}x_3) + \rho\sqrt{a}\\
&= &\frac{1}{2} (x_2,x_3)^T H_{\alpha_1}(0)(x_2,x_3) + \rho\sqrt{a}.
\end{eqnarray*}
Therefore the integral \eqref{eq:theIntegral} and consequently the estimate \eqref{eq:theEstimate1234} are bounded by $\rho \sqrt{a}$ for $a$ small enough. Finally,
\begin{align*}
 &\left \langle f, \psi_{a,0,0} \right \rangle = a \left \langle f \circ A_a, \psi \right \rangle  =  a \left \langle \chi_{A_a^{-1}\Omega}, \psi \right \rangle.
 \end{align*}
 Hence,
 \begin{align*}
a \left \langle \chi_{\tilde{S}}, \psi \right \rangle  - \rho(a^{\frac{3}{2}})  &\leq a \left \langle \chi_{\tilde{S}}, \psi \right \rangle - a \|\chi_{A_a^{-1} \Omega \cap \suppp \psi}-\chi_{\tilde{S}}\|_1 \|\psi\|_\infty  \leq \left \langle f, \psi_{a,0,0} \right \rangle \leq a \left \langle \chi_{\tilde{S}}, \psi \right \rangle  + \rho(a^{\frac{3}{2}}),
\end{align*}
which yields the result for the simplification \eqref{eq:theApproximationOf}. 

For general $s$ we revert to the case \eqref{eq:theApproximationOf} by setting $\tilde{f} = f \circ S_s$ to obtain
\begin{align*}
 \left \langle f, \psi_{a,s,0} \right \rangle=  \left \langle \tilde{f}, \psi_{a,0,0} \right \rangle.
\end{align*}
If $\alpha$ is a local parametrization of $\partial \Omega$, a parametrization of $\partial\left( S^{-1}_s\Omega\right)$ is certainly given by
\begin{align*}
 \alpha^{(s)}: t \mapsto S_s^{-1}(\alpha)(t).
\end{align*}
Since the shearing matrix acts only on the first coordinate, the determinant of the Jacobian of $\alpha^{(s)}_{(x_2,x_3)}$ equals the one of 
$\alpha_{(x_2,x_3)}$ . By repeating the previous arguments, we hence obtain
\begin{align*}
a \left \langle \chi_{\tilde{S}}, \psi \right \rangle  - C_{\delta, \rho}a^{\frac{3}{2}} \leq \left \langle f, \psi_{a,0,0} \right \rangle  \leq a \left \langle \chi_{\tilde{S}}, \psi \right \rangle  + C_{\delta, \rho}a^{\frac{3}{2}}
\end{align*}
with $\tilde{S} = \left \{(x_1,x_2, x_3)\in \suppp \psi:  x_1\leq  \frac{1}{2} (\mathrm{P}(s)x_{(2,3)})^T H_{(S_s^{-1} \alpha)_1}(0) \mathrm{P}(s)x_{(2,3)},  \right \}$.

(ii). Let $n_1, n_2$ be the two normal directions of $S$ at $p$, and let $n_1$ be the normal direction $s$ corresponds to. Assume $p\in \gamma_1$, 
$p = \gamma_1(t)$, and let $\gamma_1$ be parametrized by arc-length. Then we can locally write $\chi_S = \chi_{S^1} \pm \chi_{S^2}$, where 
$\partial S_1,\partial S_2$ both are piecewise smooth manifolds which share a separating curve $\tilde{\gamma}$ that locally coincides with 
$\gamma_1$. Furthermore, we can assume that the normals at $S_1$ are $n_1$ and $n_3$, where $n_3 \perp n_1$. Consequently, 
the normals on $S_2$ at $p$ are $n_2$ and $n_3$. As a first observation, 
$$\left \langle\chi_S, \psi_{a,s,p} \right \rangle = \left \langle\chi_{S_1}, \psi_{a,s,p} \right \rangle \pm \left \langle\chi_{S_2}, \psi_{a,s,p} \right \rangle.$$
By (iii), which will be proven below, the second term decays as $a^{\frac{3}{2}}$. Hence, to show (ii), it suffices to analyze
\begin{align*}
\lim_{a\to 0} a^{-1}\left \langle\chi_{S_1}, \psi_{a,s,p} \right \rangle.
\end{align*}
We proceed as before, by first assuming $p = 0$ and $s = 0$, which yields $n_1 = (1,0,0)$. In this situation, we obtain
\begin{align*}
\lim_{a\to 0} a^{-1}\left \langle\chi_{S_1}, \psi_{a,0,0} \right \rangle = \lim_{a\to 0}\left \langle\chi_{A_a^{-1}S_1}, \psi \right \rangle.
\end{align*}

Let now $\gamma^{(2,3)}_1$ be the orthogonal projection of $\gamma_1$ onto the $(x_2,x_3)$-plane, and $\suppp \psi^{(2,3)}$ the orthogonal projection of $\suppp \psi$ 
onto the $(x_2,x_3)$-plane. If $\gamma_1'(0)_2 \neq 0$, then there exists $\epsilon>0$ small enough such that $\gamma^{(2,3)}_1$ is locally given as the graph of a $C^3$
function 
$$
g:[-\epsilon, \epsilon] \to \suppp \psi^{(2,3)}: \{(x_2,g(x_2)), x_2\in [-\epsilon, \epsilon]\} \subset \gamma^{(2,3)}_1,
$$
such that $g'(0) = \frac{\gamma_1'(0)_3}{\gamma_1'(0)_2}$. If $\gamma_1'(0)_2 = 0$, then -- due to the fact that 
$\gamma_1'(0)_1 = 0$, we obtain that $\gamma_1'(0)_3 =1$ since we parametrized by arc-length. We can now proceed similarly by defining 
$$
g:[-\epsilon, \epsilon] \to \suppp \psi^{(2,3)} : \{(g(x_3),x_3), x_3\in [-\epsilon, \epsilon]\} \subset \gamma^{(2,3)}_1.
$$ 

Assume first $\gamma_1'(0)_2 \neq 0$.
Now we see, that the curve $A_a^{-1}\gamma^{(2,3)}_1$ splits $\suppp \psi^{(2,3)}$ into two components, namely:
\begin{align*}
(\suppp \psi)^{up} = \ & \{x \in \suppp \psi^{(2,3)}: \sqrt{a}x_3 \geq g(\sqrt{a} x_2)\},\\
(\suppp \psi)^{down} = \ & \{x \in \suppp \psi^{(2,3)}: \sqrt{a}x_3 \leq g(\sqrt{a} x_2)\}.
\end{align*}
Using the same arguments as for the part (i) yields that $A_a^{-1}S_1 \cap \suppp \psi$ is given by either
\begin{align}
\{ x \in (\suppp \psi)^{up}: a x_1 \leq \alpha_{(x_1)}(\alpha_{(x_2,x_3)}^{-1}(\sqrt{a}x_2,\sqrt{a}x_3)) \} \text{ or } \label{eq:Case1}\\
\{ x \in (\suppp \psi)^{down}: a x_1 \leq \alpha_{(x_1)}(\alpha_{(x_2,x_3)}^{-1}(\sqrt{a}x_2,\sqrt{a}x_3)) \}.\label{eq:Case2}
\end{align}
Next we need to define suitable approximations of the sets above. We use
\begin{align*}
 \tilde{S_1}^{up} = \{ x \in \suppp \psi:  \gamma_1'(t)_2 x_3 \geq \gamma_1'(t)_3 x_2, x_1 \leq  \frac{1}{2} x_{(2,3)}^T  H_{\alpha_{1}}(0) x_{(2,3)} \},\\
 \tilde{S_1}^{down} = \{ x \in \suppp \psi:  \gamma_1'(t)_2 x_3\leq \gamma_1'(t)_3 x_2, x_1 \leq  \frac{1}{2} x_{(2,3)}^T  H_{\alpha_{1}}(0) x_{(2,3)} \}.
\end{align*}
We will only analyze the case that $A_a^{-1}S_1 \cap \suppp \psi$ is given by \eqref{eq:Case1}, since case \eqref{eq:Case2} follows analogously.
Notice that we need to show that $\|\chi_{S_1\cap \suppp \psi} - \chi_{\tilde{S_1}^{up}}\|_1$ decays for $a \to 0$. We proceed in two steps. 
First, we show that $\|\chi_{\tilde{S_1}^{up}} - \chi_{\breve{S_1}^{up}} \| \to 0$ for $a \to 0$, where
$$
\breve{S_1}^{up} = \{ x \in (\suppp \psi)^{up}: x_1 \leq  \frac{1}{2} x_{(2,3)}^T  H_{\alpha_{1}}(0) x_{(2,3)} \}.
$$
By computing a Taylor approximation of $g$ we see that the measure of the difference of these sets decays as requested of order $O(\sqrt{a})$.
Second, to estimate the difference $\|\chi_{S_1\cap \suppp \psi} - \chi_{\breve{S_1}^{up}}\|_1$ we use the same argument as for \eqref{eq:theEstimate1234} in part (i) of this proof to obtain decay of order $O(\sqrt{a})$ for $a\to 0$.

We now deal with the case $\gamma_1'(0)_2 = 0$. In this situation, we have
\begin{align*}
(\suppp \psi)^{up} = \ & \{x \in \suppp \psi^{(2,3)}: \sqrt{a}x_2 \leq g(\sqrt{a} x_3)\},\\
(\suppp \psi)^{down} = \ & \{x \in \suppp \psi^{(2,3)}: \sqrt{a}x_2 \geq g(\sqrt{a} x_3)\},
\end{align*}
and with similar arguments as before, it follows that $\|\chi_{\tilde{S_1}^{up}} - \chi_{\breve{S_1}^{up}} \| = O(\sqrt{a})$.

Again, the decay of $\|\chi_{\breve{S_1}^{up}} - \chi_{S_1\cap \suppp \psi}\|_1$ for $a\to 0$ is the same as that of \eqref{eq:theEstimate1234} in part (i), 
which yields the claim.

ad iii.) Again we first restrict ourselves to the case of $p = (0,0,0)$ and $s=(0,0)$. Furthermore, by a rotation in the $(x_2,x_3)$-plane by some rotation operator $R_\lambda$, 
it suffices to compute
\begin{align*}
\left \langle \tilde{f}\circ R_\lambda, \psi_{a,0,0}\circ R_\lambda \right \rangle.
\end{align*}
Now we can pick $R_\lambda$ such that the tangent vector $T_0(\gamma_i)$ of $\gamma_i$ at $0$ lies in the $(x_1,x_2)$-plane.
Since $\partial \Omega$ is a manifold we have local parametrizations
\begin{align*}
\alpha^+, \alpha^-: [0,1)\times (-1,1) \to \partial \Omega
\end{align*}
with $\alpha^\pm(\{0\}\times (-1,1)) \subset \gamma_i$.

After the notation is now fixed, we now make two assumptions, and later show that they in fact hold without loss of generality.
We start stating the assumption \textbf{A1}:
\begin{align}
\alpha^-([0,1)\times (-1,1))\subset (x_1,x_2) \text{-plane}. \label{as:A1}\tag{A1}
\end{align}
Let $n$ be the normal vector corresponding to the parametrization $\alpha^+$ at $0$. Since $n = (n_1,n_2,n_3) \neq (1,0,0)$ we have that $n_2\neq 0$ or $n_3\neq 0$.
With this, we make the next assumption \textbf{A2}:
\begin{align}
n_3>0. \label{as:A2}\tag{A2}
\end{align}

\ref{as:A2} now implies that $\alpha^+_{(x_1,x_2)}$ is invertible on its image and we can define the map
$$
g^+: \R^2 \to \R, \quad
x \mapsto \left\{
\begin{array}{c c}
\alpha^+((\alpha^+_{x_1,x_2})^{-1}(x)) &\text{ if } \quad  x \in \text{ran } \alpha^+_{(x_1,x_2)},\\
 0  &\text{else}.\\
\end{array} \right.
$$
From this construction, it follows that locally $\Omega$ can be described as the area under the graph of $g^+$. To be more precise, there exists $\epsilon>0$ such that
\begin{align*}
\Omega \cap [-\epsilon, \epsilon]^3 = \left\{ (x_1,x_2,x_3) = x \in [-\epsilon, \epsilon]^3: 0 \leq x_3\leq g^+(x_1,x_2) \right \}.
\end{align*}
To introduce a linear approximation of $\Omega \cap [-\epsilon, \epsilon]^3$, we define
\begin{equation}\label{eq:DefOfD}
Dg^+: \R^2 \to \R, \quad
x \mapsto \left\{
\begin{array}{c c}
\nabla \alpha^+((\alpha^+_{x_1,x_2})^{-1}(0)x &\text{ if } \quad  x \in \text{ran } \alpha^+_{(x_1,x_2)},\\
 0  &\text{else},\\
\end{array} \right. 
\end{equation}
and approximate by
\begin{align*}
S := \{x: 0 \leq x_3 \leq Dg^+ (x_1,x_2)\}.
\end{align*}
By the Taylor approximation, for $r$ satisfying that $\suppp \psi \subseteq [-r, r]^3$, it follows that
\begin{align*}
\|\chi_{A_a\Omega \cap [-r, r]^3} - \chi_{A_a S \cap [-r, r]^3}  \|_1 = O(\sqrt{a}) \text{ for } a\to 0.
\end{align*}
This yields that
\begin{align*}
&\left \langle \tilde{f}, \psi_{a,0,0} \right \rangle =a   \left \langle \chi_{A_a\Omega \cap [-r, r]^3}  , \psi_{a,0,0} \right \rangle = a   \left \langle \chi_{A_a S \cap [-r, r]^3}  , \psi_{a,0,0} \right \rangle  + O(a^\frac{3}{2}) \text{ for } a\to 0.
\end{align*}
To estimate $\langle \chi_{A_a S \cap [-r, r]^3}  , \psi_{a,0,0} \rangle$, since
$$
A_a S =  \left\{x: 0 \leq \sqrt{a}x_3 \leq Dg^+ (a x_1,\sqrt{a}x_2) \right\}
= \left\{x: 0 \leq x_3 \leq Dg^+ (\sqrt{a}x_1,x_2) \right\},
$$
we have to consider
\begin{align}
&\int_{A_aS \cap [-r,r]^3} \psi(x) dx =\int_{[-r,r]}\int_{[-r,r]} \int_{x_3\leq Dg^+ (\sqrt{a}x_1,x_2)}\psi(x_1,x_2,x_3) dx_1 dx_2  dx_3.\label{eq:whatIsThis}
\end{align}
By the definition of a shearlet, $\int\psi(x_1,x_2,x_3)dx_1 = 0$ for all $x_2,x_3 \in R$, and hence
\begin{align*}
\int_{[-r,r]}\int_{[-r,r]} \int_{x_3\leq Dg^+ (0,x_2)} \psi(x_1,x_2,x_3)  dx_1 dx_2dx_3= 0.
\end{align*}
Thus \eqref{eq:whatIsThis} equals
\begin{eqnarray*}
\lefteqn{\int_{[-r,r]}\int_{[-r,r]}\left( \int_{x_3\leq Dg^+ (\sqrt{a}x_1,x_2)}\psi(x_1,x_2,x_3) dx_3 -  \int_{x_3\leq Dg^+ (0,x_2)}\psi(x_1,x_2,x_3) dx_3\right) dx_2 dx_1} \\
&= &\int_{[-r,r]}\int_{[-r,r]}\left( \int \limits_{ Dg^+ (0,x_2) \leq x_3\leq Dg^+ (\sqrt{a}x_1,x_2 ) }\psi(x_1,x_2,x_3) dx_3 \right.\\
&&\qquad \left.-  \int \limits_{ Dg^+ (0,x_2)\geq x_3\geq Dg^+ (\sqrt{a}x_1,x_2)}\psi(x_1,x_2,x_3) dx_3\right) dx_2 dx_1 = O(\sqrt{a}) \text{ for } a\to 0.
\end{eqnarray*}

This shows that under the assumptions \ref{as:A1} and \ref{as:A2}, the required estimate holds. It remains to prove that we might in fact assume
\ref{as:A1} and \ref{as:A2} without loss of generality. We start with \ref{as:A2} and observe that when, $n_3 <0,$ we can make the following argument which is similar to that before. By the same construction as before, we obtain a function $g^+$ such that
\begin{align*}
\Omega \cap [-\epsilon, \epsilon]^3 = \left\{ (x_1,x_2,x_3) = x \in [-\epsilon, \epsilon]^3: x_3\geq g^+(x_1,x_2) \right \}.
\end{align*}
Since $\int \limits_{[-r,r]^3} \psi = 0$, we have
\begin{align*}
\left \langle \chi_\Omega, \psi\right \rangle =  \left \langle \chi_{\Omega^v}, \psi\right \rangle
\end{align*}
with $\Omega^v = \left\{ x \in \R^3: 0 \leq x_3 \leq g^+(x_1,x_2) \right \}$ and thus we can proceed as before.
Should $n_3 = 0$, then $n_2 \neq 0$, and the situation is as before with $x_3$ replaced by $x_2$.

Finally, we want to address the assumption \ref{as:A1}. For the general case, we split $\Omega$ into $\Omega^{(up)}=\Omega\cap \R^2 \times \R^+$ 
and $\Omega^{(down)} = \Omega\cap \R^2 \times \R^-$ to obtain
\begin{align}
\left \langle \chi_\Omega, \psi\right \rangle = \left \langle \chi_{\Omega^{(up)}}, \psi\right \rangle +\left \langle \chi_{\Omega^{(down)}}, \psi\right \rangle. \label{eq:aSum}
\end{align}
Notice that for $\Omega^{(up)}$ as well as $\Omega^{(down)}$, there exist parametrizations obeying \ref{as:A1}.

(iv).
We assume that $p_i = 0$. Furthermore, let us assume that we have 3 outer normals at $p_i$, which are not collinear and do not equal $(1,0,0)$. 
A general number of normals follows from this special case by cutting $\Omega$ in a neighborhood of $p_i$ into disjoint domains with 3 outer 
normal directions.

Let us now denote the normal directions by $n^{(a)}$,$n^{(b)}$ and $n^{(c)}$. Since $n^{a}_1$,$n^{b}_1$,$n^{c}_1 \neq 1$ we can apply a rotation 
in the $(x_2,x_3)$-plane such that $n^{a}_3$, $n^{b}_3$, $n^{c}_3 \neq 0$. By cutting $\Omega$ along the $(x_1,x_2)$-plane, we can assume that 
$n^{(c)}= (0,0,-1)$ and $n^{(a)}_3>0, n^{(b)}_3>0$.

By the same argumentation we used in part (iii), it follows that $\partial \Omega$ is locally given by the functions $g^{(a)}$ and $g^{(b)}$ with
\begin{align*}
\Omega \cap [-\epsilon, \epsilon]^3= \left\{  x \in [-\epsilon, \epsilon]^3: 0\leq x_3\leq \max\{ g^{(a)}(x_1,x_2) + g^{(b)}(x_1,x_2)\} \right \}.
\end{align*}
Furthermore, for all $\omega \in \Omega \cap [-\epsilon, \epsilon]^3$, we have that $\epsilon\leq \omega_1 < \lambda \|\omega\|$.
Thus for $\omega \in A_a\Omega \cap [-\epsilon, \epsilon]^3$
\begin{align*}
 a\omega_1 < \lambda \sqrt{a^2\omega_1^2 + a\omega_2^2 + a\omega_3^2},
\end{align*}
which yields
\begin{align*}
  \sqrt{a}\omega_1 < \lambda \sqrt{\omega_2^2 + \omega_3^2}.
\end{align*}
Therefore, the volume of $A_a\Omega \cap [-\epsilon, \epsilon]^3$ decays as $O(\sqrt{a})$ for $a\to 0$.

We can now approximate $A_a\Omega \cap [-\epsilon, \epsilon]^3$ by 
$$
\left\{  x \in [-\epsilon, \epsilon]^3: 0\leq \sqrt{a}x_3\leq \max\{ D g^{(a)}(a x_1, \sqrt{a}x_2) + D g^{(b)}(a x_1, \sqrt{a}x_2)\} \right \}, 
$$
where $Dg^{(a)}$ and $Dg^{(b)}$ are defined as in \eqref{eq:DefOfD}.
By the Taylor approximation, the pointwise error is $O(a^{\frac{1}{2}})$. Since the support size decays as $O(a^{\frac{1}{2}})$, we obtain 
an $L_1$ norm approximation error of $O(a)$ for $a\to 0$. Also,
\begin{align*}
\left\{  x \in [-\epsilon, \epsilon]^3: 0\leq \sqrt{a}x_3\leq \max\{ D g^{(a)}(a x_1, \sqrt{a}x_2) + D g^{(b)}(a x_1, \sqrt{a}x_2)\} \right \}
\end{align*}
is locally a pyramid of height $\sqrt{a}$, hence its volume decays as $O(a)$ for $a\to 0$. Concluding, we obtain that
\begin{align*}
\left \langle f, \psi_{a,s, p_i}\right \rangle = O(a^2) \text{ for } a\to 0.
\end{align*}

(v). This is obvious.

(vi). First, observe that the Fourier transformation of $\psi_{a,s,p}$ is given by
\begin{align*}
 \widehat{\psi}_{a,s,p}(\xi) = a e^{2\pi i \left \langle p, \xi \right \rangle} \widehat{\psi}(a\xi_1, \sqrt{a} (\xi_2 - s_1 \xi_1),\sqrt{a} (\xi_3 - s_2 \xi_2) ).
\end{align*}
We now pick $\frac{1}{2} <\beta <1 $. Applying Plancherels formula yields
\allowdisplaybreaks{
\begin{eqnarray} \nonumber
 |\left \langle f, \psi_{a,s,p} \right \rangle| & = & |\left \langle \hat{f}, \widehat{\psi}_{a,s,p} \right \rangle| \nonumber \\
 &\leq& a |\int_{\R^3} \hat{f}(\xi) \widehat{\psi}(a\xi_1, \sqrt{a} (\xi_2 - s_1 \xi_1),\sqrt{a} (\xi_3 - s_2 \xi_2) ) d\xi| \nonumber \\
&=  &a |\int_{|\xi_1|\leq a^{-\beta}} \hat{f}(\xi) \widehat{\psi}(a\xi_1, \sqrt{a} (\xi_2 - s_1 \xi_1),\sqrt{a} (\xi_3 - s_2 \xi_2) ) d\xi| \nonumber\\
& &+ a |\int_{|\xi_1|\geq a^{-\beta}} \hat{f}(\xi) \widehat{\psi}(a\xi_1, \sqrt{a} (\xi_2 - s_1 \xi_1),\sqrt{a} (\xi_3 - s_2 \xi_2) ) d\xi| \nonumber\\
& = & S_1 + S_2.\nonumber
\end{eqnarray}
}
Since $\psi$ has $M$ vanishing moments in $x_1$ direction, it follows that there exists $\theta \in L^2(\R^2)$ such that
$\hat{\psi} = \xi_1^M \hat{\theta}$. We now estimate the term $S_1$ by 
\begin{eqnarray}
S_1 &\leq &a \int_{|\xi_1|\leq a^{\beta}} |\hat{f}(\xi) \widehat{\psi}(a\xi_1, \sqrt{a} (\xi_2 - s_1 \xi_1),\sqrt{a} (\xi_3 - s_2 \xi_2) )| d\xi\nonumber\\
&= &a \int_{|\xi_1|\leq a^{\beta}} a^M |\xi_1|^M|\hat{f}(\xi) \widehat{\theta}(a\xi_1, \sqrt{a} (\xi_2 - s_1 \xi_1),\sqrt{a} (\xi_3 - s_2 \xi_2) )| d\xi\nonumber\\
&\leq  &a^{M(1-\beta)+1} \int_{|\xi_1|\leq a^{\beta}} \hat{f}(\xi) \widehat{\theta}(a\xi_1, \sqrt{a} (\xi_2 - s_1 \xi_1),\sqrt{a} (\xi_3 - s_2 \xi_2) )| d\xi\nonumber\\
&\leq  &a^{M(1-\beta)} \| \hat{f} \|_2 \|\hat{\theta}_{a,s,p}\|_2  = a^{M(1-\beta)} \|f\|_2 \| \theta \|_2. \label{eq:theFirstDecay}
\end{eqnarray}

Now for the term $S_2$, we have to use the form of $f = \chi_\Omega$. We want to use the property that for every $N$ there exists $C$ such that
\begin{align}
|\hat{f}(\xi_1,\xi_2,\xi_3)|\leq C (1+ |\xi|)^{-N} \text{ for }\frac{\xi_2}{\xi_1} \in  (s_1-\epsilon,  s_1+\epsilon),  \frac{\xi_3}{\xi_1}  \in  (s_2-\epsilon,  s_2+\epsilon).\label{eq:theWavefrontDecay}
\end{align}
This follows if $(p, s)$ is not in the wavefront set of $f$. We know by Theorem \ref{thm:3DClassificationClassical} that for a function $\tilde{f}= \chi_{\tilde{S}}$ that equals $f$ on a neighborhood of $p$ and has no corner points, the classical shearlet transform decays rapidly in a direction corresponding to $s$ and by Corollary \ref{cor:WavefrontSetForBandLimited} we infer that $(p,s)$ is neither an element of the wavefront set of $\tilde{f}$ nor $f$.
Therefore we can assume \eqref{eq:theWavefrontDecay} and continue to estimate
\begin{eqnarray}
S_2 &= &a^{-1} |\int_{|\xi_1|\geq a^{-\beta}} \hat{f}(\frac{\xi_1}{a}, \frac{s_1}{a}\xi_1 + \frac{1}{\sqrt{a}}\xi_2,  \frac{s_2}{a}\xi_1 + \frac{1}{\sqrt{a}}\xi_3) \widehat{\psi}(\xi) d\xi| \nonumber\\
&\leq &a^{-1} |\int_{|\xi_1|\geq a^{-\beta}} |\hat{f}(\frac{\xi_1}{a}, \frac{s_1}{a}\xi_1 + \frac{1}{\sqrt{a}}\xi_2,  \frac{s_2}{a}\xi_1 + \frac{1}{\sqrt{a}}\xi_3)| |\widehat{\psi}(\xi)| d\xi. \label{eq:TheIntegral}
\end{eqnarray}
Observe that with $\eta_1 = \frac{\xi_1}{a}$, $\eta_2 = \frac{s_1}{a}\xi_1 + \frac{1}{\sqrt{a}}\xi_2$, $\eta_3 =  \frac{s_2}{a}\xi_1 + \frac{1}{\sqrt{a}}\xi_3$, we have
\begin{align*}
 \frac{\eta_2}{\eta_1} \in (s_1 - a^{\beta - \frac{1}{2}} \xi_2 ,  s_1 +  a^{\beta - \frac{1}{2}} \xi_2),\quad  \frac{\eta_3}{\eta_1} \in (s_2 - a^{\beta - \frac{1}{2}} \xi_3 ,  s_2 +  a^{\beta - \frac{1}{2}} \xi_3).
\end{align*}
Thus we can infer from \eqref{eq:theWavefrontDecay} that
\begin{align*}
|\hat{f}\left(\frac{\xi_1}{a},\frac{s_1}{a}\xi_1 + \frac{1}{\sqrt{a}}\xi_2,\frac{s_2}{a}\xi_1 + \frac{1}{\sqrt{a}}\xi_3)\right)|\leq C (1+ |\xi_1|)^{-N},
\end{align*}
for $s$ in a neighborhood $V(s_0)$ of $s_0$, $\frac{|\xi_1|}{a} >a^{-\beta}$ and $|\xi_2|,|\xi_3|< \epsilon' a^{\frac{1}{2}-\beta}$ for some $\epsilon' <\epsilon$. 
We can split the integral \eqref{eq:TheIntegral} into two parts
\begin{equation}\label{eq:TwoInterestingTerms}
\int \limits_{|\xi_1|\geq a^{-\beta}, |\xi_2|,|\xi_3|< \epsilon' a^{\frac{1}{2}-\beta}}
+ \int \limits_{|\xi_1|\geq a^{-\beta}, |\xi_2|\geq \epsilon' a^{\frac{1}{2}-\beta} \text{ or }  |\xi_3|\geq \epsilon' a^{\frac{1}{2}-\beta} }
a^{-1}|\hat{f}(\frac{\xi_1}{a}, \frac{s_1}{a}\xi_1 + \frac{1}{\sqrt{a}}\xi_2,  \frac{s_2}{a}\xi_1 + \frac{1}{\sqrt{a}}\xi_3)\widehat{\psi}(\xi)| d\xi.
\end{equation}
Obviously, the first term is bounded by
\begin{align}
 C a^{\beta N-1} \|\psi\|. \label{eq:theArbitraryN}
\end{align}
For the second term of \eqref{eq:TwoInterestingTerms}, we use the differentiability of $\psi$. We begin by splitting it as
\begin{align*}
 &a^{-1} \int_{|\xi_1|\geq a^{-\beta}, |\xi_2|\geq \epsilon' a^{\frac{1}{2}-\beta},  |\xi_3|\leq \epsilon' a^{\frac{1}{2}-\beta} } |\hat{f}(\frac{\xi_1}{a}, \frac{s_1}{a}\xi_1 + \frac{1}{\sqrt{a}}\xi_2,  \frac{s_2}{a}\xi_1 + \frac{1}{\sqrt{a}}\xi_3)\widehat{\psi}(\xi)| d\xi\\
&\quad + a^{-1} \int_{|\xi_1|\geq a^{-\beta}, |\xi_2|\leq \epsilon' a^{\frac{1}{2}-\beta} , |\xi_3|\geq \epsilon' a^{\frac{1}{2}-\beta} } |\hat{f}(\frac{\xi_1}{a}, \frac{s_1}{a}\xi_1 + \frac{1}{\sqrt{a}}\xi_2,  \frac{s_2}{a}\xi_1 + \frac{1}{\sqrt{a}}\xi_3)\widehat{\psi}(\xi)| d\xi\\
&\qquad +a^{-1} \int_{|\xi_1|\geq a^{-\beta}, |\xi_2|\geq \epsilon' a^{\frac{1}{2}-\beta} , |\xi_3|\geq \epsilon' a^{\frac{1}{2}-\beta} } |\hat{f}(\frac{\xi_1}{a}, \frac{s_1}{a}\xi_1 + \frac{1}{\sqrt{a}}\xi_2,  \frac{s_2}{a}\xi_1 + \frac{1}{\sqrt{a}}\xi_3)\widehat{\psi}(\xi)| d\xi.
\end{align*}
We will only estimate the first of these terms since it will be quite obvious that the other 2 follow by similar means. We know that $\frac{\partial^R}{\partial (x_2)^R}\psi \in L^2(\R^3)$,$\frac{\partial^R}{\partial (x_3)^R}\psi \in L^2(\R^3)$ and therefore we can estimate:
\begin{eqnarray}
\lefteqn{a^{-1} \int_{|\xi_1|\geq a^{-\beta}, |\xi_2|\geq \epsilon' a^{\frac{1}{2}-\beta},  |\xi_3|\leq \epsilon' a^{\frac{1}{2}-\beta} } |\hat{f}(\frac{\xi_1}{a}, \frac{s_1}{a}\xi_1 + \frac{1}{\sqrt{a}}\xi_2,  \frac{s_2}{a}\xi_1 + \frac{1}{\sqrt{a}}\xi_3)\widehat{\psi}(\xi)| d\xi} \nonumber\\
&\leq &a^{-1} \int_{|\xi_1|\geq a^{-\beta}, |\xi_2|\geq \epsilon' a^{\frac{1}{2}-\beta},  |\xi_3|\leq \epsilon' a^{\frac{1}{2}-\beta} } |\hat{f}(\frac{\xi_1}{a}, \frac{s_1}{a}\xi_1 + \frac{1}{\sqrt{a}}\xi_2,  \frac{s_2}{a}\xi_1 + \frac{1}{\sqrt{a}}\xi_3) x_2^{-R} \mathcal{F}(\frac{\partial^R}{\partial x_2^R}\psi)(\xi)| d\xi \nonumber\\
&\leq &(\epsilon')^{-R}a^{-1+(\beta-\frac{1}{2})R} \int_{\R^2}  | \hat{f}(\frac{\xi_1}{a}, \frac{s_1}{a}\xi_1 + \frac{1}{\sqrt{a}}\xi_2,  \frac{s_2}{a}\xi_1 + \frac{1}{\sqrt{a}}\xi_3) | |\mathcal{F}(\frac{\partial^R}{\partial x_2^R}\psi)(\xi)|d\xi \nonumber \\
&\leq &(\epsilon')^{-R} a^{(\beta-\frac{1}{2})R} \|f\|_2 \| \frac{\partial^R}{\partial x_2^R}\psi\|_2. \label{eq:finalEstimate}
\end{eqnarray}
Since $N$ in \eqref{eq:theArbitraryN} is arbitrary, we can choose it large enough such that the estimates \eqref{eq:finalEstimate}, \eqref{eq:theArbitraryN}, 
and \eqref{eq:theFirstDecay} combine to a decay of order
\begin{align*}
 O(a^{(\beta-\frac{1}{2})R} + a^{M(1-\beta)}) \text{ for } a\to 0,
\end{align*}
which yields the desired result.
\end{proof}

\subsection{Curvature}\label{sec:curvature}

One additional feature of the shearlet transform, that was already discussed in the 2D case, is its ability to detect geometrical features of 
the underlying manifold from the decay of shearlet transform in form of detecting the curvature. Indeed, Proposition \ref{prop:regularPoints} 
and Theorem \ref{thm:Summary2D} showed that the limit of
\begin{align*}
 a^{-\frac{3}{4}} \left \langle f,\psi_{a,s,p} \right \rangle
\end{align*}
encodes the curvature as
\begin{align*}
 \lim_{a\to 0} a^{-\frac{3}{4}} \left \langle f,\psi_{a,s,p} \right \rangle = \int_{\tilde{S}}
\psi(x)dx,
\end{align*}
where
\begin{align*}
\tilde{S}  = \left \{(x_1,x_2)\in \suppp \psi: x_1\leq \frac{1}{2
\rho(s)^2}(\alpha_1''(t_0)-s\alpha_2''(t_0))\right \}.
\end{align*}
Thus, if the curvature of $\partial S$ is bounded by $\nu>0$ and $\psi$ is such that $\kappa \to \int_{S_\kappa}\psi$ is injective for all $\kappa \in [-\nu, \nu]$, where
\begin{align*}
\tilde{S}_\kappa  = \left \{(x_1,x_2)\in \suppp \psi: x_1\leq \kappa x_2^2 \right \},
\end{align*}
then the value $\alpha_1''(t_0) - s\alpha_2''(t_0)$ can be deduced. For instance if $\psi_1(0) \neq 0$ the equation \eqref{eq:injectivity} shows that this injectivity can be achieved if the curvature is within some small interval.

Since also $\left\langle\alpha''(t_0), \alpha'(t_0)\right \rangle = 0$ is known, we can solve for $\alpha''(t_0)$. This means that we 
are theoretically able to directly compute the curvature of a manifold from the decay of the 2D shearlet coefficients.

However, we cannot reproduce these results with the plate-like 3D shearlet system. The reason for this problem is that plate-like shearlet systems treat 
the two dimensions of the manifold equally. Hence, the behavior of the manifold in an isotropic patch determines the limiting value of
\begin{align*}
  a^{-1} \left \langle f,\psi_{a,s,p} \right \rangle \to \int_{\tilde{S}} \psi(x)dx,
\end{align*}
as opposed to the curvature along one line. We can overcome this problem by introducing needle-like shearlets with an additional direction parameter. For
this, we define the matrix
\begin{align*}
M_{a,s,\beta} :=  A_a^{-1} R_\beta S_s^{-1} = \left( \begin{array}{l l l}
           a^{-1} & -a^{-1}s_1 & -a^{-1} s_2\\
           0 & a^{-1} \cos \beta  & -a^{-1} \sin \beta \\
           0 & a^{-\frac{1}{2}}\sin \beta  & a^{-\frac{1}{2}}\cos \beta \\
           \end{array}\right), \text{ where } \\
      A_a : =    \left( \begin{array}{l l l}
           a & 0 & 0\\
           0 & a  & 0 \\
           0 & 0 & a^{-\frac{1}{2}} \\
           \end{array}\right), \quad R_\beta: = \left( \begin{array}{l l l}
           1 & 0 & 0\\
           0 &\cos \beta  & -\sin \beta \\
           0 & \sin \beta & \cos \beta \\
           \end{array}\right), \quad  S_s : =  \left( \begin{array}{l l l}
           1 & s_1 & s_2\\
           0 & 1  & 0 \\
           0 & 0 & 1 \\
           \end{array}\right),
\end{align*}
with $a$ and $s = (s_1,s_2)$ being scale and shear parameter, respectively, and $\beta \in [0,2\pi)$ now being a rotation parameter, and let the associated shearlets be given by
\begin{align*}
 \psi_{a,s,p,\beta} := a^{-\frac{5}{4}}\psi(M_{a,s,\beta}(\cdot-p)).
\end{align*}
This construction will ensure that the decay of this new shearlet system at a point $p \in \partial \Omega$ depends only on the curvature of $\partial S$ in 
a direction depending on $\beta$, if $s$ is corresponding to the normal direction at $p$. Defining now $\varrho$ as
\begin{align*}
\varrho(s) = \sqrt{(\cos \theta \sin \beta )^2+(\sin \eta \cos \beta )^2}, \quad s=(\tan\theta, \cot \eta\sec\theta),
\end{align*}
we obtain the following result.

\begin{theorem}
Let $\Omega$ be a bounded region in $\R^3$ and denote its boundary by $\partial\Omega$. Assume that $\partial\Omega$ is a piecewise smooth 2-dimensional 
manifold. Let $\gamma_j, j=1,2,\dots, m$ be the separating curves of $\partial \Omega$. Moreover, let $f = \chi_\Omega$, and let $\psi \in L^2(\R^3) \cap L^\infty(\R^3)$ be a compactly 
supported shearlet. If $p\in \partial \Omega \setminus \{\gamma_i \ | \ i=1, 2,\dots, m\}$, if $(s_1,s_2)$ corresponds to the outer normal direction of 
$\partial \Omega$ at $p$, and if $\alpha$ is a local arc-length parametrization with $\alpha(t_0) = p$ and Jacobian at $t_0$ given as \eqref{eq:theJacobian}, then
\begin{align*}
\left \langle f, \psi_{a,s,p,\beta} \right \rangle = a^{\frac{5}{4}} \int_{\tilde{S}_\beta} \psi(x)dx + O(a^{\frac{7}{4}}), \text{ for } a \to 0,
\end{align*}
where with $v =  (\cos \theta \sin \beta , \sin \eta \cos \beta) $ and
\begin{align*}
\tilde{S}_\beta  = \left \{(x_1,x_2, x_3)\in \suppp \psi: x_1\leq  \frac{1}{2 \varrho(s)^2} \left(S_s^{-1} \left(\frac{\partial^2 \alpha}{\partial v^2}\right)\right)_1(t_0) x_{3}^2\right\}.
\end{align*}
\end{theorem}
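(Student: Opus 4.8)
The plan is to follow the strategy of the proof of Theorem \ref{thm:3Dmain}(i), adapting it to the anisotropic needle-like scaling. The first step is to reduce to the model situation $p=0$, $s=(0,0)$ (while \emph{retaining} the rotation parameter $\beta$) by the substitution $x = p + S_s z$. Since $|\det S_s| = 1$, this gives $\left \langle f, \psi_{a,s,p,\beta}\right \rangle = \left \langle \chi_{S_s^{-1}(\Omega - p)}, \psi_{a,0,0,\beta}\right \rangle$, so that the boundary of the transformed region is parametrized by $\alpha^{(s)} := S_s^{-1}(\alpha - p)$. Reading the scaling off $M_{a,0,\beta}$ directly one finds $\det M_{a,0,\beta} = a^{-5/2}$, so the change of variables $y = M_{a,0,\beta}z$ yields, for the reduced region $\Omega$,
\begin{align*}
\left \langle \chi_\Omega, \psi_{a,0,0,\beta}\right \rangle = a^{5/4}\int_{M_{a,0,\beta}(\Omega)}\psi(y)\,dy,
\end{align*}
which already produces the prefactor $a^{5/4}$; the remaining task is to show that the domain of integration converges to $\tilde{S}_\beta$ in $L^1$ at rate $O(a^{1/2})$.

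For the model case (normal $(1,0,0)$, so that $J_{\alpha_{(x_2,x_3)}}(0) = \mathrm{Id}$) I would write $\Omega$ locally as the subgraph $\{x_1 \le \phi(x_2,x_3)\}$, where $\phi = \alpha_{(x_1)}\circ \alpha_{(x_2,x_3)}^{-1}$ satisfies $\phi(0)=0$, $\nabla\phi(0)=0$ and $H_\phi(0) = H_{\alpha_1}(0)$. Inverting the linear map $y = A_a^{-1}R_\beta z$ gives for the tangential coordinate $z_{(2,3)} = a^{1/2}y_3(\sin\beta,\cos\beta) + a\,y_2(\cos\beta,-\sin\beta)$, so that the dominant contribution to $\phi(z_2,z_3)$ as $a\to 0$ comes from the $a^{1/2}y_3$-term. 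A second-order Taylor expansion then turns the membership condition $z_1 \le \phi(z_2,z_3)$ into
\begin{align*}
y_1 \le \tfrac{1}{2}y_3^2\,(\sin\beta,\cos\beta)\,H_{\alpha_1}(0)\,(\sin\beta,\cos\beta)^T + O(a^{1/2}),
\end{align*}
which is exactly $\tilde{S}_\beta$ up to an $O(a^{1/2})$ shift of the boundary in the $y_1$-direction. Since this shift occurs over a bounded range of $y_3$ inside $\suppp\psi$, the symmetric difference has measure $O(a^{1/2})$; because $\psi\in L^\infty$, integrating against $\psi$ and multiplying by $a^{5/4}$ gives the claimed leading term plus $O(a^{7/4})$. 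Note that no vanishing-moment hypothesis is needed, since the leading integral $\int_{\tilde{S}_\beta}\psi$ is retained rather than cancelled.

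The main obstacle is the passage to general $s$. Here the reduced parametrization $\alpha^{(s)} = S_s^{-1}(\alpha-p)$ no longer has identity Jacobian in the $(x_2,x_3)$-variables, and one must track precisely how the needle direction $R_\beta^{-1}e_3$, pulled back through $S_s$ and the now-nontrivial $\alpha^{(s)}_{(x_2,x_3)}$, selects a direction in parameter space. The computation should identify this direction with $v = (\cos\theta\sin\beta,\sin\eta\cos\beta)$ and show that the arc-length rate of $\alpha$ along $v$ contributes the normalization $\varrho(s)^2 = |v|^2$; combined with the linearity $S_s^{-1}\tfrac{\partial^2\alpha}{\partial v^2} = \tfrac{\partial^2(S_s^{-1}\alpha)}{\partial v^2}$ this produces the factor $\tfrac{1}{2\varrho(s)^2}\bigl(S_s^{-1}\tfrac{\partial^2\alpha}{\partial v^2}\bigr)_1(t_0)$ in $\tilde{S}_\beta$. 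Verifying that only the single directional second derivative along $v$ survives (rather than a full $2\times 2$ Hessian as in Theorem \ref{thm:3Dmain}(i)) is the key point: it reflects that the needle is thin ($\sim a$) in the $y_2$-direction and long ($\sim a^{1/2}$) only in the $y_3$-direction, so the quadratic form is evaluated along a single line in the tangent plane.
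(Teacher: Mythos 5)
Your proposal follows essentially the same route as the paper's own proof: reduction to the model case by the shear substitution, the change of variables producing the $a^{5/4}$ prefactor, a subgraph representation of $\Omega$ plus a second-order Taylor expansion showing that only the long needle direction ($a^{1/2}y_3$, i.e.\ the direction $(\sin\beta,\cos\beta)$ in the tangent parameters) contributes at order $a$, and an $O(a^{1/2})$ estimate in $L^1$ of the symmetric difference of the two regions, which after multiplication by $a^{5/4}\|\psi\|_\infty$ gives the $O(a^{7/4})$ error. The only differences are cosmetic: you identify the surviving quadratic term with $\bigl(\frac{\partial^2\alpha}{\partial v^2}\bigr)_1(t_0)$ by a direct chain-rule computation (valid since $\nabla\alpha_1(0)=0$ and $J_{\alpha_{(x_2,x_3)}}(0)=\mathrm{Id}$ in the model case), where the paper routes this through the curve $\sigma$ and Meusnier's theorem, and your admittedly sketchy treatment of general $s$ (shear reduction, speed $\varrho(s)$, rescaling by $\varrho(s)^{-2}$) is precisely the paper's own one-line argument.
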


\begin{proof}
We start analyzing the special case 
\begin{align*}
\left \langle f, \psi_{a,0,0, \beta} \right \rangle.
\end{align*}
First, observe that there exists a local arc-length parametrization $\alpha$ of the manifold at $0$ such that $\alpha(0) = 0$ and $J_\alpha(0)$ obeys 
\eqref{eq:theJacobian}. Since the normal corresponding to $s=(0,0)$ is $n=\pm(1,0,0)$, we obtain that for $\alpha$ restricted to the $(x_2,x_3)-$plane,
which we denote as $\alpha_{(x_2,x_3)}$, we have
\begin{align*}
 J_{\alpha_{(x_2,x_3)}}(0) = \mathrm{Id}.
\end{align*}
Hence, by the inverse function theorem, $\alpha_{(x_2,x_3)}$ is locally invertible and $J_{\alpha^{-1}_{(x_2,x_3)}}(0) = \mathrm{Id}$. Thus we can choose $\epsilon >0$ small enough such that the set $\Omega \cap [-\epsilon, \epsilon]^3$ can be written as
\begin{align*}
 \{ x \in [-\epsilon, \epsilon]^3: x_1 \leq \alpha_{(x_1)}(\alpha_{(x_2,x_3)}^{-1}(x_2,x_3)) \}
\quad \mbox{or} \quad
 \{ x \in [-\epsilon, \epsilon]^3: x_1 \geq \alpha_{(x_1)}(\alpha_{(x_2,x_3)}^{-1}(x_2,x_3)) \}.
\end{align*}
We continue with the first possibility, since the second will follow analogously. For sufficiently small $a$, $A_a^{-1} R_\beta \Omega \cap \suppp \psi$ either have the form
\begin{align*}
\{ x \in \suppp \psi: a x_1 \leq \alpha_{(x_1)}(\alpha_{(x_2,x_3)}^{-1}(R_\beta^T A_a (x_2,x_3))) \} 
\end{align*}
or 
\begin{align*}
\{ x \in \suppp \psi: a x_1 \geq \alpha_{(x_1)}(\alpha_{(x_2,x_3)}^{-1}(R_\beta^T A_a (x_2,x_3))) \}. 
\end{align*}
We continue with the first of the two possibilities above and observe that the other one follows analogously.
As a suitable approximation for this set, we define the set
\begin{align*}
 \tilde{S}_\beta: = \left \{(x_1,x_2, x_3)\in \suppp \psi:  x_1 \leq  \frac{1}{2}  \left(\frac{\partial^2 \alpha}{\partial v^2}\right)_1(0) x_{3}^2 , \text{ where } v =  (\cos \theta \sin \beta , \sin \eta \cos \beta )\right\}.
\end{align*}
The difference
\begin{align*}
\|\chi_{A_a^{-1} R_\beta \Omega \cap \suppp \psi}-\chi_{\tilde{S}_\beta}\|_1
\end{align*}
is given by the integral
\begin{align}
 \int_{Q}|\frac{1}{a}\alpha_{(x_1)}(\alpha_{(x_2,x_3)}^{-1}((R_\beta^T A_a (x_2,x_3)))) - \frac{1}{2} \left(\frac{\partial^2\alpha}{\partial v^2} \right)_1(0) x_{3}^2| dx_2dx_3,
\end{align}
where $Q$ is a square with sidelength such that there exists an interval $I$ with $\suppp \psi \subset I\times Q$ and $v = (\cos \theta \sin \beta , \sin \eta \cos \beta )$.
We now calculate the Taylor expansion of $\alpha_{(x_1)}(\alpha_{(x_2,x_3)}^{-1}(R_\beta^T A_a (x_2,x_3))$ at $(0,0)$ to obtain
 \begin{align*}
 \alpha_{(x_1)}(\alpha_{(x_2,x_3)}^{-1}(R_\beta^T A_a (x_2,x_3)))= \ & \frac{1}{2}(A_a (x_2,x_3)^T H_{\tilde{\alpha}}((A_a (x_2,x_3))) + \text{h.o.t.}\\
 = \ & \frac{1}{2}a (0, x_3)^T H_{\tilde{\alpha}}(0, x_3) + O(a^{\frac{3}{2}})=\frac{a}{2}\frac{\partial^2 \tilde{\alpha}}{\partial x_3^2}(0)x_3^2 + O(a^{\frac{3}{2}}) \text{ for } a\to 0,
\end{align*}
 where $\tilde{\alpha}: = \alpha_{(x_1)} \circ \alpha_{(x_2,x_3)}^{-1}\circ R_\beta^T$. Now define 
 $$
 \sigma: x_3 \mapsto \alpha_{(x_2,x_3)}^{-1}\circ R_\beta^T(0,x_3).
 $$
By the chain rule and \eqref{eq:theJacobian} we obtain that $\sigma'(0) = \sqrt{(\cos \theta \sin \beta )^2 + (\sin \eta \cos \beta )^2}$ and $\sigma$ is a curve in $\partial \Omega$ with direction $(\cos \theta \sin \beta , \sin \eta \cos \beta )$. Furthermore $(1,0,0)$ is a normal on $\sigma$ at $0$.
  By the Theorem of Meusnier \cite{doCarmo}, this yields that 
 $$
 \frac{\partial^2 \tilde{\alpha}}{\partial x_3^2}(0) = \frac{\partial^2 \alpha}{\partial v^2}(0) \quad \mbox{for }
 v =  (\cos \theta \sin \beta , \sin \eta \cos \beta ). 
 $$
 If $s\neq 0$, we revert to $f \circ S_s$ as in earlier results and we have to rescale by $\frac{1}{\varrho(s)^2}$.
\end{proof}

\section{Optimality}\label{sec:optimality}

We will now show that in fact the lower and upper bounds of Theorem \ref{thm:Summary2D} in 2D and Theorem \ref{thm:3Dmain} in 3D are the
only possible uniform bounds for all shearlets, which satisfy a mild moment condition. For this, let us start with the 2D case.

\subsection{Optimality in 2D} \label{sec:optimality2D}

Assume that the shearlet $\psi \in L^2(\R^2)$ satisfies the extended moment condition
\begin{align}\label{eq:MomentConditionWithDerivatives}
(\widehat{\psi})^{(k)}(\xi) \lesssim \frac{|\xi_1|^{M}}{\left \langle \xi_1\right\rangle^{L_1} \left \langle \xi_2\right\rangle^{L_2}}, \ \text{ for all } \xi \in \R^2,  k \leq K,
\end{align}
where we denote $\left \langle \xi \right \rangle := (1+|\xi|^2)^\frac{1}{2}.$

For such shearlets, we can derive the following result, which describes the interplay between $\psi$ and a classical shearlet 
$\widetilde{\psi}$ (as defined in Example \ref{ex:classicalShearlet}).
The proof is contained in Subsection \ref{subsec:proofs_theCrazyLemma}.

\begin{lemma}\label{lem:theCrazyLemma}
Let $\psi$ satisfy \eqref{eq:MomentConditionWithDerivatives} with $\frac{L_1}{2}\geq M\geq K$, and $L_2 \geq K$, and let $\widetilde{\psi}$
be a classical shearlet. Then with $\delta p = \tilde{p} - p$ and $\delta s = \tilde{s} - s$ have
\begin{align*}
&\left \langle \psi_{\tilde{a},\tilde{s},\tilde{p}}, \widetilde{\psi}_{a,s,p} \right \rangle\\
\lesssim  &\min\left(\left|\frac{a}{\tilde{a}}\right|^{M+\frac{3}{4}}, \left|\frac{\tilde{a}}{a}\right|^{M-\frac{3}{4}}\right) \frac{1}{(1+\max(a, \tilde{a})^{-1}|\delta s|^2)^{K}\left(1+ \max(a, \tilde{a})^{-1}  |\delta p|^2 + \max(a, \tilde{a})^{-2}  |(\delta p)_1|^2) \right)^{K}}.
\end{align*}
\end{lemma}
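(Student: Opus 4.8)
The plan is to move to the Fourier side and exploit that the classical shearlet $\widetilde\psi$ of Example \ref{ex:classicalShearlet} is band-limited. Using Plancherel together with the identity $\widehat{\psi_{a,s,p}}(\xi) = a^{3/4} e^{-2\pi i\langle p,\xi\rangle}\widehat{\psi}(a\xi_1,\sqrt a(s\xi_1+\xi_2))$ (which follows from $M_{a,s}^{-T}\xi=(a\xi_1,\sqrt a(s\xi_1+\xi_2))$ and $|\det M_{a,s}|^{-1}=a^{3/2}$), I would write
$$\langle \psi_{\tilde a,\tilde s,\tilde p}, \widetilde\psi_{a,s,p}\rangle = (a\tilde a)^{3/4}\!\int_{\R^2}\! e^{-2\pi i\langle\delta p,\xi\rangle}\,\widehat{\psi}\bigl(\tilde a\xi_1,\sqrt{\tilde a}(\tilde s\xi_1+\xi_2)\bigr)\,\overline{\widehat{\widetilde\psi}\bigl(a\xi_1,\sqrt a(s\xi_1+\xi_2)\bigr)}\,d\xi.$$
Since $\widehat{\widetilde\psi}(\eta)=\widehat{\psi_1}(\eta_1)\widehat{\psi_2}(\eta_2/\eta_1)$ with $\suppp\widehat{\psi_1}\subset[-2,-\tfrac12]\cup[\tfrac12,2]$ and $\suppp\widehat{\psi_2}\subset[-1,1]$, the integrand lives where $|\xi_1|\sim a^{-1}$ and $|\xi_2/\xi_1+s|\le\sqrt a$. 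I would then substitute $\omega_1=a\xi_1,\ \omega_2=\sqrt a(s\xi_1+\xi_2)$, which turns the band-limited factor into $\widehat{\psi_1}(\omega_1)\widehat{\psi_2}(\omega_2/\omega_1)$ on the fixed bounded set $\{|\omega_1|\in[\tfrac12,2],\ |\omega_2|\le|\omega_1|\}$, produces the Jacobian prefactor $(a\tilde a)^{3/4}a^{-3/2}=(\tilde a/a)^{3/4}$, and rewrites the argument of $\widehat\psi$ as $\bigl((\tilde a/a)\omega_1,\ \tfrac{\sqrt{\tilde a}\,\delta s}{a}\omega_1+\sqrt{\tilde a/a}\,\omega_2\bigr)$ and the phase as a linear form in $\delta p$. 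All three decay factors are then extracted from this single integral over a bounded region.

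For the scale factor I would split into the regimes $\tilde a\le a$ and $a\le\tilde a$. When $\tilde a\le a$ the first argument $(\tilde a/a)\omega_1$ is small, so the vanishing-moment numerator $|\xi_1|^M$ of \eqref{eq:MomentConditionWithDerivatives} gives $|\widehat\psi|\lesssim(\tilde a/a)^M$; combined with the prefactor $(\tilde a/a)^{3/4}$ this yields $(\tilde a/a)^{M+3/4}$, which dominates the claimed $(\tilde a/a)^{M-3/4}$. When $a\le\tilde a$ the first argument is large, so instead the frequency-decay factor $\langle\xi_1\rangle^{-L_1}$ produces $(\tilde a/a)^{M-L_1}$, and integrating the $\langle\cdot\rangle^{-L_2}$ factor over the bounded $\omega_2$-range contributes a further gain $(a/\tilde a)^{1/2}$; balanced against the prefactor and the hypothesis $L_1\ge 2M$ this delivers the companion power $(a/\tilde a)^{M+3/4}$. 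Taking the minimum over the two regimes reproduces the scale envelope.

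The directional and positional factors both rest on the bounded integral above, and this is where the derivative hypothesis \eqref{eq:MomentConditionWithDerivatives} for all $k\le K$ is needed. The displacement $\delta p$ enters only through the linear phase; in the $\omega$ variables $\partial_{\omega_2}$ and $\partial_{\omega_1}$ of the phase scale like $\delta p_2/\sqrt a$ and $(\delta p_1-s\delta p_2)/a$, so repeated integration by parts — with the $K$ derivative bounds on $\widehat\psi$ keeping the amplitude controlled and the $C_0^\infty$ window $\widehat{\psi_2}$ killing boundary terms — produces polynomial decay carrying exactly the anisotropic weights $\max(a,\tilde a)^{-1}|\delta p_2|^2$ and $\max(a,\tilde a)^{-2}|\delta p_1|^2$ of the statement, the disparity of weights reflecting the parabolic scaling. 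The shear displacement $\delta s$ enters through the second argument $\tfrac{\sqrt{\tilde a}\,\delta s}{a}\omega_1+\sqrt{\tilde a/a}\,\omega_2$ of $\widehat\psi$; when $|\delta s|$ is large this argument is forced large over the whole (bounded) support, so the second-variable decay of $\widehat\psi$ — sharpened by the same integration-by-parts scheme — yields the factor $(1+\max(a,\tilde a)^{-1}|\delta s|^2)^{-K}$. Multiplying the three envelopes gives the asserted bound.

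The main obstacle is the simultaneous bookkeeping. All three estimates must be run together so that the integrations by parts responsible for the $\delta s$- and $\delta p$-decay do not erode the sharp scale gain, and so that the exact exponents $M\pm\tfrac34$ and the precise anisotropic weights survive. Tracking the two competing scales $a,\tilde a$ uniformly through $\max(a,\tilde a)$, and verifying that each derivative landing on the arguments of $\widehat\psi$ during repeated integration by parts never costs more than the scale factor can absorb, is the delicate and genuinely lengthy part — which is presumably why the full computation is deferred to Subsection \ref{subsec:proofs_theCrazyLemma}.
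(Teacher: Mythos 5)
Your route is essentially the paper's own (Subsection \ref{subsec:proofs_theCrazyLemma}): Plancherel, restriction to the band-limited support of the classical shearlet, the hypothesis \eqref{eq:MomentConditionWithDerivatives} for the scale factor, the second-variable decay of $\widehat\psi$ for the $\delta s$ factor, and integration by parts against the phase $e^{-2\pi i\langle \delta p,\xi\rangle}$ for the $\delta p$ factor -- the paper merely packages this last step as symmetry of the operator $L = I - \max(a,\tilde a)^{-1}\Delta_\xi - \max(a,\tilde a)^{-2}\partial_{\xi_1}$, whose inverse powers multiply the phase by exactly the anisotropic weight, which is the same mechanism as your repeated integration by parts with the $K$ derivative bounds. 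There is, however, one concrete step in your sketch that does not go through: the scale-factor bookkeeping in the regime $a\le\tilde a$. Your own ingredients there are the prefactor $(\tilde a/a)^{3/4}$, the size bound $(\tilde a/a)^{M-L_1}\le (a/\tilde a)^{M}$ from $L_1\ge 2M$, and the $\omega_2$-integration gain $(a/\tilde a)^{1/2}$; their product is $(\tilde a/a)^{3/4}\,(a/\tilde a)^{M}\,(a/\tilde a)^{1/2}=(a/\tilde a)^{M-\frac14}$, which is a \emph{full power of $a/\tilde a$ short} of the exponent $(a/\tilde a)^{M+\frac34}$ you assert is ``delivered''. No estimate of this type can produce $M+\tfrac34$ in that regime under the hypothesis $\tfrac{L_1}{2}\ge M$ alone (one would need $L_1\ge 2M+1$).

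This defect is, to be fair, inherited from an inconsistency inside the paper itself. The paper's proof works with the compactly supported shearlet at scale $a$ and the classical one at scale $\tilde a$ (note the integrand $\widehat\psi(a\xi_1,\sqrt a(\delta s\xi_1+\xi_2))$ integrated over $\suppp\widehat{\widetilde\psi}_{\tilde a,0,0}$ in \eqref{eq:add}--\eqref{eq:estnr3}), and what it actually establishes is the pairing ``exponent $M+\tfrac34$ on (compact scale)/(classical scale), exponent $M-\tfrac34$ on the reciprocal''; relabelled to the statement's convention ($\psi$ at scale $\tilde a$, $\widetilde\psi$ at scale $a$) this reads $\min\bigl(|\tilde a/a|^{M+3/4},|a/\tilde a|^{M-3/4}\bigr)$, i.e.\ the printed statement has the two ratios swapped relative to its proof. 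Your two regimes deliver precisely the provable pairing: $(\tilde a/a)^{M+3/4}$ when $\tilde a\le a$, and $(a/\tilde a)^{M-1/4}\le (a/\tilde a)^{M-3/4}$ when $a\le\tilde a$. So your plan does prove the version of the lemma that the paper proves (and later invokes in Theorem \ref{thm:optimalityOfasymptoticBehaviour}); the gap lies only in forcing your estimate to match the literal, mis-paired statement. State the conclusion with the pairing your computation gives, rather than asserting an exponent it does not yield.
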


Notice that the estimate in Lemma \ref{lem:theCrazyLemma} is a continuous version of the cross-Gramian of two parabolic molecules described in \cite{PMolecules}.

For the proof of the next theorem, we require the following lemma which can be easily derived from Lemma \ref{lem:theCrazyLemma}
by computing the integral of the estimate over a ball and its complement. 
\begin{lemma}\label{lem:theTidyLemma}
 Let $\epsilon>0$ and $s_0\in \R, p_0\in \R^2$, then
\begin{align*}
&|\int \limits_{B_\epsilon(s_0,p_0)} (1+\max(a, \tilde{a})^{-1}|s-s_0|^2)^{-K}\left(1+ \max(a, \tilde{a})^{-1}  \|p-p_0\|^2 + \max(a, \tilde{a})^{-2} |p_1-{(p_0)}_1|^2 \right)^{-K} ds dp|\\
\lesssim &\max(a, \tilde{a})^{2}
\end{align*}
and
\begin{align*}
 &|\int \limits_{B_\epsilon(s_0,p_0)^c} (1+\max(a, \tilde{a})^{-1}|s-s_0|^2)^{-K}\left(1+ \max(a, \tilde{a})^{-1}  \|p-p_0\|^2 + \max(a, \tilde{a})^{-2} |p_1-{(p_0)}_1|^2 \right)^{-K} ds dp| \\
 \lesssim &\min(\max(a, \tilde{a}),1)^K.
\end{align*}
\end{lemma}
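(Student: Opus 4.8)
The plan is to strip the scale dependence by an anisotropic change of variables and then reduce both estimates to elementary radial integrals. Throughout write $m := \max(a,\tilde{a})$, and note that in the regime in which this lemma is applied the scales satisfy $m \le 1$. The integrand is exactly the $(s,p)$-dependent factor appearing in Lemma \ref{lem:theCrazyLemma}, and I would normalise its three quadratic terms by the substitution
\[
u = m^{-1/2}(s-s_0), \quad v_1 = m^{-1}\bigl(p_1 - (p_0)_1\bigr), \quad v_2 = m^{-1/2}\bigl(p_2 - (p_0)_2\bigr),
\]
whose Jacobian is $ds\,dp = m^{2}\,du\,dv_1\,dv_2$. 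Under this substitution the first weight becomes $(1+u^2)^{-K}$, and a short computation (using $m^{-1}\|p-p_0\|^2 = m v_1^2 + v_2^2$ and $m^{-2}|p_1-(p_0)_1|^2 = v_1^2$) turns the argument of the second weight into $1 + (m+1)v_1^2 + v_2^2$, so the rescaled integrand is $(1+u^2)^{-K}\bigl(1+(m+1)v_1^2 + v_2^2\bigr)^{-K}$.

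For the first (ball) estimate I would simply enlarge the domain of integration to all of $\R^3$. Since $m \le 1$ gives $1 + (m+1)v_1^2 + v_2^2 \ge 1 + v_1^2 + v_2^2$, the rescaled integrand is dominated by $(1+u^2)^{-K}(1+v_1^2+v_2^2)^{-K}$, whose integral over $\R^3$ is a finite constant once $K$ is large enough (the one-dimensional $u$-integral converges for $K>\tfrac12$, the two-dimensional $(v_1,v_2)$-integral for $K>1$). Multiplying by the Jacobian $m^{2}$ yields the claimed bound $\lesssim m^{2}$.

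For the complement estimate the key observation is that the substitution maps $B_\epsilon(s_0,p_0)^{c}$ into an exterior ball. Indeed $|s-s_0|^2 + \|p-p_0\|^2 = m u^2 + m^2 v_1^2 + m v_2^2 \le m\,(u^2+v_1^2+v_2^2)$ because $m\le 1$, so the defining condition $|s-s_0|^2+\|p-p_0\|^2\ge\epsilon^2$ forces $u^2+v_1^2+v_2^2 \ge \epsilon^2 m^{-1}$. On that set I would use the elementary product bound $(1+u^2)^{-K}(1+v_1^2+v_2^2)^{-K}\le (1+u^2+v_1^2+v_2^2)^{-K}$ and pass to polar coordinates in $\R^3$, reducing the integral to $\int_{\epsilon m^{-1/2}}^{\infty} (1+r^2)^{-K}r^2\,dr$, which for $K>\tfrac32$ is $\lesssim (m^{-1/2})^{3-2K}=m^{K-3/2}$. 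Multiplying by the Jacobian $m^{2}$ gives $\lesssim m^{K+1/2}\le m^{K}$, that is, $\lesssim \min(m,1)^{K}$ in the range $m\le 1$.

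The only genuine points requiring care, and hence the main (minor) obstacles, are: keeping track of the anisotropic Jacobian and verifying that it is the $p_1$-weight $m^{-2}|p_1-(p_0)_1|^2$ rather than $m^{-1}|p_1-(p_0)_1|^2$ that dictates the correct scaling $v_1=m^{-1}(p_1-(p_0)_1)$; and comparing the ellipsoidal exclusion region $\{m u^2+m^2 v_1^2+m v_2^2\ge\epsilon^2\}$ with the round ball $\{u^2+v_1^2+v_2^2\ge\epsilon^2 m^{-1}\}$, which is precisely where the hypothesis $m\le 1$ is used. Everything else is the routine convergence of the rescaled integrals, valid for $K$ sufficiently large, and the $\epsilon$-dependence is harmlessly absorbed into the implied constants.
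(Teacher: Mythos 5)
Your proof is correct, and it is essentially the argument the paper intends: the paper gives no proof of this lemma at all, saying only that it ``can be easily derived from Lemma \ref{lem:theCrazyLemma} by computing the integral of the estimate over a ball and its complement,'' and your anisotropic rescaling $u=m^{-1/2}(s-s_0)$, $v_1=m^{-1}(p_1-(p_0)_1)$, $v_2=m^{-1/2}(p_2-(p_0)_2)$ with Jacobian $m^{2}$ is precisely the natural way to carry out that computation; the convergence thresholds you need ($K>1$ for the ball estimate, $K>\tfrac32$ for the radial tail) are comfortably below the paper's standing assumption $K>2+\tfrac34$.

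One point deserves emphasis: your standing hypothesis $m=\max(a,\tilde{a})\le 1$ is not merely a convenience borrowed from the application, it is genuinely necessary for the second estimate, so you were right to state it explicitly. Indeed, for $m>1$ the claimed bound $\lesssim \min(\max(a,\tilde{a}),1)^{K}=1$ is false: on the region $\{1\le |s-s_0|\le \sqrt{m},\ \|p-p_0\|\le \sqrt{m}\}$, which is contained in $B_\epsilon(s_0,p_0)^{c}$ once $\epsilon<1$, one has $m^{-1}|s-s_0|^2\le 1$, $m^{-1}\|p-p_0\|^2\le 1$ and $m^{-2}|p_1-(p_0)_1|^2\le m^{-1}\le 1$, so the integrand is bounded below by $6^{-K}$, and the volume of this region is of order $m^{3/2}$; hence the left-hand side grows like $m^{3/2}$ while the right-hand side stays equal to $1$. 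So the lemma as printed must be read with the implicit restriction $\max(a,\tilde{a})\le 1$, which is exactly the regime in which it is invoked in the proof of Theorem \ref{thm:optimalityOfasymptoticBehaviour} (there $a_0\le 1$ and the $a$-integration is split at $a_0$ and $1$). By contrast, your first estimate needs no such restriction, since the inequality $1+(m+1)v_1^2+v_2^2\ge 1+v_1^2+v_2^2$ holds for every $m\ge 0$, so the bound $\lesssim m^{2}$ over the ball is valid for all scales.
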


The following result now makes precise what we mean by optimality of our decay estimates at regular points.

\begin{theorem}\label{thm:optimalityOfasymptoticBehaviour}
Let $\psi \in L^1(\R^2) \cap L^2(\R^2)$ be a shearlet satisfying \eqref{eq:MomentConditionWithDerivatives}, with $\frac{L_1}{2}\geq M \geq K> 2+ \frac{3}{4}$, 
and $\frac{L_2}{2} \geq K$, let $S$ be a bounded domain with piecewise smooth boundary, and let $f= \chi_S$. If, for a regular point $p_0 \in \partial S$ 
with normal direction corresponding to $s_0$, there exist $c_1$, $\epsilon>0$, and $\omega\in \R$ such that, for all $(s,p) \in B_\epsilon(s_0,p_0)$,
\begin{align*}
\left \langle  f , \psi_{a,s,p}\right \rangle \leq c_1 a^\omega  \text{ for $a\to 0$},
\end{align*}
then it follows that $\omega \leq \frac{3}{4}$. If there exist $c_2$ and $g_2: [0,1] \to [0,1]$ such that, for $(s,p) \in \R\times \R^2$,
\begin{align*}
\left \langle  f , \psi_{a,s,p}\right \rangle \geq c_2 g_2(a) \text{ for $a\to 0$},
\end{align*}
then we necessarily have $g_2(a) \lesssim a^{\frac{3}{4}}$.
\end{theorem}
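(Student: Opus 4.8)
The plan is to transfer the sharp decay of the \emph{classical} shearlet transform to $\langle f,\psi_{a,s,p}\rangle$ via a reproducing formula together with the cross-Gramian estimate of Lemma~\ref{lem:theCrazyLemma}. For a classical shearlet $\widetilde{\psi}$ (Example~\ref{ex:classicalShearlet}), Theorem~\ref{thm:2DClassificationClassical}(i)(2) gives $0<\lim_{a\to 0^+}a^{-3/4}|\langle f,\widetilde{\psi}_{a,s_0,p_0}\rangle|<\infty$ at the aligned regular point, and the proof of that theorem is locally uniform, so a compactness argument over the compact curve $\partial S$ and the finitely many corner points upgrades the pointwise rates to the uniform edge bound $|\langle f,\widetilde{\psi}_{\tilde a,\tilde s,\tilde p}\rangle|\lesssim \tilde a^{3/4}$, with rapid decay in $\tilde a$ once $\tilde p$ leaves a neighborhood of $\partial S$ or $\tilde s$ leaves the corresponding normal direction. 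Lemma~\ref{lem:theCrazyLemma} says that $\psi$ and $\widetilde{\psi}$ interact as parabolic molecules, and Lemma~\ref{lem:theTidyLemma} integrates this interaction in $(\tilde s,\tilde p)$; the whole argument then collapses to a single scale integral.

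First I would isolate that scale integral as the common engine. Inserting the bound of Lemma~\ref{lem:theCrazyLemma}, carrying out the $(\tilde s,\tilde p)$ integration by Lemma~\ref{lem:theTidyLemma} (which contributes $\max(a,\tilde a)^2$ over a ball and the smaller factor $\min(\max(a,\tilde a),1)^{K}$ over its complement), and integrating against the Haar factor $\tilde a^{-3}\,d\tilde a$, one obtains
\[
\int_0^\Gamma \tilde a^{\,\omega}\,\min\!\left(\Bigl(\tfrac{a}{\tilde a}\Bigr)^{M+\frac34},\Bigl(\tfrac{\tilde a}{a}\Bigr)^{M-\frac34}\right)\max(a,\tilde a)^{2}\,\tilde a^{-3}\,d\tilde a = O(a^{\,\omega}),
\]
where $\tfrac{L_1}{2}\ge M\ge K>2+\tfrac34$ ensures integrability as $\tilde a\to 0$ and as $\tilde a\to\Gamma$; the same computation with the extra factor $\min(\max(a,\tilde a),1)^{K}$ from the complementary region produces $O(a^{K-2})$, which is $o(a^{3/4})$ since $K-2>\tfrac34$. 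The low-pass part of either reproducing formula contributes only terms decaying rapidly in $a$, since at fine scales $\psi_{a,s,p}$ and $\widetilde{\psi}_{a,s,p}$ are frequency-supported away from the band-limited window.

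For the lower-bound assertion I would exploit that the classical system is a Parseval frame equal to its own dual (Example~\ref{ex:classicalShearlet}): expanding $f$ in the classical system and pairing with $\psi_{a,s,p}$ gives
\[
\langle f,\psi_{a,s,p}\rangle=\int \langle f,\widetilde{\psi}_{\tilde a,\tilde s,\tilde p}\rangle\,\langle\widetilde{\psi}_{\tilde a,\tilde s,\tilde p},\psi_{a,s,p}\rangle\,\tilde a^{-3}\,d\tilde a\,d\tilde s\,d\tilde p
\]
plus a rapidly decaying low-pass term. Bounding $|\langle f,\widetilde{\psi}_{\tilde a,\tilde s,\tilde p}\rangle|\lesssim\tilde a^{3/4}$ and applying the engine with $\omega=\tfrac34$ yields the uniform estimate $|\langle f,\psi_{a,s,p}\rangle|\lesssim a^{3/4}$ for all $(s,p)$. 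Together with the hypothesis $\langle f,\psi_{a,s,p}\rangle\ge c_2 g_2(a)$ and $c_2>0$ this forces $g_2(a)\lesssim a^{3/4}$.

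For the upper-bound assertion I would run the comparison in the other direction. The moment condition~\eqref{eq:MomentConditionWithDerivatives} makes $\psi$ generate a continuous frame (Theorems~\ref{thm:globalNonTight} and~\ref{thm:tightFrameCompact}), so I expand $f$ through its canonical dual frame $\psi^{\sharp}$ and pair with the classical element $\widetilde{\psi}_{a,s_0,p_0}$ to get $\langle f,\widetilde{\psi}_{a,s_0,p_0}\rangle=\int\langle f,\psi_{\tilde a,\tilde s,\tilde p}\rangle\,\langle\psi^{\sharp}_{\tilde a,\tilde s,\tilde p},\widetilde{\psi}_{a,s_0,p_0}\rangle\,\tilde a^{-3}\,d\tilde a\,d\tilde s\,d\tilde p$. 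Splitting the $(\tilde s,\tilde p)$ domain into $B_\epsilon(s_0,p_0)$, where the hypothesis supplies $|\langle f,\psi_{\tilde a,\tilde s,\tilde p}\rangle|\le c_1\tilde a^{\omega}$, and its complement, where I use only $|\langle f,\psi_{\tilde a,\tilde s,\tilde p}\rangle|\le\|f\|_2\|\psi\|_2$ together with the spatial decay of the cross-Gramian, the engine gives $\langle f,\widetilde{\psi}_{a,s_0,p_0}\rangle=O(a^{\omega})+O(a^{K-2})=O(a^{\min(\omega,K-2)})$; comparing with $|\langle f,\widetilde{\psi}_{a,s_0,p_0}\rangle|\gtrsim a^{3/4}$ and using $K-2>\tfrac34$ then forces $\omega\le\tfrac34$. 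The main obstacle I anticipate is exactly this last expansion: Lemma~\ref{lem:theCrazyLemma} is stated for $\psi$, but here I need the same cross-Gramian bound for the dual frame $\psi^{\sharp}$ against $\widetilde{\psi}$. Establishing that $\psi^{\sharp}$ is again a parabolic-molecule system of the same order — the point linking Lemma~\ref{lem:theCrazyLemma} to the cross-Gramian calculus of~\cite{PMolecules}, so that the $\delta s$- and $\delta p$-decay survives application of $S^{-1}$ — is the delicate step; the low-pass terms and the convergence of the scale integrals are then routine under the stated inequalities on $M,L_1,L_2,K$.
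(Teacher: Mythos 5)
Your overall strategy -- transfer the hypothesis to the classical shearlet transform via a reproducing formula and the cross-Gramian estimates of Lemmas~\ref{lem:theCrazyLemma} and~\ref{lem:theTidyLemma}, then contradict Theorem~\ref{thm:2DClassificationClassical}(i)(2) -- is exactly the paper's strategy, and your ``engine'' integral agrees with the paper's computation. However, the step you yourself flag as delicate is a genuine gap, and it sits at the heart of the main claim: your expansion of $f$ uses the \emph{canonical dual frame} $\psi^{\sharp}$ of a non-tight continuous frame, and you need $\langle\psi^{\sharp}_{\tilde a,\tilde s,\tilde p},\widetilde{\psi}_{a,s_0,p_0}\rangle$ to satisfy the same parabolic-molecule decay as Lemma~\ref{lem:theCrazyLemma}. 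No such estimate is available: canonical duals of continuous frames carry no a priori localization, and proving they form parabolic molecules is an open-ended problem, not a routine verification. The paper sidesteps the dual frame entirely: under \eqref{eq:MomentConditionWithDerivatives} one may invoke the \emph{tight} cone-restricted reproducing formula of Theorem~\ref{thm:tightFrameCompact}, expand the band-limited element $\widetilde{\psi}_{a_0,s_0,p_0}$ (whose frequency support lies in the cone $C$) in that formula, and pair with $f$; the synthesis elements are then $P_C\psi_{a,s,p}$, so Lemma~\ref{lem:theCrazyLemma} applies verbatim. A related point you gloss over: the reproducing formula produces coefficients $\langle P_C f,\psi_{a,s,p}\rangle$, while the hypothesis controls $\langle f,\psi_{a,s,p}\rangle$; the paper must and does estimate the discrepancy $\langle P_{D\cup C^{\nu}}f,\psi_{a,s,p}\rangle=O(a^{K/2})$ using the moment condition, and this term is one of the three error contributions $O(a_0^{\omega})+O(a_0^{K-2-\epsilon})+O(a_0^{K/2})$ that make the numerology $K>2+\tfrac34$ necessary.

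Your treatment of the second claim is also off track. The uniform bound $|\langle f,\widetilde{\psi}_{\tilde a,\tilde s,\tilde p}\rangle|\lesssim\tilde a^{3/4}$ cannot be obtained by ``a compactness argument'' from Theorem~\ref{thm:2DClassificationClassical}: that theorem gives pointwise limits which this very paper emphasizes are \emph{not} uniform. More importantly, no frame expansion is needed at all. The paper's entire proof of the second claim is the one-line estimate
\begin{align*}
\left\langle f,\psi_{a,s,p}\right\rangle = a^{\frac34}\left\langle f(A_aS_s(\cdot+p)),\psi\right\rangle \leq a^{\frac34}\|\psi\|_1\|f\|_\infty = a^{\frac34}\|\psi\|_1,
\end{align*}
valid for every $(s,p)$, which immediately forces $g_2(a)\lesssim a^{3/4}$. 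Your detour through the classical Parseval system would additionally have to cope with the cone projections in \eqref{eq:reproFormula1} (the coefficients there are $\mathcal{SH}_{\psi}P_Cf$, not $\mathcal{SH}_{\psi}f$), so even as a fallback it is not as routine as you suggest.
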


\begin{proof}
We will use Theorem \ref{thm:2DClassificationClassical} for classical shearlets to prove this result. For this, let $\widetilde{\psi}$ denote a 
classical shearlet. We will then show that for $\epsilon > 0$ such that $K-\epsilon\geq 2+ \frac{3}{4}$, we have that
\begin{eqnarray}
\lefteqn{\langle f, \psi_{a,s,p}  \rangle \leq C a^{\omega}, \ \text{ for all } (p,s)\in B_{\epsilon}(p_0,s_0)} \nonumber\\
 & \implies & \langle f, \widetilde{\psi}_{a_0,s_0,p_0} \rangle \leq C_2 a_0^\omega + O(a_0^\frac{K}{2})+ O(a_0^{K-2-\epsilon}) \text{ for } a\to 0. \label{eq:DecayImplication}
\end{eqnarray}
Given that $K - 2 \-\epsilon > \frac{3}{4}$ this implies faster decay of the transform associated with $\widetilde{\psi}$ than the one 
allowed by Theorem \ref{thm:2DClassificationClassical}, which cannot hold. Thus, \eqref{eq:DecayImplication} yields the first part of the result.

To prove \eqref{eq:DecayImplication} we assume a normal associated with the first cone $C_{1,1}$ and a shearing parameter $s_0$ such that $|s_0| < 1-\epsilon$. Notice that this 
is not a restriction, since the left hand side of \eqref{eq:DecayImplication} holding with $|s_0|= 1$ implies that it also holds for some $s_0'$ such that for some $\epsilon'>0$ we have $|s_0'| < 1-\epsilon'$. The situation of the other cone follows analogously. We then have $0<u,v$ such that $\suppp \widetilde{\psi}_{a_0,s_0,p_0} \subset C_{u,v}=:C$.

We first apply the reproducing formula of Theorem \ref{thm:tightFrameCompact}. By Theorem \ref{thm:tightFrameCompact}, there exists a window function 
$W$ with $\hat{W}(\xi) \in O(|\xi|^{-(K-\frac{1}{2}-\epsilon)})$ such that, for some $\Xi, \Gamma>0$,  we can write
\begin{eqnarray}
\left \langle f, \widetilde{\psi}_{a_0,s_0,p_0}\right \rangle & = & \left \langle P_C f, \widetilde{\psi}_{a_0,s_0,p_0}\right \rangle  \nonumber \\
& = &\frac{1}{C_{\psi}}|\left \langle P_C f,  \int \limits_{p\in \R^2} \left \langle \widetilde{\psi}_{a_0,s_0,p_0}, T_p W \right \rangle T_p P_{C}W dp \right.\nonumber\\
&&\quad \left.+  \int \limits_{p\in \R^2}\int \limits_{s\in [-\Xi, \Xi]} \int \limits_{a \in (0, \Gamma]}\mathcal{SH}_{\psi} \widetilde{\psi}_{a_0,s_0, p_0}(a,s,p)  P_{C}\psi_{a,s,p} a^{-3} da ds dp \right \rangle| \nonumber\\
& = &\frac{1}{C_{\psi}}|  \left \langle f,\int \limits_{p\in \R^2} \left \langle \widetilde{\psi}_{a_0,s_0,p_0}, T_p W \right \rangle T_p P_{C}W dp \right \rangle\nonumber\\
&&\quad+  \int \limits_{p\in \R^2}\int \limits_{s\in [-\Xi, \Xi]} \int \limits_{a \in (0, \Gamma]}\mathcal{SH}_{\psi} \widetilde{\psi}_{a_0,s_0, p_0}(a,s,p)  \left \langle P_{C} f,\psi_{a,s,p}\right \rangle a^{-3} da ds dp |. \label{eq:newStuff}
\end{eqnarray}
The first part of \eqref{eq:newStuff} can be estimated as follows:
\begin{eqnarray}
\lefteqn{\left \langle f,\int \limits_{p\in \R^2} \left \langle \widetilde{\psi}_{a_0,s_0,p_0}, T_p W \right \rangle T_p P_{C}W dp \right \rangle} \nonumber\\
&\leq \ & \sup_p |\left \langle \widetilde{\psi}_{a_0,s_0,p_0}, T_p W \right \rangle| \int \limits_{S} \int \limits_{\R^2} |f(x)| |P_CW(x-p)| dp dx\nonumber\\
&\leq \ &  \sup_p |\left \langle \widetilde{\psi}_{a_0,s_0,p_0}, T_p W \right \rangle|\cdot \|\chi_S\|_2 \cdot \left \|\left|\chi_S\right|*\left|P_CW\right| \right\|_2 \nonumber\\
&\leq \ & \sup_p |\left \langle \widetilde{\psi}_{a_0,s_0,p_0}, T_p W \right \rangle|\cdot\|\chi_S\|_2\cdot\|\chi_S\|_1 \cdot\|P_CW\|_2 \lesssim \sup_p |\left \langle \widetilde{\psi}_{a_0,s_0,p_0}, T_p W \right \rangle|,\label{eq:theLastLine}
\end{eqnarray}
since $f = \chi_S$ has finite $\|.\|_1$ norm. By the frequency decay of $W$ and the band-limitedness of $\psi$, the term in \eqref{eq:theLastLine} 
behaves as $O(a_0^{K-2-\epsilon})$ for $a_0 \to 0$. 

Thus it now remains to estimate
\begin{align}
 \int \limits_{p\in \R^2}\int \limits_{s\in [-\Xi, \Xi]} \int \limits_{a \in (0, \Gamma]}\mathcal{SH}_{\psi} \widetilde{\psi}_{a_0,s_0, p_0}(a,s,p)  \left \langle P_{C} f,\psi_{a,s,p}\right \rangle a^{-3} da ds dp. \label{eq:weSplitTheIntegral}
\end{align}
We split the integral \eqref{eq:weSplitTheIntegral} as
$$
\int \limits_{(0,\Gamma]}{\int \limits_{B_\epsilon(s_0,p_0)}  \left \langle P_C f, \psi_{a,s,p} \right \rangle \left \langle \widetilde{\psi}_{a_0,s_0, p_0}, \psi_{a,s,p} \right \rangle dp ds \frac{da}{a^3}}
+ \int \limits_{(0,\Gamma]}{\int \limits_{B_\epsilon(s_0,p_0)^c}  \left \langle P_C f, \psi_{a,s,p} \right \rangle  \left \langle \widetilde{\psi}_{a_0,s_0, p_0}, \psi_{a,s,p} \right \rangle  dp ds \frac{da}{a^3}}
$$
$$ 
\hspace*{-10cm} = I_1 + I_2.
$$

The first term $I_1$ can be controlled by
\begin{align}
I_1 \leq & \int \limits_{(0,\Gamma]}{\int \limits_{B_\epsilon(s_0,p_0)}  \left \langle f, \psi_{a,s,p} \right \rangle \left \langle \widetilde{\psi}_{a_0,s_0, p_0}, \psi_{a,s,p} \right \rangle dp ds \frac{da}{a^3}}\nonumber \\
&\quad- \int \limits_{(0,\Gamma]}{\int \limits_{B_\epsilon(s_0,p_0)}  \left \langle P_{D\cup C^\nu} f, \psi_{a,s,p} \right \rangle \left \langle \widetilde{\psi}_{a_0,s_0, p_0}, \psi_{a,s,p} \right \rangle dp ds \frac{da}{a^3}}.\label{eq:coneSplitting2D}
\end{align}
Now we will use the frequency decay of $\psi$ \eqref{eq:MomentConditionWithDerivatives} to see that $\left \langle P_{D\cup C^\nu} f, \psi_{a,s,p} \right \rangle$ decays as $O(a^{\frac{K}{2}})$ for $a \to 0$. Indeed we have that
\begin{align*}
&\left \langle P_{D\cup C^{\nu}} f, \psi_{a,s,p} \right \rangle \leq\|f\|_2 \|P_D \psi_{a,s, p} + P_{C^{\nu}} \psi_{a,s, p}\|\leq \|f\|_2 \left(\int \limits_{\R^2 \setminus C} |\widehat{\psi}_{a, s, p}(\xi)|^2 d\xi\right)^\frac{1}{2}.
\end{align*}
Next, we apply the moment condition to obtain
\begin{eqnarray*}
\int \limits_{\R^2 \setminus C} |\widehat{\psi}_{a, s, p}(\xi)|^2 d\xi  
& \leq & a^\frac{3}{2}\int \limits_{|\xi_1|< 1} \frac{\min(1,|a\xi_1|)^{2M}}{\left\langle a\xi_1\right\rangle^{2L_1}\left\langle \sqrt{a}(\xi_2-s\xi_1)\right\rangle^{2L_2}} d\xi 
+a^\frac{3}{2} \hspace*{-0.7cm}\int \limits_{|\xi_1|\geq 1, |\frac{\xi_1}{\xi_2}|> 1} \frac{\min(1,|a\xi_1|)^{2M}}{\left\langle a\xi_1\right\rangle^{2L_1}\left\langle \sqrt{a}(\xi_2-s\xi_1)\right\rangle^{2L_2}} d\xi\\
& = & I_{1,1} + I_{1,2}.
\end{eqnarray*}

We use that $|\xi_1|\leq 1$ to estimate $I_{1,1}$ by $a^{2K}$. We continue with the term $I_{1,2}$ to obtain
\begin{align*}
I_{1,2} \leq \ &a^\frac{3}{2}\int \limits_{|\xi_1|\geq 1} \int \limits_{|{\xi_2}|>|\xi_1|} \frac{\min(1,|a\xi_1|)^{2M}}{\left\langle a\xi_1\right\rangle^{2L_1}\left\langle \sqrt{a}(\xi_2-s\xi_1)\right\rangle^{2L_2}} d\xi_2d\xi_1\nonumber\\
\leq \ &a^\frac{3}{2}\int \limits_{|\xi_1|\geq 1} \frac{|a\xi_1|^{2K}}{\left\langle a\xi_1\right\rangle^{2L_1}}\int \limits_{|{\xi_2}| > |\xi_1|} \frac{1}{\left\langle \sqrt{a}(\xi_2-s\xi_1)\right\rangle^{2K+3}} d\xi_2d\xi_1\nonumber\\
\leq \ &a^\frac{3}{2}\int \limits_{|\xi_1|\geq 1} |a\xi_1|^{2K} \int \limits_{|r|> 1} \frac{1}{(\sqrt{a}|s-r||\xi_1|)^{2K+3}} |\xi_1| drd\xi_1\nonumber\\
\leq \ &a^\frac{3}{2}\int \limits_{|\xi_1|\geq 1} |a|^{K-\frac{3}{2}} \frac{1}{|\xi_1|^2} \int \limits_{|r|> 1} \frac{1}{(|s-r|)^{2K+3}}  drd\xi_1.\nonumber\\
\end{align*}
Since $s$ is strictly smaller than $1$ the integral exists and we obtain that $\left \langle P_{D\cup C^\nu} f, \psi_{a,s,p} \right \rangle$ decays as $O(a^{\frac{K}{2}})$ for $a \to 0$ uniformly for $(s,p) \in B_\epsilon(s_0,p_0)$.
Now, since $\left \langle P_{D\cup C^\nu} f, \psi_{a,s,p} \right \rangle$ decays as $O(a^{\frac{K}{2}})$ for $a \to 0$, \eqref{eq:coneSplitting2D} can be estimated by
\begin{align}
& c_1|\int \limits_{(0,\Gamma]}{\int \limits_{B_\epsilon(s_0,p_0)} a^\omega \left \langle \widetilde{\psi}_{a_0,s_0, p_0}, \psi_{a,s,p} \right \rangle dp ds \frac{da}{a^3}}| + C| \int \limits_{(0,\Gamma]}{\int \limits_{B_\epsilon(s_0,p_0)} a^\frac{K}{2} \left \langle \widetilde{\psi}_{a_0,s_0, p_0}, \psi_{a,s,p} \right \rangle dp ds \frac{da}{a^3}}|. \label{eq:theConesSplit}
\end{align}
The following argumentation, which we will only show for the first term of \eqref{eq:theConesSplit} yields an upper bound of $O(a_0^\omega + a_0^{K-2-\epsilon} + a_0^\frac{K}{2})$ as $a_0 \to 0$ for \eqref{eq:theConesSplit}. 

We start by invoking Lemma \ref{lem:theCrazyLemma} to obtain
\begin{equation}\label{eq:againTwoTerms1}
|\int \limits_{(0,\Gamma]}{\int \limits_{B_\epsilon(s_0,p_0)}  \left \langle f, \psi_{a,s,p} \right \rangle \left \langle \widetilde{\psi}_{a_0,s_0, p_0}, \psi_{a,s,p} \right \rangle dp ds \frac{da}{a^3}}|
\lesssim \int \limits_{(0,\Gamma]}{ a^\omega \min\left(\frac{a}{a_0} , \frac{a_0}{a} \right )^{M} \max(a, a_0)^2 \frac{da}{a^3}}.
\end{equation}
We continue with \eqref{eq:againTwoTerms1} and obtain that
\begin{eqnarray*}
\lefteqn{\int \limits_{(0,\Gamma]} a^\omega \min\left(\frac{a}{a_0} , \frac{a_0}{a} \right )^{M} \max(a, a_0)^2 \frac{da}{a^3}}\\
&\lesssim & a_0^2 \int \limits_{[0,a_0)}{ a^\omega \min\left(\frac{a}{a_0} , \frac{a_0}{a} \right)^{M}\frac{da}{a^3}} + \int \limits_{[a_0,\Gamma)}{ a^\omega \min\left(\frac{a}{a_0} , \frac{a_0}{a} \right)^{M}\frac{da}{|a|}} \\
&\leq &a_0^2 \int \limits_{[0,a_0]}{ a^\omega \left(\frac{a}{a_0}\right)^M \frac{da}{a^3}}+ \int \limits_{[a_0,\Gamma)}{ a^\omega  \left(\frac{a_0}{a}\right)^M \frac{da}{|a|}}\\
&\lesssim &a_0^{2-M} a_0^\omega \int \limits_{[0,a_0)}{a^{M-3} da} +\int \limits_{[a_0,\Gamma)}{ a^\omega \left(\frac{a_0}{a}\right)^{M}\frac{da}{|a|}}\\
&\leq &a_0^\omega + a_0^M \int \limits_{[a_0,\Gamma)}{\frac{da}{a^{M+1-\omega}}} \lesssim a_0^\omega + a_0^{M}\left(a_0^{\omega-M} - \Gamma^{\omega-M} \right)\lesssim a_0^\omega + O(a_0^M) \text{ for } a_0 \to 0.
\end{eqnarray*}
This finishes the estimation of the first term of $I_1$

Therefore, we have to deal with the term $I_2$. Let $a_0 \leq 1$, then, by Lemma \ref{lem:theTidyLemma},
\begin{eqnarray*}
\lefteqn{\int \limits_{(0,\Gamma]}{\int \limits_{{B_{\epsilon}(s_0,p_0)}^c}  \left \langle P_C f, \psi_{a,s,p} \right \rangle  \min\left(\frac{a}{a_0} , \frac{a_0}{a} \right )^{M} \left \langle \widetilde{\psi}_{a_0,s_0, p_0}, \psi_{a,s,p} \right \rangle  dp ds \frac{da}{a^3}}}\\
& \leq \ &C_\epsilon\int \limits_{(0,\Gamma]}{\min\left(\frac{a}{a_0} , \frac{a_0}{a} \right )^{M}\min(\max(a, a_0)^{K} ,1)\frac{da}{a^3}}\\
&\leq \ &C_\epsilon\int \limits_{(0,\Gamma]}{\min\left(\frac{a}{a_0} , \frac{a_0}{a} \right )^{M}   \min(\max(a, a_0)^{K} ,1) \frac{da}{a^3}}\\
&\leq \ &C_\epsilon\left(\int \limits_{[0, a_0)}{\left(\frac{a}{a_0} \right)^{M} a_0^{M} \frac{da}{a^3}}+\int \limits_{[a_0,1]}{\left(\frac{a_0}{a}\right )^{M} a^{K} \frac{da}{a^3}} \right) = O(a_0^{K-2}) \text{ for } a_0 \to 0.
\end{eqnarray*}
This yields that
\begin{align*}
\left \langle f, \widetilde{\psi}_{a,s_0,p_0}\right \rangle \leq O(a^\omega) + O(a_0^{K-2-\epsilon}) + O(a_0^\frac{K}{2}) \text{ for } a_0 \to 0,
\end{align*}
which cannot hold if $\omega > \frac{3}{4}$ due to Theorem \ref{thm:2DClassificationClassical}. The first part of the theorem is proved.

The second part of the Theorem follows directly from
\begin{align*}
 \left \langle f, \psi_{a,s,p}\right \rangle = a^{\frac{3}{4}} \left \langle f(A_a S_s(\cdot + p)), \psi\right \rangle \leq a^{\frac{3}{4}} \|\psi\|_1,
\end{align*}
since $\|f\|_\infty = 1$.
\end{proof}

\subsection{Optimality in 3D} \label{sec:optimality3D}

Similar to the 2D case we aim to examine whether it is possible to obtain different (better) uniform bounds for the detection of edges in Theorem \ref{thm:3Dmain}. 
Not surprisingly, it turns out that the rates for the regular points are -- similar to the 2D case formally stated in Theorem \ref{thm:optimalityOfasymptoticBehaviour} --
the only possible uniform rates. The reasoning behind this follows the same approach as in 2D, meaning that assuming a faster or slower decay we would contradict Theorem \ref{thm:3DClassificationClassical}.

We will make use of the following extended moment condition, which the reader might want to compare with \eqref{eq:MomentConditionWithDerivatives}:
\begin{align}
 (\widehat{\psi})^{(k)}(\xi) \lesssim \frac{\min(1,|\xi_1|^{M})}{(1+ |\xi_1|^2)^{L_1}(1+|\xi_2|^2)^{L_2}(1+ |\xi_1|^2)^{L_3}}, \ \text{ for all } k\leq K. \label{eq:MomentConditionWithDerivatives3D}
\end{align}
This allows to formulate the following optimality result.

\begin{theorem}
Let $\psi \in L^1(\R^3) \cap L^2(\R^3)$ be a shearlet satisfying \eqref{eq:MomentConditionWithDerivatives3D}, with $\frac{L_1}{3}\geq M \geq K>4$, $L_2,L_3 \geq 2K$,
let $S$ be a bounded domain with piecewise smooth boundary, and let $f= \chi_S$. If, for a regular point $p_0$ with normal direction corresponding to $s_0$, there 
exist $c_1$, $\epsilon>0$, and $\omega\in \R$ such that, for all $(s,p) \in B_\epsilon(s_0,p_0)$,
\begin{align*}
\left \langle  f , \psi_{a,s,p}\right \rangle \leq c_1 a^\omega  \text{ for $a\to 0$},
\end{align*}
then it follows that $\omega \leq 1$. If there exist $c_2$ and $g_2: [0,1] \to [0,1]$ such that, for $(s,p) \in \R^2 \times \R^3$,
\begin{align*}
\left \langle  f , \psi_{a,s,p}\right \rangle \geq c_2 g_2(a) \text{ for $a\to 0$},
\end{align*}
then we necessarily have $g_2(a) \lesssim a$.
\end{theorem}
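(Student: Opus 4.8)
The plan is to mirror the two-dimensional argument of Theorem \ref{thm:optimalityOfasymptoticBehaviour}, proving both bounds by comparison with a $3$D classical (band-limited) shearlet $\widetilde{\psi}$ and invoking Theorem \ref{thm:3DClassificationClassical}. I would dispose of the lower bound first, since it is immediate. Writing $y = M_{a,s}^{-1}(x-p)$ and using $|\det M_{a,s}| = a^2$ together with the normalization $\psi_{a,s,p} = a^{-1}\psi(M_{a,s}^{-1}(\cdot - p))$ gives $\langle f, \psi_{a,s,p}\rangle = a\,\langle f(M_{a,s}\,\cdot + p), \psi\rangle$, so that $|\langle f, \psi_{a,s,p}\rangle| \le a\,\|f\|_\infty\,\|\psi\|_1 = a\,\|\psi\|_1$ because $\|f\|_\infty = 1$. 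Hence any admissible lower-bounding function must satisfy $g_2(a) \lesssim a$, which is the second assertion.

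For the upper bound I would argue by contradiction: assuming the hypothesis $\langle f, \psi_{a,s,p}\rangle \le c_1 a^\omega$ on $B_\epsilon(s_0,p_0)$, I would show that the classical shearlet coefficient $\langle f, \widetilde{\psi}_{a_0,s_0,p_0}\rangle$ then decays like $O(a_0^\omega)$ plus strictly faster remainder terms, and that $\omega > 1$ would contradict Theorem \ref{thm:3DClassificationClassical}(iii) (applied, as in the proof of Theorem \ref{thm:3Dmain}(vi), to a function agreeing with $f$ near $p_0$ and free of corner points), which forces $\lim_{a\to 0} a^{-1}\mathcal{SH}_{\widetilde\psi}\chi_S \neq 0$ at a regular point in the normal direction. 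The first technical ingredient is a $3$D analog of the cross-Gramian estimate Lemma \ref{lem:theCrazyLemma}: bounding $\langle \widetilde\psi_{a_0,s_0,p_0}, \psi_{a,s,p}\rangle$ by $\min(|a/a_0|^{M+1}, |a_0/a|^{M-1})$ times a localization factor $(1+\max(a,a_0)^{-1}|\delta s|^2)^{-K}(1+\max(a,a_0)^{-1}|\delta p|^2 + \max(a,a_0)^{-2}|(\delta p)_1|^2)^{-K}$ in the shear and spatial offsets $\delta s, \delta p$. The exponents $M\pm 1$ replace $M\pm 3/4$ because the $3$D normalization weight is $a^{-1}$ rather than $a^{-3/4}$; this estimate is proved as in Subsection \ref{subsec:proofs_theCrazyLemma}, using the moment condition \eqref{eq:MomentConditionWithDerivatives3D} (with $\tfrac{L_1}{3}\ge M\ge K$ and $L_2,L_3\ge 2K$ guaranteeing convergence) and repeated integration by parts in the two shear directions. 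Integrating this localization factor then yields the $3$D analog of Lemma \ref{lem:theTidyLemma}, namely $\lesssim \max(a,a_0)^3$ over $B_\epsilon(s_0,p_0)$ and $\lesssim \min(\max(a,a_0),1)^K$ over its complement, the spatial integration now contributing total weight $\tfrac12+\tfrac12+1$ and the shear integration $\tfrac12+\tfrac12$.

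With these estimates I would apply the tight-frame reproducing formula of Theorem \ref{thm:3DReproFormula} to expand $\langle f, \widetilde\psi_{a_0,s_0,p_0}\rangle$ into a window term and a shearlet integral over the compactly supported system, restricting attention to the pyramid $\mathcal{P}_{u,v,w}$ containing the Fourier support of $\widetilde\psi_{a_0,s_0,p_0}$. The window contribution is controlled, as in \eqref{eq:theLastLine}, by $\sup_p|\langle \widetilde\psi_{a_0,s_0,p_0}, T_pW\rangle|$, which is $O(a_0^{K-3-\epsilon})$ by the frequency decay of $W$ from Lemma \ref{lem:theWlemma} and the band-limitedness of $\widetilde\psi$. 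The mass of $f$ falling into the remaining pyramids and the central box is handled as in the $I_{1,1}, I_{1,2}$ computation: the moment condition \eqref{eq:MomentConditionWithDerivatives3D} gives $\langle P_{\mathcal{P}^c}f, \psi_{a,s,p}\rangle = O(a^{K/2})$ uniformly on $B_\epsilon(s_0,p_0)$. Splitting the main integral over $B_\epsilon(s_0,p_0)$ and its complement and substituting the hypothesis on $B_\epsilon$ together with the two lemmas reduces everything to the scalar integral $\int_{(0,\Gamma]} a^\omega \min(a/a_0, a_0/a)^M \max(a,a_0)^3\,a^{-4}\,da$, the Haar weight in $3$D being $a^{-4}$, which evaluates to $a_0^\omega + O(a_0^M)$; the complement integral contributes $O(a_0^{K-3})$. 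Thus $\langle f, \widetilde\psi_{a_0,s_0,p_0}\rangle = O(a_0^\omega) + O(a_0^{K-3-\epsilon}) + O(a_0^{K/2})$, and since $K > 4$ the remainders are $o(a_0)$, so $\omega > 1$ would give decay strictly faster than $a_0$, contradicting Theorem \ref{thm:3DClassificationClassical}(iii).

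I expect the main obstacle to be establishing the $3$D cross-Gramian bound, i.e.\ the $3$D replacement for Lemma \ref{lem:theCrazyLemma}. Unlike the $2$D case, the shear acts through two parameters $(s_1,s_2)$ coupling $\xi_1$ separately to $\xi_2$ and $\xi_3$, so the integration-by-parts bookkeeping that produces the anisotropic localization factor with its distinguished $(\delta p)_1$ direction is noticeably heavier; keeping the decay uniform in all of $\delta s$ and $\delta p$ while tracking the correct power $M\pm 1$ is where the real work lies. Once that estimate is in hand, the integration lemma, the pyramid-splitting, and the final scalar integral are routine adaptations of the $2$D computation.
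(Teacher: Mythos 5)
Your proposal is correct and is essentially the paper's own argument: the paper omits the proof, stating only that it follows the proof of Theorem \ref{thm:optimalityOfasymptoticBehaviour} very closely with the obvious higher-dimensional changes and with the reproducing formula of Theorem \ref{thm:3DReproFormula} in place of Theorem \ref{thm:tightFrameCompact}, which is precisely what you carry out. Your dimensional bookkeeping also checks out against the 2D computation (cross-Gramian exponents $M\pm 1$ from the $a^{-1}$ normalization and the $\tilde{a}^{-2}$ frequency-support volume, localization integral $\max(a,a_0)^3$, Haar weight $a^{-4}$, remainders $O(a_0^{K-3-\epsilon})+O(a_0^{K/2})$ requiring exactly $K>4$, and the contradiction with Theorem \ref{thm:3DClassificationClassical}(iii)).
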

We will omit the proof, since it follows the arguments of the proof of Theorem \ref{thm:optimalityOfasymptoticBehaviour} very closely with the obvious changes 
due to the higher dimension. In particular, the reproducing formula from Theorem \ref{thm:3DReproFormula} has now to be applied.

\section{Proofs}\label{sec:proofs}

\subsection{Lemmata from Section \ref{sec:shearlets}}\label{subsec:proofs_sec2}

\subsubsection{Proof of Lemma \ref{lem:theWlemma}}\label{subsec:proofs_lem:theWlemma}

\begin{proof}
If we apply the hypothesis on $W$ and the admissibility of $\psi$, we obtain with Lemma \ref{lem:admissibility} $$
 |\widehat{W}(\xi)|^2 = \chi_{\mathcal{P}_{u,v,w}}(\xi)\left( \int \limits_{a\in \R^+} \int \limits_{\|s\|_\infty\geq \Xi} |\widehat{\psi}(M_{a,s}^{T}\xi)|^2 a^{-2} ds da
+\int \limits_{a \ge \Gamma} \int \limits_{\|s\|_\infty \le \Xi} |\widehat{\psi}(M_{a,s}^{T}\xi)|^2 a^{-2} ds da\right) = I_1(\xi) + I_2(\xi).
$$
Using the Fourier decay of $\psi$ in the first variable, $I_2(\xi)$ can be estimated by
$$
I_2(\xi) \leq C (2\Xi)^2 \int \limits_{a \ge \Gamma}  |a\xi_1|^{-2M} a^{-2} da = O(|\xi_1|^{-2M}) = O(|\xi|^{-2M}) \quad \text{for } |\xi| \to \infty,
$$
where the last equality holds, since we only need to consider $\xi\in \mathcal{P}_{u,v,w}$.
Next we split up the first integral $I_1(\xi)$ into the following two terms
$$
I_1(\xi)
= \int \limits_{a\le 1} \int \limits_{\|s\|_\infty\geq \Xi} |\widehat{\psi}(M_{a,s}^{T}\xi)|^2 a^{-2} ds da
+\int \limits_{1 \le a} \int \limits_{\|s\|_\infty\geq \Xi} |\widehat{\psi}(M_{a,s}^{T}\xi)|^2 a^{-2} ds da
=I_{1,1}(\xi) + I_{1,2}(\xi).
$$
We continue with the first term $I_{1,1}(\xi)$ to derive an estimate for $\xi\in \mathcal{P}_{u,v,w}$. The fact that we only consider $\xi\in \mathcal{P}_{u,v,w}$ 
implies that we can write $\xi = (\xi_1, r_1\xi_1, r_2\xi_1)$ with $|r_1| \leq v$, and $|r_2| \leq w$. 
Then
\begin{align}
I_{1,1}(\xi) \le  &\int \limits_{a\le 1, |s_1|\ge\Xi} \int \limits_{\R} |\widehat{\psi}(a\xi_1, \sqrt{a}(r_1\xi_1 + s_1\xi_1), \sqrt{a}(r_2\xi_1 + s_2\xi_1))|^2 a^{-2} ds_2 ds_1 \frac{da}{a^{2}}   \label{eq:s1Large}\\
& \quad + \int \limits_{a\le 1, |s_2|\ge\Xi, |s_1|\leq \Xi} \int \limits_{\R} |\widehat{\psi}(a\xi_1, \sqrt{a}(r_1 \xi_1 + s_1\xi_1), \sqrt{a}(r_2 \xi_1 + s_2\xi_1))|^2 a^{-2} ds_2 ds_1 \frac{da}{a^{2}}. \label{eq:s2Large}
\end{align}
In the two terms above, $|s_i + r_i|$ is strictly bounded from below for at least one $i = 1,2$. We continue with \eqref{eq:s1Large}, where $|s_1 + r_1|$ is bounded from below. We can estimate \eqref{eq:s1Large} by
\begin{align*}
 &C\int \limits_{a\le 1, |s_1|\ge\Xi} \left|a\xi_1\right|^{2M} \left|1+a^2|\xi_1|^2\right|^{-L_1}\left|1+a|\xi_1|^2|r_1 + s_1|^2\right|^{-L_2} \int \limits_{\R}\left|1+a|\xi_1|^2|r_2 + s_2|^2\right|^{-L_3}ds_2 ds_1 \frac{da}{a^{2}}.\\
\noalign{\text{Calculating the integral over $s_2$ by the transformation $s_2 \mapsto  \frac{s_2}{\sqrt{a}{|\xi_1|}}$ yields}}\\
\leq \ &C'\int \limits_{a\le 1, |s_1|\ge \Xi} \left|a\xi_1\right|^{2M} (\sqrt{a}|r_1 + s_1||\xi_1|)^{-2L_2} (\sqrt{a}|\xi_1|)^{-1}  ds_1 \frac{da}{a^{2}}\\
= \ &C |\xi|^{-2|L_2-M+\frac{1}{2}|}\int \limits_{a\le 1, |s_1|\ge \Xi} \left|a\right|^{2M-L_2-\frac{1}{2}} (|r_1 + s_1|)^{-2L_2}  ds_1 da,
\end{align*}
which is in $O(|\xi|^{-2|L_2-M+\frac{1}{2}|})$ for $|\xi| \to \infty$ by the assumptions on $L_i$ and $M$. If we apply the same method to \eqref{eq:s1Large}, we obtain a bound of $O(|\xi|^{-2|L_3-M+\frac{1}{2}|})$ for $|\xi|\to \infty$.

We finish the proof by examining $I_{1,2}(\xi)$. Then we obtain with a similar estimate as above that
\begin{align*}
I_{1,2}(\xi) \le 
& \ C'\int \limits_{1\le a, |s_1|\ge \Xi} (a|r_1 + s_1|^2|\xi_1|^2)^{-L_2} (\sqrt{a}|\xi_1|)^{-1}  ds_1 \frac{da}{a^{2}}\\
& \quad + C'\int \limits_{1\le a, |s_2|\ge \Xi} (a|r_2 + s_2|^2|\xi_1|^2)^{-L_2} (\sqrt{a}|\xi_1|)^{-1}  ds_2 \frac{da}{a^{2}}\\
= & \ O(|\xi_1|^{-2L_2-1}) \quad \text{for } |\xi_1|\to \infty.
\end{align*}
If we combine all estimates we obtain the result.
\end{proof}

\subsection{Lemmata from Section \ref{sec:detectAndClassify3D}}\label{subsec:proofs_sec4}

\subsubsection{Proof of Lemma \ref{lem:SmoothCutOffPlusWavefront}}\label{subsec:proofs_SmoothCutOffPlusWavefront}

Pick $1>\delta >0$ and an open neighborhood $V_\lambda'$ of $\lambda$ such that $V_\lambda' + B_\delta(0) \subset V_\lambda$.
Let $q$ be a vector such that $\left( \frac{q_2}{q_1}, \frac{q_3}{q_1}\right) \in V_\lambda'$. Then, for $t\in \R$, we have
\begin{align}
(\Phi f)^\wedge(t q) = \hat{\Phi}*\hat{f}(t q). \label{eq:convolutionEstimate}
\end{align}
We now split the right hand side of \eqref{eq:convolutionEstimate} into
\begin{align*}
\int_{|\xi|<\delta t} \hat{f}(t q - \xi)\hat{\Phi}(\xi) d\xi + \int_{|\xi| \geq \delta t} \hat{f}(t q - \xi)\hat{\Phi}(\xi) d\xi.
\end{align*}
By the choice of $V_\lambda'$, the first term decays with the requested rate as $|t| \to \infty$ and by the smoothness of $\Phi$ the second term 
decays rapidly for $t \to \infty$. The claim is proven.

\subsubsection{Proof of Lemma \ref{lem:DirectedPointsAndCones}}\label{subsec:proofs_DirectedPointsAndCones}

We start with the assumption that $(p_0,s_0)$ is an $N-$ regular directed point of $g$. Then, by definition of a regular directed point, there exists 
a smooth cutoff function $\Phi_1$ supported around $p_0$ such that
\begin{align}
(\Phi_1 g)^\wedge(\xi) = O(|\xi|^{-N}), \ \text{ for all }\left( \frac{\xi_2}{\xi_1}, \frac{\xi_3}{\xi_1}\right) \in B_{\delta_1}(s_0),\label{eq:directedPoint1}
\end{align}
for some $\delta_1 >0$. Since $|s_0^{(1)}| < v, |s_0^{(2)}| < w$, we have
\begin{align}
(P_{(\mathcal{P}_{u,v,w})^c}g)^\wedge(\xi) = 0, \ \text{ for all } \left(\frac{\xi_2}{\xi_1}, \frac{\xi_3}{\xi_1}\right) \in B_{\delta_2}(s_0). \label{eq:directedPoint2}
\end{align}
Now we have to argue that
\begin{align}
(\Phi_1 P_{\mathcal{P}_{u,v,w}}g)^\wedge(\xi) = O(|\xi|^{-N}), \ \text{ for all }\left( \frac{\xi_2}{\xi_1}, \frac{\xi_3}{\xi_1}\right) \in B_{\delta_3}(s_0) \label{eq:directedPoint3}.
\end{align}
Since $\Phi_1 P_{\mathcal{P}_{u,v,w}}g = \Phi_1g - \Phi_1P_{(\mathcal{P}_{u,v,w})^c}g$, \eqref{eq:directedPoint1} and \eqref{eq:directedPoint2} imply \eqref{eq:directedPoint3} by Lemma \ref{lem:SmoothCutOffPlusWavefront}.

For the converse in the assertion of Lemma \ref{lem:DirectedPointsAndCones}, we consider
$$g = P_{\mathcal{P}_{u,v,w}}g + P_{(\mathcal{P}_{u,v,w})^c}g.$$
There exists a smooth cutoff function $\Phi$ such that $\Phi P_{\mathcal{P}_{u,v,w}}g$ has the desired decay. By Lemma \ref{lem:SmoothCutOffPlusWavefront}, also $\Phi P_{(\mathcal{P}_{u,v,w})^c}g$ admits the desired decay, and hence also $(\Phi g)^\wedge$ does. This yields the claim.

\subsubsection{Proof of Lemma \ref{lem:localizationForWavefront}}\label{subsec:proofs_localizationForWavefront}

We recall that the Radon transform of $g$ is given by
\begin{align*}
\mathcal{R}g(u,s) := \int_{\R}g(u - s x, x)dx = \int_{\R}g(u_1 - s_1 x, u - s_2 x, x)dx, \ \text{ for } u\in \R, s\in \R^2.
\end{align*}
A simple calculation shows, that
\begin{align}
(\mathcal{R}g(\cdot,s))^\wedge(\omega) = \hat{g}(\omega, s_1\omega, s_2\omega), \label{eq:projectionSlice}
\end{align}
which is sometimes called Projection Slice Theorem. This result implies that in order to prove our claim, it suffices to show that, for fixed $s$, 
the estimate 
$$
\hat{I}(\omega): = (\mathcal{R}g(\cdot,s))^\wedge(\omega) = O(|\omega|^{-N}) \quad \mbox{for } |\omega| \to \infty
$$ 
holds. But this follows by the properties of the Fourier transform, if $\frac{\partial}{\partial^N u}I(u) \in L^1(\R)$ can be proven.
Since $\Phi$ is compactly supported, \eqref{eq:definitionOfG} yields that $g$ is compactly supported. Therefore we obtain that $I$ and hence $\frac{\partial}{\partial^N u}I(u)$ are compactly supported. Thus, we reduced the task to prove that $\frac{\partial}{\partial^N u}I(u)$ stays bounded.

For this, we first obtain that
\begin{align*}
\frac{\partial}{\partial^N u}I(u) = \frac{\partial}{\partial^N u}\int_\R \int_{p \in U(p_0)^c, s\in [-\Xi, \Xi], a\in (0,\Gamma]} \left \langle f, \widetilde{\psi}_{a,s,p}\right \rangle \Phi(u - sx , x)\psi_{a,s,p}(u-sx , x) a^{-4} da ds dp dx.
\end{align*}
A change of the order of integration and an application of the product rule yield that this equals
\begin{align*}
\sum \limits_{j=0}^N \left( \begin{array}{c} N \\ j\end{array} \right) \int \limits_{p \in U(p_0)^c, s\in [-\Xi, \Xi], a\in (0,\Gamma]} \hspace*{-0.5cm} \left \langle f, \widetilde{\psi}_{a,s,p}\right \rangle \int \limits_\R \left(\frac{\partial}{\partial u}\right)^{N-j}\Phi(u - sx , x)\left(\frac{\partial}{\partial u}\right)^{j}\psi_{a,s,p}(u-sx , x) a^{-4} dx da ds dp.
\end{align*}
We write $\theta^j: =\left(\frac{\partial }{\partial x_1}\right)^j \psi$ to obtain that the $j-$th term in the above equals
\begin{align}
\int \limits_{p \in U(p_0)^c, s\in [-\Xi, \Xi], a\in (0,\Gamma]} \left \langle f, \widetilde{\psi}_{a,s,p}\right \rangle a^{-j-4}\int \limits_\R\left(\frac{\partial}{\partial u}\right)^{N-j}\Phi(u - sx , x)\left(\frac{\partial}{\partial u}\right)^{j}\theta^j_{a,s,p}(u-sx , x) dx da ds dp. \label{eq:thisShouldBeFinite}
\end{align}
Invoking Equation \eqref{eq:derivativeDecay} yields that
\begin{align*}
\left|\theta^j_{a,s,p}(x)\right| = O\left(a^{P_j/2-1}|x - p|^{-P_j}\right).
\end{align*}
Since $\Phi$ is only supported in a small neighborhood $V(p_0)$ of $p_0$, we obtain
\begin{align*}
\left| \left(\frac{\partial}{\partial x_1} \right)^{N-j}\Phi(x)\theta^j_{a,s,p}(x)\right| = O\left(\left(\frac{\partial}{\partial x_1} \right)^{N-j}\Phi(x) a^{P_j/2-1}|p- p_0|^{-P_j}\right).
\end{align*}
Since $p\in U(p_0)^c$, it follows, by inserting the above equation into \eqref{eq:thisShouldBeFinite} and employing \eqref{eq:derivativeDecay}, that the integral exists. This ultimately yields the claim.

\subsection{Lemmata from Section \ref{sec:optimality}}

\subsubsection{Proof of Lemma \ref{lem:theCrazyLemma}}\label{subsec:proofs_theCrazyLemma}

First, notice that after applying a suitable transformation it certainly suffices to show the result for
\begin{align*}
 \left \langle \psi_{\tilde{a},\delta s, \delta p}, \widetilde{\psi}_{a,0,0} \right \rangle,
\end{align*}
where $\delta s = \tilde{s} - s$, and $\delta p = \tilde{p} - p$.

For this, we introduce the differential operator
\begin{align*}
 L = I - \max(a,\tilde{a})^{-1} \Delta_\xi - \max(a, \tilde{a})^{-2}\frac{d}{d\xi_1}.
\end{align*}
In the sequel, we will apply $L$ only to compactly supported functions, and hence we can assume it to be symmetric. Also observe that
\begin{align*}
 L^{-k}\left( \textrm{exp}\left\langle - 2\pi \xi \cdot \delta p \right\rangle \right) = \left(1+ \max(a, \tilde{a})^{-1} \|\delta p\|^2 + \max(a, \tilde{a}))^{-2} |(\delta p)_1|^2  \right)^{-k} \textrm{exp}\left\langle-2 \pi \xi \cdot \delta p\right\rangle.
\end{align*}
Using that
\begin{align*}
\suppp \widehat{\widetilde{\psi}}_{\tilde{a},0,0} \subset \left\{ (\xi_1, \xi_2): \xi_1\in \left[-\frac{2}{\tilde{a}}, -\frac{1}{2\tilde{a}}\right] \cup \left[\frac{2}{\tilde{a}}, \frac{1}{2\tilde{a}}\right], \left|\frac{\xi_2}{\xi_1}\right|\leq \sqrt{\tilde{a}}\right\},
\end{align*}
we now obtain
\begin{eqnarray*}
\lefteqn{\left \langle \widehat{\psi}_{a,\delta s,\delta p}, \widehat{\widetilde{\psi}}_{a,0,0} \right \rangle}\\
&\leq	&|\tilde{a}a|^{\frac{3}{4}}\int_{\R^2}\widehat{\psi}(a\xi_1, \sqrt{a}(\delta s\xi_1 + \xi_2)) \widehat{\widetilde{\psi}}(a\xi_1, a^{-\frac{1}{2}}(\frac{\xi_2}{\xi_1})) e^{-2\pi i \xi \cdot \delta p} d\xi\\
&=	&|\tilde{a}a|^{\frac{3}{4}}\int_{\R^2}L^{K}\left(\widehat{\psi}(a\xi_1, \sqrt{a}(\delta s\xi_1 + \xi_2)) \widehat{\widetilde{\psi}}(a\xi_1, a^{-\frac{1}{2}}(\frac{\xi_2}{\xi_1}))\right) L^{-K}\left(e^{-2\pi i \xi \cdot \delta p} \right)d\xi\\
&=&|\tilde{a}a|^{\frac{3}{4}}\int_{\R^2}L^{K}\left(\widehat{\psi}(a\xi_1, \sqrt{a}(\delta s\xi_1 + \xi_2)) \widehat{\widetilde{\psi}}(a\xi_1, a^{-\frac{1}{2}}(\frac{\xi_2}{\xi_1}))\right)\\
&&\quad\cdot\left(1+ \max(a, \tilde{a})^{-1}  |\delta p|^2 + \max(a, \tilde{a})^{-2} |(\delta p)_1|^2  \right)^{-K} \textrm{exp}\left\langle -2 \pi \xi \cdot \delta p\right\rangle d\xi.
\end{eqnarray*}

We next observe that, since $\widehat{\psi} \widehat{\widetilde{\psi}}$ obeys \eqref{eq:MomentConditionWithDerivatives},it follows that 
$$
L^{K}\left(\widehat{\psi} \widehat{\widetilde{\psi}}\right) \lesssim \frac{|\xi_1|^{M}}{\left \langle \xi_1\right\rangle^{L_1} \left \langle \xi_2\right\rangle^{L_2}}
\quad \mbox{and} \quad
\suppp L^{K}\left(\widehat{\psi} \widehat{\widetilde{\psi}}\right) \subseteq \suppp \widehat{\widetilde{\psi}}_{a,0,0}.
$$ 
Hence, we obtain
\begin{align}
&|\left \langle \widehat{\psi}_{a,\delta s,\delta p}, \widehat{\widetilde{\psi}}_{a,0,0} \right \rangle| \nonumber\\ 
\label{eq:add}
&\quad \lesssim |\tilde{a}a|^{\frac{3}{4}}\int_{\suppp \widehat{\widetilde{\psi}}_{\tilde{a},0,0}}  \frac{|a \xi_1|^{M}}{\left \langle a \xi_1 \right\rangle^{L_1} \left \langle \sqrt{a}(\delta s \xi_1 + \xi_2)\right\rangle^{L_2}}  \left(1+ \max(a, \tilde{a})^{-1} |\delta p|^2 + \max(a, \tilde{a})^{-2} |(\delta p)_1|^2 \right)^{-K} d\xi.
\end{align}
We now use that, for $\xi \in \suppp \widehat{\widetilde{\psi}}_{\tilde{a},0,0}$, we have $|a\xi_1| \leq 2\frac{a}{\tilde{a}}$. This allows us
to estimate
\begin{align}
&|\tilde{a}a|^{\frac{3}{4}}\int_{\suppp \widehat{\widetilde{\psi}}_{\tilde{a},0,0}}  \frac{|a \xi_1|^{M}}{\left \langle a \xi_1 \right\rangle^{L_1} \left \langle \sqrt{a}(\delta s \xi_1 + \xi_2)\right\rangle^{L_2}}  \left(1+ \max(a, \tilde{a})^{-1} |\delta p|^2)+ \max(a, \tilde{a})^{-2} |(\delta p)_1|^2  \right)^{-K} d\xi \nonumber\\
&\leq 2|\tilde{a}a|^{\frac{3}{4}} \left|\frac{a}{\tilde{a}}\right|^{M} \int_{\suppp \widehat{\widetilde{\psi}}_{\tilde{a},0,0}}  \frac{1}{\left \langle a \xi_1 \right\rangle^{L_1-M}\left \langle a \frac{1}{2 \tilde{a}} \right\rangle^{M} \left \langle \sqrt{a}(\delta s \xi_1 + \xi_2)\right\rangle^{L_2}} \nonumber\\
&\quad \cdot\left(1+ \max(a, \tilde{a})^{-1}  |\delta p|^2) + \max(a, \tilde{a})^{-2} |(\delta p)_1|^2 \right)^{-K} d\xi\nonumber\\
&\lesssim |\tilde{a}a|^{\frac{3}{4}} \min\left(\left|\frac{a}{\tilde{a}}\right|, \left|\frac{\tilde{a}}{a}\right|\right)^{M}  \int_{[\frac{1}{2\tilde{a}},\frac{2}{\tilde{a}}]} \int_{|\xi_2|\leq \tilde{a}^{\frac{1}{2}}|\xi_1|} \frac{1}{\left \langle a \xi_1 \right\rangle^{L_1-M}\left \langle \sqrt{a}(\delta s \xi_1 + \xi_2)\right\rangle^{L_2}} \nonumber\\
&\qquad \cdot\left(1+ \max(a, \tilde{a})^{-1}  |\delta p|^2 + \max(a, \tilde{a})^{-2} |(\delta p)_1|^2 ) \right)^{-K} d\xi.\label{eq:estnr1}
\end{align}
We now aim to estimate $\left \langle a \xi_1 \right\rangle^{L_1-M}\left \langle \sqrt{a}(\delta s \xi_1 + \xi_2)\right\rangle^{L_2}$. 
Since $\xi \in \suppp \widehat{\widetilde{\psi}}_{\tilde{a},0,0}$ implies $ |\xi_2|\leq \tilde{a}^{\frac{1}{2}}|\xi_1|\leq \frac{2}{\sqrt{\tilde{a}}}$, we conclude
\begin{align*}
 \left\langle \sqrt{a}(\delta_s \xi_1) \right \rangle^2 &=  (1+|\sqrt{a}(\delta_s \xi_1)|^2) \\
 &= (1+|\sqrt{a}(\delta_s \xi_1 + \xi_2 - \xi_2)|^2)\\
 &\leq 2(1+a |\delta_s \xi_1 + \xi_2 |^2 +  a |\xi_2|^2)\\
 &\leq 2(1+a |\delta_s \xi_1 + \xi_2 |^2 (1+a  |\xi_2 |^2) \\
 &=  2\left\langle \sqrt{a}(\delta_s \xi_1 + \xi_2) \right \rangle^2 \left\langle \sqrt{a}\xi_2 \right \rangle^2 \lesssim \left\langle \sqrt{a}(\delta_s \xi_1 + \xi_2) \right \rangle^2 \left\langle \sqrt{\frac{a}{\tilde{a}}} \right \rangle^2.
\end{align*}
If $\tilde{a}\leq a$, the last term can be estimated by
\begin{align}\label{eq:thisEquationFollows}
 \left\langle \sqrt{a}(\delta_s \xi_1 + \xi_2) \right \rangle^2 \left\langle \sqrt{\frac{a}{\tilde{a}}} \right \rangle^2 \lesssim \left\langle \sqrt{a}(\delta_s \xi_1 + \xi_2) \right \rangle^2 \left\langle a \xi_1 \right \rangle^2.
\end{align}
If $\tilde{a} \geq a$, then $\left\langle \sqrt{\frac{a}{\tilde{a}}} \right \rangle^2$ is bounded by $4$, and hence we can also obtain \eqref{eq:thisEquationFollows}.
Combining the above estimates then yields that
$$\left\langle \sqrt{a}(\delta_s \xi_1) \right \rangle \lesssim \left\langle \sqrt{a}(\delta_s \xi_1 + \xi_2) \right \rangle^2 \left\langle a \xi_1 \right \rangle^2.$$
Since $L_1-M\geq K$, we finally obtain that
\begin{align}
\frac{1}{\left \langle a \xi_1 \right\rangle^{L_1-M}\left \langle \sqrt{a}(\delta s \xi_1 + \xi_2)\right\rangle^{L_2}} \lesssim \frac{1}{\left \langle \sqrt{a}(\delta s \xi_1)\right\rangle^{K}}.\label{eq:estnr2}
\end{align}
Furthermore, we can estimate the following integral by
\begin{align}
 &\int_{[\frac{1}{2\tilde{a}},\frac{2}{\tilde{a}}]} \int_{|\xi_2|\leq \tilde{a}^{\frac{1}{2}}|\xi_1|} \frac{1}{\left \langle \sqrt{a}(\delta s \xi_1)\right\rangle^{K}}  \left(1+ \max(a, \tilde{a})^{-1}  |\delta p|^2) + \max(a, \tilde{a})^{-2} |(\delta p)_1|^2\right)^{-K} d\xi \nonumber\\
&\lesssim \tilde{a}^{-\frac{3}{2}} \frac{1}{\left \langle \max(a, \tilde{a})^{-\frac{1}{2}}(\delta s)\right\rangle^{K}}  \left(1+ \max(a, \tilde{a})^{-1}  |\delta p|^2 + \max(a, \tilde{a})^{-2} |(\delta p)_1|^2 \right)^{-K}.\label{eq:estnr3}
\end{align}
Ultimately, by combining the estimates \eqref{eq:estnr1}, \eqref{eq:estnr2}, \eqref{eq:estnr3} we obtain
\begin{align*}
&\left \langle \psi_{\tilde{a},\delta s, \delta p}, \widetilde{\psi}_{a,0,0} \right \rangle \\
\lesssim  &\min\left(\left|\frac{a}{\tilde{a}}\right|^{M+\frac{3}{4}} \left|\frac{\tilde{a}}{a}\right|^{M-\frac{3}{4}}\right) \frac{1}{(1+\max(a, \tilde{a})^{-1}|\delta s|^2)^{K}}  \frac{1}{\left(1+ \max(a, \tilde{a})^{-1}  |\delta p|^2 + \max(a, \tilde{a})^{-2} |(\delta p)_1|^2 \right)^{K}}.
\end{align*}

\section*{Acknowledgments}\label{sec:acknowledgements}

G.K. acknowledges support by the Einstein Foundation Berlin, by the Einstein Center for Mathematics Berlin
(ECMath), by Deutsche Forschungsgemeinschaft (DFG) Grant KU 1446/14, by the DFG Collaborative Research Center TRR 109
``Discretization in Geometry and Dynamics'', and by the DFG Research Center {\sc Matheon} ``Mathematics for key technologies''
in Berlin. Parts of the research for this paper was performed while the first author was visiting the Department of
Mathematics at the ETH Z\"urich. G.K. thanks this department for its hospitality and support during this visit.
P.P. would like to thank the DFG Collaborative Research Center TRR 109 ``Discretization in Geometry and Dynamics''
for its support.

\bibliographystyle{plain}
\bibliography{references}
\end{document}